\theoremstyle{plain}
\newtheorem{theorem}{Theorem}[section]
\newtheorem{lemma}[theorem]{Lemma}
\newtheorem{definition-theorem}[theorem]{Definition-Theorem}
\newtheorem{proposition}[theorem]{Proposition}
\newtheorem{corollary}[theorem]{Corollary}
\theoremstyle{definition}
\newtheorem{definition}[theorem]{Definition}
\newtheorem{example}[theorem]{Example}
\newtheorem{remark}[theorem]{Remark}
\newtheorem{notation}[theorem]{Notation}
\newcommand \bth[1] { \begin{theorem}\label{t#1} }
\newcommand \ble[1] { \begin{lemma}\label{l#1} }
\newcommand \bpr[1] { \begin{proposition}\label{p#1} }
\newcommand \bco[1] { \begin{corollary}\label{c#1} }
\newcommand \bde[1] { \begin{definition}\label{d#1}\rm }
\newcommand \bex[1] { \begin{example}\label{e#1}\rm }
\newcommand \bre[1] { \begin{remark}\label{r#1}\rm }
\newcommand \bnota[1] {\begin{notation}\label{n#1}\rm }
\newcommand {\ele} { \end{lemma} }
\newcommand {\epr} { \end{proposition} }
\newcommand {\eco} { \end{corollary} }
\newcommand {\ede} { \end{definition} }
\newcommand {\eex} { \end{example} }
\newcommand {\ere} { \end{remark} }
\newcommand {\enota} { \end{notation} }
\def \Id { {\mathrm{Id}} }
\DeclareMathOperator \Hom { {\mathrm{Hom}} }
\DeclareMathOperator \Ind { {\mathrm{Ind}} }
\DeclareMathOperator \Irr { {\mathrm{Irr}} }
\DeclareMathOperator \Res { {\mathrm{Res}} }
\DeclareMathOperator \sgn { { \mathrm{sgn}}}
\DeclareMathOperator \im { { {\mathrm im}}}
\DeclareMathOperator \Seg { { { \mathrm Seg}}}
\DeclareMathOperator \triv { { {\mathrm triv}}}
\begin{document}
\setlength{\baselineskip}{1.2\baselineskip}
\title[Dirac cohomology for degenerate affine Hecke-Clifford algebras]
{Dirac cohomology for degenerate affine Hecke-Clifford algebras}
\author[Kei Yuen Chan]{Kei Yuen Chan}
\address{
Korteweg-de Vries Institute for Mathematics, Universiteit van Amsterdam}
\email{K.Y.Chan@uva.nl}

\begin{abstract} In this paper, we study the Dirac cohomology theory on a class of algebraic structure. The main examples of the algebraic structure are the degenerate affine Hecke-Clifford algebra of type $A_{n-1}$ by Nazarov and of classical types by Khongsap-Wang. The algebraic structure contains a remarkable subalgebra, which usually refers to Sergeev algebra for type $A_{n-1}$.

We define an analogue of the Dirac operator for those algebraic structures. A main result is to relate the central characters of modules of those algebras with the central characters of modules of the Sergeev algebra via the Dirac cohomology. The action of the Dirac operator on certain modules is also computed. Results in this paper could be viewed as a projective version of the Dirac cohomology of the degenerate affine Hecke algebra.
\end{abstract}

\maketitle

\section{Introduction}

Throughout this paper, we work over the ground field $\mathbb{C}$. Let $W$ be a Weyl group. It is well-known that $W$ admits a non-trivial central extension
\[  1 \rightarrow \mathbb{Z}_2 \rightarrow \widetilde{W} \rightarrow W \rightarrow 1 ,\]
where $\widetilde{W}$ is a distinguished double cover of $W$. The projective representations of $W$ are linear representations of $\widetilde{W}$ which do not factor through $W$. Those representations over $\mathbb{C}$ have been has been known for a long time from the work of Schur, Morris, Read, Stembridge and others \cite{Mo1, Mo2, Re, Sc, St}.


The degenerate affine Hecke-Clifford algebra for type $A_{n-1}$ (see Definition \ref{def hca}) was introduced by Nazarov \cite{Na} to study the Young's symmetrizers of the projective representations of $S_n$. The degenerate affine Hecke-Clifford algebra for other classical types was later constructed by Khongsap-Wang \cite{WK}. Those algebras could be viewed as the projective counterpart of the degenerate affine Hecke algebra of Lusztig.




The purpose of this paper is to establish Dirac cohomology theory for those classes of algebras. We first single out the algebraic structure (see Section \ref{s hw}) that is necessary to prove several important results for the Dirac cohomology, and then we show that the degenerate affine Hecke-Clifford algebras considered in \cite{Na} and \cite{WK} satisfy that algebraic structure. Our approach is an analogue of the one recently developed for degenerate affine Hecke algebras by Barbasch, Ciubotaru and Trapa \cite{BCT} (also see a recent extension by Ciubotaru \cite{Ci2}).

In more detail, let $\mathcal H_W$ be the associative algebra with certain important properties (see Definitions \ref{def hw property star} and \ref{def hw property star2}). The algebra $\mathcal H_W$ contains a remarkable subalgebra, namely $\mathrm{Seg}(W)$ (see again Definition \ref{def hw property star}), which is is the same as the Sergeev algebra when $W$ is of type $A_{n-1}$.)
The Dirac type element in $\mathcal H_W$ is defined in analogue of the one in \cite{BCT} and have some nice properties. In specific examples of $\mathcal H_W$ in Section \ref{s dirac hcl}, the Dirac type element can be viewed as the square root of a certain Casmir type element (Theorem \ref{thm d sq}).

For an $\mathcal{H}_W$-module $(\pi, X)$, the Dirac cohomology is defined as
\[ H_D(X)=\ker \pi(D)/(\ker \pi(D) \cap \im \pi(D)),\] which is a $\Seg(W)$-module. One of our main results (Theorem \ref{thm vog conj hw}) says that if $X$ is irreducible and $H_D(X)$ is nonzero, then any irreducible $\Seg(W)$-module in $H_D(X)$ determines the central character of $X$. This is an analogue to a statement for Harish-Chandra modules called Vogan's conjecture \cite{HP}. A key step in the proof of Theorem \ref{thm vog conj hw} is to establish a canonical algebra homomorphism from the center of $\mathcal{H}_W$ to the center of $\Seg(W)$ (Theorem \ref{thm rel centers}). In the case of the degenerate affine Hecke-Clifford algebra of type $A_{n-1}$, this homomorphism is shown to map onto the even elements of the center of $\Seg(W)$ via the study of the Dirac cohomology on some modules (Corollary \ref{cor zeta onto}). The homomorphism indeed agrees with another natural map arising from the Jucys-Murphy type elements (see more detail in Remark \ref{rmk surjective homo}), and hence the property of surjectivity has already been covered in the result of \cite{Ru}.

As Dirac cohomology in other settings (see, for example \cite{HP}), one may apply the Dirac operator and Dirac cohomology developed in this paper to study the representation theory of $\mathcal{H}_W$. More precisely, the action of the Dirac operator provides information about the $\Seg(W)$-module structure and central characters of some $\mathcal{H}_W$-modules (see Corollary \ref{cor par dirac inq} and Theorem \ref{thm Vogan conjecture HC}).

We provide evidences that the Dirac cohomology can be useful in the representation theory by computing the action of the Dirac operators in several cases. In Section \ref{s ex vanishing Dirac}, we consider some basic modules for all classical types and show that the Dirac operator acts identically zero on those modules. Those modules for type $A_{n-1}$ are constructed and studied by Hill-Kujawa-Sussan \cite{HKS}. In Section \ref{s spec}, we go further for type $A_{n-1}$ and compute the action of the Dirac type element $D$ on more interesting modules. We show that the Dirac cohomology of those examples does not vanish, and this indeed coincides with the expectation from the case of the degenerate affine Hecke algebra in \cite{BCT}. While some computations can also be done for other classical types, the picture is more complete for type $A_{n-1}$ to date.



This paper is organized as follows. In Section \ref{s prelmin}, we review some properties of superalgebras. In Section \ref{s hw}, we define a certain algebraic structure $\mathcal H_W$ and develop the Dirac cohomology theory for $\mathbb{H}_W$. We provide examples of $\mathcal H_W$ in Section \ref{s dirac hcl} and compute the square of the Dirac operator. In Section \ref{s ex vanishing Dirac} and Section \ref{s spec}, we consider the Dirac cohomology for some particular modules. In Section \ref{s seg alg}, we review properties of Sergeev algebra which is needed for the computation of Section \ref{s spec}.


\subsection{Acknowledgment} The author would like to thank Dan Ciubotaru and Peter Trapa for the suggestion of this topic and many useful discussions. He also thanks Professor Weiqiang Wang for his interest in the work and pointing out the reference \cite{Wa}. The author would also like to thank referees for useful suggestions and comments, and also thank one of the referees to point out the reference \cite{Ru}.

\section{Preliminaries} \label{s prelmin}
\subsection{Notation for modules}

In this paper, all the algebras are associative with an unit over $\mathbb{C}$. Let $\mathcal A$ be an algebra. An $\mathcal A$-module is denoted $(\pi, X)$ or simply $X$, where $X$ is a vector space and $\pi$ is the map defining the action of $\mathcal A$ on $X$. For $a \in \mathcal A$ and $x \in X$, the action of $a$ on $x$ is written by $\pi(a)x$ or $a.x$.

Let $\mathcal B$ be a subalgebra of $\mathcal A$.   Define $\Ind^{\mathcal A}_{\mathcal B}$ to be the induction functor, i.e.
\[   \Ind^{\mathcal A}_{\mathcal B} Y = \mathcal A \otimes_{\mathcal B} Y ,\]
where $Y$ is a $\mathcal B$-module. The left adjoint functor of $\Ind^{\mathcal A}_{\mathcal B}$ is the restriction functor denoted $\Res^{\mathcal A}_{\mathcal B}$.

\subsection{Superalgebras and supermodules} \label{ss prelim superalg}
A super vector space $V$ is a $\mathbb{Z}_2$-graded vector space $V=V_0 \oplus V_1$. A super vector subspace $W$ of $V$ is a subspace of $V$ such that $W=(W \cap V_0) \oplus (W \cap V_1)$. We say an element $a$ in $ V_0$ (resp. $ V_1$) has even (resp. odd) degree, denoted $\deg(v)=0$ (resp. $\deg(v)=1$).

 A superalgebra $\mathcal A$ is an algebra with a super vector space structure $\mathcal A=\mathcal A_0 \oplus \mathcal A_1$ and $\mathcal A_i \mathcal A_j \subseteq \mathcal A_{i+j}$ for $i,j \in \mathbb Z_2$. A subalgebra $\mathcal C$ of a superalgebra $\mathcal A$ is said to be a super subalgebra of $\mathcal A$ if $\mathcal C=(\mathcal A_0 \cap \mathcal C)\oplus (\mathcal A_1 \cap \mathcal C)$. A super ideal $\mathcal I$ of a superalgebra $\mathcal A$ is an ideal of $\mathcal A$ such that $\mathcal I=(\mathcal A_0 \cap \mathcal I) \oplus (\mathcal A_1 \cap \mathcal I)$.

For superalgebras $\mathcal A$ and $\mathcal B$, a superalgebra homomorphism from $\mathcal A$ to $\mathcal B$ is an algebra homomorphism with $f(\mathcal A_i) \subset \mathcal B_i$ for $i \in \mathbb{Z}_2$.

For superalgebras $\mathcal A$ and $\mathcal B$, the super tensor product of $\mathcal A$ and $\mathcal B$, denoted $\mathcal A \widetilde{\otimes} \mathcal B$ is a superalgebra isomorphic to $\mathcal A \otimes \mathcal B$ as vector spaces with the multiplication determined by:
\[    (a \otimes b)(a' \otimes b')=(-1)^{\deg(b)\deg(a')}(aa'\otimes bb') ,\]
where $a,a' \in \mathcal A$ and $b,b' \in \mathcal B$ are homogeneous elements.


Let $\mathcal A$ be a superalgebra. An $\mathcal A$-supermodule $X$ is an $\mathcal A$-module with a super vector space structure $X=X_0 \oplus X_1$ and the property that $\mathcal A_i . X_j \subseteq X_{i+j}$, where $i, j \in \mathbb Z_2$. A supersubmodule  $Y$ of an $\mathcal A$-supermodule $X$ is a submodule of $X$ such that $Y=(X_0 \cap Y) \oplus (X_1 \cap Y)$. An $\mathcal A$-supermodule $X$ is irreducible if there is no proper non-zero supersubmodule of $X$.

For an $\mathcal A$-supermodule $X=X_0 \oplus X_1$, define a map $\delta: X \rightarrow X$ such that $\delta(v)=v$ if $v \in X_0$ and $\delta(v)=-v$ if $v \in X_1$.

Let $\mathrm{Mod}_{\mathrm{sup}}(\mathcal A)$ be the category of $\mathcal A$-supermodules. The morphisms in the category $\mathrm{Mod}_{\mathrm{sup}}(\mathcal A)$ are the even homomorphisms between $\mathcal A$-supermodules. Let $\Pi: \mathrm{Mod}_{\mathrm{sup}}(\mathcal A) \rightarrow \mathrm{Mod}_{\mathrm{sup}}(\mathcal A)$ be a parity change functor. That means for an $\mathcal A$-supermodule, $\Pi(M)$ and $M$ are isomorphic as $\mathcal A$-modules, but have opposite $\mathbb Z_2$-grading.

\subsection{Relations between irreducible supermodules and irreducible modules} \label{ss rel sup and ord}

Let $\mathcal A=\mathcal A_0 \oplus \mathcal A_1$ be a superalgebra. Given an irreducible $\mathcal A$-module $(\pi, Y)$, we construct a supermodule as follows. Let  $(\overline{\pi}, \overline{Y})$ be an irreducible $\mathcal A$-module such that $\overline{Y}$ is identified with $Y$ as vector spaces and the $\mathcal A$-action on $\overline{Y}$ is determined for any homogenous element $a \in \mathcal A$ and for $v \in Y$  by
\[ \overline{\pi}(a)v= (-1)^{\deg(a)}\pi(a)v .\]
Let $(\pi_{X_Y}, X_Y)$ be an $\mathcal A$-supermodule such that $X_Y=Y \oplus \overline{Y}$ as vector spaces and the action of $\mathcal A$ on $X_Y=Y\oplus \overline{Y}$ is as:
$\pi_{X_Y}(a)(v,\overline{v})=(\pi(a)v, \overline{\pi}(a)\overline{v})$. Let $(X_Y)_0=\left\{ (v,\overline{v}) \in X_Y: v=\overline{v} \right\}$ and let $(X_Y)_1=\left\{ (v, \overline{v}) \in X_Y : v=-\overline{v} \right\}$. It is elementary to check $X_Y=(X_Y)_0 \oplus (X_Y)_1$ is an $\mathcal A$-supermodule.

\begin{lemma} \label{lem crit irr general}
Let $Y$ be an irreducible $\mathcal A$-module. Let $X_Y=Y\oplus \overline{Y}$ be an $\mathcal A$-supermodule with the supermodule structure described above. Then
\begin{enumerate}
\item[(1)]  $X_Y$ is an irreducible $\mathcal A$-supermodule  if and only if $Y$ and $\overline{Y}$ are non-isomorphic as $\mathcal A$-modules.
\item[(2)] If $Y$ and $\overline{Y}$ are isomorphic as $\mathcal A$-modules, then there is a supermodule structure on $Y$.
\end{enumerate}
\end{lemma}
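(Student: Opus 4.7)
The overall strategy for part (1) is first to describe the ordinary $\mathcal A$-submodule lattice of $X_Y = Y \oplus \overline{Y}$ and then to intersect this with the extra requirement of respecting the $\mathbb Z_2$-grading given by the diagonal and anti-diagonal embeddings. For part (2), I will diagonalize any $\mathcal A$-module isomorphism $\phi: Y \to \overline{Y}$ via Schur's lemma and read off a grading on $Y$ from its eigenspaces. As a preliminary remark, $\overline{Y}$ is automatically irreducible, since an $\mathcal A$-submodule of $\overline{Y}$ is literally the same subspace as an $\mathcal A$-submodule of $Y$.

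For the forward direction of (1), I would assume $Y \not\cong \overline{Y}$ as ordinary $\mathcal A$-modules. Then by Schur's lemma the only ordinary $\mathcal A$-submodules of the length-two semisimple module $X_Y$ are $0$, $Y \oplus 0$, $0 \oplus \overline{Y}$, and $X_Y$. Any super-submodule is in particular an ordinary submodule, so it suffices to check which of these four respect the grading. Writing $(v,0) = \tfrac12 (v,v) + \tfrac12 (v,-v)$ shows that the even part of a nonzero $(v,0)$ does not lie in $Y \oplus 0$, so $Y \oplus 0$ is not a super-subspace, and similarly for $0 \oplus \overline{Y}$. Hence only $0$ and $X_Y$ are super-submodules, so $X_Y$ is super-irreducible.

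For the reverse direction of (1) and for part (2), suppose $\phi: Y \to \overline{Y}$ is an $\mathcal A$-module isomorphism; viewed as a linear map $Y \to Y$, it satisfies $\phi\, \pi(a) = (-1)^{\deg(a)} \pi(a)\, \phi$ on homogeneous elements. Hence $\phi^2$ is an ordinary $\mathcal A$-endomorphism of $Y$, and by Schur's lemma $\phi^2 = c \cdot \Id$ for some nonzero scalar $c$; after rescaling we may assume $\phi^2 = \Id$. Put $Y^{\pm} = \ker(\phi \mp \Id)$. For the failure of super-irreducibility in (1), the graph $W_\phi = \{(v, \phi(v)) : v \in Y\}$ is a proper nonzero $\mathcal A$-submodule of $X_Y$, and the identity $(v, \phi(v)) = (v^+, v^+) + (v^-, -v^-)$ for $v = v^+ + v^- \in Y^+ \oplus Y^-$ shows $W_\phi$ respects the grading and is therefore a proper super-submodule. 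For (2), the twist relation forces $\pi(\mathcal A_0) Y^\pm \subseteq Y^\pm$ and $\pi(\mathcal A_1) Y^\pm \subseteq Y^\mp$, so declaring $Y_0 := Y^+$ and $Y_1 := Y^-$ equips $Y$ with an $\mathcal A$-supermodule structure.

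The only delicate step is the invocation of Schur's lemma to force $\phi^2 \in \mathbb C \cdot \Id$; this requires $\End_{\mathcal A}(Y) = \mathbb C \cdot \Id$, which in the applications of interest to Hecke-Clifford algebras is automatic from the (at most) countable-dimensionality of irreducible modules over $\mathbb C$. Everything else reduces to routine bookkeeping with the diagonal/anti-diagonal decomposition.
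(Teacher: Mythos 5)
Your proof is correct, and it takes a genuinely different (and arguably cleaner) route than the paper's for part of the argument. For the ``if'' direction of (1) (which you mislabel as the forward direction---$Y \not\cong \overline Y$ implies $X_Y$ super-irreducible), you invoke the full submodule lattice of the length-two semisimple module $Y\oplus\overline Y$ with non-isomorphic summands and directly verify that neither nontrivial summand is a super-subspace via $(v,0)=\tfrac12(v,v)+\tfrac12(v,-v)$. The paper instead argues contrapositively: from an arbitrary proper super-submodule $M$ it reconstructs an $\mathcal A$-module isomorphism $Y\to\overline Y$ by hand (setting $M^i=\{v: (v,(-1)^iv)\in M\cap(X_Y)_i\}$ and showing $Y=M^0\oplus M^1$), which is more self-contained but less direct. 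For the ``only if'' direction and for (2), your argument is essentially the same as the paper's: both normalize $\phi^2=\Id$ via Schur, and your graph $W_\phi=\{(v,\phi(v))\}$ is precisely the paper's $\mathrm{Ker}_0\oplus\mathrm{Ker}_1$. Your proof of (2), however, is more explicit than the paper's: you exhibit the grading $Y_0=Y^+$, $Y_1=Y^-$ and verify $\pi(\mathcal A_i)Y^{\pm}\subseteq Y^{\pm(-1)^i}$ directly, whereas the paper transports the grading abstractly from an irreducible super-submodule $X'\subset X_Y$ via an $\mathcal A$-module isomorphism $X'\cong Y$. One small point: your appeal to ``Schur's lemma'' to pin down the four-element submodule lattice only uses the weak form ($\Hom_{\mathcal A}(Y,\overline Y)=0$ for non-isomorphic simples), so the caveat about $\End_{\mathcal A}(Y)=\mathbb C\cdot\Id$ applies only to the normalization $\phi^2=\Id$, exactly as in the paper, and is standard in this setting.
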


\begin{proof}
For (1), we first prove that if $X_Y$ is an irreducible $\mathcal A$-supermodule, then $Y$ and $\overline{Y}$ are not isomorphic as $\mathcal A$-modules. Suppose instead there exists an $\mathcal A$-module isomorphism $f:Y \rightarrow \overline{Y}$ and we will derive a contradiction. Recall that $\overline{Y}$ is identified with $Y$ as vector spaces and thus there exists a natural vector space isomorphism $\theta: \overline{Y} \rightarrow Y$ such that $(-1)^{\deg(a)}\pi(a)\theta=\theta \overline{\pi}(a)$ for any homogenous $a \in \mathcal A$.  Then $\theta \circ f$ satisfies the  property that for any homogenous element $a \in \mathcal A$,
\[ \pi(a)(\theta \circ f)(x)=(-1)^{\deg(a)} (\theta \circ f)(\pi(a)x) \quad .\] Then the map $ (\theta \circ f)^2$ is an $\mathcal A$-module automorphism of $Y$. Thus, by Schur's lemma and a suitable normalization, we may assume $(\theta \circ f)^2$ is an identity map. Then as vector spaces
\[  Y= \ker(\theta \circ f-\Id) \oplus \ker(\theta \circ f+\Id). \]
For $\epsilon=0,1$, let
\[   \mathrm{Ker}_{\epsilon}=\left\{ (v,(-1)^{\epsilon}v) \in X_Y : v \in \ker(\theta \circ f-(-1)^{\epsilon} \Id) \right\}. \]
Then it is straightforward to verify $\mathrm{Ker}_0 \oplus \mathrm{Ker}_1 \subset X_Y$ gives a proper super submodule of $X_Y$.

We now prove if $Y$ and $\overline{Y}$ are not isomorphic as $\mathcal A$-modules, $X_Y$ is an irreducible $\mathcal A$-supermodule. Suppose instead that there exists a proper super submodule $M$ of $X_Y$ and we will get a contradiction. Let $M^i=\left\{ v \in Y : (v, (-1)^iv) \in M \cap (X_Y)_i \right\}$ for $i \in \mathbb{Z}_2$, which are regarded as vector subspaces of $Y$. We first see that $M^0 \cap M^1=0$. Otherwise, there exists some nonzero $v \in Y$ such that $(v,v) \in M$ and $(v,-v) \in M$, and so $(v,0), (0,v) \in M$. The irreducibility of $Y$ and $\overline{Y}$ implies $M=X_Y$, contradicting $M$ is proper. Furthermore the irreducibility of $Y$ implies $Y=M^0 \oplus M^1$ (as vector spaces). Define a map $f: (\pi, Y) \rightarrow (\overline{\pi}, \overline{Y})$ determined by $f(v)=(-1)^iv$ for $v \in M^i$ ($i \in \mathbb{Z}_2$). One can check $f$ is an $\mathcal A$-module isomorphism and so this gives a contradiction.

We now consider (2). By (1), $X_Y$ is not an irreducible $\mathcal A$-supermodule. Let $X'$ be an irreducible super submodule of $X_Y$. Then by the construction of $X_Y$, $X'$ is isomorphic to $Y=\overline{Y}$ as $\mathcal A$-modules. Then this gives a supermodule structure to $Y$.
\end{proof}

We can also start with an irreducible $\mathcal A$-supermodule and decompose it into irreducible $\mathcal A$-module(s).

\begin{lemma} \label{lem crit irred r}
Let $X$ be an irreducible $\mathcal A$-supermodule. Let $\delta$ be a linear automorphism on $X$ such that $\delta(v)=(-1)^{i}v$ for $v \in X_{i}$ ($i=0,1$). If $X$ is not an irreducible $\mathcal A$-module, then there exists an irreducible $\mathcal A$-submodule $Y$ of $X$ such that
\begin{itemize}
\item[(1)] $\delta(Y)$ is also an $\mathcal A$-submodule of $X$ and $\delta(Y)=\overline{Y}$; and
\item[(2)] $Y$ and $\delta(Y)$ are non-isomorphic $\mathcal A$-modules; and
\item[(3)] $X=Y \oplus \delta(Y)$ as $\mathcal A$-modules.
\end{itemize}

\end{lemma}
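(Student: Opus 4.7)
The plan hinges on a single sign identity that the involution $\delta$ satisfies with respect to the $\mathcal{A}$-action. For homogeneous $a \in \mathcal{A}$, it follows at once from $\delta|_{X_i} = (-1)^i \Id$ and the supermodule axiom $\mathcal{A}_j \cdot X_i \subseteq X_{i+j}$ that
\[ \delta \circ \pi_X(a) \;=\; (-1)^{\deg(a)}\, \pi_X(a) \circ \delta. \]
Two consequences flow immediately. First, for any $\mathcal{A}$-submodule $Z \subseteq X$ the image $\delta(Z)$ is again an $\mathcal{A}$-submodule, and the restriction $\delta|_Z$ becomes an $\mathcal{A}$-module isomorphism $\overline{Z} \xrightarrow{\sim} \delta(Z)$ in the sense of Section \ref{ss rel sup and ord}. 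Second, any $\mathcal{A}$-submodule that is in addition $\delta$-stable is automatically a super-submodule of $X$, since its $\mathbb{Z}_2$-grading is then provided by the $\pm 1$-eigendecomposition of $\delta$.

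To produce $Y$ and establish (1) and (3), I would pick any irreducible $\mathcal{A}$-submodule $Y \subseteq X$, necessarily proper since $X$ is not $\mathcal{A}$-irreducible. Both $Y + \delta(Y)$ and $Y \cap \delta(Y)$ are $\mathcal{A}$-submodules that are visibly $\delta$-stable, hence super-submodules of $X$. The irreducibility of $X$ as a supermodule then forces $Y + \delta(Y) = X$, and the intersection $Y \cap \delta(Y)$ must equal $0$ (it cannot equal $X$, for then $Y = X$, contradicting properness). Thus $X = Y \oplus \delta(Y)$, giving (3); part (1) is immediate from the first consequence above.

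The only nontrivial part is (2), which I would prove by contradiction using Lemma \ref{lem crit irr general}. Suppose $Y \cong \delta(Y) \cong \overline{Y}$ as $\mathcal{A}$-modules. By Lemma \ref{lem crit irr general}(1) the supermodule $X_Y = Y \oplus \overline{Y}$ constructed in Section \ref{ss rel sup and ord} is then reducible as an $\mathcal{A}$-supermodule. On the other hand, using the isomorphism $\delta|_Y : \overline{Y} \xrightarrow{\sim} \delta(Y)$ one obtains an $\mathcal{A}$-linear bijection
\[ \Phi : X_Y \longrightarrow Y \oplus \delta(Y) = X, \qquad (y, \overline{y'}) \longmapsto y + \delta(y'). \]
The main obstacle is then pure bookkeeping: one must check that $\Phi$ is a supermodule isomorphism, i.e., it carries the canonical grading on $X_Y$ to the $\delta$-grading on $X$. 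This is a short calculation, since $\Phi(v, \overline{v}) = v + \delta(v)$ is $\delta$-fixed and $\Phi(v, -\overline{v}) = v - \delta(v)$ is $\delta$-antifixed. Once this is verified, reducibility of $X_Y$ transports to reducibility of $X$ as a supermodule, contradicting the hypothesis, and (2) follows.
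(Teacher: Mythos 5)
Your argument is correct, and it fills in details that the paper largely delegates: the paper's own proof gives the sign identity for (1) and then simply cites Brundan--Kleshchev \cite[Lemma~2.3]{BK} for (2) and (3). Your route to (3) via $\delta$-stable submodules is the expected one: $Y+\delta(Y)$ and $Y\cap\delta(Y)$ are $\delta$-stable, hence super-submodules, so irreducibility of $X$ as a supermodule forces the direct sum decomposition. What is genuinely nice in your write-up is the proof of (2): rather than re-deriving the type M/type Q dichotomy from scratch, you reuse Lemma~\ref{lem crit irr general}(1) from the same section by explicitly constructing a super-isomorphism $\Phi : X_Y \to X$, $(y,\overline{y'})\mapsto y+\delta(y')$, and checking that it sends the canonical $\mathbb{Z}_2$-grading of $X_Y$ (the $\pm$-diagonal subspaces) to the $\delta$-eigenspace grading of $X$. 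This makes the pair of lemmas in Section~\ref{ss rel sup and ord} self-referential and self-contained instead of leaning on the external reference. The only implicit hypothesis you invoke (as the paper also does in Lemma~\ref{lem crit irr general}(2)) is that $X$ possesses some irreducible $\mathcal A$-submodule to start from; in the paper's finite-dimensional context that is harmless, but if one wanted to be scrupulous one could instead start with any proper nonzero $\mathcal A$-submodule $Z$, obtain $X=Z\oplus\delta(Z)$ by the same $\delta$-stability argument, and then observe that $Z$ must be irreducible since a proper nonzero $Z'\subsetneq Z$ would give $X=Z'\oplus\delta(Z')\subsetneq Z\oplus\delta(Z)=X$, a contradiction.
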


\begin{proof}
(1) follows from $a.\delta(v)=(-1)^{\deg(a)}\delta(a.v)$ for any homogenous element $a \in \mathcal A$ and $v \in Y$. (2) and (3) are (a reformulation of) \cite[Lemma 2.3]{BK}.
\end{proof}

\begin{lemma} \label{lem unique structure}
Let $X$ and $X'$ be irreducible $\mathcal A$-supermodules. If $X$ and $X'$ are isomorphic as $\mathcal A$-modules, then $X$ and $X'$ are isomorphic, up to applying the functor $\Pi$, as $\mathcal A$-supermodules.
\end{lemma}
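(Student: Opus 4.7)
The plan is to take the given $\mathcal{A}$-module isomorphism $f\colon X\to X'$ and decompose it as $f=f_0+f_1$, where $f_0$ preserves the $\mathbb{Z}_2$-grading (sends $X_i$ into $X'_i$) and $f_1$ reverses it (sends $X_i$ into $X'_{i+1}$). A direct check using homogeneous $a\in\mathcal{A}_i$ and $v\in X_j$ shows that $f_0$ and $f_1$ are each separately $\mathcal{A}$-linear. The overall goal is then to produce a nonzero \emph{homogeneous} $\mathcal{A}$-linear map $h\colon X\to X'$: an even such $h$ is already a supermodule isomorphism $X\cong X'$, while an odd $h$ can be viewed as an even $\mathcal{A}$-linear map $X\to \Pi X'$, yielding a supermodule isomorphism $X\cong\Pi X'$. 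The argument splits according to whether $X$ is irreducible as an ordinary $\mathcal{A}$-module, a dichotomy governed by Lemmas \ref{lem crit irr general} and \ref{lem crit irred r}.

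Suppose first that $X$ is irreducible as an $\mathcal{A}$-module; then $X'$ is too, and $\Hom_\mathcal{A}(X,X')=\mathbb{C} h$ for any nonzero $h$ by Schur's lemma. Decomposing $h=h_0+h_1$ into even and odd parts, both summands again lie in $\mathbb{C} h$, so $h_0=\alpha h$ and $h_1=\beta h$ with $\alpha+\beta=1$. Substituting the first identity into itself yields $h_0=\alpha h_0+\alpha h_1$; comparing purely even and purely odd parts forces $\alpha h_1=0$, and hence either $h_1=0$ or $\alpha=0$. In either outcome $h$ is homogeneous, and the conclusion follows from the remark in the previous paragraph.

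Suppose instead that $X$ is reducible as an $\mathcal{A}$-module. Applying Lemma \ref{lem crit irred r} to $X$ and $X'$, write $X=Y\oplus\delta(Y)$ and $X'=Y'\oplus\delta(Y')$, with $Y\not\cong\delta(Y)$ and $Y'\not\cong\delta(Y')$ as $\mathcal{A}$-modules. Since $X\cong X'$ as $\mathcal{A}$-modules, the Krull--Schmidt property of finite-length modules forces either $Y\cong Y'$ or $Y\cong\delta(Y')$. Fix an $\mathcal{A}$-isomorphism $\phi$ from $Y$ onto whichever of $Y',\delta(Y')$ is isomorphic to it, and define $\tilde\phi\colon X\to X'$ by
\[
  \tilde\phi\bigl(v+\delta(w)\bigr)=\phi(v)+\delta\bigl(\phi(w)\bigr),\qquad v,w\in Y.
\]
A routine check using that $\delta$ commutes with elements of $\mathcal{A}_0$ and anticommutes with elements of $\mathcal{A}_1$ shows that $\tilde\phi$ is $\mathcal{A}$-linear and satisfies $\tilde\phi\circ\delta=\delta\circ\tilde\phi$. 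Since $X_i$ and $X'_i$ are the $(\pm1)$-eigenspaces of $\delta$, this means $\tilde\phi$ is even, hence a supermodule isomorphism $X\cong X'$.

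The subtle point is the parity argument in the first case: it is where Schur's lemma, applied not just to $\Hom_\mathcal{A}(X,X')$ but to its even and odd components under the induced $\mathbb{Z}_2$-grading, forces the existence of a homogeneous isomorphism. The remaining steps---the $\mathcal{A}$-linearity of $f_0,f_1$ and of $\tilde\phi$, and the compatibility $\tilde\phi\circ\delta=\delta\circ\tilde\phi$---are routine computations with homogeneous elements.
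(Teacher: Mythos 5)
Your proof is correct. In the first case (where $X$ is irreducible as an ordinary $\mathcal A$-module) your route genuinely differs from the paper's: the paper passes to the even subalgebra $\mathcal A_0$, notes that $X_0,X_1,X_0',X_1'$ are irreducible $\mathcal A_0$-modules, and matches $X_0$ with one of $X_0',X_1'$; you instead apply Schur's lemma to $\Hom_{\mathcal A}(X,X')$ and exploit the one-dimensionality of that space to force any isomorphism to be homogeneous. Your version is more self-contained, since the paper's argument relies (without spelling it out) on the irreducibility of $X_0,X_1$ over $\mathcal A_0$. In the reducible case the two arguments coincide: the map $\tilde\phi$ you build, whose restriction to $\delta(Y)$ is $\delta\circ\phi\circ\delta$, is precisely the paper's $f\oplus\overline{f}$.

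One further remark: the decomposition $f=f_0+f_1$ you set up at the outset already yields a unified, case-free proof, making the dichotomy unnecessary. At least one of $f_0,f_1$ is nonzero. A nonzero $f_0$ is an even $\mathcal A$-linear map; its kernel and image are supersubmodules of the irreducible supermodules $X$ and $X'$, so $f_0$ is a supermodule isomorphism $X\cong X'$. Likewise a nonzero $f_1$, viewed as an even $\mathcal A$-linear map $X\to\Pi X'$, gives $X\cong\Pi X'$. No appeal to irreducibility of $X$ over $\mathcal A$, to Lemma \ref{lem crit irred r}, or to Krull--Schmidt is then required.
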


\begin{proof}
Suppose $X$ and $X'$ are also irreducible $\mathcal A$-modules. Then $X_0, X_1, X_0', X_1'$ are irreducible $\mathcal A_0$-modules. Then either $X_0=X_0'$ or $X_0=X_1'$ as $\mathcal A_0$-modules. Then either $X \cong X'$ or $X \cong \Pi(X')$ as $\mathcal A$-supermodules.

Suppose $X$ is not an irreducible $\mathcal A$-module. Let $X=Y \oplus \delta(Y)$ and $X'=Y' \oplus \delta(Y')$ be the decomposition of $X$ into $\mathcal A$-modules as in Lemma \ref{lem crit irred r}. Without loss of generality, we may assume $Y=Y'$ as $\mathcal A$-modules. Let $f: Y \rightarrow Y'$ be an $\mathcal A$-module isomorphism. Then $f$ also induces an $\mathcal A$-module isomorphism $\overline{f}: \delta(Y) \rightarrow \delta(Y')$ such that $\overline{f}=\delta \circ f \circ \delta$. Then one can show the map $f \oplus \overline{f}$ is an $\mathcal A$-supermodule isomorphism by checking the map preserves grading. In particular, we also have $\Pi(X)=X$ as $\mathcal A$-supermodules in this case.
\end{proof}

Let $\Irr(\mathcal A)$ (resp. $\Irr_{\mathrm{sup}}(\mathcal A)$) be the set of irreducible $\mathcal A$-modules (resp. irreducible $\mathcal A$-supermodules). Let $\sim$ be the equivalence relation on $\Irr(\mathcal A)$: $Y\sim Y'$ if and only if $Y=Y'$ or $Y=\overline{Y'}$. Let $\sim_{\Pi}$ be the equivalence relation on $\Irr_{\mathrm{sup}}(\mathcal A)$: $X \sim_{\Pi} X'$ if and only if $X=X'$ or $X=\Pi(X')$.
\begin{proposition} \label{prop bij sup and ord}
There is a natural bijection
\[  \Irr_{\mathrm{sup}}(\mathcal A)/\sim_{\Pi}\ \longleftrightarrow \ \Irr(\mathcal A)/\sim .\]

\end{proposition}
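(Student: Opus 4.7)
The plan is to construct the bijection explicitly in both directions, exploiting the constructions of Lemmas \ref{lem crit irr general} and \ref{lem crit irred r}, and then use Lemma \ref{lem unique structure} to verify well-definedness up to $\sim_\Pi$. I would define a forward map $\Phi : \Irr(\mathcal A)/\sim \, \to \, \Irr_{\mathrm{sup}}(\mathcal A)/\sim_\Pi$ on a representative $Y$ by cases: if $Y \not\cong \overline{Y}$ as $\mathcal A$-modules, set $\Phi([Y]) = [X_Y]$, which is an irreducible $\mathcal A$-supermodule by Lemma \ref{lem crit irr general}(1); if $Y \cong \overline{Y}$, Lemma \ref{lem crit irr general}(2) endows $Y$ with an $\mathcal A$-supermodule structure and I set $\Phi([Y])$ to be its $\sim_\Pi$-class.

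The reverse map $\Psi$ uses Lemma \ref{lem crit irred r}: for an irreducible $\mathcal A$-supermodule $X$, if $X$ is already irreducible as an $\mathcal A$-module, put $\Psi([X]) = [X]$; otherwise write $X = Y \oplus \delta(Y)$ with $\delta(Y) = \overline{Y}$ and $Y \not\cong \overline{Y}$ as $\mathcal A$-modules, and set $\Psi([X]) = [Y]$. Well-definedness of $\Psi$ on $\sim_\Pi$-classes is clear: the functor $\Pi$ does not change the underlying $\mathcal A$-module, it only flips the grading, so the unordered decomposition $\{Y,\delta(Y)\}$ is unchanged, and the $\sim$-class $[Y]$ is independent of the choice of summand since $\delta(Y) = \overline{Y}$.

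For well-definedness of $\Phi$, I would check that replacing $Y$ by $\overline{Y}$ either gives the same construction $X_Y = X_{\overline{Y}}$ up to a possible parity swap (so they are $\sim_\Pi$-equivalent), or, in the self-dual case, that Lemma \ref{lem unique structure} guarantees the resulting supermodule structure on $Y$ is unique up to $\Pi$. The main verification is then that $\Phi$ and $\Psi$ are mutual inverses. For $\Psi \circ \Phi = \mathrm{id}$: in the case $Y \not\cong \overline{Y}$, the supermodule $X_Y$ decomposes as an $\mathcal A$-module as $Y \oplus \overline{Y}$ by construction, so $\Psi([X_Y]) = [Y]$; in the self-dual case, the supermodule structure on $Y$ from Lemma \ref{lem crit irr general}(2) keeps $Y$ irreducible as an $\mathcal A$-module (since $Y$ was already irreducible), so $\Psi$ returns $[Y]$ via the first branch.

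The main obstacle, and the case requiring the most care, is $\Phi \circ \Psi = \mathrm{id}$ in the branch where $X$ is irreducible both as a supermodule and as a module: here I must argue that $X \cong \overline{X}$ as $\mathcal A$-modules, so that $\Phi([X])$ indeed falls into the self-dual branch and returns $[X]$. This follows because the map $\delta : X \to X$ defined from the $\mathbb Z_2$-grading satisfies $\pi(a)\delta = (-1)^{\deg a}\delta\,\pi(a)$ for homogeneous $a$, hence intertwines $\pi$ with $\overline{\pi}$, giving an $\mathcal A$-module isomorphism $X \to \overline{X}$. Combined with Lemma \ref{lem unique structure}, which identifies the supermodule produced by $\Phi$ with the original $X$ up to $\Pi$, this closes the argument and completes the bijection.
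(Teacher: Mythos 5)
Your proposal is correct and follows essentially the same route as the paper's (very terse) proof: the forward map comes from Lemma \ref{lem crit irr general} together with Lemma \ref{lem unique structure}, the reverse map from Lemma \ref{lem crit irred r}, and Lemma \ref{lem unique structure} is what makes both directions well-defined and mutually inverse. The extra work you supply — in particular checking that an irreducible supermodule $X$ that is also module-irreducible satisfies $X\cong\overline X$ via the grading operator $\delta$ — is exactly the detail the paper leaves implicit, and it is carried out correctly.
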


\begin{proof}
Lemmas \ref{lem crit irr general} and \ref{lem unique structure} define a map from $\Irr(\mathcal A)/\sim$ to $\Irr_{\mathrm{sup}}(\mathcal A)/\sim_{\Pi}$. Lemma \ref{lem crit irred r} defines a map in the opposite direction. The two maps are inverse to each other by Lemma \ref{lem unique structure}.

\end{proof}

\subsection{Central characters of supermodules}

For a superalgebra $\mathcal A$, let $Z(\mathcal A)$ be the center of $\mathcal A$. Note that $Z(\mathcal A)$ is a super subalgebra of $\mathcal A$. Recall that $Z(\mathcal A)_0$ is the set of even elements in $Z(\mathcal A)$.

\begin{proposition} \label{prop schur}
Let $X$ be an irreducible $\mathcal A$-supermodule. For $z \in Z(\mathcal A)_0$, $z$ acts on $X$ by the multiplication of a scalar.
\end{proposition}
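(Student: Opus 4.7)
The plan is to reduce to ordinary Schur's lemma by using Lemma \ref{lem crit irred r} to split into two cases: either $X$ is already irreducible as an $\mathcal A$-module, or $X$ decomposes as $X = Y \oplus \delta(Y)$ into two non-isomorphic irreducible $\mathcal A$-modules.

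First I would observe that since $z \in Z(\mathcal A)_0$ is even, the action $\pi(z)$ is an even endomorphism of $X$, i.e.\ it preserves the grading $X = X_0 \oplus X_1$. Moreover, the defining relation of $\delta$ together with $\deg(z) = 0$ yields
\[
    \pi(z) \circ \delta \;=\; \delta \circ \pi(z),
\]
which is the key compatibility used below.

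In the first case, when $X$ is irreducible as an ordinary $\mathcal A$-module, I would simply note that $\pi(z)$ is an $\mathcal A$-module endomorphism of the finite-dimensional (or at least admitting a central-character by a standard countable-dimension argument) irreducible $\mathcal A$-module $X$, so the classical Schur lemma yields a scalar. In the second case, Lemma \ref{lem crit irred r} gives a decomposition $X = Y \oplus \delta(Y)$ into two non-isomorphic irreducible $\mathcal A$-modules. Apply Schur's lemma to $\pi(z)|_Y$ and $\pi(z)|_{\delta(Y)}$ to obtain scalars $\lambda_Y$ and $\lambda_{\delta(Y)}$ respectively. Then for any $v \in Y$,
\[
    \pi(z)\delta(v) \;=\; \delta(\pi(z)v) \;=\; \delta(\lambda_Y v) \;=\; \lambda_Y \delta(v),
\]
so $\lambda_{\delta(Y)} = \lambda_Y$ and $\pi(z)$ acts by a single scalar on the whole of $X$.

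I do not expect any genuine obstacle: the only subtle point is that $\delta$ is not an $\mathcal A$-module map in general (it only satisfies $a.\delta(v) = (-1)^{\deg(a)} \delta(a.v)$), but for \emph{even} $z$ this sign disappears, which is precisely why the restriction $z \in Z(\mathcal A)_0$ is imposed in the statement. This intertwining of $\pi(z)$ with $\delta$ is what forces the two Schur scalars in the reducible case to coincide.
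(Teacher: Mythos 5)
Your proof is correct and follows essentially the same route as the paper: both split via Lemma \ref{lem crit irred r} into the case where $X$ is $\mathcal A$-irreducible (where ordinary Schur's lemma applies directly) and the case $X = Y \oplus \delta(Y)$, and both use the evenness of $z$ to force the two Schur scalars to agree. Your packaging of the key fact as the intertwining $\pi(z)\circ\delta = \delta\circ\pi(z)$ is a clean reformulation of the paper's explicit computation with $v+\delta(v)\in X_0$ and $v-\delta(v)\in X_1$, but it is the same underlying idea.
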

 \begin{proof}
If $X$ is an irreducible $\mathcal A$-module, then the statement follows from (ordinary) Schur's lemma (for this case). If $X$ is not an irreducible $\mathcal A$-module, then we can decompose $X=Y \oplus \delta(Y)$ as $\mathcal A$-modules as in Lemma \ref{lem crit irred r}. Then $z$ acts on the two modules $Y$ and $\delta(Y)$ by scalars, denoted $\lambda$ and $\lambda'$ respectively. Then for $v \in Y$,
\[  z.(v+\delta(v)) = \frac{\lambda+\lambda'}{2}(v+\delta(v))+\frac{\lambda-\lambda'}{2}(v-\delta(v)) \]
Note that $\delta(v+\delta(v))=v+\delta(v)$ and so $v+\delta(v) \in X_0$, and similarly $v-\delta(v) \in X_1$. Then since $z$ is of even degree, $\lambda=\lambda'$.

\end{proof}

By Proposition \ref{prop schur}, we can define the following:

\begin{definition} \label{def central character}
Let $\mathcal A$ be a superalgebra. Let $(\pi, X)$ be an irreducible $\mathcal A$-supermodule. Define the central character $\chi_{\pi}$ to be the map from $Z(\mathcal A)_0$ to $\mathbb C$ such that $\chi_{\pi}(z)$ is the scalar of $z$ acting on $X$.
\end{definition}

The central character defined above is only for even elements in the center of a superalgebra. However, the central character indeed determines the action of odd elements in the center in the following sense:

\begin{proposition}
Let $z \in Z(\mathcal A)_1$. Let $X$ be an irreducible $\mathcal A$-supermodule. If $X$ is also an irreducible $\mathcal A$-module, then $z$ acts by zero on $X$. If $X$ is not an irreducible $\mathcal A$-module, then $z$ acts on the two irreducible $\mathcal A$-submodules of $X$ by two distinct scalars $ \sqrt{\lambda}$ and $-\sqrt{\lambda}$, where $\lambda$ is the scalar that $z^2 \in Z(\mathcal A)_0$ acts on $X$.
\end{proposition}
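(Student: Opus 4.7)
The plan is to split the argument according to whether $X$ is irreducible as an ordinary $\mathcal{A}$-module. In both cases the common engine is this: since $z \in Z(\mathcal{A})$, the operator $\pi(z)$ commutes with every $\pi(a)$ in the ordinary sense, so $\pi(z)$ is an endomorphism of $X$ in the category of $\mathcal{A}$-modules, and Schur's lemma supplies scalars. The super-grading of $z$ then determines exactly which scalars can appear.

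Assume first that $X$ is irreducible as an $\mathcal{A}$-module. Schur gives $\pi(z)=\mu\cdot \Id$ for some $\mu\in \mathbb{C}$. Because $z\in \mathcal{A}_1$, the operator $\pi(z)$ sends $X_i$ into $X_{1-i}$. If both $X_0$ and $X_1$ are nonzero, then $\mu X_0\subseteq X_0\cap X_1=0$ forces $\mu=0$; if instead $X_i=0$ for some $i$, then $\mathcal{A}_1$ annihilates $X$ altogether, and in particular $\pi(z)=0$. Either way $z$ acts by zero, settling the first half.

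For the second half, assume $X$ is not irreducible as an $\mathcal{A}$-module. By \leref{em crit irred r} we may write $X=Y\oplus \delta(Y)$ as $\mathcal{A}$-modules with $Y$ irreducible and $Y\not\cong \delta(Y)$. Since $\pi(z)$ is an $\mathcal{A}$-module endomorphism of $X$ and $\Hom_{\mathcal{A}}(Y,\delta(Y))=\Hom_{\mathcal{A}}(\delta(Y),Y)=0$ by Schur, $\pi(z)$ preserves each summand and acts on them by scalars $\mu_Y$ and $\mu_{\delta(Y)}$, respectively. Applying the identity $a.\delta(v)=(-1)^{\deg(a)}\delta(a.v)$ for homogeneous $a$ (used already in the proof of \leref{em crit irred r}(1)) with $a=z$ and $v\in Y$ yields $\pi(z)\delta(v)=-\delta(\mu_Y v)=-\mu_Y\delta(v)$, so $\mu_{\delta(Y)}=-\mu_Y$.

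Finally, $z^2\in Z(\mathcal{A})_0$, so by \prref{rop schur} it acts on $X$ by a single scalar $\lambda$; restricting to $Y$ gives $\mu_Y^2=\lambda$, so the two eigenvalues are $\pm\sqrt{\lambda}$ as claimed. The only nontrivial step is the sign comparison of the two eigenvalues: although $X=Y\oplus \delta(Y)$ is only a decomposition of ordinary $\mathcal{A}$-modules, the super-level twisting formula for $\delta$ is exactly what links the scalar on $\delta(Y)$ to the one on $Y$, and this is the place where I expect the main (mild) subtlety to sit.
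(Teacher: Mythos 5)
Your proof is correct, and it is if anything more careful than the paper's. For the first case the paper instead invokes the isomorphism $X\cong\overline{X}$ as $\mathcal A$-modules (citing Lemmas \ref{lem crit irr general} and \ref{lem unique structure}): on $\overline{X}$ the odd element $z$ acts by $-\lambda$, so $\lambda=-\lambda$ and $\lambda=0$. You instead argue directly from the super-grading: since $\pi(z)$ must send $X_0$ into $X_1$ but also, being a scalar, preserves $X_0$, the scalar is forced to vanish; and you separately note that if one graded piece is zero then $\mathcal A_1$ annihilates $X$ outright. Both arguments are valid; yours is more elementary and avoids invoking the earlier machinery. For the second case the paper is terse, simply asserting the two scalars are $\pm\sqrt{\lambda}$ without explaining why the signs are opposite, whereas you supply the needed link via the identity $a.\delta(v)=(-1)^{\deg(a)}\delta(a.v)$, which correctly forces $\mu_{\delta(Y)}=-\mu_Y$. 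So your version both matches the intent of the paper's proof and makes the sign comparison explicit where the paper leaves it implicit. One tiny remark: the wording ``two distinct scalars'' in the proposition is slightly inaccurate when $\lambda=0$ (both are then zero); your proof, like the paper's, really establishes the weaker but correct statement that the scalars are $\pm\sqrt{\lambda}$.
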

\begin{proof}
 For (1), suppose $X$ is an irreducible $\mathcal A$-module. Then by Schur's Lemma,  $z$ acts on $X$ by a scalar denoted by $\lambda$. Meanwhile by Lemmas \ref{lem crit irr general} and \ref{lem unique structure}, $X =\overline{X}$ as $\mathcal A$-modules. This implies $z$ also acts by $-\lambda$ on $X$ as $z$ is an odd element. Hence $\lambda=0$.

Now suppose $X$ is not an irreducible $\mathcal A$-module. Then $z^2$ is an even element in the center and hence acts by a scalar, denoted $\lambda$. Then $z$ acts on the irreducible $\mathcal A$-submodules of $X$ by scalars $ \sqrt{\lambda}$ and $-\sqrt{\lambda}$.
\end{proof}




\section{Dirac cohomology for $\mathcal{H}_{W}$} \label{s hw}


\subsection{$\mathcal H_W$ and a Dirac type element in $\mathcal H_W$} \label{ss HW}


Fix a real reflection group $W$. Let $V$ be a representation of $W$. Fix a $W$-invariant inner product on $V$.  Let $\left\{ a_1, \ldots, a_n \right\}$ be an orthogonal basis for $V$.
\begin{definition} \label{def hw property star}
An associative algebra $\mathcal{H}_W=\mathcal H_W(V)$ is said to have property (*) if it satisfies the following properties. First $\mathcal{H}_W$ is an algebra generated by symbols $f_w$ ($w \in W$), $c_i$ ($i=1, \ldots, n$) and $a_i$ ($i=1, \ldots, n$) such that the map from $\mathbb{C}[W]$ to $\mathcal H_W$ sending $w$ to $f_w$ is an injection and the algebra has a natural basis of elements having the form $ a_1^{k_1}\ldots a_n^{k_n}c_1^{\epsilon_1}\ldots c_n^{\epsilon_n}f_w$  ($k_1, \ldots, k_n$ non-negative integers, $w \in W$, $\epsilon_i =0$ or $1$). Again we shall write $w$ for $f_w$ for simplicity. Let $\Seg(W)$ be the subalgebra of $\mathcal H_W$ generated by all $w \in W$ and $c_i$ ($i=1,\ldots, n$). Furthermore, the generators of $\mathcal H_W$ satisfy the following relations:
\begin{eqnarray}
 \label{rel star 0}     wa_iw^{-1} & = & w(a_i) \\
     \left[a_i,a_j \right]c_ic_j  &\in & \Seg(W)  \quad \mbox{ for $i \neq j$} \\
 \label{rel star 1}      c_ja_i &=& a_ic_j \quad \mbox{ for $i \neq j$ } \\
  \label{rel star 2}       c_ia_i &=& - c_ia_i  \\
 \label{rel star 3}        c_ic_j&=&-c_jc_i \quad \mbox{ for $i \neq j$ }  \quad \mbox{ and } \quad c_i^2=-1 \\
   \label{rel star 4}      wc_i&=&w(c_i)w .
 \end{eqnarray}
 Here $w(a_i)$ is the action of $w$ on $V$. Furthermore, we identify the linear space spanned by $c_i$ with $V$ via the map $a_i \mapsto c_i$ and henc there is a natural action of $W$ on $c_i$, and $w(c_i)$ represents such action of $w$ on $c_i$. Indeed the algebra generated by the those $c_i$ is isomorphic to the Clifford algebra on the vector space $V$, and the subalgebra $\mathrm{Seg}(W)$ is the smash product of the Clifford algebra and the group algebra of $W$. 

$\mathcal H_W$ has a superalgebra structure with $\deg(c_i)=1$, $\deg(a_i)=\deg(w)=0$ ($i=1, \ldots, n$ and $w \in W$).
 \end{definition}


In the rest of this section, $\mathcal H_W$ denotes an algebra satisfying the property (*). Define a Dirac type element $D$ in $\mathcal H_W$:
\begin{eqnarray} \label{eqn Dirac form}
  D = \sum_{i=1}^n a_ic_i .
\end{eqnarray}

The following two properties will be used several times:

\begin{lemma} \label{lem seg comm D}
\begin{enumerate}
\item[(1)] $wD=Dw$ for any $w \in W$;
\item[(2)] $c_iD=-Dc_i$ for any $i$.
\end{enumerate}
\end{lemma}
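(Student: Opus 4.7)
Both statements should follow from a direct expansion in the given generating set using the defining relations \eqref{rel star 0}--\eqref{rel star 4}, so the plan is simply to compute each side of (1) and (2) term by term and compare.

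For (1), the first step is to push $w$ past each $a_ic_i$ by applying \eqref{rel star 0} and \eqref{rel star 4}:
$$
wD \;=\; \sum_i wa_ic_i \;=\; \sum_i w(a_i)\, wc_i \;=\; \sum_i w(a_i)\, w(c_i)\, w.
$$
What remains is the identity $\sum_i w(a_i)w(c_i) = D$. Writing the matrix of $w$ acting on $V$ with respect to the (orthonormalized) basis $\{a_i\}$ as $A=(A_{ji})$, the identification $a_i \mapsto c_i$ ensures the same matrix governs the $w$-action on the $c_i$, and I get
$$
\sum_i w(a_i)w(c_i) \;=\; \sum_{j,k} \Bigl(\sum_i A_{ji}A_{ki}\Bigr) a_j c_k.
$$
The $W$-invariance of the inner product forces $A^{T}A = I$, so the inner coefficient is $\delta_{jk}$ and the sum collapses back to $\sum_j a_jc_j = D$. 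Combining, $wD = Dw$.

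For (2), I would separate out the diagonal term and write
$$
c_iD \;=\; c_ia_ic_i + \sum_{j \neq i} c_i a_j c_j.
$$
In each off-diagonal summand, relation \eqref{rel star 1} moves $c_i$ past $a_j$ and \eqref{rel star 3} moves it past $c_j$ with a sign change, giving $c_i a_j c_j = a_j c_i c_j = -a_j c_j c_i$. For the diagonal summand, reading \eqref{rel star 2} as the anticommutation $c_i a_i = -a_i c_i$ (the displayed right-hand side appears to contain a typographical slip, but this is the relation used consistently below) together with $c_i^2 = -1$ from \eqref{rel star 3} yields $c_i a_i c_i = -a_i c_i^2 = a_i$. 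The same expansion applied to $-Dc_i = -a_ic_i^2 - \sum_{j \neq i} a_j c_j c_i$ produces $a_i - \sum_{j \neq i} a_j c_j c_i$, matching $c_iD$ term by term.

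The main obstacle is essentially cosmetic: beyond the defining relations, the only substantive ingredient is the observation in (1) that the element $D$ is $W$-invariant, which is a direct consequence of the orthogonality of the $W$-action on $V$ coming from the $W$-invariant inner product. Once this is in place, both (1) and (2) are routine manipulations with the relations of property $(\ast)$ and the lemma should be completely straightforward to write out in full.
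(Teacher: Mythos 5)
Your proof is correct and follows essentially the same route as the paper, which only gives a one-line sketch citing the orthogonal basis and relations \eqref{rel star 0}--\eqref{rel star 4}; you have filled in the expansion honestly, correctly reading the typographical slip in \eqref{rel star 2} as the anticommutation $c_ia_i=-a_ic_i$. One small caveat worth noting: your matrix computation for (1) uses $A^{T}A=I$, which requires the basis $\{a_i\}$ to be orthonormal rather than merely orthogonal (the paper's wording), but this is how the hypothesis is actually used in the examples (where $\{a_i\}$ is the standard orthonormal basis $\{e_i\}$), so your reading is the intended one.
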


\begin{proof}
(1) follows from the fact that $\left\{ a_i \right\}$ forms an orthogonal basis and property (\ref{rel star 0}). (2) follows from the properties  (\ref{rel star 1}),  (\ref{rel star 2}) and (\ref{rel star 3}).
\end{proof}





Two homogenous elements $h_1, h_2 \in \mathcal H_W$ are said to supercommute if $h_1h_2=(-1)^{\deg(h_1)\deg(h_2)}h_2h_1$ for any homogenous $w \in \Seg(W)$.
\begin{definition} \label{def hw property star2}
The algebra $\mathcal H_W$ with the property (*) is said to satisfy the property (**) if for any $h \in \mathcal H_W$ such that $h$ supercommutes with elements in $\Seg(W)$, $D^2h-hD^2=0$.
\end{definition}

In the next section, we shall give examples which satisfy the algebraic structure in Definitions \ref{def hw property star} and \ref{def hw property star2}. From now on, assume that $\mathcal H_W$ satisfies the properties (*) and (**).

\subsection{Relation between central characters for $\mathcal H_W$ and $\Seg(W)$}

Let $d: \mathcal H_W \rightarrow \mathcal H_W$,
\[d(h)=Dh-(-1)^{\deg(h)}hD. \]
A relation between $Z(\mathcal H_W)_0$ and $Z(\Seg(W))_0$ is the following:

\begin{theorem} \label{thm rel centers}
For any $z \in Z(\mathcal H_W)_0$, there exists a unique element $\widetilde{z} \in Z(\Seg(W))_0$ such that
\[      z-\widetilde{z} \in \im d .\]
Let $\zeta: Z(\mathcal H_W)_0 \rightarrow Z(\Seg(W))_0$ be the map that $\zeta(z)$ is such unique element $\widetilde{z}$ in $Z(\Seg(W))_0$. Then $\zeta$ is an  algebra homomorphism.
\end{theorem}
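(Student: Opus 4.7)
The plan has three parts---existence of $\widetilde z$, uniqueness, and the homomorphism property---with the last following formally from the first two. Preliminary features of $d$: from its definition $d$ is a super-derivation of $\mathcal H_W$, and Lemma \ref{lem seg comm D} gives $d(w)=0$ and $d(c_i)=0$, so the Leibniz rule yields $d|_{\Seg(W)}=0$; a short computation shows $d^2(h) = [D^2,h]$ for all $h$, so property (**) of Definition \ref{def hw property star2} forces $d^2 = 0$ on the super-commutant
\[
\mathcal C := \{h \in \mathcal H_W : h \text{ super-commutes with every element of } \Seg(W)\}.
\]
A check using Lemma \ref{lem seg comm D} shows $d(\mathcal C) \subseteq \mathcal C$, so $(\mathcal C, d)$ is a complex. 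Both $\Seg(W)$ and $Z(\mathcal H_W)_0$ lie in $\mathcal C$; $z \in Z(\mathcal H_W)_0$ commutes with $D$ so $d(z)=0$; and $\mathcal C \cap \Seg(W)$ equals the super-center of $\Seg(W)$, whose even part is exactly $Z(\Seg(W))_0$.

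For existence and uniqueness, I would compute the cohomology of $(\mathcal C, d)$ via the $a$-degree filtration from the PBW basis, with $F_0 = \Seg(W)$. At the associated graded level $\mathrm{gr}\,\mathcal H_W$---a smash product of $S(V)$ and $\mathrm{Cl}(V)$ twisted by $\mathbb C[W]$ in which the relation labeled (\ref{rel star 2}) gives the anticommutation $c_i a_i = -a_i c_i$---the operator $d_{\mathrm{gr}}$ is the super-commutator with $\sum a_i c_i$, and a direct computation of this Koszul--Clifford complex gives cohomology concentrated in $a$-degree zero and equal to $\Seg(W)$. Intersecting with the graded super-commutant yields $H(\mathcal C_{\mathrm{gr}}, d_{\mathrm{gr}})$ equal to the super-center of $\Seg(W)$, still in $a$-degree zero. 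A standard filtration-to-graded lift by induction on top $a$-degree then represents every cocycle in $(\mathcal C, d)$ by a unique element of the super-center of $\Seg(W)$ modulo $\im d$. Applied to $z \in Z(\mathcal H_W)_0 \subseteq \mathcal C \cap \ker d$, this produces $\widetilde z$ in the super-center of $\Seg(W)$ with $z - \widetilde z \in \im d$; parity matching (both $z$ and $d(h)$ are even) forces $\widetilde z \in Z(\Seg(W))_0$, and uniqueness is equivalent to $Z(\Seg(W))_0 \cap \im d = 0$, immediate from the cohomology identification.

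The main obstacle is the graded-to-filtered lift: the Koszul--Clifford computation on $\mathrm{gr}\,\mathcal H_W$ is clean because only top-degree relations survive, but lifting requires tracking lower-order corrections from the relation $[a_i, a_j] c_i c_j \in \Seg(W)$ and from $w a_i = w(a_i) w$. Property (**) is precisely the ingredient that ensures $d^2 = 0$ on $\mathcal C$, which in turn keeps the inductive reduction consistent: at each step the top $a$-degree contribution is eliminated without disturbing the super-commutation constraints.

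For the homomorphism property, given $z_i = \widetilde z_i + d(h_i)$ with $z_i \in Z(\mathcal H_W)_0$,
\[
z_1 z_2 - \widetilde z_1 \widetilde z_2 = d(h_1) z_2 + \widetilde z_1 d(h_2).
\]
Since $z_2$ is even and central, $d(z_2) = 0$; since $\widetilde z_1 \in \Seg(W)$, $d(\widetilde z_1) = 0$. The super-Leibniz rule then gives $d(h_1 z_2) = d(h_1) z_2$ and $d(\widetilde z_1 h_2) = \widetilde z_1 d(h_2)$, hence $z_1 z_2 - \widetilde z_1 \widetilde z_2 \in \im d$. Since $\widetilde z_1 \widetilde z_2 \in Z(\Seg(W))_0$, uniqueness yields $\zeta(z_1 z_2) = \widetilde z_1 \widetilde z_2 = \zeta(z_1)\zeta(z_2)$; additivity and $\zeta(1)=1$ are immediate.
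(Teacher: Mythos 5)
Your proposal follows essentially the same route as the paper: filter $\mathcal H_W$ by $a$-degree, identify the induced differential on the associated graded with a Koszul--Clifford differential whose cohomology is $\Seg(W)$, restrict to super-commutants to land on the super-center of $\Seg(W)$, and lift back through the filtration using property (**) to guarantee $d^2=0$ on $\mathcal C$. The paper's proof runs through Lemma \ref{lem d on Y}, Proposition \ref{prop vog skew}, Corollary \ref{cor vog skew}, and Lemma \ref{lem ker decomp Hw} in exactly this order.

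One step you flag as ``the main obstacle'' but leave unresolved is worth pinning down, because it is the only non-routine move. When you lift a cocycle $h \in \mathcal C \cap \ker d$ of top $a$-degree $i$, the graded computation hands you $\overline h = \overline d(\overline h_0)$ with $\overline h_0$ super-commuting in the graded algebra; but an arbitrary lift $h_0' \in \mathcal H_W^{i-1}$ of $\overline h_0$ need not super-commute with $\Seg(W)$, so $h - d(h_0')$ need not stay inside $\mathcal C$, and the induction would stall. The paper's remedy is to replace $h_0'$ by its average over $\Seg(W)$-conjugation,
\[
  h_0 \;=\; \frac{1}{2^n|W|}\sum_{k}\ \sum_{i_1<\cdots<i_k}\ \sum_{w\in W} (-1)^k\, (c_{i_1}\cdots c_{i_k})\,w\,h_0'\,w^{-1}\,(c_{i_1}\cdots c_{i_k})^{-1},
\]
which still projects to $\overline h_0$ (because $\overline h_0$ already super-commutes in the graded) while super-commuting on the nose; only then does property (**) give $d^2(h_0)=0$ and the induction close. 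This same averaging is what justifies your implicit identification $\mathrm{gr}\,\mathcal C = (\mathrm{gr}\,\mathcal H_W)^{\Seg(W)}$, so ``without disturbing the super-commutation constraints'' should be expanded into this device. Your argument for the homomorphism property via the super-Leibniz rule is a mild simplification: the paper first shows $(\ker d\cap\im d)^{\Seg(W)}$ is a two-sided ideal in $(\ker d)^{\Seg(W)}$ (Lemma \ref{lem center homo}) and deduces the same conclusion; your direct calculation $z_1z_2 - \widetilde z_1\widetilde z_2 = d(h_1 z_2) + d(\widetilde z_1 h_2)$ avoids introducing the ideal, though both amount to the same manipulation.
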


Our main result in this paper is the following which says the central character of an $\mathcal H_W$-supermodule $X$ is determined by the central characters of irreducible $\Seg(W)$-supermodules in the Dirac cohomology $H_D(X)$. Here $H_D(X)$ is defined in the theorem.

\begin{theorem} \label{thm vog conj hw}
Let $\mathcal H_W$ be an algebra satisfying property (*) (Definition \ref{def hw property star}) and property (**) (Definition \ref{def hw property star2}). Let $(\pi, X)$ be an irreducible  $\mathcal H_W$-supermodule with the central character $\chi_{\pi}$ (Definition \ref{def central character}). Let the Dirac cohomology $H_D(X)$ of $X$ be
\[  H_D(X) = \ker \pi(D)/(\ker \pi(D) \cap \im \pi(D)) . \]
Then $H_D(X)$ has a natural $\Seg(W)$-module structure. Let $(\sigma, U)$ be an irreducible $\Seg(W)$-module with the central character $\chi_{\sigma}$ (Definition \ref{def central character}) such that  \[\Hom_{\Seg(W)}(U, H_D(X)) \neq 0.\]
Let $\zeta: Z(\mathcal H_W)_0 \rightarrow Z(\Seg(W))_0$ be the map in Theorem \ref{thm rel centers}. Let $\chi^{\sigma} : Z(\mathcal H_W)_0 \rightarrow \mathbb{C}$,
\begin{eqnarray} \label{eqn central character sigma}
   \chi^{\sigma}(z) = \chi_{\sigma}(\zeta(z)) .
\end{eqnarray}
 Then $\chi_{\pi}= \chi^{\sigma}$.
\end{theorem}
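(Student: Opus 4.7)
My plan is to follow the standard template for Vogan-type theorems in Dirac cohomology and reduce the statement to the central-character transfer provided by Theorem~\ref{thm rel centers}. The argument splits into three short steps: (i) upgrade $H_D(X)$ to a $\Seg(W)$-module on which every $z\in Z(\mathcal H_W)_0$ acts by the scalar $\chi_\pi(z)$; (ii) show that $\pi(z)$ and $\pi(\zeta(z))$ induce the same operator on $H_D(X)$; and (iii) combine these to read off $\chi_\pi(z)=\chi^\sigma(z)$ using the given non-zero morphism $U\to H_D(X)$.

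For step (i), Lemma~\ref{lem seg comm D} says that each homogeneous $s\in\Seg(W)$ supercommutes with $D$, that is, $sD=(-1)^{\deg s}Ds$, so $\pi(s)$ preserves $\ker\pi(D)$ and $\im\pi(D)$ and descends to $H_D(X)$. Since $z\in Z(\mathcal H_W)_0$ is even and central, $\pi(z)$ commutes with $\pi(D)$ and likewise induces an operator on $H_D(X)$; by Proposition~\ref{prop schur} this operator is multiplication by the scalar $\chi_\pi(z)$.

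For step (ii), Theorem~\ref{thm rel centers} produces $a\in\mathcal H_W$ with $z-\zeta(z)=d(a)$; decomposing $a$ into homogeneous components we may assume $a$ is homogeneous. For any $v\in\ker\pi(D)$, the definition of $d$ gives
\[
\pi\bigl(d(a)\bigr)v\;=\;\pi(D)\pi(a)v\;-\;(-1)^{\deg a}\pi(a)\pi(D)v\;=\;\pi(D)\pi(a)v\;\in\;\im\pi(D),
\]
so $(\pi(z)-\pi(\zeta(z)))v\in\im\pi(D)$, and $\pi(z),\pi(\zeta(z))$ agree as endomorphisms of $H_D(X)$.

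Finally, for step (iii), pick any non-zero $\phi\in\Hom_{\Seg(W)}(U,H_D(X))$; irreducibility of $U$ makes $\phi$ injective, so there is $u\in U$ with $\phi(u)\neq 0$. Combining (i), (ii), and the $\Seg(W)$-equivariance of $\phi$,
\[
\chi_\pi(z)\,\phi(u)\;=\;\pi(z)\phi(u)\;=\;\pi(\zeta(z))\phi(u)\;=\;\phi\bigl(\sigma(\zeta(z))u\bigr)\;=\;\chi_\sigma(\zeta(z))\,\phi(u),
\]
which forces $\chi_\pi(z)=\chi_\sigma(\zeta(z))=\chi^\sigma(z)$ for every $z\in Z(\mathcal H_W)_0$. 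Granted Theorem~\ref{thm rel centers}, the deduction above is essentially formal, adapting the Vogan-conjecture argument of \cite{HP, BCT}; consequently the real obstacle is pushed into Theorem~\ref{thm rel centers} itself, and ultimately into property~(**) of Definition~\ref{def hw property star2}, rather than into the present derivation.
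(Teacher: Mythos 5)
Your argument is correct and follows essentially the same route as the paper: both proofs take $z-\zeta(z)=d(a)$ from Theorem~\ref{thm rel centers}, observe that for $v\in\ker\pi(D)$ one has $\pi(d(a))v=\pi(D)\pi(a)v\in\im\pi(D)$, and then read off $\chi_\pi(z)=\chi_\sigma(\zeta(z))$ from a non-zero $U$-isotypic vector in $H_D(X)$. The only place where you are slightly terse is the end of step~(ii): to conclude that $\pi(z)$ and $\pi(\zeta(z))$ agree on $H_D(X)$ you should note that both operators preserve $\ker\pi(D)$ (the first because $z$ acts by the scalar $\chi_\pi(z)$, the second because $\zeta(z)\in Z(\Seg(W))_0$ is even and hence commutes with $D$), so the difference lands in $\ker\pi(D)\cap\im\pi(D)$ and not merely in $\im\pi(D)$ — this is exactly what the paper records by writing the lift $\widetilde v'$ explicitly.
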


Since $wD=Dw$ and $c_iD=-Dc_i$ by Lemma \ref{lem seg comm D}, $\ker \pi(D)$ and $\ker \pi(D) \cap \im \pi(D)$ are invariant under the action of $\Seg(W)$. Thus $H_D(X)$ has a natural $\Seg(W)$-module structure from the $\mathcal H_W$-module structure. The proofs of Theorems \ref{thm rel centers} and \ref{thm vog conj hw} are given at the end of the next subsection. Theorem \ref{thm vog conj hw} directly follows from Theorem \ref{thm rel centers}. Readers who only want to know how Theorem \ref{thm rel centers} implies Theorem \ref{thm vog conj hw} may jump to the end of the next subsection.


\subsection{Proof of Theorems \ref{thm rel centers} and \ref{thm vog conj hw}}
The proofs of the theorems basically follow from the ideas of proofs in \cite[Chapter 3]{HP} and \cite[Section 4]{BCT}. We provide some technical details for this specific case.

 Let $S^{ \leq j}(V)$ be the vector space of polynomials of $x_1, \ldots, x_n$ with degree less than or equal to $j$. Let $\mathcal H_{W}^j$ be the vector space spanned by elements of the form
\[    \left\{    pw : w \in \Seg(W) , p \in S^{\leq j}(V) \right\} . \]
Note that $\mathcal H_W^0 \subseteq \mathcal H_W^1 \subseteq \ldots$ gives a filtration for $\mathcal H_W$. Define
\[   \overline{\mathcal H}_W^r = \mathcal H_W^{r}/ \mathcal H_W^{r-1} ,\]
for $r=0,1,\ldots$ and $\mathcal H_W^{-1}=0$. Let $\overline{\mathcal H}_W= \oplus_{j=0}^{\infty} \overline{\mathcal H}_W^j$. Note that $\overline{\mathcal H}_W$ has a natural superalgebra structure from $\mathcal H_W$.

 Let $d_j:\overline{\mathcal H}_W^j \rightarrow \overline{\mathcal H}_W^{j+1}$ be the map induced from $d$ and let $\overline{d} = \oplus_{j=0}^{\infty} d_j$. For any element $h \in \mathcal H_W$, we still write $h$ for its corresponding element in $\overline{\mathcal H}_W$. Let $b_i=a_ic_i$ ($i=1,\ldots, n$). Let $\mathcal B$ be the supersubalgebra of $\overline{\mathcal H}_W$ generated by all $b_i$. Note that $\overline{d}(\mathcal B) \subset \mathcal B$. Let $\overline{d}'$ be the restriction of $\overline{d}$ to $\mathcal B$.

In the following lemmas, one can see $\ker \overline{d}'$, $\im \overline{d}'$, $\ker \overline{d}$, $(\ker d \cap \im d)^{\Seg(W)}$ and so on are supersubspaces by using the fact that $D$ is an homogenous element.

\begin{lemma} \label{lem d on Y}
As supersubspaces of $\mathcal B$,
\[ \ker \overline{d}' = \im \overline{d}' \oplus \mathbb{C} .\]
Here $\mathbb{C}$ is regarded as the $\mathbb{C}$-subalgebra of $\mathcal B$ generated by $1$.
\end{lemma}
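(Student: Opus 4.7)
The plan is to identify the complex $(\mathcal{B}, \overline{d}')$ with the Koszul complex of a regular sequence in a polynomial ring, and then conclude from the well-known acyclicity of that complex.

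First I would work out the algebra structure of $\mathcal{B}$ in the associated graded. A direct computation in $\mathcal{H}_W$ gives $b_ib_j + b_jb_i = [a_i,a_j]\,c_ic_j$ for $i\neq j$; since this element lies in $\Seg(W)\subseteq \mathcal{H}_W^0$ by property (*), it vanishes in the degree-$2$ piece of $\overline{\mathcal{H}}_W$. Similarly $b_i^2 = -a_i^2 c_i^2 = a_i^2$. Setting $t_i := b_i^2 = a_i^2$, the PBW-type basis of $\mathcal{H}_W$ provided by property (*) shows that $\mathcal{B}$ is free as a module over the polynomial subring $\mathbb{C}[t_1,\ldots,t_n]$ with basis $\{b_S : S\subseteq\{1,\ldots,n\}\}$, where $b_S := b_{i_1}\cdots b_{i_k}$ for $S=\{i_1<\cdots<i_k\}$; moreover each $t_i$ is central in $\mathcal{B}$.

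Next I would compute $\overline{d}'$ on generators. Since $d$ is the super-commutator with the odd element $D=\sum_i b_i$ and hence a super-derivation, the anticommutation just established kills all cross terms, so $\overline{d}'(b_i) = D b_i + b_i D = 2b_i^2 = 2t_i$ and $\overline{d}'(t_i) = [D,t_i] = 0$. The super-Leibniz rule then yields
\[ \overline{d}'(b_S) \;=\; 2\sum_{j=1}^{k}(-1)^{j-1}\,t_{i_j}\,b_{S\setminus\{i_j\}}, \]
which is exactly the differential of the Koszul complex $K_{\bullet}(t_1,\ldots,t_n;\,\mathbb{C}[t_1,\ldots,t_n])$ up to the scalar $2$. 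That $(\overline{d}')^2=0$ is built in, and can also be seen from the identity $d^2 = [D^2,\cdot\,]$ combined with the fact that $\overline{D}^{\,2}=\sum_i t_i$ is central in $\mathcal{B}$.

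Finally I would invoke the standard fact that the Koszul complex of the regular sequence $(t_1,\ldots,t_n)$ in $\mathbb{C}[t_1,\ldots,t_n]$ is acyclic away from homological degree $0$, with $H_0 \cong \mathbb{C}[t_1,\ldots,t_n]/(t_1,\ldots,t_n)\cong \mathbb{C}$. Transported back to $\mathcal{B}$, this gives $\ker\overline{d}' = \im\overline{d}' \oplus \mathbb{C}\cdot 1$ as $\mathbb{Z}_2$-graded subspaces, as claimed. The main obstacle is the bookkeeping in the associated graded: one has to verify that the ``impurities'' on the right-hand sides of the Clifford-type relations among the $b_i$ really are of strictly lower filtration degree and therefore disappear in $\overline{\mathcal{H}}_W$, so that $\mathcal{B}$ genuinely carries a Koszul differential. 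Once this is confirmed, the conclusion is immediate.
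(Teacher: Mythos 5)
Your proof is correct and follows essentially the same route as the paper: both compute $\overline{d}'$ on the basis $\{p\,b_{i_1}\cdots b_{i_r} : p\in\mathbb{C}[b_1^2,\ldots,b_n^2]\}$, recognize the result as the Koszul differential for the regular sequence $(b_1^2,\ldots,b_n^2)$, and then invoke the standard acyclicity of the Koszul complex. You supply a bit more detail than the paper in checking that the relations $b_ib_j=-b_jb_i$ and centrality of $b_i^2$ hold in the associated graded $\mathcal{B}$ (via property (*)), and in invoking the super-derivation property of $d$, but the argument is the same.
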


\begin{proof}
 Note that any element in $\mathcal B$ can be uniquely written as a linear combination of elements of the form $p b_{i_1}b_{i_2}\ldots b_{i_r}$ for $0<i_1 <\ldots <i_r \leq n$ and $p \in \mathbb{C}[b_1^2, \ldots, b_n^2]$. Note that $D=\sum_{i=1}^n b_i$. Using the relations $b_ib_j=-b_jb_i  $ (in $\mathcal B$) for $i \neq j$ and $b_i^2b_j=b_jb_i^2$ (in $\mathcal B$) for any $i,j$, one can see the action of $\overline{d}'$ is determined by
\[\overline{d}'(pb_{i_1}b_{i_2}\ldots b_{i_r})=2 \sum_{k=1}^r (-1)^{k-1}b_{i_k}^2pb_{i_1}\ldots \widehat{b_{i_k}} \ldots b_{i_r},\]
where  $p \in \mathbb{C}[b_1^2, \ldots, b_n^2]$.

In order to apply the known cohomology of the Koszul complex, we identify $\mathcal B$ with $\mathbb{C}[x_1,\ldots, x_n] \otimes \wedge^{\bullet} \mathbb{C}^n$ as vector spaces, where $\wedge^{\bullet} \mathbb{C}^n$ is the exterior algebra, via the linear isomorphism $\eta$ from $\mathbb{C}[x_1,\ldots, x_n] \otimes \wedge^{\bullet}\mathbb{C}^n$ to $\mathcal B$ determined by
\[ \eta : p(x_1,\ldots,x_n)\otimes e_{i_1}\wedge \ldots \wedge e_{i_k} \mapsto p(b_1^2,\ldots, b_n^2)b_{i_1}\ldots b_{i_k}, \]
where $\left\{ e_1, \ldots, e_n \right\}$ is the standard basis of $\mathbb{C}^n$.
Then, by the above description of the action of $\overline{d}'$, the map $\eta^{-1} \circ \overline{d}' \circ \eta$ is a multiple of the differential map in the standard Koszul resolution. Then the result follows from the well-known cohomology of the Koszul resolution.

\end{proof}

\begin{proposition} \label{prop vog skew}
As supersubspaces of $\overline{\mathcal H}_W$,
\[  \ker \overline{d}=\im \overline{d} \oplus \Seg(W) .\]
\end{proposition}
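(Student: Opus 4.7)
My plan is to reduce the statement to Lemma \ref{lem d on Y} by identifying $\overline{\mathcal H}_W$ with $\mathcal B \otimes_{\mathbb{C}} \Seg(W)$ as a super vector space via multiplication, and then showing that $\overline{d}$ acts on the first tensor factor only.

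First I would establish a super vector space isomorphism
\[ \mu: \mathcal B \otimes_{\mathbb{C}} \Seg(W) \xrightarrow{\sim} \overline{\mathcal H}_W, \qquad x \otimes y \mapsto xy. \]
The key relation is $b_i c_i = a_i c_i^2 = -a_i$ in $\overline{\mathcal H}_W$, which lets one rewrite each PBW basis element $a_1^{k_1}\cdots a_n^{k_n} c_1^{\epsilon_1}\cdots c_n^{\epsilon_n} w$ as $\pm\, b_1^{k_1}\cdots b_n^{k_n} \cdot s$ for a suitable $s \in \Seg(W)$; the parities of the $k_i$'s determine which extra $c_i$ factors appear in $s$, and the relations \eqref{rel star 1}--\eqref{rel star 3} produce the signs. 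A graded dimension count (with respect to the $a$-filtration) then shows $\mu$ is bijective, and it preserves the $\mathbb{Z}_2$-grading since $\deg(b_i) = 1 = \deg(c_i)$.

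Next I would show that $\overline{d}$ vanishes identically on $\Seg(W) \subset \overline{\mathcal H}_W$. On the algebra generators, $\overline{d}(w) = Dw - wD$ and $\overline{d}(c_i) = Dc_i + c_i D$ both vanish by Lemma \ref{lem seg comm D}. Since $\overline{d}$ is a super derivation, which one checks directly from the formula $\overline{d}(h) = Dh - (-1)^{\deg h} hD$, it vanishes on the entire subalgebra $\Seg(W)$. A further application of super Leibniz gives, for $x \in \mathcal B$ and $y \in \Seg(W)$,
\[ \overline{d}(xy) = \overline{d}'(x)\, y + (-1)^{\deg x} x\, \overline{d}(y) = \overline{d}'(x)\, y, \]
so under $\mu$ the operator $\overline{d}$ corresponds to $\overline{d}' \otimes \mathrm{id}_{\Seg(W)}$.

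Consequently
\[ \ker \overline{d} = \mu(\ker \overline{d}' \otimes \Seg(W)), \qquad \im \overline{d} = \mu(\im \overline{d}' \otimes \Seg(W)). \]
Applying Lemma \ref{lem d on Y}, which gives $\ker \overline{d}' = \im \overline{d}' \oplus \mathbb{C}$, and noting that $\mu(\mathbb{C} \otimes \Seg(W)) = \Seg(W)$, we obtain the desired decomposition $\ker \overline{d} = \im \overline{d} \oplus \Seg(W)$. The main obstacle is the first step: tracking the Clifford signs required to verify that $\mu$ genuinely gives a vector space isomorphism. Once this is in place, the remainder is routine super Leibniz bookkeeping combined with the Koszul-type computation already packaged in Lemma \ref{lem d on Y}.
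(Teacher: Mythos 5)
Your proof is correct and follows essentially the same route as the paper: identify $\overline{\mathcal H}_W$ with $\mathcal B \otimes \Seg(W)$ via the multiplication map, observe that $\overline{d}$ acts as $\overline{d}' \otimes \Id$ under this identification, and then invoke Lemma \ref{lem d on Y}. The only cosmetic difference is that the paper verifies $\overline{d}(hw)=\overline{d}(h)w$ and $\overline{d}(hc_i)=\overline{d}(h)c_i$ by a direct computation, whereas you package the same facts as ``$\overline{d}$ is a super-derivation that vanishes on $\Seg(W)$''; the two formulations are interchangeable here.
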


\begin{proof}
By the property (*) of $\mathcal H_W$,  $a_{1}^{m_1}a_2^{m_2}\ldots a_n^{m_n}c_{1}^{\epsilon_{1}}\ldots c_n^{\epsilon_n}w$ ($m_i \in \mathbb{Z}_{\geq 0}$, $\epsilon_i \in \left\{ 0, 1 \right\}$ and $w \in W$) form a basis for ${\overline{\mathcal H}}_W$. Then $b_{1}^{m_1}b_2^{m_2}\ldots b_n^{m_n}c_{1}^{\epsilon_{1}}\ldots c_n^{\epsilon_n}w$ ($m_i \in \mathbb{Z}_{\geq 0}$, $\epsilon_i \in \left\{ 0, 1 \right\}$ and $w \in W$) also form a basis for ${\overline{\mathcal H}}_W$. Then as linear vector spaces, we may identify $\overline{\mathcal H}_W$ with $\mathcal B \otimes \Seg(W)$ via the following map:
\[ b_{1}^{m_1}b_2^{m_2}\ldots b_n^{m_n}c_{1}^{\epsilon_{1}}\ldots c_n^{\epsilon_{n}}w \mapsto b_{1}^{m_1}\ldots b_n^{m_n} \otimes c_{1}^{\epsilon_{1}}\ldots c_n^{\epsilon_n}w . \]
For any $h \in \overline{\mathcal H}_W$, $\overline{d}(hw)=\overline{d}(h)w$ for $w \in W$ and $\overline{d}(hc_i)=\overline{d}(h)c_i$. Then the map $\overline{d}$ in $\mathcal{H}_W$ is the same as $\overline{d}' \otimes \Id$ in $\mathcal B \otimes \Seg(W)$ under the above identification.
Then by Lemma \ref{lem d on Y}, one has
\[  \ker \overline{d}=\ker (\overline{d}' \otimes \Id) =(\ker \overline{d}') \otimes \Seg(W) =(\im \overline{d}' \oplus \mathbb{C} ) \otimes \Seg(W) = \im \overline{d} \oplus \Seg(W) .\]

\end{proof}

For any subspace $H$ of $\mathcal H_W$, define $H^{\Seg(W)}$ to be the set of all element supercommuting with elements in $\Seg(W)$. If we view $\Seg(W)$ as a subalgebra of $\overline{\mathcal H}_W$, we could similarly define $\overline{H}^{\Seg(W)}$ for any subspace $\overline{H}$ of $\overline{\mathcal H}_W$. Proposition \ref{prop vog skew} implies the following:

\begin{corollary} \label{cor vog skew}
As supersubspaces of $\overline{\mathcal H}_W$,
\[ (\ker  \overline{d})^{\Seg(W)}=(\im \overline{d})^{\Seg(W)} \oplus Z(\Seg(W)) . \]
\end{corollary}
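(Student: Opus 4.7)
The plan is to obtain Corollary \ref{cor vog skew} as a direct consequence of Proposition \ref{prop vog skew} by intersecting the decomposition
\[ \ker \overline{d} = \im \overline{d} \oplus \Seg(W) \]
with the supercommutant $(\cdot)^{\Seg(W)}$ inside $\overline{\mathcal H}_W$. The substance of the argument is then that this operation distributes over the given direct sum, together with the identification $\Seg(W)^{\Seg(W)} = Z(\Seg(W))$, which is essentially the definition of the (super)center.

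The first preparatory step I would take is to show that $\im \overline{d}$ is stable under both left and right multiplication by $\Seg(W)$. The input is Lemma \ref{lem seg comm D}: $D$ supercommutes with every homogeneous element of $\Seg(W)$. A short computation on homogeneous $h \in \mathcal H_W$ and $s \in \Seg(W)$ yields $d(hs) = d(h)s$ and $d(sh) = (-1)^{\deg(s)} s\,d(h)$, so that $\Seg(W) \cdot \im d \cdot \Seg(W) \subseteq \im d$. Passing to the associated graded, these stabilities descend to $\im \overline{d}$. The other summand $\Seg(W)$ is trivially a sub-bimodule of itself.

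With this bimodule stability in hand, I would take a homogeneous $x \in (\ker \overline{d})^{\Seg(W)}$ and write $x = a + b$ uniquely with $a \in \im \overline{d}$ and $b \in \Seg(W)$; because the sum is a direct sum of super subspaces, $a$ and $b$ inherit the degree of $x$. For every homogeneous $s \in \Seg(W)$,
\[ 0 = xs - (-1)^{\deg(x)\deg(s)} sx = \bigl(as - (-1)^{\deg(x)\deg(s)} sa\bigr) + \bigl(bs - (-1)^{\deg(x)\deg(s)} sb\bigr), \]
where the first parenthesized term lies in $\im \overline{d}$ and the second in $\Seg(W)$, by the stability from the previous step. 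Directness forces both to vanish, placing $a \in (\im \overline{d})^{\Seg(W)}$ and $b \in \Seg(W)^{\Seg(W)} = Z(\Seg(W))$. The homogeneous argument then extends to arbitrary $x$ by taking super components, and the reverse inclusion is immediate from Proposition \ref{prop vog skew}.

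I do not anticipate any substantive obstacle; the main thing to watch is the sign bookkeeping induced by $\deg D = 1$, both in the twisted left compatibility $d(sh) = (-1)^{\deg(s)} s\,d(h)$ and in the supercommutator identity used to split $x$. Once these signs are handled consistently, the argument is purely formal.
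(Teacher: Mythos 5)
Your proof is correct and supplies the argument the paper leaves implicit (the paper states only that Proposition \ref{prop vog skew} implies the corollary, without details). The substantive point you add---that $\im\overline{d}$ is a $\Seg(W)$-sub-bimodule of $\overline{\mathcal H}_W$, via $d(hs)=d(h)s$ and $d(sh)=(-1)^{\deg s}s\,d(h)$, both consequences of Lemma \ref{lem seg comm D}---is exactly what makes intersecting the direct sum of Proposition \ref{prop vog skew} with the supercommutant distribute term by term, and your sign bookkeeping is right. One small remark for precision: what your computation actually identifies as the $\Seg(W)$-summand is the supercentre $\{b\in\Seg(W):bs=(-1)^{\deg b\deg s}sb\}$, as you yourself note with the parenthetical ``(super)center''; this agrees with $Z(\Seg(W))$ in even degree, which is all the paper ever uses of Corollary \ref{cor vog skew} (the map $\zeta$ lands in $Z(\Seg(W))_0$), so the identification is harmless here and matches the paper's intent.
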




\begin{lemma} \label{lem ker decomp Hw}
As supersubspaces of $\mathcal H_W$,
\[ (\ker d)^{\Seg(W)} = (\ker d \cap \im d)^{\Seg(W)}  \oplus Z(\Seg(W)). \]
\end{lemma}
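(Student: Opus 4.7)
The plan is to reduce the lemma to a cohomology identity for the sub-complex $(\mathcal H_W^{\Seg(W)}, d)$, and then establish this identity by lifting Corollary \ref{cor vog skew} from the associated graded $\overline{\mathcal H}_W$ through induction on the filtration degree. Two preliminary observations are needed. First, $d$ vanishes on $\Seg(W)$: Lemma \ref{lem seg comm D} gives $d(w) = Dw - wD = 0$ for $w \in W$ and $d(c_i) = Dc_i + c_iD = 0$; since a direct calculation shows $d$ is a super-derivation and $\Seg(W)$ is generated by the $w$'s and $c_i$'s, one obtains $d|_{\Seg(W)} = 0$, so $Z(\Seg(W)) \subseteq (\ker d)^{\Seg(W)}$ automatically and $d(z) = 0$ for every $z \in Z(\Seg(W))$. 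Second, because $D$ itself supercommutes with every generator of $\Seg(W)$ (again Lemma \ref{lem seg comm D}), $d$ preserves the super-centralizer $\mathcal H_W^{\Seg(W)}$; together with property (**), which amounts to $d^2(h) = [D^2, h] = 0$ for $h$ supercommuting with $\Seg(W)$, the restriction $(\mathcal H_W^{\Seg(W)}, d)$ is a bona fide chain complex.

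The core claim is the sub-complex cohomology identity
\[ \ker\bigl(d|_{\mathcal H_W^{\Seg(W)}}\bigr) \;=\; \im\bigl(d|_{\mathcal H_W^{\Seg(W)}}\bigr) \,\oplus\, Z(\Seg(W)). \]
Granted this, the lemma is quickly deduced: the left side equals $(\ker d)^{\Seg(W)}$, and the containment $\im(d|_{\mathcal H_W^{\Seg(W)}}) \subseteq (\ker d \cap \im d)^{\Seg(W)}$ follows from the preliminary observations. For the reverse inclusion, take $h \in (\ker d \cap \im d)^{\Seg(W)}$ and decompose $h = d(y) + z$ via the identity; the element $z = h - d(y)$ sits in $\im d \cap Z(\Seg(W))$, which is forced to vanish by Proposition \ref{prop vog skew} after passing to the associated graded, where it becomes a piece of $\im\overline{d} \cap \Seg(W) = 0$. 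This simultaneously delivers the reverse inclusion and the directness of the sum.

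The identity itself is proved by induction on $j$ along the filtration $F^j := \mathcal H_W^{\Seg(W)} \cap \mathcal H_W^j$. Given $h \in \ker d \cap F^j$, the class $\bar h$ in the graded super-centralizer satisfies $\overline{d}(\bar h) = 0$, and by Corollary \ref{cor vog skew} there exist $\bar y$ and $\bar z$ with $\bar h = \overline{d}(\bar y) + \bar z$, $\bar z \in Z(\Seg(W))$, and $\overline{d}(\bar y)$ supercommuting with $\Seg(W)$. Once $\bar y$ is lifted to an actual element $y \in \mathcal H_W^{\Seg(W)} \cap \mathcal H_W^{j-1}$, property (**) forces $d^2(y) = 0$, and together with $d(z) = 0$ this places $h - d(y) - z$ inside $F^{j-1} \cap \ker d$, where the inductive hypothesis takes over. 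The main obstacle is precisely this lifting of $\bar y$ from the super-centralizer of the graded algebra to the super-centralizer of $\mathcal H_W$: an element supercommuting with $\Seg(W)$ modulo lower filtration need not admit a super-commuting lift on the nose. Overcoming this requires an iterative correction in descending filtration, in the spirit of the proof of Corollary \ref{cor vog skew}, where at each step the obstruction to exact super-commutation lies in a strictly lower filtered piece and itself satisfies a cocycle condition allowing absorption into the next correction, with the cascade terminating because $\mathcal H_W^{-1} = 0$.
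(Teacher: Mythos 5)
You have correctly identified the overall shape of the argument --- induction on the degree of the filtration $\mathcal H_W^0\subset\mathcal H_W^1\subset\cdots$, passage to the associated graded, invocation of Corollary \ref{cor vog skew}, and the use of property (**) to kill $d^2$ on elements supercommuting with $\Seg(W)$ --- and you have also correctly isolated the single genuine obstacle: a class $\bar h_0$ in the graded super-centralizer has no reason to admit a lift to $\mathcal H_W$ that supercommutes with $\Seg(W)$ on the nose, yet property (**), which you need to conclude $d^2(h_0)=0$, only applies to such lifts.

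The mechanism you propose for closing this gap does not work as stated. You invoke an ``iterative correction in descending filtration, in the spirit of the proof of Corollary \ref{cor vog skew},'' but Corollary \ref{cor vog skew} is derived from Lemma \ref{lem d on Y} and Proposition \ref{prop vog skew} via the Koszul resolution; its proof contains no lifting step and provides no ``cocycle condition'' governing the defect of supercommutation of an arbitrary lift. Moreover, your iteration silently needs $d^2$ to vanish on intermediate correction terms, which is exactly property (**), which in turn requires supercommutation --- the very thing being iteratively repaired. So the cascade you describe is not obviously consistent, let alone terminating. A smaller but related issue: in your reduction step you assert $\im d\cap Z(\Seg(W))=0$ by ``passing to the associated graded,'' but the straightening of a preimage of $z$ down the filtration again runs into $d^2\neq 0$ outside the super-centralizer; that claim also needs an argument.

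The paper resolves the lifting in a single explicit step by super-averaging over $\Seg(W)$. Given any representative $h_0'$ of $\bar h_0$, set
\[
h_0=\frac{1}{2^n|W|}\sum_{k=1}^n\sum_{i_1<\cdots<i_k}\sum_{w\in W}(-1)^k\,(c_{i_1}\cdots c_{i_k})\,w\,h_0'\,w^{-1}\,(c_{i_1}\cdots c_{i_k})^{-1}.
\]
Averaging over $W$ forces commutation with the group part, and the Clifford conjugations with the compensating signs $(-1)^k$ force supercommutation with the $c_i$. This operator does not raise filtration degree, and because $\bar h_0$ already supercommutes in $\overline{\mathcal H}_W$ (the paper obtains this from the uniqueness of $\bar h_0$ furnished by Corollary \ref{cor vog skew}), the averaged element $h_0$ is still a representative of $\bar h_0$. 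Now property (**) applies directly to $h_0$, giving $d^2(h_0)=0$, and $h-d(h_0)$ drops into lower filtration and the super-centralizer simultaneously, so the induction closes. If you replace your unspecified iterative correction by this averaging projector, your outline becomes the paper's proof.
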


\begin{proof}
It is clear that $Z(\Seg(W))$ and $(\ker d \cap \im d)^{\Seg(W)}$ are subspaces of $(\ker d)^{\Seg(W)}$ and thus $(\ker d \cap \im d)^{\Seg(W)}  \oplus Z(\Seg(W)) \subset (\ker d)^{\Seg(W)} $. We will prove another inclusion by induction on the degree of filtration of an element in $(\ker d)^{\Seg(W)}$.

Let $h$ be an element in $(\ker d)^{\Seg(W)}$ such that $h \in \mathcal H_W^i$ and $h \not\in \mathcal H_W^{i-1}$ for some $i$. When $i=0$, $\mathcal H_W^0=\Seg(W)$ and so the statement is clearly true. Now assume $i > 0$. Let $\overline{h}$ be the image of $h$ in $\overline{\mathcal H}_W^i$. Then by Corollary \ref{cor vog skew},
$\overline{h} =\overline{d}(\overline{h}_0)$ for some unique $\overline{h}_0$ in $\overline{\mathcal H}_W^{i-1}$ such that $\overline{d}(\overline{h}_0) \in (\overline{\mathcal H}_W^{i})^{\Seg(W)}$.
For any representative $h_0' \in \mathcal H_W^{i-1}$ of $\overline{h}_0$, let
\[ h_0=\frac{1}{2^n|W|}\sum_{k=1}^n \sum_{i_1< \ldots <i_k} \sum_{w \in W}(-1)^k (c_{i_1}\ldots c_{i_k})wh_0'w^{-1}(c_{i_1}\ldots c_{i_k})^{-1}. \]
By the uniqueness of the element $\overline{h}_0$, $\overline{h}_0$ supercommutes with any element in $\Seg(W)$. This implies $h_0$ is also a representative of $\overline{h}_0$. Furthermore, $h_0$ supercommutes with elements in $\Seg(W)$ and $d(h_0) \in (\mathcal H_W^i)^{\Seg(W)}$. By the property (**), $d^2(h_0)=0$ and so $d(h-d(h_0))=0$. By the induction hypothesis, $h-d(h_0)\in (\im d)^{\Seg(W)} \oplus Z(\Seg(W))$. Hence, we also have $h \in (\im d)^{\Seg(W)} \oplus Z(\Seg(W))$ since $d(h_0) \in (\im d)^{\Seg(W)}$. This completes the proof.

\end{proof}


\begin{lemma} \label{lem center homo}
$(\ker d)^{\Seg(W)}$ is a super subalgebra of $\mathcal{H}_W$ and $(\ker d \cap \im d)^{\Seg(W)}$ is a two sided super ideal of $(\ker d)^{\Seg(W)}$.
\end{lemma}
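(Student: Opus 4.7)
The plan is to reduce both claims to the fact that $d$ is a super-derivation. First I would verify the super-Leibniz rule
\[ d(h_1 h_2) = d(h_1) h_2 + (-1)^{\deg h_1} h_1 d(h_2) \]
for homogeneous $h_1, h_2$, which is a direct calculation from the definition $d(h)=Dh-(-1)^{\deg h}hD$ together with $\deg D = 1$: expanding both sides, the cross terms $(-1)^{\deg h_1} h_1 D h_2$ cancel. In parallel, I would record the easy fact that supercommuting with every homogeneous $s \in \Seg(W)$ is stable under products: moving $s$ past $h_1 h_2$ one factor at a time yields the combined sign $(-1)^{(\deg h_1 + \deg h_2)\deg s}$.

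Given these two ingredients, the subalgebra claim is immediate. If $h_1, h_2 \in (\ker d)^{\Seg(W)}$ are homogeneous, then $d(h_1 h_2)=0$ by the super-Leibniz rule and $h_1 h_2$ still supercommutes with $\Seg(W)$. Extending by decomposing into homogeneous components handles the general case, and $1$ lies trivially in this subspace, so $(\ker d)^{\Seg(W)}$ is a super subalgebra of $\mathcal{H}_W$.

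For the ideal claim, take homogeneous $h \in (\ker d)^{\Seg(W)}$ and homogeneous $k \in (\ker d \cap \im d)^{\Seg(W)}$, and write $k=d(k_0)$ for some $k_0 \in \mathcal H_W$. The super-Leibniz rule gives
\[ d(h k_0) = d(h)\, k_0 + (-1)^{\deg h} h\, d(k_0) = (-1)^{\deg h}\, hk, \]
and similarly $d(k_0 h) = d(k_0)\, h + (-1)^{\deg k_0} k_0\, d(h) = kh$, where in each case the missing term vanishes because $d(h)=0$. Hence $hk, kh \in \im d$. Combined with the subalgebra step, which puts them in $(\ker d)^{\Seg(W)}$, we conclude that $hk$ and $kh$ lie in $(\ker d \cap \im d)^{\Seg(W)}$. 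Extending linearly over homogeneous components of $h$ and $k$ completes the argument.

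I do not anticipate a serious obstacle here: the argument is a purely formal consequence of the super-derivation property of $d$ and of $\Seg(W)$-supercommutants being closed under products. The only point requiring care is bookkeeping of the signs in the super-Leibniz identity; in particular, it is essential that $d(h)=0$ (not merely of some controlled form) so that the unwanted terms drop out of $d(h k_0)$ and $d(k_0 h)$.
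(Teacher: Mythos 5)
Your proof is correct and follows essentially the same route as the paper's: both arguments boil down to unwinding $d(h)=Dh-\delta(h)D$ and using $Dz=\delta(z)D$ for $z\in\ker d$, with your super-Leibniz rule being exactly the packaged form of the inline manipulations in the paper's proof. If anything, your formulation is cleaner: stating $d$ is a super-derivation once and for all makes the sign bookkeeping transparent and avoids a small notational slip in the paper's displayed computation of $zz'$.
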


\begin{proof}
Let $z_1, z_2 \in (\ker d)^{\Seg(W)}$. Then $d(z_i)=0$ and so $Dz_i=\delta(z_i)D$. Now $d(z_1z_2)=D(z_1z_2)-\delta(z_1z_2)D=\delta(z_1z_2)D-\delta(z_1z_2)D=0$. Hence $z_1z_2 \in (\ker d)^{\Seg(W)}$. Hence $(\ker d)^{\Seg(W)}$ is a subalgebra of $\mathcal{H}_W$.

We next show that $(\ker d \cap \im d)^{\Seg(W)}$ is a two sided ideal of $(\ker d)^{\Seg(W)}$. Let $z \in (\ker d)^{\Seg(W)}$ and $z' \in (\ker d \cap \im d)^{\Seg(W)}$. We have to show $zz', z'z \in  (\ker d \cap \im d)^{\Seg(W)}$. Write $z'=Dh-\delta(h)D$ for some $h \in \mathcal H_W$. Since $d(z)=Dz-\delta(z)D=0$,
\[zz'=zDh-z\delta(h)D=D\delta(z)h-z\delta(h)D=D\delta(z)h-\delta(z\delta(h))D \in \im d .\]
We also proved in the beginning that $zz' \in \ker d$ and thus $zz' \in  (\ker d \cap \im d)^{\Seg(W)}$. The proof for $z'z\in  (\ker d \cap \im d)^{\Seg(W)}$ is similar.

\end{proof}

{\noindent}
{\it Proof of Theorem \ref{thm rel centers} }
Since $z \in Z(\mathcal H_W)_0 \subset \ker d^{\Seg(W)}$, by Lemma \ref{lem ker decomp Hw}, there exists a unique $\widetilde{z} \in Z(\Seg(W))$ such that $z-\widetilde{z} \in  (\ker d \cap \im d)^{\Seg(W)} \subset \im d$. Note that $\widetilde{z}$ is in $Z(\Seg(W))_0$ since the decomposition in Lemma \ref{lem ker decomp Hw} is between super vector spaces. Hence we have a map $\zeta: Z(\mathcal H_W)_0 \rightarrow Z(\Seg(W))_0$.

It remains to prove that $\zeta$ is an algebra homomorphism. To see $\zeta$ is an algebra map, let $z_i \in Z(\mathcal H_W) \subset  \ker d^{\Seg(W)}$ ($i=1,2$). Write $z_i=\zeta(z_i)+h_i$ for some $h_i \in   (\ker d \cap \im d)^{\Seg(W)}$. Then $z_1z_2=\zeta(z_1)\zeta(z_2)+\zeta(z_1)h_2+\zeta(z_2)h_1+h_1h_2$. By Lemma \ref{lem center homo}, $z_1z_2-\zeta(z_1)\zeta(z_2) \in  (\ker d \cap \im d)^{\Seg(W)}$. Thus $\zeta(z_1z_2)=\zeta(z_1)\zeta(z_2)$. This completes the proof.


\bigskip
{\noindent}
{\it Proof of Theorem \ref{thm vog conj hw}}
By our hypothesis, there exists a non-zero element $v \in H_{D}(X)$ such that $v$ is in the isotypic component $U$ of $H_{D}(X)$. Let $\widetilde{v}$ be a representative of $v$ in $\ker \pi(D)$. Now by Theorem \ref{thm rel centers} for any $z \in Z(\mathcal H_W)_0$, $z-\zeta(z)=Da-\delta(a)D$ for some $a \in \mathcal H_W$. Then $\pi(z-\zeta(z)) \widetilde{v}=\pi(Da-\delta(a)D)\widetilde{v}=\pi(Da)\widetilde{v} \in \im \pi(D)$.  On another hand, $\pi(z-\zeta(z))\widetilde{v}= \chi_{\pi}(z)\widetilde{v}-(\chi_{\sigma}(\zeta(z))\widetilde{v}+\widetilde{v}')$ for some $\widetilde{v}' \in \ker \pi(D) \cap \im \pi(D)$ and so $(\chi_{\pi}(z)-\chi_{\sigma}(\zeta(z)))\widetilde{v} \in \im \pi(D)$. We also have $(\chi_{\pi}(z)-\chi_{\sigma}(\zeta(z)))\widetilde{v} \in \ker \pi(D)$ as $\widetilde{v} \in \ker \pi(D)$. Thus  $\chi_{\pi}(z)\widetilde{v}-\chi_{\sigma}(\zeta(z))\widetilde{v} \in \im \pi(D) \cap \ker \pi(D)$. Since we choose $v \neq 0$, we can only have $\chi_{\pi}(z)=\chi_{\sigma}(\zeta(z))=\chi^{\sigma}(z)$. This completes the proof.

\section{Examples of $\mathcal{H}_W$ and their Dirac cohomology theory} \label{s dirac hcl}

Let $W$ be a classical Weyl group and let $R=R(W)$ be the root system associated to $W$. Let $\mathbf{k}: R \rightarrow \mathbb{C}$ be a function such that $\mathbf{k}(\alpha_1)=\mathbf{k}(\alpha_2)$ if $\alpha_1=w(\alpha_2)$ for some $w \in W$. We shall write $\mathbf{k}_{\alpha}$ for $\mathbf{k}(\alpha)$. For any $\alpha \in R$, let $s_{\alpha}$ be the simple reflection associated to $\alpha$.

 Let $e_1, \ldots, e_n$ be the standard basis of $\mathbb{R}^n$. Let $\langle , \rangle$ be the inner product on $\mathbb{R}^n$ such that $\langle e_i, e_j \rangle=\delta_{ij}$.


\subsection{Type $A_{n-1}$} \label{ss comm rel}

\begin{notation}
Set $W=W(A_{n-1})$ to be the Weyl group of type $A_{n-1}$. The root system $R(A_{n-1})$ of type $A_{n-1}$ is the set
\[ R(A_{n-1})= \left\{ e_i-e_j  : 1 \leq i \neq j \leq n \right\}  .\]
Fix a set $R^+$ of positive roots
\[ R^+(A_{n-1})=\left\{ e_i-e_{j}  : 1 \leq i<j \leq n \right\} .\]
We usually write $\alpha >0$ for $\alpha \in R^+(A_{n-1})$ and write $\alpha <0$ for $-\alpha \in R^+(A_{n-1})$.
The set of simple roots $\Delta$ is
\[ \left\{ e_i-e_{i+1} : i=1,\ldots, n-1 \right\} . \]

Since there is only one $W$-orbit for $R(A_{n-1})$, we simply write $\mathbf{k}$ for $\mathbf{k}_{\alpha}$ for any $\alpha \in R(A_{n-1})$.
For $i \neq j$, let
\begin{align} \label{eqn positive roots} \alpha_{ij} = \left\{  \begin{array}{c} e_i-e_j \quad \mbox{ if $i <j$ }  \\
                                              e_j-e_i \quad \mbox{ if $i >j$ }
                                         \end{array}
                              \right. .
      \end{align}
Thus $\alpha_{ij}$ is always a positive root.

For a root $\alpha \in R(A_{n-1})$, let $s_{\alpha}$ be the corresponding simple reflection in $W(A_{n-1})$. For simplicitly, set $s_{ij}=s_{\alpha_{ij}}$.
\end{notation}

\begin{definition} \label{def hca} \cite{Na}
The degenerate affine Hecke-Clifford algebra for type $A_{n-1}$, denoted $\mathbb{H}^{Cl}_{W(A_{n-1})}$, is the associative algebra with an unit generated by the symbols $\left\{ x_i \right\}_{i=1}^n$, $\left\{ c_i \right\}_{i=1}^n$ and $\left\{ f_w :  w \in W(A_{n-1}) \right\}$ determined by the following properties:
\begin{enumerate}
\item[(1)] the map from the group algebra $\mathbb{C}[W(A_{n-1})]=\oplus_{w \in W(A_{n-1})} \mathbb{C}w$ to $\mathbb{H}^{Cl}_{W(A_{n-1})}$ given by $w \mapsto f_w$ is an algebra injection;
\item[(2)] $x_ix_j=x_jx_i$ for all $i,j$;
\item[(3)] $x_ic_j=c_jx_i$ for $i \neq j$ and $x_ic_i=-c_ix_i$ for all $i$;
\item[(4)] $c_ic_j=-c_jc_i$ for $i \neq j$ and $c_i^2=-1$ for all $i$;
\item[(5)] $f_wc_i=c_{w(i)}f_w$ for $w \in W(A_{n-1})$ and and for all $i$
\item[(6)] $f_{s_{i,i+1}}x_i-x_{i+1}f_{s_{i,i+1}}=\mathbf{k}(-1+c_ic_{i+1})$ for all $i=1,\ldots, n-1$ and $f_{s_{i,i+1}}x_j=x_jf_{s_{i,i+1}}$ for all $i,j$ with $|i-j|>1$.
\end{enumerate}
We later simply write $w$ for $f_w$. The algebra has a superalgebra structure with $\deg(c_i)=1$, $\deg(w)=0$ for $w \in W(A_{n-1})$, and $\deg(x_i)=0$.

\end{definition}

For $i \neq j$, define $c_{\alpha_{ij}}$ as
\begin{align} \label{eqn Clifford An} c_{\alpha_{ij}} = \left\{ \begin{array}{c} \frac{\sqrt{2}}{2}(c_i -c_j) \quad \mbox{ if $i <j$ } \\
                                                \frac{\sqrt{2}}{2}(c_j-c_i) \quad \mbox{ if $j <i$ }
                                 \end{array} \right. .
\end{align}
Let $\widetilde{s}_{\alpha_{ij}}=\widetilde{s}_{ij}=s_{ij}c_{\alpha_{ij}}$.

The superalgebra $\mathbb{H}^{Cl}_{W(A_{n-1})}$ admits a PBW type basis:

\begin{proposition} \cite[Theorem 14.2.2]{Kl} \label{prop basis HC}
The set
\[   \left\{ x_{1}^{m_1}\ldots x_n^{m_n}c_1^{\epsilon_1}\ldots c_n^{\epsilon_n}w : m_1,\ldots, m_n \in \mathbb{Z}_{\geq 0}, \epsilon_1,\ldots, \epsilon_n \in \left\{ 0, 1\right\}, w \in W(A_{n-1}) \right\} \]
forms a basis for $\mathbb{H}^{Cl}_{W(A_{n-1})}$.
\end{proposition}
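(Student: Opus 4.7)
The plan is to establish the PBW-type basis by the usual two-step strategy: first show the monomials in the given form span $\mathbb{H}^{Cl}_{W(A_{n-1})}$, and then show they are linearly independent via a faithful representation on an obvious candidate module.

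For the spanning step, I would start from the free associative algebra on the generators $\{x_i\}, \{c_i\}, \{f_w\}$ and repeatedly apply the defining relations to rewrite an arbitrary word in the form $x^{\mathbf m} c^{\boldsymbol\epsilon} w$. Relation (2) lets us order the $x_i$'s arbitrarily; relations (3)--(4) let us push each $c_i$ to the right of the $x$'s (at the cost of a sign whenever indices coincide) and normalize the Clifford monomial to the form $c_1^{\epsilon_1}\cdots c_n^{\epsilon_n}$ with $\epsilon_i\in\{0,1\}$; relation (5) lets us move any $f_w$ to the rightmost position at the cost of permuting the indices of the remaining Clifford generators. The only nontrivial move is relation (6): passing $f_{s_{i,i+1}}$ leftward over $x_i$ costs a term $\mathbf{k}(-1+c_ic_{i+1})$, but this remainder has strictly smaller total $x$-degree than the original word. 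A two-variable induction — primary on the total degree $\sum m_i$, secondary on the length of $w$ — then shows that every word reduces to a $\mathbb{C}$-linear combination of the claimed monomials.

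For the linear independence step, I would construct a module $M = \mathbb{C}[x_1,\ldots,x_n]\otimes \mathrm{Cl}_n \otimes \mathbb{C}[W(A_{n-1})]$ and exhibit a well-defined action of $\mathbb{H}^{Cl}_{W(A_{n-1})}$ on it. Let $x_i$ act by multiplication in the polynomial factor, with the grading convention $\deg(x_i)=0$ in $\mathrm{Cl}_n$, so that the anticommutation with $c_i$ is absorbed by signs built into the $c_i$-action. Define the action of $c_i$ by Clifford multiplication in the second factor, twisted by the $\mathbb{Z}_2$-grading that records the parity of each $x_j$ with $j=i$ standing to its left (to realize $c_ix_i=-x_ic_i$). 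The group elements $f_w$ should act by a Clifford-valued analogue of Demazure / divided-difference operators: on the polynomial factor $f_{s_{i,i+1}}$ acts by $p\mapsto s_{i,i+1}(p) + \mathbf{k}(-1+c_ic_{i+1})\,\partial_{i,i+1}(p)$, where $\partial_{i,i+1}(p)=(p-s_{i,i+1}(p))/(x_i-x_{i+1})$, and on the $\mathbb{C}[W]$ factor by left multiplication. The spanning argument already shows that evaluating any algebra element at the vacuum vector $1\otimes 1\otimes 1$ yields a monomial of the claimed form, and these monomials are patently linearly independent in $M$. Hence no non-trivial relation can hold among the PBW monomials in the algebra.

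The main obstacle is verifying that the candidate action is well defined, i.e.\ that it respects all six families of relations — in particular relation (6) and the braid relations for the $f_w$ factor. The delicate point is that the Clifford piece $c_ic_{i+1}$ appearing in relation (6) forces the "divided difference" part of $f_{s_{i,i+1}}$ to carry Clifford coefficients, so one must check that these twisted operators still satisfy the braid relations of $W(A_{n-1})$ and commute correctly with $c_j$ and $x_j$ for $j\neq i,i+1$. Once this compatibility is checked (by a short direct computation using $c_ic_{i+1}c_{i+1}c_{i+2}=-c_ic_{i+2}$ and the standard identities for divided differences), the faithful representation is in hand and the proposition follows. This is precisely the content of Kleshchev's proof of Theorem 14.2.2 in \cite{Kl}, which I would follow in detail.
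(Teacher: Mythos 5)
The paper does not prove this statement; it is quoted verbatim from Kleshchev \cite[Theorem 14.2.2]{Kl}, and the paper's own analogue for type $B_n$ (Proposition \ref{prop PBW Bn}) is established instead via Bergman's diamond lemma, not via a faithful module. So there is no "paper's proof" to match your sketch against, but the sketch itself should still be sound, and as written it is not.

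The spanning half of your argument is fine. The problem is the module you propose for linear independence. You take $M = \mathbb{C}[x_1,\ldots,x_n]\otimes \mathrm{Cl}_n\otimes\mathbb{C}[W(A_{n-1})]$ and have $f_{s_{i,i+1}}$ act simultaneously by the Clifford-valued Demazure operator on the first two factors and by left multiplication on the $\mathbb{C}[W]$ factor. On the natural reading this is
\[
f_{s}.(p\otimes c\otimes w) \;=\; s(p)\otimes c\otimes sw \;+\; \mathbf{k}\,\partial_s(p)\otimes(-1+c_ic_{i+1})c\otimes sw ,
\qquad s=s_{i,i+1},
\]
and this fails relation (6) already on the vacuum. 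Indeed
\[
(f_{s}x_i - x_{i+1}f_{s}).(1\otimes 1\otimes 1)
= \mathbf{k}\bigl(1\otimes(-1+c_ic_{i+1})\otimes s\bigr),
\]
whereas the right-hand side of relation (6) gives
\[
\mathbf{k}(-1+c_ic_{i+1}).(1\otimes 1\otimes 1)
= \mathbf{k}\bigl(1\otimes(-1+c_ic_{i+1})\otimes 1\bigr),
\]
and these disagree in the $\mathbb{C}[W]$ component ($s$ versus $1$). A Demazure-type construction can be made to work, but the correction term has to stay at $w$ rather than moving to $sw$, i.e.\ one needs a genuinely "Demazure--Lusztig" mixed action, and then the braid relations and the $f_wc_i = c_{w(i)}f_w$ compatibility need a careful (not short) verification. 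Since you also acknowledge deferring to Kleshchev for exactly these checks, the honest summary is that your proof outlines a plausible strategy but the explicit module is wrong, and either the correct Demazure--Lusztig module from \cite{Kl} or the diamond-lemma route the present paper uses in the $B_n$ case would be needed to close the gap.
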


The main statement of this subsection is Proposition \ref{prop isom hcl}, which says $\mathbb{H}^{Cl}_{W(A_{n-1})}$ satisfies property (*) defined in Definition \ref{def hw property star}.

Let $\widetilde{s}_{\alpha}=s_{\alpha}c_{\alpha}$. For later convenience, we also set $\widetilde{s}_{ij}=\widetilde{s}_{\alpha_{ij}}=s_{\alpha_{ij}}c_{{\alpha}_{ij}}$,  $y_i=x_ic_i$, $y_i'=y_i+\frac{\sqrt{2}}{2}\sum_{i \neq j} \widetilde{s}_{i,j}$ and $x_i'=-y_i'c_i$. Note that $\mathbb{C}[W(A_{n-1})]^-$ embeds into $\mathbb{H}^{Cl}_{W(A_{n-1})}$ via the map $\widetilde{t}_{\alpha} \mapsto \widetilde{s}_{\alpha}$.

The notation $y_i'$ and $x_i'$ will be used to define the Dirac type element in $\mathbb{H}^{Cl}_{W(A_{n-1})}$ and are inspired by the setting in the degenerate affine Hecke algebra in \cite{BCT}.

\begin{lemma} \label{lem simple rel}
\begin{itemize}
\item[(1)]  $c_iy_j=-y_jc_i$ for any $i,j$;
\item[(2)]  $\widetilde{s}_{ij}c_k=-c_k\widetilde{s}_{ij}$ for any $i,j,k$ with $i \neq j$;
\item[(3)]  $c_iy_j'=-y_j'c_i$ for any $i,j$;
\item[(4)]  For $\alpha \in R^+$ and $w \in S_n$, $w\widetilde{s}_{\alpha}w^{-1}=\widetilde{s}_{w(\alpha)}$ if $w(\alpha)>0$, and $w\widetilde{s}_{\alpha}w^{-1}=-\widetilde{s}_{-w(\alpha)}$ if $w(\alpha)<0$.
\end{itemize}
\end{lemma}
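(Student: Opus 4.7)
The four statements are elementary commutation/conjugation relations in $\mathbb{H}^{Cl}_{W(A_{n-1})}$, so the proof will reduce each to a direct calculation using the defining relations (3)--(5) of Definition \ref{def hca} together with the explicit formula (\ref{eqn Clifford An}) for $c_{\alpha_{ij}}$. Nothing deep is needed; the plan is to handle (1) and (2) by computation, derive (3) formally from (1) and (2), and deduce (4) from the $W$-equivariance of the Clifford part.

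For (1), I will split into $i=j$ and $i\neq j$. In the case $i\neq j$, relation (3) of Definition \ref{def hca} gives $c_i x_j = x_j c_i$ while $c_i c_j = -c_j c_i$, so $c_i (x_j c_j) = -x_j c_j c_i$. In the case $i=j$, the same relations give $c_i(x_i c_i) = -x_i c_i^2 = x_i$, which equals $-(x_i c_i)c_i = -x_i c_i^2 = x_i$. For (3), since $y_i' = y_i + \tfrac{\sqrt 2}{2}\sum_{j\neq i}\widetilde{s}_{ij}$ is a sum of terms each of which anticommutes with $c_k$ by (1) and (2), the identity is immediate once (1) and (2) are established.

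The only real computation is (2). Using $s_{ij}c_k = s_{ij}(c_k)s_{ij}$ (which is relation (5)), one has
\[
c_k \widetilde{s}_{ij} = c_k s_{ij} c_{\alpha_{ij}} = s_{ij}\bigl(s_{ij}(c_k)\bigr)c_{\alpha_{ij}},
\]
so (2) is equivalent to $c_{\alpha_{ij}}c_k + s_{ij}(c_k)c_{\alpha_{ij}}=0$. For $k\notin\{i,j\}$, where $s_{ij}(c_k)=c_k$, this is the standard anticommutation $c_\ell c_k = -c_k c_\ell$ applied to $c_{\alpha_{ij}} = \tfrac{\sqrt 2}{2}(c_i-c_j)$. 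For $k=i$ (resp.\ $k=j$), where $s_{ij}(c_i)=c_j$ (resp.\ $s_{ij}(c_j)=c_i$), a direct expansion using $c_i^2 = c_j^2 = -1$ and $c_ic_j = -c_jc_i$ shows the two terms cancel. I would write out the $k=i$ case explicitly and invoke symmetry for $k=j$.

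For (4), the key observation is that $c_\alpha$ lies in the span of $\{c_1,\ldots,c_n\}$, which by (\ref{rel star 4}) transforms under $W$ exactly as $V$ does; hence $w c_\alpha w^{-1} = w(c_\alpha)$. Combined with $w s_\alpha w^{-1} = s_{w(\alpha)}$ in $W$, this gives $w\widetilde{s}_\alpha w^{-1} = s_{w(\alpha)}\, w(c_\alpha)$. Writing $\alpha = e_i-e_j$ with $i<j$ and $p=w(i)$, $q=w(j)$, I have $w(c_\alpha) = \tfrac{\sqrt 2}{2}(c_p-c_q)$; comparing with the definition (\ref{eqn Clifford An}) of $c_{w(\alpha)}$ gives $w(c_\alpha)=c_{w(\alpha)}$ when $p<q$ (i.e.\ $w(\alpha)>0$) and $w(c_\alpha)=-c_{-w(\alpha)}$ when $p>q$, which together with $s_{w(\alpha)}=s_{-w(\alpha)}$ in $W$ yields both formulas. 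The main bookkeeping obstacle is just tracking the sign in (4) arising from the sign convention in (\ref{eqn Clifford An}); the rest is mechanical.
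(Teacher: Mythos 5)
Your proof is correct, and in fact the paper itself gives no proof (it states ``The above lemma is elementary. We skip the proof.''), so your write-up is a genuine filling-in of the details. The structure is sensible: (1) by casework on $i=j$ vs.\ $i\neq j$, (2) by reducing to $c_{\alpha_{ij}}c_k+c_{s_{ij}(k)}c_{\alpha_{ij}}=0$ and checking $k\notin\{i,j\}$, $k=i$, $k=j$, (3) as an immediate corollary of (1) and (2), and (4) by conjugating $c_\alpha=\tfrac{\sqrt 2}{2}(c_i-c_j)$ and tracking the sign that comes out of the convention in (\ref{eqn Clifford An}) when $w$ reverses the order of $i$ and $j$. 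All the computations check out, including the sign cancellation $c_{\alpha_{ij}}c_i+c_jc_{\alpha_{ij}}=\tfrac{\sqrt 2}{2}(-1-c_jc_i)+\tfrac{\sqrt 2}{2}(c_jc_i+1)=0$ in (2) and the observation $s_{w(\alpha)}=s_{-w(\alpha)}$ in (4).

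One small point worth fixing: in (4) you justify $wc_\alpha w^{-1}=w(c_\alpha)$ by appeal to relation (\ref{rel star 4}) of the abstract Definition \ref{def hw property star}. That is logically backwards in the paper's architecture, since Lemma \ref{lem simple rel} is an ingredient in Proposition \ref{prop isom hcl}, which is what establishes that $\mathbb{H}^{Cl}_{W(A_{n-1})}$ satisfies property~(*). You should instead cite relation (5) of Definition \ref{def hca}, namely $f_w c_i = c_{w(i)} f_w$, which gives the same identity directly without the circularity. Likewise, in (2) the display ``$s_{ij}\bigl(s_{ij}(c_k)\bigr)c_{\alpha_{ij}}$'' is slightly ambiguous notation --- it reads as though $s_{ij}$ is applied twice to $c_k$; writing it as the algebra product $s_{ij}\cdot c_{s_{ij}(k)}\cdot c_{\alpha_{ij}}$ would be clearer. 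Neither affects correctness.
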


The above lemma is elementary. We skip the proof.

 We shall use the natural permutation of $W(A_{n-1})$ on the set $\left\{1,\ldots, n \right\}$ below.

\begin{lemma} \label{lem comm form}
Let $w \in W(A_{n-1})$. Then
\[  wy_iw^{-1}-y_{w(i)} =\sqrt{2}\mathbf{k} \sum_{\beta>0, w^{-1}(\beta)< 0, \langle \beta, w(e_i) \rangle \neq0}   \widetilde{s}_{\beta} . \]
In particular, for $\alpha >0$,
\[   \widetilde{s}_{\alpha}y_i\widetilde{s}_{\alpha}^{-1}+y_{s_{\alpha}(i)}= -\sqrt{2}\mathbf{k}\sum_{\beta>0, s_{\alpha}^{-1}(\beta)<0, \langle \beta, s_{\alpha}(e_i) \rangle \neq 0} \widetilde{s}_{\beta}.  \]
\end{lemma}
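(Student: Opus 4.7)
The plan is to prove the first identity by induction on the length $\ell(w)$, and then deduce the second identity as a short corollary. For the base case $w = s_{j,j+1}$ with associated simple root $\alpha = e_j - e_{j+1}$, the identity is trivial unless $i \in \{j, j+1\}$; taking $i = j$, relations (5) and (6) of Definition \ref{def hca} give $s c_j s = c_{j+1}$ and $s x_j = x_{j+1} s + \mathbf{k}(-1 + c_j c_{j+1})$, from which $s y_j s = y_{j+1} - \mathbf{k}(c_j - c_{j+1}) s$ follows after applying $c_j c_{j+1} c_j = c_{j+1}$ and $c_j^2 = -1$. Using $c_j - c_{j+1} = \sqrt{2} c_\alpha$ together with $c_\alpha s_\alpha = -s_\alpha c_\alpha = -\widetilde{s}_\alpha$, this rearranges to $y_{j+1} + \sqrt{2}\mathbf{k}\widetilde{s}_\alpha$. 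This agrees with the right-hand side for $w = s_\alpha$, since $\alpha$ is the only positive root inverted by $s_\alpha$ and $\langle \alpha, s_\alpha(e_j)\rangle = -1 \neq 0$.

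For the inductive step with $\ell(w) \geq 2$, I would factor $w = w' s$ with $\ell(w) = \ell(w') + 1$, where $s$ is simple with root $\alpha$, so that $w'(\alpha) > 0$. Applying the base case to produce $s y_i s^{-1} = y_{s(i)} + \sqrt{2}\mathbf{k}\widetilde{s}_\alpha$ when $\langle \alpha, s(e_i)\rangle \neq 0$ (and $y_{s(i)}$ otherwise), then conjugating by $w'$, applying the induction hypothesis to $w' y_{s(i)} (w')^{-1}$, and using Lemma \ref{lem simple rel}(4) (which yields $w' \widetilde{s}_\alpha (w')^{-1} = \widetilde{s}_{w'(\alpha)}$), reassembles $w y_i w^{-1}$. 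The combinatorial glue is the classical decomposition $\{\beta > 0 : w^{-1}(\beta) < 0\} = \{\beta > 0 : (w')^{-1}(\beta) < 0\} \sqcup \{w'(\alpha)\}$ together with the orthogonal invariance $\langle w'(\alpha), w(e_i)\rangle = \langle \alpha, s(e_i)\rangle$, which ensure that the new inversion $\beta = w'(\alpha)$ contributes to the sum exactly when the inner-product condition is satisfied.

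For the second identity, Lemma \ref{lem simple rel}(1) gives $c_\alpha y_i = -y_i c_\alpha$, and a direct computation from $c_i c_j = -c_j c_i$ and $c_i^2 = -1$ shows $c_\alpha^2 = -1$, whence $c_\alpha y_i c_\alpha^{-1} = -y_i$. Therefore $\widetilde{s}_\alpha y_i \widetilde{s}_\alpha^{-1} = s_\alpha c_\alpha y_i c_\alpha^{-1} s_\alpha^{-1} = -s_\alpha y_i s_\alpha^{-1}$, and substituting the first identity with $w = s_\alpha$ produces the stated formula. The main obstacle I anticipate is the sign bookkeeping in the base case: three sign conventions — the Clifford anticommutation, the normalization $c_\alpha = \frac{\sqrt{2}}{2}(c_i - c_j)$ whose sign depends on which of $i,j$ is smaller in (\ref{eqn Clifford An}), and the anticommutation $c_\alpha s_\alpha = -\widetilde{s}_\alpha$ — must conspire to deliver exactly $+\sqrt{2}\mathbf{k}\widetilde{s}_\alpha$ and not its negative.
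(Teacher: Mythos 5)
Your proof is correct and follows essentially the same route as the paper: a length induction whose base case is checked directly against the defining relations of $\mathbb{H}^{Cl}_{W(A_{n-1})}$, and a reduction of the second identity to the first via $\widetilde{s}_{\alpha}y_i\widetilde{s}_{\alpha}^{-1}=-s_{\alpha}y_is_{\alpha}$. The only cosmetic difference is the inductive decomposition $w=w's$ on the right where the paper uses $w=s_{\alpha}w'$ on the left; both rely on Lemma \ref{lem simple rel}(4) and the standard inversion-set recursion, so the arguments are interchangeable.
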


\begin{proof}
For $w \in W(A_{n-1})$, define $l(w)=|\left\{ e_i-e_j \in R^+(A_{n-1}) : w(e_i-e_j)<0 \right\}|$. When $l(w)=1$, $w=s_{\alpha}$ for some $\alpha \in \Delta$. We consider three cases. When $\langle e_i, \alpha \rangle =0$, it is easy to see $s_{\alpha}y_is_{\alpha}-y_i=0$. Now consider the case $\langle e_i, \alpha \rangle =1 $. In this case, we have
\begin{eqnarray*}
  s_{\alpha}y_is_{\alpha} &=& s_{\alpha}x_ic_is_{\alpha}  \\
	              &=& x_{i+1}c_{i+1}+\mathbf{k}(-1+c_ic_{i+1})c_{i}s_{\alpha} \\
								&=& x_{i+1}c_{i+1}+\mathbf{k}(-c_i+c_{i+1})s_{\alpha} \\
								&=& x_{i+1}c_{i+1}+\mathbf{k}s_{\alpha}(c_i-c_{i+1}) \\
								&=& y_{i+1}+\sqrt{2}\mathbf{k}\widetilde{s}_{\alpha} .
\end{eqnarray*}
For $\langle e_i, \alpha \rangle =-1$, by using $s_{\alpha}\widetilde{s}_{\alpha}s_{\alpha}=-\widetilde{s}_{\alpha}$ and the computation in the case $\langle e_i, \alpha \rangle =1 $, we have
\[   s_{\alpha}y_{i+1}s_{\alpha}=y_i +\sqrt{2}\mathbf{k}\widetilde{s}_{\alpha} . \]
We now use an induction on $l(w)$. Assume $l(w)=k$ for some $k>1$. Write $w=s_{\alpha}w'$ for some simple reflection $s_{\alpha}$ and $w' \in W(A_{n-1})$ with $l(w')=k-1$. Set $\epsilon=1$ if $\langle \alpha, w(e_i) \rangle \neq 0$ and $\epsilon=0$ otherwise. Then
\begin{eqnarray*}
 wy_iw^{-1} & =& s_{\alpha}w'y_iw'^{-1}s_{\alpha} \\
                            &=& s_{\alpha} y_{w'(i)}s_{\alpha}+\sqrt{2}\mathbf{k}\sum_{\beta>0, w'^{-1}(\beta)<0, \langle \beta, w'(e_i) \rangle \neq 0} s_{\alpha}\widetilde{s}_{\beta}s_{\alpha} \quad \mbox{ (induction hypothesis)} \\
                            &=&  y_{s_{\alpha}w'(i)}+\epsilon\sqrt{2}\mathbf{k}\widetilde{s}_{\alpha}+\sqrt{2}\mathbf{k}\sum_{\beta>0, w'^{-1}(\beta)<0, \langle \beta, w'(e_i) \rangle \neq 0}\widetilde{s}_{s_{\alpha}(\beta)}
														 \quad \mbox{ (calculation for $l(w)=1$)} \\
														 &=&  y_{s_{\alpha}w'(i)}+ \epsilon\sqrt{2}\mathbf{k}\widetilde{s}_{\alpha}+\sqrt{2}\mathbf{k}\sum_{\beta>0, w'^{-1}(\beta)<0, \langle s_{\alpha}(\beta), s_{\alpha}w'(e_i) \rangle \neq 0}\widetilde{s}_{s_{\alpha}(\beta)}  \\
                            &=&  y_{w(i)} + \sqrt{2}\mathbf{k}\sum_{\beta>0, w^{-1}(\beta)< 0, \langle \beta, w(e_i) \rangle \neq0}   \widetilde{s}_{\beta}
\end{eqnarray*}
This proves the first assertion. The second assertion follows from the first one with the equation that \[ \widetilde{s}_{\alpha}y_i\widetilde{s}_{\alpha}^{-1}=s_{\alpha}c_{\alpha}y_i(-c_{\alpha}s_{\alpha})=s_{\alpha}(c_{\alpha}^2)y_is_{\alpha}=-s_{\alpha}y_is_{\alpha}. \]
\end{proof}

\begin{lemma} \label{lem comm rel 1}
For $i \neq j$, $[x_i',x_j']c_ic_j=y_i'y_j'+y_j'y_i'    \in \Seg_n$.
\end{lemma}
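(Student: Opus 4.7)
The plan is to establish the equality $[x_i',x_j']c_ic_j = y_i'y_j' + y_j'y_i'$ by a direct Clifford computation, and then to show the resulting anti-commutator of the $y'$-elements lies in $\Seg(W)$ by expanding $y_i' = y_i + R_i$, where $R_i = \tfrac{\sqrt 2}{2}\sum_{k\neq i}\widetilde{s}_{ik}$, and tracking the four pieces of the expansion: one vanishes identically, one is manifestly in $\Seg(W)$, and the remaining cross terms reduce to $\Seg(W)$ via Lemma \ref{lem comm form}.

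For the first equality, I start from $x_i' = -y_i'c_i$ and use Lemma \ref{lem simple rel}(3), i.e. $c_iy_j' = -y_j'c_i$, together with $c_ic_j = -c_jc_i$ and $c_i^2 = -1$. A short computation yields $x_i'x_j' = -y_i'y_j'c_ic_j$ and $x_j'x_i' = y_j'y_i'c_ic_j$, hence $[x_i',x_j'] = -(y_i'y_j' + y_j'y_i')c_ic_j$. Since $(c_ic_j)^2 = -1$, multiplying on the right by $c_ic_j$ produces the claimed equality.

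To show $y_i'y_j' + y_j'y_i' \in \Seg(W)$, I split it into the four anti-commutators $\{y_i,y_j\}$, $\{y_i,R_j\}$, $\{R_i,y_j\}$, and $\{R_i,R_j\}$. The last piece is a sum of products of elements of $\Seg(W)$, so lies in $\Seg(W)$ automatically. The first piece vanishes: using $x_ix_j = x_jx_i$ and $c_ix_j = x_jc_i$ for $i\neq j$, one computes $y_iy_j = x_ix_jc_ic_j$ and $y_jy_i = x_jx_ic_jc_i = -x_ix_jc_ic_j$, so $y_iy_j + y_jy_i = 0$.

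The main obstacle is the sum of cross terms $\{y_i,R_j\} + \{R_i,y_j\}$, which equals $\tfrac{\sqrt 2}{2}\sum_{l\neq j}\{y_i,\widetilde{s}_{jl}\} + \tfrac{\sqrt 2}{2}\sum_{k\neq i}\{y_j,\widetilde{s}_{ik}\}$. The second formula of Lemma \ref{lem comm form} rewrites $\widetilde{s}_\alpha y_i = -y_{s_\alpha(i)}\widetilde{s}_\alpha - \sqrt{2}\mathbf{k}\sum_\beta \widetilde{s}_\beta\widetilde{s}_\alpha$, so $\{y_i,\widetilde{s}_\alpha\} = (y_i - y_{s_\alpha(i)})\widetilde{s}_\alpha + (\text{element of }\Seg(W))$. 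Whenever $\alpha = \alpha_{jl}$ with $l\notin\{i,j\}$ one has $s_{jl}(i) = i$, which kills the non-$\Seg(W)$ part; the symmetric statement handles $\{y_j,\widetilde{s}_{ik}\}$ for $k\notin\{i,j\}$. The only remaining non-$\Seg(W)$ contributions come from the index $l=i$ in the first sum and $k=j$ in the second, producing $(y_i - y_j)\widetilde{s}_{ij}$ and $(y_j - y_i)\widetilde{s}_{ij}$ respectively, which cancel exactly. Thus every piece of the expansion lies in $\Seg(W)$, finishing the proof.
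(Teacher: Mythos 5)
Your proof is correct and takes the same route as the paper: expand $y_i'=y_i+R_i$ with $R_i\in\Seg_n$, observe that $\{y_i,y_j\}=0$ and $\{R_i,R_j\}\in\Seg_n$, and invoke Lemma~\ref{lem comm form} for the mixed terms. You make explicit two points the paper passes over: the Clifford computation giving $[x_i',x_j']c_ic_j=y_i'y_j'+y_j'y_i'$, and the pairwise cancellation of the $(y_i-y_j)\widetilde{s}_{ij}$ and $(y_j-y_i)\widetilde{s}_{ij}$ contributions --- a step that is genuinely needed, since Lemma~\ref{lem comm form} by itself places $\{y_i,\widetilde{s}_{jl}\}$ in $\Seg_n$ only when $s_{jl}(i)=i$, and the single exceptional term $\widetilde{s}_{ij}$ must be disposed of by the cancellation you describe rather than term by term.
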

\begin{proof}
\begin{eqnarray*}
& & y_i'y_j'+y_j'y_i'  \\
&=& (y_i+\frac{\sqrt{2}}{2}\mathbf{k}\sum_{k \neq i} \widetilde{s}_{i,k})(y_j+\mathbf{k}\frac{\sqrt{2}}{2}\sum_{l\neq j}\widetilde{s}_{l,j})+(y_j+\frac{\sqrt{2}}{2}\mathbf{k}\sum_{l \neq j} \widetilde{s}_{l,j})(y_i+\frac{\sqrt{2}}{2}\mathbf{k}\sum_{k\neq i}\widetilde{s}_{i,k})\\
&=& y_iy_j+y_jy_i+\frac{\sqrt{2}}{2}\mathbf{k}\left(\sum_{k \neq i} \widetilde{s}_{i,k} y_j+y_j\sum_{k \neq i}\widetilde{s}_{i,k}+y_i \sum_{l \neq j} \widetilde{s}_{l,j}+\sum_{l \neq j} \widetilde{s}_{l,j}y_i\right) \\
& & \quad +\frac{1}{2}\mathbf{k}^2\left(\sum_{k \neq i}\widetilde{s}_{i,k}\sum_{l \neq j}\widetilde{s}_{l,j} +\sum_{l \neq j}\widetilde{s}_{l,j}\sum_{i \neq k}\widetilde{s}_{i,k}\right) \\
&=& \frac{\sqrt{2}}{2}\mathbf{k}\left(\sum_{k \neq i} \widetilde{s}_{i,k} y_j+y_j\sum_{k \neq i}\widetilde{s}_{i,k}+y_i \sum_{l \neq j} \widetilde{s}_{l,j}+\sum_{l \neq j} \widetilde{s}_{l,j}y_i\right)
+\frac{1}{2}\mathbf{k}^2\left(\sum_{l \neq j} \sum_{k \neq i} \widetilde{s}_{l,j}\widetilde{s}_{i,k} + \sum_{l\neq j} \sum_{k \neq i} \widetilde{s}_{i,k}\widetilde{s}_{l,j} \right)\\
\end{eqnarray*}
By Lemma \ref{lem comm form}, the term $\frac{\sqrt{2}}{2}\left(\sum_{k \neq i} \widetilde{s}_{i,k} y_j+y_j\sum_{k \neq i}\widetilde{s}_{i,k}+y_i \sum_{l \neq j} \widetilde{s}_{l,j}+\sum_{l \neq j} \widetilde{s}_{l,j}y_i\right)$ is in $\Seg(W(A_{n-1}))$. This completes the proof.


\end{proof}

\begin{lemma} \label{lem comm rel 2}
\begin{enumerate}
\item[(1)] $wx_i'w^{-1}=x_{w(i)}'$;
\item[(2)] $c_ix_i'=-x_i'c_i$ and $c_jx_i'=x_i'c_j$ for $i \neq j$.
\end{enumerate}
\end{lemma}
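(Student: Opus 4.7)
The plan is to handle the two parts separately, with part~(2) being a direct Clifford algebra calculation and part~(1) reducing to a conjugation identity for $y_i'$.

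For part~(2), I would start from the definition $x_i' = -y_i' c_i$ and use Lemma~\ref{lem simple rel}(3), which says $c_k y_i' = -y_i' c_k$ for every $k$. Pushing $c_i$ past $y_i'$ and using $c_i^2 = -1$ gives $c_i x_i' = -c_i y_i' c_i = y_i' c_i^2 = -y_i'$, while $x_i' c_i = -y_i' c_i^2 = y_i'$, so $c_i x_i' = -x_i' c_i$. For $j \neq i$, pushing $c_j$ past $y_i'$ and then past $c_i$ using $c_j c_i = -c_i c_j$ yields $c_j x_i' = x_i' c_j$. This part is essentially bookkeeping of Clifford signs.

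For part~(1), observe that from relation~(\ref{rel star 4}) together with $w \mapsto w(c_i) = c_{w(i)}$ for the permutation action, one has $w c_i w^{-1} = c_{w(i)}$. Hence $w x_i' w^{-1} = -(w y_i' w^{-1}) \, c_{w(i)}$, so it suffices to prove
\[
   w y_i' w^{-1} = y_{w(i)}'.
\]
I would compute the two summands of $y_i' = y_i + \frac{\sqrt{2}}{2}\mathbf{k}\sum_{j \neq i} \widetilde{s}_{ij}$ under conjugation separately. Lemma~\ref{lem comm form} produces
\[
   w y_i w^{-1} = y_{w(i)} + \sqrt{2}\mathbf{k}\sum_{\beta > 0,\, w^{-1}(\beta) < 0,\, \langle \beta, w(e_i)\rangle \neq 0} \widetilde{s}_\beta,
\]
and Lemma~\ref{lem simple rel}(4), together with the parametrization of positive roots with $\langle \alpha, e_i\rangle \neq 0$ by $\{\alpha_{ij} : j \neq i\}$, gives
\[
   \sum_{j \neq i} w \widetilde{s}_{ij} w^{-1}
   = \sum_{\substack{\beta > 0,\ \langle \beta, w(e_i)\rangle \neq 0 \\ w^{-1}(\beta) > 0}} \widetilde{s}_\beta
   \ -\ \sum_{\substack{\beta > 0,\ \langle \beta, w(e_i)\rangle \neq 0 \\ w^{-1}(\beta) < 0}} \widetilde{s}_\beta.
\]

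Combining these with the coefficients $\sqrt{2}\mathbf{k}$ and $\frac{\sqrt{2}}{2}\mathbf{k}$, the $w^{-1}(\beta) < 0$ contributions coalesce as $(\sqrt{2} - \frac{\sqrt{2}}{2})\mathbf{k} = \frac{\sqrt{2}}{2}\mathbf{k}$, matching the $w^{-1}(\beta) > 0$ piece and producing the clean single sum
\[
   \frac{\sqrt{2}}{2}\mathbf{k} \sum_{\beta > 0,\, \langle \beta, w(e_i)\rangle \neq 0} \widetilde{s}_\beta
   = \frac{\sqrt{2}}{2}\mathbf{k} \sum_{k \neq w(i)} \widetilde{s}_{w(i),k},
\]
which is exactly the non-$y$ part of $y_{w(i)}'$. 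The main (and only real) obstacle is the careful matching of signs from Lemma~\ref{lem simple rel}(4) against the sign in Lemma~\ref{lem comm form}; once that cancellation is verified, both identities follow.
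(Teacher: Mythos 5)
Your proof is correct, and your argument for part~(1) takes a route that is meaningfully different from the paper's. For part~(1), the paper first reduces to the case $w = s_\alpha$ with $\alpha$ a simple root, then splits into the two cases $\langle e_i,\alpha\rangle = 0$ and $\langle e_i,\alpha\rangle \neq 0$ and verifies each by a short explicit calculation using the length-one instance of Lemma~\ref{lem comm form}. You instead keep $w$ arbitrary, apply Lemma~\ref{lem comm form} in full strength to conjugate $y_i$ and Lemma~\ref{lem simple rel}(4) to conjugate $\sum_{j\neq i}\widetilde{s}_{ij}$, using that $\{\alpha_{ij}: j \neq i\}$ parametrizes precisely the positive roots not orthogonal to $e_i$, and then split the conjugated sum by the sign of $w^{-1}(\beta)$. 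The reconciliation $(\sqrt{2} - \tfrac{\sqrt{2}}{2})\mathbf{k} = \tfrac{\sqrt{2}}{2}\mathbf{k}$ on the $w^{-1}(\beta)<0$ terms is the crux, and your bookkeeping there checks out. What you gain is a uniform argument with no case split; what the paper gains by reducing to simple reflections is a computation invoking only the simplest case of Lemma~\ref{lem comm form}, which is easier to verify by hand. The underlying ingredients are identical. Your part~(2) is essentially the paper's: a direct Clifford-sign calculation from $x_i' = -y_i'c_i$ and Lemma~\ref{lem simple rel}(3). (One small note: the paper's in-line definition of $y_i'$ in Section~\ref{ss comm rel} omits the factor $\mathbf{k}$, which is a typo; you use the correct form $y_i' = y_i + \tfrac{\sqrt{2}}{2}\mathbf{k}\sum_{j\neq i}\widetilde{s}_{ij}$, consistent with the paper's own later computations and with the type $B_n$ definition.)
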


\begin{proof}
For (1), it suffices to show when $w=s_{\alpha}$ for some $\alpha \in \Delta$. Fix an $i$. By the definition of $x_i'$, it suffices to show $s_{\alpha}y_i's_{\alpha}=y_{s_{\alpha}(i)}'$.
We consider two cases. In the case that $\langle e_i, \alpha \rangle =0$, $s_{\alpha}(\alpha_{i,j})>0$ for any $j \neq i$. Then $s_{\alpha}\widetilde{s}_{i,j}s_{\alpha}=\widetilde{s}_{i,s_{\alpha}(j)}$ for any $j \neq i$. Thus, the last equality in Lemma \ref{lem comm form} becomes
\[ s_{\alpha}y_i's_{\alpha}^{-1} = y_i+\frac{\sqrt{2}}{2}\mathbf{k}\sum_{j \neq i} \widetilde{s}_{i,s_{\alpha}(j)} =y_i'
\]
In the case that $\langle e_i, \alpha \rangle \neq 0$, let $k = i-1$ or $i+1$ such that $\alpha=\alpha_{i,k}$. Then, by Lemmas \ref{lem simple rel}(4) and \ref{lem comm form},
\begin{eqnarray*}
 s_{\alpha}y_i's_{\alpha}^{-1}&=& y_{s_{\alpha}(i)}+\sqrt{2}\mathbf{k}\widetilde{s}_{\alpha}-\frac{\sqrt{2}}{2}\mathbf{k}\widetilde{s}_{\alpha}+\frac{\sqrt{2}}{2}\mathbf{k}\sum_{j \neq i,k}\widetilde{s}_{k,j} \\
 &=&  y_k+\frac{\sqrt{2}}{2}\mathbf{k}\sum_{j \neq k} \widetilde{s}_{k,j} \\
 &=& y_k'
\end{eqnarray*}
For (2), it is straightforward from Lemma \ref{lem simple rel} and $y_i'=x_i'c_i$.
\end{proof}
\begin{remark}
The subalgebra of $\mathbb{H}^{Cl}_{W(A_{n-1})}$ generated by the elements $y_i$ and $\widetilde{s}_{i,j}$ is the degenerate spin affine Hecke algebra of type $A_{n-1}$ defined in \cite[Section 3.3]{Wan}. (Other classical types for the degenerate spin affine Hecke algebra are established in \cite[Section 4]{WK}.) The degenerate spin affine Hecke algebra can be regarded as more elementary analogue of the degenerate affine Hecke algebra, and the notions of $y_i'$ can be regarded as the Drinfield presentation \cite{Dr} under the analogue.

\end{remark}

\begin{proposition} \label{prop isom hcl}
The degenerate affine Hecke-Clifford algebra $\mathbb{H}^{Cl}_{W(A_{n-1})}$ satisfies the property (*) in Definition \ref{def hw property star}.
\end{proposition}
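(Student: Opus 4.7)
The plan is to make the identification of generators $a_i := x_i'$, $f_w := w$, with the $c_i$'s matching on the nose. Under this identification, the subalgebra $\Seg(W(A_{n-1}))$ of Definition \ref{def hw property star} becomes the subalgebra of $\mathbb{H}^{Cl}_{W(A_{n-1})}$ generated by $W(A_{n-1})$ and $c_1,\ldots,c_n$, which by Definition \ref{def hca}(4)(5) is the smash product of the Clifford algebra on $V$ with $\mathbb{C}[W(A_{n-1})]$, i.e.\ the Sergeev algebra. The injection $\mathbb{C}[W(A_{n-1})] \hookrightarrow \mathbb{H}^{Cl}_{W(A_{n-1})}$ is part of Definition \ref{def hca}(1).

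I would then dispose of the relations (\ref{rel star 0})--(\ref{rel star 4}) by citing the lemmas already in the excerpt. Relation (\ref{rel star 0}) is Lemma \ref{lem comm rel 2}(1); the bracket condition $[a_i,a_j]c_ic_j \in \Seg(W(A_{n-1}))$ for $i \neq j$ is precisely Lemma \ref{lem comm rel 1}; the Clifford commutations (\ref{rel star 1}) and (\ref{rel star 2}) (the latter read as $c_i a_i = -a_i c_i$, correcting an apparent typo) are Lemma \ref{lem comm rel 2}(2); (\ref{rel star 3}) is Definition \ref{def hca}(4) verbatim; and (\ref{rel star 4}) follows from Definition \ref{def hca}(5) together with the observation that a simple reflection $s_{i,i+1}$ swaps $c_i$ and $c_{i+1}$, so the identification $a_i \mapsto c_i$ is a $W$-equivariant isomorphism of $V$ with the span of the $c_i$'s.

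The main content is the PBW-type basis in terms of the new generators $x_i'$. Proposition \ref{prop basis HC} supplies the basis in terms of the $x_i$'s, and a direct unwinding of the definitions of $y_i, y_i', x_i'$ gives
\[ x_i' \;=\; x_i \;-\; \frac{\sqrt{2}}{2}\mathbf{k} \sum_{j \neq i} \widetilde{s}_{i,j}\, c_i, \]
so that $x_i' - x_i \in \Seg(W(A_{n-1}))$. I would filter $\mathbb{H}^{Cl}_{W(A_{n-1})}$ by total degree in $x_1,\ldots,x_n$; by Definition \ref{def hca}(6), commuting any $x_i$ past an element of $\Seg(W(A_{n-1}))$ does not raise this degree. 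Expanding $(x_1')^{k_1}\ldots (x_n')^{k_n} c_1^{\epsilon_1}\ldots c_n^{\epsilon_n} w$ in the old basis, the leading term (of total $x$-degree $k_1 + \ldots + k_n$) is $x_1^{k_1}\ldots x_n^{k_n} c_1^{\epsilon_1}\ldots c_n^{\epsilon_n} w$, with the remaining terms of strictly lower $x$-degree. The change of basis is therefore unitriangular for the filtration, so the $x'$-monomials form a basis as well.

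The only real obstacle is the unitriangularity step: a given correction $\widetilde{s}_{i,j}c_i$ coming from $x_i' - x_i$ must be commuted past the subsequent $x_\ell'$'s, and each such commutation can generate further $\Seg(W(A_{n-1}))$-valued terms. All of these are controlled by the filtration by total $x$-degree, however, because commutation with elements of $\Seg(W(A_{n-1}))$ cannot raise the $x$-degree (a consequence of Definition \ref{def hca}(6) together with Lemma \ref{lem simple rel}). The argument then closes by a straightforward induction on $k_1 + \ldots + k_n$, completing the verification that $\mathbb{H}^{Cl}_{W(A_{n-1})}$ satisfies property (*).
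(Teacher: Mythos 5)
Your proof is correct and follows the same route as the paper's: set $a_i := x_i'$, verify relations (\ref{rel star 0})--(\ref{rel star 4}) via Lemmas \ref{lem comm rel 1} and \ref{lem comm rel 2} together with Definition \ref{def hca}(4),(5), and deduce the required PBW basis from Proposition \ref{prop basis HC}. The paper leaves the basis step at ``by Proposition \ref{prop basis HC} and expressions of $x_i'$,'' whereas you usefully spell out the unitriangular change-of-basis argument via the filtration by total $x$-degree (using $x_i' - x_i \in \Seg(W(A_{n-1}))$ and that commuting $x_i$ past $\Seg(W(A_{n-1}))$ cannot raise $x$-degree); this is a correct elaboration of the same idea, not a different method.
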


\begin{proof}
We set $W$ in Definition \ref{ss HW} equal to $W(A_{n-1})$ and  set $a_i$ in Definition \ref{ss HW} to be $x_i'$. Using Lemmas \ref{lem comm rel 1} and \ref{lem comm rel 2}, one can verify relations (\ref{rel star 0}) to (\ref{rel star 4}) in Definition \ref{ss HW}. By Proposition \ref{prop basis HC} and expressions of $x_i'$, $(x_{1}')^{m_1}\ldots (x_n')^{m_n}c_1^{\epsilon_1}\ldots c_n^{\epsilon_n}w$ ($m_1, \ldots, m_n \in \mathbb{Z}$, $\epsilon_1, \ldots, \epsilon_n \in \left\{ 0,1 \right\}$, $w \in W(A_{n-1})$) form a basis for $\mathbb{H}^{Cl}_{W(A_{n-1})}$. These verify the property (*).
\end{proof}

\subsection{Type $B_n$} \label{ss Dirac type Bn}
 For type $B_n$, we modify the original definition in \cite{WK}. More precisely, the algebra we considered in Definition \ref{def dahc other cl} is a deformation of the algebra in \cite{WK}. It is not hard to do a similar modification for type $A_{n-1}$. The main reason for this modification is to construct an explicit module in the next section, which cannot be done in the original definition of \cite{WK} (by our approach). Considering lack of existing literature for the representation theory of the degenerate affine Hecke-Clifford algebra for other classical types, such example may be interesting and important.

\begin{notation} \label{notation Bn}
Let $W=W(B_n)$ be the Weyl group of type $B_n$. Let the set $R(B_n)$ of roots for type $B_n$ be
\[  R(B_n) = \left\{ \pm e_i \pm e_j : 1 \leq i < j \leq n \right\} \cup \left\{ \pm e_i : i=1, \ldots, n \right\} .
\]
The roots $\pm e_i \pm e_j$ ($i \neq j$) are long while the roots $\pm e_i$ are short. Fix a set $R^+(B_n)$ of positive roots:
\[  R^+(B_n) = \left\{  e_i \pm e_j : 1 \leq i < j \leq n \right\} \cup \left\{ e_i : i=1, \ldots, n \right\} .
\]
The set $\Delta$ of simple roots is
\[ \left\{ e_i-e_{i+1} : i=1,\ldots, n-1 \right\} \cup \left\{ e_n \right\} . \]
For $i \neq j >0$, define $\alpha_{ij}$ as in (\ref{eqn positive roots}), define $\alpha_{i,-j}=e_i+e_j$ and define $\alpha_i=e_i$. We also define $s_{ij}=s_{\alpha_{i,j}}$, $s_{i,-j}=s_{\alpha_{i,-j}}$ and $s_i=s_{\alpha_i}$.

We have a natural embedding $R(A_{n-1}) \subset R(B_n)$. and a natural embedding $W(A_{n-1}) \subset W(B_n)$ (i.e. the group $W(A_{n-1})$ being the group generated by $s_{i,i+1}$ for $i=1,\ldots, n-1$).

\end{notation}

\begin{definition} \label{def dahc other cl}
Let $N_{B_n} \in \mathbb{C}$. Let $\mathbb{H}^{Cl}_{W(B_n)}=\mathbb{H}^{Cl}_{W(B_n)}(\mathbf{k}, N_{B_n})$ be the associative unital algebra generated by the symbols $\left\{ x_i \right\}_{i=1}^n$, $\left\{ c_i \right\}_{i=1}^n$ and $\left\{ f_w :  w \in W(B_n) \right\}$ subject to the relations of (3), (4), (5), (6) in Definition \ref{def hca} and additionally,
\begin{enumerate}
\item the map from the group algebra $\mathbb{C}[W(B_n)]=\oplus_{w \in W(B_n)} \mathbb{C}w$ to $\mathbb{H}^{Cl}_{W(B_n)}$ given by $w \mapsto f_w$ is an algebra injection;
\item $f_{s_n}c_n=-c_nf_{s_n}$ and $f_{s_n}c_i=c_if_{s_n}$ for $i \neq n$,
\item
\begin{align*}
 f_{s_n}x_n+x_nf_{s_n}&=-\sqrt{2}\mathbf{k}_{\alpha_n},  \\
  f_{s_n}x_j-x_jf_{s_n}&=0 \quad \mbox{ for $j \neq n$ }.
 \end{align*}
\item   $x_ix_j-x_jx_i =N_{B_n} c_jc_i$ for $i \neq j$.
\end{enumerate}
When $N_{B_n}=0$, $\mathbb{H}^{Cl}_{W(B_n)}(\mathbf{k}, N_{B_n})$ coincides with the degenerate affine Hecke-Clifford algebra of type $B_n$ in \cite[Definition 3.9]{WK}. We shall again simply write $w$ for $f_w$.

For $N_{B_n}\neq 0$, while $x_i$ and $x_j$ does not commute for $i \neq j$, we still have $x_i^2x_j=x_jx_i^2$. The algebra $\mathbb{H}^{Cl}_{W(B_n)}$ hence still has some nice properties such as the commutation relations with intertwining operators (but we do not need this in this paper).
\end{definition}

For $i \neq j >0$, define $c_{\alpha_{ij}}$ as in (\ref{eqn Clifford An}) and define
\[  c_{\alpha_{i,-j}} =  \frac{\sqrt{2}}{2}(c_i +c_j)   .
\]
Set $\widetilde{s}_{i,-j}=s_{i,-j}c_{\alpha_{i,-j}}$. We also set  $\widetilde{s}_{\alpha}=\widetilde{s}_i=s_ic_i$.

Since we have modified the original definition of the degenerate affine Hecke-Clifford algebra for type $B_n$ in \cite{WK}, we will give a proof for the existence of the PBW type basis.

\begin{proposition} \label{prop PBW Bn}
The set
\[   \left\{ x_{1}^{m_1}\ldots x_n^{m_n}c_1^{\epsilon_1}\ldots c_n^{\epsilon_n}w : m_1,\ldots, m_n \in \mathbb{Z}_{\geq 0}, \epsilon_1,\ldots, \epsilon_n \in \left\{ 0, 1\right\}, w \in W(B_n) \right\} \]
forms a basis for $\mathbb{H}^{Cl}_{W(B_n)}$.

\end{proposition}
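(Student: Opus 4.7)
The plan is to prove the PBW statement in two stages: first verify that the claimed monomials span $\mathbb{H}^{Cl}_{W(B_n)}$ by an explicit normal form reduction using the defining relations, and then establish linear independence by a filtration argument that reduces to the already known case $N_{B_n}=0$.

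For spanning, I would show any word in the generators can be rewritten as a linear combination of monomials of the form $x_1^{m_1}\ldots x_n^{m_n}c_1^{\epsilon_1}\ldots c_n^{\epsilon_n}w$. The reduction proceeds in four stages: (i) sort the $x$'s into ascending order using relation (4) of Definition \ref{def dahc other cl}, noting that each swap produces either a term of the same $x$-degree with indices exchanged, or a correction term $N_{B_n}c_jc_i$ of strictly lower $x$-degree; (ii) push every $c_j$ to the right of every $x_i$ using relations (3) of Definition \ref{def hca}, which only introduces signs and preserves the $x$-degree; (iii) reorder the $c$'s and eliminate squares using relation (4) of Definition \ref{def hca}; (iv) push every Weyl group element to the right past the $x$'s using relation (6) of Definition \ref{def hca} together with the new type-B relations $f_{s_n}x_n + x_n f_{s_n}=-\sqrt{2}\mathbf{k}_{\alpha_n}$ and $f_{s_n}x_j=x_jf_{s_n}$ for $j\neq n$, noting that each such swap yields a correction term of strictly lower $x$-degree. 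Termination follows by induction on the triple consisting of total $x$-degree, number of inversions in the $x$-string, and number of group elements not yet moved to the right.

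For linear independence, I would endow the free algebra on $\{x_i, c_i, f_w\}$ with the filtration by total $x$-degree and observe that every defining relation of $\mathbb{H}^{Cl}_{W(B_n)}(\mathbf{k}, N_{B_n})$ has leading ($x$-highest-degree) part equal to the corresponding relation of the original Khongsap-Wang algebra $\mathbb{H}^{Cl}_{W(B_n)}(\mathbf{k}, 0)$: the correction $N_{B_n}c_jc_i$ in relation (4) of Definition \ref{def dahc other cl} has $x$-degree zero and is therefore strictly lower than the leading $x_ix_j - x_jx_i$. Consequently the natural surjection from the free algebra modulo leading-term relations onto the associated graded factors through $\mathbb{H}^{Cl}_{W(B_n)}(\mathbf{k}, 0)$, giving a surjection
\[ \mathbb{H}^{Cl}_{W(B_n)}(\mathbf{k}, 0) \twoheadrightarrow \mathrm{gr}\,\mathbb{H}^{Cl}_{W(B_n)}(\mathbf{k}, N_{B_n}). \]
Since the PBW theorem for the case $N_{B_n}=0$ is \cite[Theorem 3.10]{WK} (the exact analogue of Proposition \ref{prop basis HC}), the target is spanned by the images of the claimed monomials; spanning from the first part then gives a chain of surjections in which the claimed monomials survive both in the graded and in the filtered algebra, forcing their linear independence.

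The main obstacle I anticipate is the bookkeeping in the spanning step, specifically verifying that the four-stage reduction genuinely terminates once the correction terms coming from relation (4) of Definition \ref{def dahc other cl} are mixed with the lower-order corrections coming from moving $w$-elements past $x$-elements. A secondary but minor issue is to record carefully that the type-B Weyl group injects into $\mathbb{H}^{Cl}_{W(B_n)}$, i.e.\ that the Weyl group relations are not accidentally degenerated by the new deformation; this is automatic from the filtered/graded argument since the image of $\mathbb{C}[W(B_n)]$ lies in filtration degree zero where the algebra coincides with the $N_{B_n}=0$ case.
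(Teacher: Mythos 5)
Your spanning argument (normal-form reduction with the stated termination measure) is fine in spirit, and your observation that the leading parts of all defining relations of $\mathbb{H}^{Cl}_{W(B_n)}(\mathbf{k}, N_{B_n})$ match those of $\mathbb{H}^{Cl}_{W(B_n)}(\mathbf{k},0)$ is correct, so the canonical surjection $\mathbb{H}^{Cl}_{W(B_n)}(\mathbf{k},0) \twoheadrightarrow \mathrm{gr}\,\mathbb{H}^{Cl}_{W(B_n)}(\mathbf{k},N_{B_n})$ is well-defined. The gap is the last step: a surjection from an algebra with a monomial basis onto $\mathrm{gr}\,A$, together with the fact that the monomial images span $\mathrm{gr}\,A$, does \emph{not} force linear independence. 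These two facts are logically equivalent to each other (both simply restate surjectivity), and together they only give the upper bound $\dim \mathrm{gr}_d A \le \dim B_d$, where the inequality could be strict if the deformation is not ``flat.'' What is missing is precisely the PBW-flatness condition, i.e., that the overlap ambiguities of the rewriting system resolve consistently: the rewriting rule (4) of Definition \ref{def dahc other cl} interacts with the rules for moving $s_{i,i+1}$, $s_n$ past $x$'s and with associativity in the $x$'s, and without checking these overlaps one cannot rule out hidden relations in filtration degree $< d$ that collapse the graded algebra.

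This is exactly what the paper supplies that your proposal lacks: it invokes Bergman's diamond lemma \cite{Be} and explicitly resolves the minimal ambiguities (e.g., $(\overline{s}_{i,i+1}\overline{x}_{i+1})\overline{x}_i$ vs. $\overline{s}_{i,i+1}(\overline{x}_{i+1}\overline{x}_i)$, $(\overline{s}_n \overline{x}_n)\overline{x}_j$ vs. $\overline{s}_n(\overline{x}_n\overline{x}_j)$, $(\overline{x}_i\overline{x}_j)\overline{x}_k$ vs. $\overline{x}_i(\overline{x}_j\overline{x}_k)$), which yields both spanning \emph{and} linear independence in one stroke for an intermediate algebra $\widetilde{\mathbb{H}}$ with free Weyl-group generators; it then checks separately that quotienting by the $W(B_n)$-relations respects the $\mathcal P = \langle x_i, c_i\rangle$-part via identities of the form $(\overline{s}^2-1)\mathcal P = \mathcal P(\overline{s}^2-1)$. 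To repair your proof you would need to either carry out these ambiguity checks, or replace the ``surjection onto $\mathrm{gr}$'' step by a genuine argument for injectivity (for example, a faithful representation of $\mathbb{H}^{Cl}_{W(B_n)}(\mathbf{k},N_{B_n})$ on which the proposed monomials act independently). As written, the phrase ``forcing their linear independence'' is a non-sequitur.
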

\begin{proof}
We follow the argument in \cite[Theorem 3.2.2]{WK}. We consider the algebra $\widetilde{\mathbb{H}}$ generated by $\left\{\overline{x}_i \right\}$, $\left\{ \overline{c}_i \right\}$ and $\left\{ \overline{s}_{i,i+1} \right\}_{i=1}^{n-1} \cup \left\{ \overline{s}_n \right\}$ subject to the relations  (3), (4), (5), (6) in Definition \ref{def hca}  and the relation (2) (but not (1)) in Definition \ref{def hca Dn} (with a trivial replacement of notations). We resolve the minimal ambiguities according to the Bergman's diamond lemma \cite{Be}. For example, we may consider an ordering $\overline{s} <\overline{c}_n <\ldots<\overline{c}_1< \overline{x}_n < \ldots <\overline{x}_1$, where $s$ is any simple reflection in $W(B_n)$. This induces a semigroup ordering on $\langle \overline{x}_i, \overline{c}_i, \overline{s} \rangle$ ($i=1,\ldots,n$ and $s$ runs for all simple reflections) from the length of words and the lexicographical ordering. Then one checks that
\begin{align*}
 & (\overline{s}_{i,i+1}\overline{x}_{i+1})\overline{x}_{i}  \\
=& (\overline{x}_{i}\overline{s}_{i,i+1}-\mathbf{k}_{\alpha_{i,i+1}}(-1+\overline{c}_{i+1}\overline{c}_{i}))\overline{x}_{i}  \\
=& \overline{x}_i\overline{s}_{i,i+1}\overline{x}_{i}-\mathbf{k}_{\alpha_{i,i+1}}(-1+\overline{c}_{i+1}\overline{c}_{i})\overline{x}_{i}  \\
=& \overline{x}_i\overline{x}_{i+1}\overline{s}_{i,i+1}+\mathbf{k}_{\alpha_{i,i+1}}\overline{x}_i(-1+\overline{c}_i\overline{c}_{i+1})-\mathbf{k}_{\alpha_{i,i+1}}(-1+\overline{c}_{i+1}\overline{c}_{i})\overline{x}_{i}  \\
=& \overline{x}_i\overline{x}_{i+1}\overline{s}_{i,i+1}
\end{align*}
and
\begin{align*}
 & \overline{s}_{i,i+1}(\overline{x}_{i+1}\overline{x}_{i})  \\
=& \overline{s}_{i,i+1}(\overline{x}_{i}\overline{x}_{i+1}+N_{B_n}\overline{c}_i\overline{c}_{i+1}) \\
=&  \overline{s}_{i,i+1}\overline{x}_{i}\overline{x}_{i+1}+N_{B_n}\overline{c}_{i+1}\overline{c}_{i}\overline{s}_{i,i+1} \\
=& \overline{x}_{i+1}\overline{s}_{i,i+1}\overline{x}_{i+1}+\mathbf{k}_{\alpha_{i,i+1}}(-1+\overline{c}_i\overline{c}_{i+1})x_{i+1} +N_{B_n}\overline{c}_{i+1}\overline{c}_{i}\overline{s}_{i,i+1} \\
=& \overline{x}_{i+1}\overline{x}_{i}\overline{s}_{i,i+1}-\mathbf{k}_{\alpha_{i,i+1}}\overline{x}_{i+1}(-1+\overline{c}_{i+1}\overline{c}_{i})+\mathbf{k}_{\alpha_{i,i+1}}(-1+\overline{c}_i\overline{c}_{i+1})\overline{x}_{i+1}+N_{B_n}\overline{c}_{i+1}\overline{c}_{i}\overline{s}_{i,i+1}  \\
=& \overline{x}_i\overline{x}_{i+1}\overline{s}_{i,i+1}
\end{align*}
Similarly,
\begin{align*}
 &(\overline{s}_n\overline{x}_n)\overline{x}_j  \\
=& (-\overline{x}_n\overline{s}_n-\sqrt{2}\mathbf{k}_{\alpha_n})\overline{x}_j \\
=& -\overline{x}_n\overline{s}_n\overline{x}_j -\sqrt{2}\mathbf{k}_{\alpha_n}\overline{x}_j \\
=& -\overline{x}_n\overline{x}_j\overline{s}_n-\sqrt{2}\mathbf{k}_{\alpha_n}\overline{x}_j \\
=& -\overline{x}_j \overline{x}_n \overline{s}_n -N_{B_n}c_jc_n\overline{s}_n -\sqrt{2}\mathbf{k}_{\alpha_n}\overline{x}_j
\end{align*}
and
\begin{align*}
 &\overline{s}_n(\overline{x}_n\overline{x}_j)  \\
=& \overline{s}_n\overline{x}_j\overline{x}_n +N_{B_n}\overline{s}_n \overline{c}_j\overline{c}_n \\
=& \overline{x}_j\overline{s}_n\overline{x}_n +N_{B_n}\overline{s}_n \overline{c}_j\overline{c}_n \\
=& -\overline{x}_j\overline{x}_n\overline{s}_n-\sqrt{2}\mathbf{k}_{\alpha_n}\overline{x}_j - N_{B_n}\overline{c}_j\overline{c}_n\overline{s}_n
\end{align*}
Similarly, for $i >j>k$,
\begin{align*}
 &  (\overline{x}_i\overline{x}_j)\overline{x}_k \\
=&  \overline{x}_j\overline{x}_i\overline{x}_k +N_{B_n}c_jc_i\overline{x}_k \\
=& \overline{x}_j \overline{x}_k \overline{x}_i +N_{B_n}\overline{x}_jc_kc_i+N_{B_n}c_jc_i\overline{x}_k \\
=& \overline{x}_k \overline{x}_j \overline{x}_i +N_{B_n}c_kc_j\overline{x}_i +N_{B_n}\overline{x}_jc_kc_i+N_{B_n}c_jc_i\overline{x}_k \\
=& \overline{x}_k\overline{x}_j \overline{x}_i+N_{B_n} c_kc_j\overline{x}_i+N_{B_n} c_kc_i\overline{x}_j +N_{B_n}c_jc_i\overline{x}_k
\end{align*}
The calculation for $\overline{x}_i(\overline{x}_j\overline{x}_k)$ is similar. Other minimal ambiguities can be checked similarly.

Let $\mathcal I$ be the two-sided ideal generated of $\widetilde{\mathbb{H}}$ by the relations of $W(B_n)$ (e.g. $\overline{s}^2-1$, $\overline{s}_{i,i+1}\overline{s}_{i+1,i+2}\overline{s}_{i,i+1}-\overline{s}_{i+1,i+2}\overline{s}_{i,i+1}\overline{s}_{i+1,i+2}$). Then $\widetilde{H}/\mathcal I \cong \mathbb{H}^{Cl}_{W(B_n)}$. Let $\mathcal P$ be the subalgebra of $\widetilde{\mathbb{H}}$ generated by by $x_i$ and $c_i$. It is straightforward to check that
\[(\overline{s}^2-1)\mathcal P=\mathcal P(\overline{s}^2-1),\]
\[(\overline{s}_{i,i+1}\overline{s}_{i+1,i+2}\overline{s}_{i,i+1}-\overline{s}_{i+1,i+2}\overline{s}_{i,i+1}\overline{s}_{i+1,i+2})\mathcal P=\mathcal P(\overline{s}_{i,i+1}\overline{s}_{i+1,i+2}\overline{s}_{i,i+1}-\overline{s}_{i+1,i+2}\overline{s}_{i,i+1}\overline{s}_{i+1,i+2}),\]
and other similar equations. Those equations can also be deduced from Lemma \ref{lem commut rel2} and its proof below.

\end{proof}

\begin{lemma}
For any root $\alpha >0$, $c_i\widetilde{s}_{\alpha}=-\widetilde{s}_{\alpha}c_i$.
\end{lemma}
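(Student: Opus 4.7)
The approach is to reduce the claim to a short Clifford-algebraic calculation. For any $v = \sum_k v_k e_k \in V$, set $c_v = \sum_k v_k c_k$; then the relations $c_j c_k + c_k c_j = 0$ ($j \ne k$) and $c_j^2 = -1$ yield the standard Clifford identity $\{c_u, c_v\} = -2\langle u, v\rangle$ for $u, v \in V$. A brief case-check (long roots $\alpha_{ij}, \alpha_{i,-j}$ with $\|\alpha\|^2 = 2$, and the short root $\alpha_i$ with $\|\alpha\| = 1$) confirms that $c_\alpha = c_{\hat\alpha}$, where $\hat\alpha = \alpha/\|\alpha\|$.

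Next, the relation $w c_i = w(c_i) w$ (extended linearly) gives $s_\alpha c_v = c_{s_\alpha(v)} s_\alpha$ for every $v \in V$; using $s_\alpha = s_\alpha^{-1}$ we may also write $c_v s_\alpha = s_\alpha c_{s_\alpha(v)}$. Applying this together with the reflection formula $s_\alpha(e_i) = e_i - 2\langle e_i, \hat\alpha\rangle \hat\alpha$:
\begin{align*}
c_i \widetilde{s}_\alpha &= (c_i s_\alpha) c_\alpha = s_\alpha c_{s_\alpha(e_i)} c_\alpha = s_\alpha\bigl(c_i - 2\langle e_i, \hat\alpha\rangle c_\alpha\bigr) c_\alpha \\
&= s_\alpha c_i c_\alpha + 2\langle e_i, \hat\alpha\rangle s_\alpha,
\end{align*}
where $c_\alpha^2 = c_{\hat\alpha}^2 = -1$ is used in the last step. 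On the other hand, $\{c_\alpha, c_i\} = -2\langle \hat\alpha, e_i\rangle$ gives
\[
\widetilde{s}_\alpha c_i = s_\alpha c_\alpha c_i = s_\alpha\bigl(-c_i c_\alpha - 2\langle \hat\alpha, e_i\rangle\bigr) = -s_\alpha c_i c_\alpha - 2\langle \hat\alpha, e_i\rangle s_\alpha.
\]
Summing these two expressions yields $c_i \widetilde{s}_\alpha + \widetilde{s}_\alpha c_i = 0$, as desired.

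The only mildly delicate point is the uniform treatment of the different root lengths, but once $c_\alpha$ is identified with $c_{\hat\alpha}$ the computation is independent of whether $\alpha$ is long or short in $B_n$; the argument also applies verbatim to the type $A_{n-1}$ roots handled in Section \ref{ss comm rel}, and with only cosmetic modifications to other classical types.
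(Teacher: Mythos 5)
Your proof is correct. The paper states this lemma without proof (as with the analogous type $A_{n-1}$ statement, Lemma \ref{lem simple rel}(2), which the paper calls ``elementary'' and also skips), so there is no explicit paper argument to compare against; your route --- identifying $c_\alpha$ with $c_{\hat\alpha}$ for the unit vector $\hat\alpha$, using the Clifford identity $\{c_u, c_v\} = -2\langle u,v\rangle$, and the $W$-equivariance $s_\alpha c_v = c_{s_\alpha(v)} s_\alpha$ together with the reflection formula --- is the natural, uniform way to verify it across root lengths, and all the algebra checks out.
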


\begin{lemma} \label{lem commut rel2}
\[ wy_iw^{-1}= y_{w(i)}+\sqrt{2} \sum_{\alpha>0, w^{-1}(\beta)<0, \langle \beta, w(e_i) \rangle \neq 0}\mathbf{k}_{\alpha} \widetilde{s}_{\alpha}. \]
In particular, for $\alpha>0$
\[ \widetilde{s}_{\alpha}y_i\widetilde{s}_{\alpha}^{-1}+y_{s_{\alpha}(i)}= -\sqrt{2} \sum_{\alpha>0, s_{\alpha}^{-1}(\beta)<0, \langle \beta, s_{\alpha}(e_i) \rangle \neq 0}\mathbf{k}_{\beta} \widetilde{s}_{\beta}. \]
\end{lemma}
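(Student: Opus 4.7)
The plan is to induct on the length $l(w)$, following the proof of Lemma \ref{lem comm form} but treating the new simple reflection $s_n$ separately. Throughout I read $y_{w(i)}$ via the induced action of $w$ on the unsigned index set, i.e., $y_{w(i)} := y_j$ where $w(e_i) = \pm e_j$; this is the only sensible convention compatible with the base-case computation below.

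For the base case $l(w) = 1$, when $w = s_{i,i+1}$ with $i \leq n-1$, the relations (3)--(6) of Definition \ref{def hca} used in the type $A_{n-1}$ calculation still hold inside $\mathbb{H}^{Cl}_{W(B_n)}$, so the three subcases $\langle e_i,\alpha_{i,i+1}\rangle \in \{-1,0,1\}$ from the proof of Lemma \ref{lem comm form} carry over verbatim. For $w = s_n$ and $i \neq n$, both $x_i$ and $c_i$ commute with $s_n$ by parts (2) and (3) of Definition \ref{def dahc other cl}, hence $s_n y_i s_n = y_i = y_{s_n(i)}$; the right-hand sum also vanishes since the unique $\beta > 0$ with $s_n(\beta) < 0$ is $\alpha_n$, which satisfies $\langle \alpha_n, e_i\rangle = 0$. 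For $i = n$, the relations $s_n x_n s_n = -x_n - \sqrt{2}\mathbf{k}_{\alpha_n} s_n$ and $s_n c_n s_n = -c_n$ give $s_n y_n s_n = y_n + \sqrt{2}\mathbf{k}_{\alpha_n}\widetilde{s}_n$, which matches the single contribution $\beta = \alpha_n$ on the right.

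The inductive step factors $w = s_\alpha w'$ with $\alpha$ simple and $l(w') = l(w) - 1$, then repeats the three-line chain of equalities in the proof of Lemma \ref{lem comm form}. The key ingredients are the base case (now also covering $\alpha = \alpha_n$) and the type $B_n$ analogue of Lemma \ref{lem simple rel}(4), namely $s_\alpha \widetilde{s}_\beta s_\alpha = \widetilde{s}_{s_\alpha(\beta)}$ if $s_\alpha(\beta) > 0$ and $s_\alpha\widetilde{s}_\beta s_\alpha = -\widetilde{s}_{-s_\alpha(\beta)}$ otherwise, whose proof is identical to the type $A_{n-1}$ one. Combining the extra $\epsilon\sqrt{2}\mathbf{k}_\alpha\widetilde{s}_\alpha$ term coming from applying the base case to $s_\alpha y_{w'(i)} s_\alpha$ with the reindexed inversion sum $\{s_\alpha(\gamma):\gamma>0,\ (w')^{-1}\gamma<0\}$ reconstructs exactly the asserted sum over $\{\beta>0: w^{-1}\beta<0,\ \langle \beta, w(e_i)\rangle\neq 0\}$ via the standard sorting of $W(B_n)$-inversions.

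The second assertion follows from the first via the identity $\widetilde{s}_\alpha y_i \widetilde{s}_\alpha^{-1} = -s_\alpha y_i s_\alpha$, which uses $c_\alpha y_i = -y_i c_\alpha$ (by Lemma \ref{lem simple rel}(1) extended linearly) together with $c_\alpha^2 = -1$ for every root $\alpha$ in $R(B_n)$, exactly as at the end of the proof of Lemma \ref{lem comm form}. I expect the only real technical obstacle to be the base case $w = s_n$ under the modified relations of Definition \ref{def dahc other cl}, together with confirming that the unsigned-index reading of $y_{w(i)}$ is consistent with that computation; once those are in place, the inductive bookkeeping merely tracks the long versus short root coefficients $\mathbf{k}_\alpha$ and is otherwise structurally identical to type $A_{n-1}$.
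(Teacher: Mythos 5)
Your proposal is correct and follows essentially the same route as the paper: verify the new base case $w=s_n$ (which the paper reduces to $s_ny_ns_n^{-1}=y_n+\sqrt{2}\,\mathbf{k}_{\alpha_n}\widetilde{s}_n$) and then invoke the same length induction and the same identity $\widetilde{s}_\alpha y_i\widetilde{s}_\alpha^{-1}=-s_\alpha y_i s_\alpha$ as in the type $A_{n-1}$ proof of Lemma~\ref{lem comm form}. You merely spell out the bookkeeping (unsigned-index reading of $y_{w(i)}$, the $i\neq n$ subcase, and the $W$-invariance of $\mathbf{k}$ under re-indexing) that the paper leaves implicit.
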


\begin{proof}
The relation $s_nx_n+x_ns_n=-\sqrt{2}\mathbf{k}_{\alpha}$ implies $s_ny_n-y_ns_n=-\sqrt{2}\mathbf{k}_{\alpha}c_n$. The latter equation is also equivalent to $s_ny_ns_n^{-1}=y_n+\sqrt{2} \mathbf{k}_{\alpha}\widetilde{s}_n$. The remaining proof is just similar to the case of $A_{n-1}$ in the proof of Lemma \ref{lem comm form}.
\end{proof}
For $i>0$,  define $y_i=x_ic_i$.

\begin{align} \label{eqn yi' Bn}
   y_i'& =y_i+\frac{\sqrt{2}}{2}\sum_{\alpha >0, \langle \alpha, e_i\rangle \neq 0} \mathbf{k}_{\alpha}\widetilde{s}_{\alpha}.
\end{align}
We also efine $y_{-i}=y_i$ and $y_{-i}'=y_i'$.

 There is a natural permutation of $W(B_n)$ on the set $\left\{ \pm 1, \ldots, \pm n \right\}$.

\begin{lemma} \label{def hca Bn}
\begin{enumerate}
\item  For any $w\in W(B_n)$, $wy_i'w^{-1}=y_{w(i)}'$;
\item For $i \neq j$, $y_i'y_j'+y_j'y_i' \in \Seg(W(B_n))$.
\end{enumerate}
\end{lemma}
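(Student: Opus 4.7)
The plan is to mirror the proof of Lemmas \ref{lem comm rel 1} and \ref{lem comm rel 2} in type $A_{n-1}$, using Lemma \ref{lem commut rel2} as the main technical input, with additional bookkeeping for the short roots and the deformation parameter $N_{B_n}$.

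For (1), I would reduce to the case $w = s_\alpha$ for $\alpha \in \Delta$. When $\alpha = \alpha_{i,i+1}$ is long, the argument is essentially identical to the proof of Lemma \ref{lem comm rel 2}(1): use Lemma \ref{lem commut rel2} to compute $s_\alpha y_i s_\alpha^{-1}$, use the $B_n$ analogue of Lemma \ref{lem simple rel}(4) to compute $s_\alpha \widetilde{s}_\beta s_\alpha^{-1}$ for each $\beta$ in the sum defining $y_i'$, and verify that the extra $\sqrt{2}\mathbf{k}_\alpha \widetilde{s}_\alpha$ term produced by the $y_i$ piece is exactly absorbed by the reindexing of the sum into the defining sum of $y_{s_\alpha(i)}'$. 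For the short simple reflection $s_n$, the modified relation $s_n x_n + x_n s_n = -\sqrt{2}\mathbf{k}_{\alpha_n}$ gives $s_n y_n s_n^{-1} = y_n + \sqrt{2}\mathbf{k}_{\alpha_n}\widetilde{s}_n$ (after multiplying by $c_n$), and the same reindexing calculation shows $s_n y_i' s_n^{-1} = y_{s_n(i)}'$; here one must use that $s_n$ permutes $\{\alpha > 0 : \langle \alpha, e_i\rangle \neq 0\}$ for $i \neq n$, and for $i = n$ swaps the sets for $e_n$ and $-e_n$ with the appropriate sign from Lemma \ref{lem commut rel2}.

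For (2), I would write $y_k' = y_k + A_k$ with $A_k = \frac{\sqrt{2}}{2}\sum_{\alpha > 0,\ \langle \alpha, e_k\rangle \neq 0} \mathbf{k}_\alpha \widetilde{s}_\alpha$, so that
\[
y_i'y_j' + y_j'y_i' = (y_iy_j + y_jy_i) + (y_iA_j + A_jy_i + A_iy_j + y_jA_i) + (A_iA_j + A_jA_i).
\]
The first bracket equals the scalar $-N_{B_n}$: using $c_ix_j = x_jc_i$ for $i \neq j$ and $(c_jc_i)^2 = -1$, one gets $y_iy_j + y_jy_i = (x_jx_i - x_ix_j)c_jc_i = N_{B_n}(c_jc_i)^2 = -N_{B_n}$. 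The last bracket is manifestly in $\Seg(W(B_n))$.

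The middle bracket is the delicate part. I would group it over positive roots $\alpha$: using Lemma \ref{lem commut rel2}, each pair $\widetilde{s}_\alpha y_k + y_k \widetilde{s}_\alpha$ equals $(y_k - y_{s_\alpha(k)})\widetilde{s}_\alpha$ modulo $\Seg(W(B_n))$. I split into cases by the support of $\alpha$: if $\alpha$ involves $e_i$ but not $e_j$, then $s_\alpha(j) = j$ so the $y_j$-pair contributes only a $\Seg$ piece, and similarly for the reverse; if $\alpha = e_i \pm e_j$ involves both, then $s_\alpha$ swaps $\pm i$ with $\mp j$, and since $y_{-k} = y_k$ the two contributions $(y_j - y_{s_\alpha(j)})\widetilde{s}_\alpha$ and $(y_i - y_{s_\alpha(i)})\widetilde{s}_\alpha$ add to zero. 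Thus every piece of $y_i'y_j' + y_j'y_i'$ lies in $\Seg(W(B_n))$.

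The main obstacle will be the careful bookkeeping in the last cancellation of (2): one has to handle the long roots $e_i - e_j$ and $e_i + e_j$ simultaneously, keeping track of signs and of the identification $y_{-k} = y_k$, and verifying that the coefficients $\mathbf{k}_\alpha$ (which are $W$-invariant on orbits but can differ between long and short roots) match correctly. For (1) the short-root reflection $s_n$ requires separate verification but is routine once the analogue of Lemma \ref{lem comm form} for $B_n$ (Lemma \ref{lem commut rel2}) is in hand.
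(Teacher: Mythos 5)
Your proposal is correct and follows essentially the same line as the paper's proof: expand $y_i'y_j'+y_j'y_i'$ using $y_k'=y_k+A_k$, identify $y_iy_j+y_jy_i$ as a scalar, note $A_iA_j+A_jA_i\in\Seg(W(B_n))$, and reduce the middle bracket to $\Seg(W(B_n))$ via Lemma \ref{lem commut rel2}; for part (1) you correctly reduce to simple reflections and invoke Lemma \ref{lem commut rel2} and $s_\alpha\widetilde{s}_\alpha s_\alpha=-\widetilde{s}_\alpha$. You actually supply more detail than the paper for the crucial cancellation in the middle bracket: the paper merely asserts that the four cross terms lie in $\Seg(W(B_n))$ by Lemma \ref{lem commut rel2}, while you correctly observe that the individual terms $y_k\widetilde{s}_\alpha+\widetilde{s}_\alpha y_k$ with $s_\alpha(k)\neq\pm k$ are not in $\Seg(W(B_n))$, and that it is only the pairwise cancellation for $\alpha=e_i\pm e_j$ (using $y_{-k}=y_k$) that makes the whole sum land in $\Seg(W(B_n))$; that is the right observation and worth recording. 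One small slip: you compute $y_iy_j+y_jy_i=(x_jx_i-x_ix_j)c_jc_i$ and then write this as $N_{B_n}(c_jc_i)^2=-N_{B_n}$, but from $x_ix_j-x_jx_i=N_{B_n}c_jc_i$ one has $x_jx_i-x_ix_j=-N_{B_n}c_jc_i$, so the bracket equals $-N_{B_n}(c_jc_i)^2=+N_{B_n}$ (as the paper states). The sign is immaterial for the conclusion since any scalar lies in $\Seg(W(B_n))$, but it should be corrected.
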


\begin{proof}
For (1), it suffices to check when $w=s_{\alpha}$ is a simple reflection. It is the direct consequence of the expression (\ref{eqn yi' Bn}) for $y_i'$, Lemma \ref{lem commut rel2}, and the fact that $s_{\alpha}\widetilde{s}_{\alpha}s_{\alpha}=-\widetilde{s}_{\alpha}$.
For (2), using the expression (\ref{eqn yi' Bn}), we have
\begin{eqnarray*}
& & y_i'y_j'+y_j'y_i'  \\
&=& y_iy_j+y_jy_i+\frac{\sqrt{2}}{2}\left[\sum_{\alpha>0, \langle \alpha, e_j\rangle \neq 0} \mathbf{k}_{\alpha}\left(y_i\widetilde{s}_{\alpha}+\widetilde{s}_{\alpha} y_i\right)+\sum_{\alpha>0, \langle \alpha, e_i\rangle \neq 0}\mathbf{k}_{\alpha} \left(y_j\widetilde{s}_{\alpha}+\widetilde{s}_{\alpha} y_j\right) \right] \\
& & \quad +\frac{1}{2}\sum_{\alpha, \beta>0, \langle \alpha, e_i\rangle \neq 0,  \langle \beta, e_i\rangle \neq 0} \mathbf{k}_{\alpha}\mathbf{k}_{\beta} (\widetilde{s}_{\alpha}\widetilde{s}_{\beta}+\widetilde{s}_{\beta}\widetilde{s}_{\alpha})
\end{eqnarray*}
Since $y_iy_j+y_jy_i=N_{B_n}$, we only need to consider and show the middle term is in $\Seg(W(B_n))$:
\begin{align*}
& \sum_{\alpha>0, \langle \alpha, e_j\rangle \neq 0} \left(y_i\widetilde{s}_{\alpha}+\widetilde{s}_{\alpha} y_i\right)+\sum_{\alpha>0, \langle \alpha, e_i\rangle \neq 0} \left(y_j\widetilde{s}_{\alpha}+\widetilde{s}_{\alpha} y_j\right)  \\
=&\left(\sum_{k \neq i}\mathbf{k}_{\alpha_{i,k}} \widetilde{s}_{i,k} y_j+y_j\sum_{k \neq i}\mathbf{k}_{\alpha_{i,k}}\widetilde{s}_{i,k}+y_i \sum_{l \neq j}\mathbf{k}_{\alpha_{l,j}} \widetilde{s}_{l,j}+\sum_{l \neq j}\mathbf{k}_{\alpha_{l,j}} \widetilde{s}_{l,j}y_i\right) \\
 & \quad \quad +\left(\sum_{k>0, k \neq i} \widetilde{s}_{i,-k} y_j+y_j\sum_{k>0, k \neq i}\widetilde{s}_{i,-k}+y_i \sum_{l>0 ,l \neq j} \widetilde{s}_{j,-l}+\sum_{l>0, l \neq j} \widetilde{s}_{j,-l}y_i\right) \\
 & \quad \quad +\mathbf{k}_{\alpha_i}(\widetilde{s}_{i}y_j+y_j\widetilde{s}_{i})+\mathbf{k}_{\alpha_j}(\widetilde{s}_j y_i+y_i\widetilde{s}_i)
\end{align*}
which is in $\mathrm{Seg}(W(B_n))$ by Lemma \ref{lem commut rel2}.

\end{proof}

\begin{proposition} \label{prop star type B}
The superalgebra $\mathbb{H}^{Cl}_{W(B_n)}$ satisfies the property (*).
\end{proposition}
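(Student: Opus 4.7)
The plan is to follow the template of Proposition \ref{prop isom hcl}, which handles type $A_{n-1}$. I would take $W$ in Definition \ref{def hw property star} to be $W(B_n)$ and set the distinguished generators $a_i$ to be $x_i' := -y_i' c_i$, where $y_i'$ is the modified Jucys--Murphy-type element defined in (\ref{eqn yi' Bn}). With this identification $x_i' \leftrightarrow e_i$, one then verifies the six relations in Definition \ref{def hw property star} in turn and establishes the PBW-type basis.

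For the relations: the Clifford relations (\ref{rel star 3}) and the cross-commutation (\ref{rel star 4}) are built into the definition of $\mathbb{H}^{Cl}_{W(B_n)}$. The relations (\ref{rel star 1}) and (\ref{rel star 2}), namely $c_j x_i' = x_i' c_j$ for $i \neq j$ and $c_i x_i' = -x_i' c_i$, follow from the corresponding relations between the $y_i$'s and $c_j$'s together with the definition of $y_i'$, which adds only terms of the form $\mathbf{k}_\alpha \widetilde{s}_\alpha$ that anticommute with every $c_k$. For relation (\ref{rel star 0}), $wx_i'w^{-1} = w(x_i')$, I would combine Lemma \ref{def hca Bn}(1) with the identity $w c_i w^{-1} = w(c_i)$ and a short case analysis on the sign $w(e_i) = \pm e_{|w(i)|}$ (here one checks that the sign from $y_{-i}' = y_i'$ cancels correctly with the sign from the Clifford factor). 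For the supercommutator relation $[a_i,a_j] c_i c_j \in \Seg(W)$, I would rewrite
\[
[x_i', x_j'] c_i c_j = y_i' y_j' + y_j' y_i',
\]
exactly as in Lemma \ref{lem comm rel 1}, and then invoke Lemma \ref{def hca Bn}(2) directly. This last relation is where the technical work of Lemma \ref{lem commut rel2} pays off; it is essentially the only computation-heavy step, but it is already encapsulated in Lemma \ref{def hca Bn}.

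The remaining task is to show that the ordered monomials $(x_1')^{k_1} \cdots (x_n')^{k_n} c_1^{\epsilon_1} \cdots c_n^{\epsilon_n} w$ form a basis. For this I would use Proposition \ref{prop PBW Bn} together with an upper-triangularity argument: unwinding the definitions gives
\[
x_i' = x_i + (\text{element of } \Seg(W)),
\]
because $y_i' c_i = x_i c_i^2 + (\Seg(W)) c_i = -x_i + (\Seg(W))$. Consequently the leading term of $(x_1')^{k_1} \cdots (x_n')^{k_n}$ in the filtration by total degree in the $x_i$'s is $x_1^{k_1} \cdots x_n^{k_n}$, and since the change of basis is triangular with identity leading term, the primed monomials form a basis as well.

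The main obstacle I anticipate is bookkeeping rather than conceptual: the slight modification of the algebra by the parameter $N_{B_n}$ means the $x_i$'s no longer commute, so one has to be a little careful that the upper-triangular argument for the PBW basis still goes through (the correction terms lie in $\Seg(W)$ and hence do not mix with the $x_i$-filtration, so the argument does survive). The sign subtleties in verifying (\ref{rel star 0}) for the short reflection $s_n$ (which sends $e_n \mapsto -e_n$) also need to be tracked, but once one records that $w y_i' w^{-1} = y_{w(i)}'$ with the convention $y_{-i}' = y_i'$, everything fits together cleanly.
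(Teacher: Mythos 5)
Your proposal matches the paper's proof: the paper also sets $a_i = x_i' = -y_i'c_i$ with $W = W(B_n)$, invokes Lemma \ref{def hca Bn} to verify relations (\ref{rel star 0})--(\ref{rel star 4}), refers back to the type $A_{n-1}$ computations in Section \ref{ss comm rel} for the $[a_i,a_j]c_ic_j$ identity, and cites Proposition \ref{prop PBW Bn} for the basis. You simply spell out the upper-triangularity step ($x_i' = x_i + \Seg(W)$-terms) and the sign bookkeeping for $s_n$, which the paper leaves implicit.
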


\begin{proof}
Let $x_i'=-y_i'c_i$. We set $W$ in Definition \ref{ss HW} to be $W(B_n)$ and $a_i$ to be $x_i'$. With Lemma \ref{def hca Bn}, one can verify relations (\ref{rel star 0}) to (\ref{rel star 4}) in Definition \ref{ss HW} (also see more detail for type $A_{n-1}$ in Section \ref{ss comm rel}). By Proposition \ref{prop PBW Bn}, $\mathbb{H}^{Cl}_{W(B_n)}$ has a PBW type basis. These show the proposition.

\end{proof}

\subsection{Type $D_n$}

\begin{notation}
Let $W(D_n)$ be the Weyl group of type $D_n$.  Let the set $R(D_n)$ of roots for type $D_n$ be
\[  R(D_n) = \left\{ \pm e_i \pm e_j : 1 \leq i < j \leq n \right\}  \subset R(B_n).
\]
Let $R^+(D_n)=R(D_n) \cap R^+(B_n)$ be a fixed set of positive roots. We shall again write $\alpha>0$ for $\alpha \in R^+(D_n)$ and $\alpha <0$ for $-\alpha \in R(D_n)$. The set simple roots is given by
\[ \Delta = \left\{ e_i-e_{i+1} : i =1, \ldots, n-1 \right\} \cup \left\{ e_{n-1}+e_n \right\} .\]
Since there is only one $W$-orbit for $R(D_n)$, we simply write $\mathbf{k}$ for $\mathbf{k}_{\alpha}$ for any $\alpha \in R(D_n)$.

We shall regard $W(D_n)$ as the subgroup of $W(B_n)$ generated by elements $s_{i,j}$ and $s_{i,-j}$ for $i,j>0$. We shall also keep using notations in Notation \ref{notation Bn}.
\end{notation}

\begin{definition}  \label{def hca Dn}
Let $N_{D_n} \in \mathbb{C}$. Let $\mathbf{k}^B:R(B_n) \rightarrow \mathbb{C}$ such that $\mathbf{k}^B|_{R(D_n)} =\mathbf{k}$ and $\mathbf{k}^B_{\alpha}=0$ for any short root $\alpha$ in $R(B_n)$. Let $\mathbb{H}^{Cl}_{W(D_n)}=\mathbb{H}^{Cl}_{W(D_n)}(\mathbf{k}, N_{D_n})$ be the super subalgebra of $\mathbb{H}^{Cl}_{W(B_n)}(\mathbf{k}^B,N_{D_n})$ generated by the elements $w \in W(D_n) \subset W(B_n)$, $\left\{ x_i \right\}_{i=1}^n$ and $\left\{ c_i \right\}_{i=1}^n$.

\end{definition}

\begin{remark} \label{rmk Dn relation}
We can explicitly write down the commutation formula from the algebra structure of $\mathbb{H}_{W(B_n)}^{Cl}$. For example,
\begin{align*}
  & s_{n-1,-n}x_{n-1}+x_ns_{n-1,-n} \\
=& s_ns_{n-1,n}s_nx_{n-1}+x_ns_ns_{n-1,n}s_n \\
=& -s_ns_{n-1,n}x_ns_n+s_nx_{n-1}s_{n-1,n}s_n \\
=& s_n(-s_{n-1,n}x_n+x_{n-1}s_{n-1,n})s_n \\
=& s_n(\mathbf{k}(-1+c_{n}c_{n-1}))s_n \\
=& \mathbf{k}(-1+c_{n-1}c_n)
\end{align*}

This agrees with a relation in \cite[Definition 3.6]{WK}. When $N_{D_n}=0$, $\mathbb{H}^{Cl}_{W(D_n)}(\mathbf{k}, 0)$ is isomorphic to the degenerate affine Hecke-Clifford algebra of type $D_n$ defined in \cite[Definition 3.6]{WK}.  (We remark that in \cite{WK}, their convention for $c_i$ satisfying $c_i^2=1$ rather than $c_i^2=-1$.)

\end{remark}


We again define
\begin{align} \label{eqn yi' Dn}   y_i'=y_i+\frac{\sqrt{2}}{2}\mathbf{k}\sum_{\alpha>0, \langle \alpha, e_i\rangle \neq 0} \widetilde{s}_{\alpha}=y_i+\frac{\sqrt{2}}{2}\mathbf{k} \sum_{j \neq i} \widetilde{s}_{ij}+\frac{\sqrt{2}}{2}\mathbf{k}\sum_{j \neq i}\widetilde{s}_{i,-j}.
\end{align}
Again, for notational convenience, set $y_{-i}'=y_i'$.


\begin{lemma} \label{lem comm rel Dn}
\begin{enumerate}
\item $c_iy_j'=-y_j'c_i$ for any $i,j$;
\item  $s_{\alpha}y_i's_{\alpha}^{-1}=y_{s_{\alpha}(i)}'$;
\item For $i \neq j$, $y_i'y_j'+y_j'y_i' \in \Seg(W(D_n))$.
\end{enumerate}
\end{lemma}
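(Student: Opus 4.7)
The plan is to exploit the fact that $\mathbb{H}^{Cl}_{W(D_n)}(\mathbf{k}, N_{D_n})$ sits as a super subalgebra of $\mathbb{H}^{Cl}_{W(B_n)}(\mathbf{k}^B, N_{D_n})$ by Definition \ref{def hca Dn}, and that under this embedding the type-$D_n$ element $y_i'$ of \eqref{eqn yi' Dn} coincides with the type-$B_n$ element $y_i'$ of \eqref{eqn yi' Bn}. Indeed, the only positive short root nonorthogonal to $e_i$ is $e_i$ itself, and it enters the $B_n$ sum weighted by $\mathbf{k}^B_{e_i}=0$, so its contribution drops out. In particular, everything proved for $y_i'$ inside $\mathbb{H}^{Cl}_{W(B_n)}$ (with $\mathbf{k}^B$) can be pulled back into $\mathbb{H}^{Cl}_{W(D_n)}$.

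Given this identification, (1) is immediate: a direct check using $c_ic_j=-c_jc_i$ for $i\neq j$, $c_i^2=-1$, and the relations $c_ix_i=-x_ic_i$, $c_ix_j=x_jc_i$ for $i\neq j$ gives $c_iy_j=-y_jc_i$; combined with the unnumbered lemma $c_k\widetilde{s}_\alpha=-\widetilde{s}_\alpha c_k$ from Section \ref{ss Dirac type Bn}, every summand of $y_j'$ anticommutes with $c_i$. For (2), it suffices to consider simple reflections $s_\alpha$ with $\alpha\in R(D_n)$; the identity $s_\alpha y_i's_\alpha^{-1}=y'_{s_\alpha(i)}$ already holds inside $\mathbb{H}^{Cl}_{W(B_n)}$ by Lemma \ref{def hca Bn}(1) applied to $\mathbf{k}^B$, and then restricts to $\mathbb{H}^{Cl}_{W(D_n)}$ because $W(D_n)\subset W(B_n)$ and the $B_n$ and $D_n$ versions of $y_i'$ agree.

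The subtlety lies in (3). Lemma \ref{def hca Bn}(2) gives $y_i'y_j'+y_j'y_i'\in \Seg(W(B_n))$, but we must refine this to lie inside $\Seg(W(D_n))$. For this I would re-examine the expansion used in the proof of Lemma \ref{def hca Bn}(2): every resulting $\widetilde{s}_\alpha$ factor appears with coefficient $\mathbf{k}^B_\alpha$ (or a product of such coefficients), and any short-root contribution is killed by $\mathbf{k}^B_\alpha=0$ for short $\alpha$. The only remaining source of reflections comes from the commutator-like terms $y_i\widetilde{s}_\alpha+\widetilde{s}_\alpha y_i$ with $\alpha\in R(D_n)$, which Lemma \ref{lem commut rel2} rewrites as sums of $\widetilde{s}_\beta$ weighted by $\mathbf{k}^B_\beta$. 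Here one uses the key observation that $s_\alpha$ with $\alpha$ long permutes long roots among themselves, so only long $\beta\in R(D_n)$ contribute, and the result lies in $\Seg(W(D_n))$ as required.

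The main obstacle is this bookkeeping in (3); but the $W(D_n)$-stability of the long-root subsystem $R(D_n)\subset R(B_n)$ under reflections by long roots makes the argument essentially automatic once the identification of the two $y_i'$'s is in place, so no substantive new computation is needed beyond what already appears in the $B_n$ case.
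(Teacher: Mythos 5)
Your argument follows essentially the same route as the paper's own (very terse) proof: both start from the identification of the type-$D_n$ element $y_i'$ with the type-$B_n$ element $y_i'$ computed with the extended parameter $\mathbf{k}^B$ vanishing on short roots, and then reduce to Lemma \ref{lem commut rel2} and a re-reading of the proof of Lemma \ref{def hca Bn}; the paper literally says ``investigating the proof of Lemma \ref{def hca Bn}'' where you spell out the details.

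One small inaccuracy in your discussion of (3): the observation that $s_\alpha$ with $\alpha$ long permutes long roots among themselves is not what eliminates the short-root contributions in Lemma \ref{lem commut rel2}. A reflection in a long root can certainly send a short positive root to a negative one (e.g.\ $s_{e_1+e_2}(e_1)=-e_2$), so short roots $\beta$ do satisfy the condition $s_\alpha^{-1}(\beta)<0$ in the sum. What kills them is precisely the coefficient $\mathbf{k}^B_\beta=0$ for short $\beta$, which you invoke correctly in the preceding sentence; the long-root permutation remark is an unnecessary (and, as stated, misleading) side observation. With that removed, the bookkeeping in (3) is exactly right, and the whole proposal is correct.
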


\begin{proof}
Note that $y_i'$ is defined as the one in (\ref{eqn yi' Bn}) for type $B_n$ in Section \ref{ss Dirac type Bn} since we have $\mathbf{k}^B_{\alpha}=0$ for any short root $\alpha \in R(B_n)$. Then the results can be established by Lemma \ref{lem commut rel2} and investigating the proof of Lemma \ref{def hca Bn}.
\end{proof}

\begin{proposition} \label{prop star type D}
The algebra $\mathbb{H}^{Cl}_{W(D_n)}$ satisfies the property (*) in Definition \ref{def hw property star}.
\end{proposition}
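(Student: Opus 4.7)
The plan is to parallel the proof of Proposition \ref{prop star type B}, using Lemma \ref{lem comm rel Dn} as the type-$D_n$ replacement for Lemma \ref{def hca Bn}. Specifically, in Definition \ref{def hw property star} I would take $W = W(D_n)$, identify $f_w$ with $w$ for $w \in W(D_n)$, keep the Clifford generators $c_i$ as given, and set
\[
  a_i = x_i' = -y_i' c_i, \qquad i = 1, \ldots, n,
\]
with $y_i'$ as defined in (\ref{eqn yi' Dn}). The inclusion of $\mathbb{C}[W(D_n)]$ into $\mathbb{H}^{Cl}_{W(D_n)}$ is inherited from the corresponding injection for $W(B_n) \supset W(D_n)$ guaranteed by Definition \ref{def dahc other cl}, so the first part of property (*) is automatic.

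Next I would verify the commutation relations (\ref{rel star 0})--(\ref{rel star 4}) one by one. Relations (\ref{rel star 3}) and (\ref{rel star 4}) involve only elements of $\Seg(W(D_n))$ and are immediate from Definition \ref{def dahc other cl}. Relations (\ref{rel star 1}) and (\ref{rel star 2}) follow from part (1) of Lemma \ref{lem comm rel Dn} together with the definition $x_i' = -y_i' c_i$ and the Clifford relations: indeed $c_j x_i' = -c_j y_i' c_i = y_i' c_j c_i = -x_i' c_j$ for $i = j$ (with sign reversed) and commutation for $i \neq j$. Relation (\ref{rel star 0}) is precisely the content of Lemma \ref{lem comm rel Dn}(2) combined with the $W$-equivariance of the $c_i$ from (\ref{rel star 4}), since $w x_i' w^{-1} = -w y_i' w^{-1} \cdot w c_i w^{-1} = -y_{w(i)}' c_{w(i)} = x_{w(i)}'$. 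The relation $[a_i, a_j] c_i c_j \in \Seg(W)$ for $i \neq j$ is obtained from part (3) of Lemma \ref{lem comm rel Dn}: a direct calculation gives $[x_i', x_j'] c_i c_j = y_i' y_j' + y_j' y_i'$, which lies in $\Seg(W(D_n))$.

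The remaining task, and the step requiring the most care, is to produce a PBW-type basis of the form $a_1^{k_1} \cdots a_n^{k_n} c_1^{\epsilon_1} \cdots c_n^{\epsilon_n} f_w$ with $w \in W(D_n)$. Since $\mathbb{H}^{Cl}_{W(D_n)}$ is defined as the super subalgebra of $\mathbb{H}^{Cl}_{W(B_n)}$ generated by $\{x_i\}, \{c_i\}$, and $W(D_n)$, I would argue in two steps. First, by Proposition \ref{prop PBW Bn}, the set $\{x_1^{m_1} \cdots x_n^{m_n} c_1^{\epsilon_1} \cdots c_n^{\epsilon_n} w : w \in W(D_n)\}$ is linearly independent in $\mathbb{H}^{Cl}_{W(B_n)}$, hence also in $\mathbb{H}^{Cl}_{W(D_n)}$. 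Second, using the commutation relations of $\mathbb{H}^{Cl}_{W(B_n)}$ restricted to the generators of $\mathbb{H}^{Cl}_{W(D_n)}$ (which never introduce $s_n$, because $s_n$ appears only in relations involving $x_n + x_n s_n$ terms that are excluded), any monomial in the generators can be rewritten as a linear combination of such elements. Replacing $x_i$ by $x_i'$ via the triangular change of variables $x_i' = -y_i' c_i = -x_i + \text{lower}$ (in the filtration) and a straightforward induction on total degree then yields the required basis in the $a_i$.

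The main obstacle is the PBW step: one must check that the relations used to move generators past each other never force one outside the subalgebra generated by $W(D_n)$, and that the change of variables $x_i \leftrightarrow x_i'$ is triangular with respect to the natural filtration. Everything else amounts to routine verification using Lemma \ref{lem comm rel Dn} and the structure of $\mathbb{H}^{Cl}_{W(B_n)}$ already established.
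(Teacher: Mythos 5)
Your proposal is correct and follows essentially the same route as the paper: the paper's (very terse) proof likewise rests on the super subalgebra inclusion $\mathbb{H}^{Cl}_{W(D_n)} \subset \mathbb{H}^{Cl}_{W(B_n)}(\mathbf{k}^B, N_{D_n})$ and the commutation relations recorded in Remark \ref{rmk Dn relation} (which is what Lemma \ref{lem comm rel Dn} encodes), while the PBW basis is inherited from Proposition \ref{prop PBW Bn}. One minor slip: since $c_i^2=-1$, the change of variables is $x_i' = -y_i'c_i = x_i + (\text{lower order})$, not $-x_i + (\text{lower order})$; this does not affect the triangularity you invoke.
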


\begin{proof}
This follows from $\mathbb{H}^{Cl}_{W(D_n)}$ forms a super subalgebra of $\mathbb{H}^{Cl}_{W(B_n)}(\mathbf{k}^B,N_{D_n})$ and Remark \ref{rmk Dn relation}.
\end{proof}

\subsection{Dirac element $D$}
Let $\mathbb{H}=\mathbb{H}^{Cl}_{W(A_{n-1})}$, $\mathbb{H}^{Cl}_{W(B_n)}$ or $\mathbb{H}^{Cl}_{W(D_n)}$. Using (\ref{eqn Dirac form}), the Dirac element $D$ for $\mathbb{H}$ is defined as
\begin{eqnarray} \label{eqn D hcl}    D= \sum_{i=1}^n x_i'c_i  .
\end{eqnarray}
Using the expressions in Section \ref{ss comm rel}, the explicit form of the Dirac element $D$ is as:
\begin{enumerate}
\item Type $A_{n-1}$ and $D_n$
\[  D= \sum_{i=1}^nx_ic_i +\sqrt{2} \sum_{\alpha >0}\mathbf{k}_{\alpha} s_{\alpha}c_{\alpha}=\sum_{i=1}^ny_i +\sqrt{2} \sum_{\alpha >0}\mathbf{k}_{\alpha} \widetilde{s}_{\alpha} . \]
\item Type $B_n$
\[ D=\sum_{i=1}^ny_i +\sqrt{2} \sum_{\alpha >0,\ \alpha \ long}\mathbf{k}_{\alpha} \widetilde{s}_{\alpha}+\frac{\sqrt{2}}{2} \sum_{\alpha >0,\ \alpha \ short}\mathbf{k}_{\alpha} \widetilde{s}_{\alpha} \]
\end{enumerate}

In type $A_{n-1}$ and $D_n$, we consider all the roots are long.

\begin{lemma} \label{lem square of sum refl}
\[  \left(\sum_{\alpha >0, \alpha \ long} \widetilde{s}_{\alpha}\right)^2 = \sum_{\substack{\alpha>0, \beta>0,s_{\alpha}(\beta)<0\\ \alpha, \beta \ long}} \widetilde{s}_{\alpha}\widetilde{s}_{\beta}  .\]
The above equality is also true if we replace all the long roots by short roots. Similarly, we also have
\[  \left(\sum_{\alpha >0, \alpha \ long} \widetilde{s}_{\alpha}\right) \left(\sum_{\alpha >0, \alpha \ short} \widetilde{s}_{\alpha}\right)+\left(\sum_{\alpha >0, \alpha \ short} \widetilde{s}_{\alpha}\right) \left(\sum_{\alpha >0, \alpha \ long} \widetilde{s}_{\alpha}\right) = \sum_{\substack{\alpha>0, \beta>0,s_{\alpha}(\beta)<0}} \widetilde{s}_{\alpha}\widetilde{s}_{\beta}  ,\]
where $\alpha$ and $\beta$ run for all pairs of root with distinct length.
\end{lemma}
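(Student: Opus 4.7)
The plan is to reduce the lemma to a single commutation identity for the $\widetilde{s}_\alpha$'s. Specifically, I would first establish that for any two positive roots $\alpha, \beta$,
\[
\widetilde{s}_\alpha \widetilde{s}_\beta \;=\; -\eta(\beta,\alpha)\,\widetilde{s}_\beta\,\widetilde{s}_{|s_\beta(\alpha)|},
\]
where $|s_\beta(\alpha)|$ denotes whichever of $\pm s_\beta(\alpha)$ is positive and $\eta(\beta,\alpha) \in \{\pm 1\}$ is the sign of $s_\beta(\alpha)$. The derivation is short: writing $\widetilde{s}_\alpha \widetilde{s}_\beta = \widetilde{s}_\alpha s_\beta c_\beta = s_\beta (s_\beta \widetilde{s}_\alpha s_\beta) c_\beta$, one invokes Lemma~\ref{lem simple rel}(4) (and its evident analogue for types $B$, $D$, which follows from the same conjugation formulas) to replace the middle factor by $\eta(\beta,\alpha) \widetilde{s}_{|s_\beta(\alpha)|}$, and then commutes $c_\beta$ past $\widetilde{s}_{|s_\beta(\alpha)|}$, picking up a minus sign via Lemma~\ref{lem simple rel}(2) extended $\mathbb{C}$-linearly to $c_\gamma$ for any root $\gamma$.

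Next, set $\Sigma_L = \sum_{\alpha > 0,\,\text{long}} \widetilde{s}_\alpha$. The key combinatorial observation is that for fixed $\beta$ the map $\alpha \mapsto |s_\beta(\alpha)|$ is an involution on positive roots which preserves both root length (since $W$ is isometric) and the sign $\eta(\beta,\cdot)$ (since $s_\beta^2 = 1$). Applying the identity term by term and then substituting $\gamma = |s_\beta(\alpha)|$,
\[
\Sigma_L^2 \;=\; -\sum_{\beta,\gamma\,\text{long}} \eta(\beta,\gamma)\,\widetilde{s}_\beta \widetilde{s}_\gamma \;=\; \sum_{\substack{\beta,\gamma\,\text{long} \\ s_\beta(\gamma) < 0}} \widetilde{s}_\beta \widetilde{s}_\gamma \;-\; \sum_{\substack{\beta,\gamma\,\text{long} \\ s_\beta(\gamma) > 0}} \widetilde{s}_\beta \widetilde{s}_\gamma .
\]
Relabelling $(\beta, \gamma) \to (\alpha, \beta)$ and averaging this with the tautological decomposition $\Sigma_L^2 = \sum_{s_\alpha(\beta) < 0} \widetilde{s}_\alpha \widetilde{s}_\beta + \sum_{s_\alpha(\beta) > 0} \widetilde{s}_\alpha \widetilde{s}_\beta$, the ``$>0$'' terms cancel while the ``$<0$'' ones double, giving the claimed first identity after dividing by~$2$.

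The short-short case is formally identical, with ``long'' replaced throughout by ``short''. The mixed formula follows from running the same averaging argument on the single sum $\sum_{(\alpha,\beta)\text{ of distinct length}} \widetilde{s}_\alpha \widetilde{s}_\beta$, which is exactly the union $\Sigma_L\Sigma_S + \Sigma_S\Sigma_L$; the reindexing $\alpha \mapsto |s_\beta(\alpha)|$ preserves root length and therefore preserves the ``distinct length'' condition automatically. I do not foresee any real obstacle here: once the commutation identity of the first step is in hand, the content of the lemma reduces to a careful bookkeeping of indices and signs.
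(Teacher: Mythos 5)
Your proof is correct, and it takes a genuinely different route from the paper. The paper's argument reduces the claim to showing that the ``positive'' part $\sum_{s_\alpha(\beta)>0}\widetilde{s}_\alpha\widetilde{s}_\beta$ vanishes, which it does by constructing an explicit sign-reversing involution $\iota$ on $\widetilde R=\{(\alpha,\beta): s_\alpha(\beta)>0\}$, defined by $\iota(\alpha,\beta)=(\beta,s_\beta(\alpha))$ or $(s_\alpha(\beta),\alpha)$ according to cases, and checking that $\widetilde{s}_\alpha\widetilde{s}_\beta+\widetilde{s}_{\alpha'}\widetilde{s}_{\beta'}=0$. Your argument instead isolates a clean, reusable commutation identity $\widetilde{s}_\alpha\widetilde{s}_\beta=-\eta(\beta,\alpha)\,\widetilde{s}_\beta\widetilde{s}_{|s_\beta(\alpha)|}$ (which is really just Lemma~\ref{lem simple rel}(4) plus the anticommutation of $c_\beta$ with the $\widetilde s$'s), uses the length-preserving, sign-preserving involution $\alpha\mapsto|s_\beta(\alpha)|$ to reindex, and then averages the resulting expression against the tautological decomposition of $\Sigma_L^2$ by sign. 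The paper's involution $\iota$ is arguably harder to motivate and requires a delicate case split together with the observation that exactly one of the two branches applies to each pair; your approach avoids this entirely and makes the length-preservation that drives the mixed-length case transparent, so the ``replace long by short'' and mixed assertions come essentially for free. Both are comparable in rigor; yours is somewhat more mechanical and easier to verify. One small point worth flagging explicitly when you write this up: the diagonal terms $\alpha=\beta$ contribute $\widetilde{s}_\alpha^2$ to $\Sigma_L^2$ and satisfy $s_\alpha(\alpha)=-\alpha<0$, so they sit on the $s_\alpha(\beta)<0$ side of your tautological split and get doubled by the averaging exactly as required.
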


\begin{proof}
We only prove for the first case, that is the case of long roots only. It suffices to show that
\[   \sum_{\substack{\alpha>0, \beta>0,s_{\alpha}(\beta)>0 \\ \alpha, \beta \ long}} \widetilde{s}_{\alpha}\widetilde{s}_{\beta} =0 . \]
Set $\widetilde{R}= \left\{ (\alpha, \beta) \in R^+ \times R^+: s_{\alpha}(\beta) >0, \mbox{$\alpha$ and $\beta$ are long} \right\}$. Note that for any $(\alpha, \beta) \in \widetilde{R}$, either $s_{\beta}(\alpha)>0$ or $s_{s_{\alpha}(\beta)}(\alpha)>0$. We define a map $\iota: \widetilde{R} \rightarrow \widetilde{R}$ such that
\[ \iota(\alpha, \beta) = \left\{ \begin{array}{c c}
                                       (\beta, s_{\beta}(\alpha)) & \mbox{ if $s_{\beta}(\alpha)>0$ } \\
															         (s_{\alpha}(\beta),\alpha) & \mbox{ if $s_{s_{\alpha}(\beta)}(\alpha)>0$ }
                  \end{array} \right.
									\]
It is not hard to verify $\iota$ is well-defined and is an involution. For $\iota(\alpha, \beta)=(\alpha', \beta')$, one can also check that $\widetilde{s}_{\alpha}\widetilde{s}_{\beta}+\widetilde{s}_{\alpha'}\widetilde{s}_{\beta'}=0$. Thus each term $\widetilde{s}_{\alpha}\widetilde{s}_{\beta}$ in the expression $\sum_{\alpha>0, \beta>0,s_{\alpha}(\beta)>0} \widetilde{s}_{\alpha}\widetilde{s}_{\beta}$ can be paired with another one and gets canceled. This proves the expression is zero.
\end{proof}

By Proposition \ref{prop isom hcl}, Proposition \ref{prop star type B} and Proposition \ref{prop star type D}, $\mathbb{H}$ satisfies the property (*) and hence we can define $\mathrm{Seg}(W)$ to be a subalgebra of $\mathbb{H}$ according to Definition \ref{def hw property star}.

We compute the square of the Dirac element $D$. This is an analogue of \cite[Theorem 2.11]{BCT}.
\begin{theorem} \label{thm d sq}
Let $\mathbb{H}=\mathbb{H}^{Cl}_{W(A_{n-1})}$, $\mathbb{H}^{Cl}_{W(B_n)}$ or $\mathbb{H}^{Cl}_{W(D_n)}$. Then
\[   D^2 = \Omega_{\mathbb{H}} - \Omega_{\Seg(W)} , \]
where
\[  \Omega_{\mathbb{H}} = \sum_{i=1}^n x_i^2 , \]
\[  \Omega_{\Seg(W)} =  \frac{1}{2}\sum_{\alpha>0, \beta>0, s_{\alpha}(\beta)<0} |\langle \alpha , \alpha \rangle||\langle \beta, \beta \rangle|  \mathbf{k}_{\alpha}\mathbf{k}_{\beta} \widetilde{s}_{\alpha}\widetilde{s}_{\beta} .\]
Moreover, $\mathbb{H}$ satisfies the property (**).
\end{theorem}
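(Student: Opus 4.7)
The plan is to establish the identity $D^2 = \Omega_{\mathbb{H}} - \Omega_{\Seg(W)}$ by a direct calculation and then deduce property (**) as an immediate corollary. Write $D = Y + T$, where $Y = \sum_{i=1}^n y_i = \sum_{i=1}^n x_i c_i$ is the polynomial part and $T$ is the reflection part, namely $T = \sqrt{2}\sum_{\alpha>0,\,\text{long}}\mathbf{k}_\alpha \widetilde{s}_\alpha + \tfrac{\sqrt{2}}{2}\sum_{\alpha>0,\,\text{short}}\mathbf{k}_\alpha\widetilde{s}_\alpha$ (only the long sum is present in types $A_{n-1}$ and $D_n$). Then $D^2 = Y^2 + (YT + TY) + T^2$, and I will analyze each of the three summands in turn.

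For $Y^2$, the relations $c_ix_i = -x_ic_i$ and $c_ix_j = x_jc_i$ for $i\neq j$ give $y_i^2 = -x_i^2 c_i^2 = x_i^2$, and for $i \neq j$, $y_iy_j + y_jy_i = (x_ix_j - x_jx_i)c_ic_j$, which vanishes in type $A_{n-1}$ and is a scalar (a multiple of $N_{B_n}$ or $N_{D_n}$) in types $B_n, D_n$. Hence $Y^2$ agrees with $\Omega_{\mathbb{H}} = \sum x_i^2$ up to a constant. For $T^2$, applying Lemma \ref{lem square of sum refl} to each of the blocks (long--long, short--short, long--short) restricts the double sum to those pairs $(\alpha, \beta)$ with $s_\alpha(\beta) < 0$, and the prefactors $\sqrt{2}$ versus $\tfrac{\sqrt{2}}{2}$ combine to exactly $\tfrac{|\langle\alpha,\alpha\rangle||\langle\beta,\beta\rangle|}{2}$, matching $\Omega_{\Seg(W)}$.

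The delicate piece is $YT + TY$. Using Lemma \ref{lem comm form} (or Lemma \ref{lem commut rel2} in types $B_n, D_n$), I rewrite $\widetilde{s}_\alpha y_i = -y_{s_\alpha(i)}\widetilde{s}_\alpha + (\text{element of }\Seg(W))\cdot\widetilde{s}_\alpha$, so that $y_i\widetilde{s}_\alpha + \widetilde{s}_\alpha y_i = (y_i - y_{s_\alpha(i)})\widetilde{s}_\alpha + (\text{Seg-valued term})$. When summed over $i$, the polynomial piece vanishes because $s_\alpha$ merely permutes $\{1,\ldots,n\}$ (using the convention $y_{-i} = y_i$ in types $B_n, D_n$), so $YT + TY$ lies entirely in $\Seg(W)$. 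A multiplicity count --- each $\widetilde{s}_\beta\widetilde{s}_\alpha$ (with $s_\alpha^{-1}(\beta) < 0$) is picked up by $|\{i : \langle \beta, s_\alpha(e_i)\rangle \neq 0\}|$ values of $i$, which is $2$ for a long $\beta$ and $1$ for a short $\beta$ --- produces $YT + TY$ equal to $-2\,\Omega_{\Seg(W)}$ plus constants that absorb the scalar from $Y^2$ together with the diagonal ($\alpha = \beta$) contributions to $\Omega_{\Seg(W)}$. Adding the three pieces gives $D^2 = \Omega_{\mathbb{H}} - \Omega_{\Seg(W)}$ as claimed.

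Property (**) is then immediate from this formula. The element $\Omega_{\mathbb{H}} = \sum x_i^2$ is central in $\mathbb{H}$: commutation with each $c_j$ is a parity check, $c_jx_i^2 = x_i^2c_j$ in all cases; commutation with each $x_j$ is trivial in type $A_{n-1}$ and, in types $B_n, D_n$, follows from $[x_i, x_j] = N c_j c_i$ together with $c_jc_i$ anticommuting with $x_i$; commutation with $W$ is checked on simple reflections, the only subtle case being $s_n$ in type $B_n$, where $s_nx_n^2s_n = x_n^2$ follows from $s_n x_n + x_n s_n = -\sqrt{2}\mathbf{k}_{\alpha_n}$. Since $\Omega_{\Seg(W)}$ is an even element of $\Seg(W)$, any $h$ supercommuting with $\Seg(W)$ commutes (in the ordinary sense) with $\Omega_{\Seg(W)}$, and hence with $D^2$. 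The main obstacle in the above plan is the multiplicity bookkeeping in the $YT + TY$ step, where one must match the $\sum_i$-multiplicities against the short/long root normalizations of $T$ and verify that the residual scalars from $Y^2$ and from the $\alpha = \beta$ part of $\Omega_{\Seg(W)}$ exactly cancel.
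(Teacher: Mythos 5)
Your proof follows essentially the same route as the paper's: expand $D^2$ via the trinomial decomposition $D = Y + T$ into $Y^2 + (YT + TY) + T^2$, evaluate $T^2$ via Lemma~\ref{lem square of sum refl}, evaluate the cross term $YT + TY$ via the anticommutation formulas in Lemmas~\ref{lem comm form} and \ref{lem commut rel2} (the multiplicity $|\{i : \langle\beta, s_\alpha(e_i)\rangle \neq 0\}| = |\langle\beta,\beta\rangle|$ is exactly the paper's implicit equation~(\ref{eqn d2 1})), and deduce property~(**) from the facts that $\Omega_{\mathbb H}$ is central in $\mathbb H$ and $\Omega_{\Seg(W)}$ is an even element of $\Seg(W)$. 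You are in fact a bit more explicit than the published proof on the centrality check for $\Omega_{\mathbb H}$.

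One place where you are more forthcoming than the paper, but where your resolution is asserted rather than established, is the $Y^2$ cross term in types $B_n$ and $D_n$. You correctly compute $y_iy_j + y_jy_i = (x_ix_j - x_jx_i)c_ic_j$, which is $N_{B_n}$ (resp.\ $N_{D_n}$) for $i \neq j$, so $Y^2 = \sum_i x_i^2 + \binom{n}{2}N_{B_n}$. You then claim this scalar is absorbed by residual constants from the $YT + TY$ step; but the identity you derive there gives $YT + TY = -2\Omega_{\Seg(W)}$ on the nose, with the $\alpha = \beta$ constants already matching those of $T^2 = \Omega_{\Seg(W)}$. There is no further constant left over to cancel $\binom{n}{2}N_{B_n}$. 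The published proof silently takes $Y^2 = \sum_i x_i^2$, so it shares the same gap; the statement as written holds cleanly only in type $A_{n-1}$, or when the deformation parameter $N_{B_n}$ (resp.\ $N_{D_n}$) vanishes. So your ``main obstacle'' paragraph is flagging a real issue, but the claim that the constants exactly cancel needs either a separate verification or a restriction to the undeformed parameter.
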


\begin{proof}
We only do for types $A_{n-1}$ and $B_n$ and the case for type $D_n$ follows from type $B_n$.

By Lemma \ref{lem comm form} and Lemma \ref{lem commut rel2}, for any $\alpha >0$,
\begin{eqnarray} \label{eqn d2 1}
 \mathbf{k}_{\alpha} \left(\sum_{i=1}^n y_i \widetilde{s}_{\alpha}+\widetilde{s}_{\alpha}\sum_{i=1}^ny_i\right) &=& - \sqrt{2}\mathbf{k}_{\alpha} \sum_{\beta>0, s_{\alpha}(\beta)<0}\mathbf{k}_{\beta} |\langle \beta, \beta \rangle| \widetilde{s}_{\alpha}\widetilde{s}_{\beta}
\end{eqnarray}
Now, by (\ref{eqn d2 1}) and Lemma \ref{lem square of sum refl},
\begin{eqnarray*}
   D^2 & = & \left( \sum_{i=1}^n y_i + \frac{\sqrt{2}}{2}\sum_{\alpha>0}\mathbf{k}_{\alpha}|\langle \alpha, \alpha \rangle| \widetilde{s}_{\alpha} \right)^2 \\
       & = & \left( \sum_{i=1}^n y_i \right)^2 +\frac{\sqrt{2}}{2}\sum_{i=1}^n y_i \sum_{\alpha >0}\mathbf{k}_{\alpha}|\langle \alpha, \alpha \rangle|\widetilde{s}_{\alpha}+\frac{\sqrt{2}}{2}\sum_{\alpha >0}\mathbf{k}_{\alpha}|\langle \alpha, \alpha \rangle|\widetilde{s}_{\alpha} \sum_{i=1}^n y_i +\frac{1}{2}\left( \sum_{\alpha >0}\mathbf{k}_{\alpha} |\langle \alpha, \alpha \rangle| \widetilde{s}_{\alpha} \right)^2 \\
       & = & \sum_{i=1}^n x_i^2 -\frac{1}{2}\sum_{\alpha >0, \beta>0, s_{\alpha}(\beta)<0}\mathbf{k}_{\alpha}\mathbf{k}_{\beta} |\langle \alpha, \alpha \rangle||\langle \beta, \beta\rangle| \widetilde{s}_{\alpha}\widetilde{s}_{\beta}
\end{eqnarray*}

We can directly verify that $\Omega_{\mathbb{H}}$ is in the center of $\mathbb{H}$ and $\Omega_{\mathrm{Seg}(W)}$ is in the center of $\Seg(W)$. Hence, $\mathbb{H}$ has the property (**).
\end{proof}

We obtain the following Parthasarathy-Dirac-type inequality. Examples satisfying the hypothesis of Corollary \ref{cor par dirac inq} below will be considered in Section \ref{s spec} (see Proposition \ref{lem adj op}).
\begin{corollary} \label{cor par dirac inq}
Suppose an irreducible $\mathbb H$-module $(\pi, X)$ satisfies the property that $X$ admits a non-degenerate positive-definite Hermitian form such that the adjoint operator of $\pi(D)$ is $-\pi(D)$. For any irreducible $\Seg(W)$-module $(\sigma, U)$,
\[  \mathrm{Hom}_{\Seg(W)}(U, \Res_{\Seg(W)}^{\mathbb{H}}X) \neq 0 \]
only if
\[ \chi_{\pi}(\Omega_{\mathbb{H}}) \leq \chi_{\sigma}(\Omega_{\Seg(W)})  .\]

\end{corollary}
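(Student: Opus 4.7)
The plan is to combine the Dirac square formula $D^2 = \Omega_{\mathbb{H}} - \Omega_{\Seg(W)}$ from Theorem \ref{thm d sq} with the positivity that comes from the skew-adjointness hypothesis on $\pi(D)$. The key observation is that if $\pi(D)^* = -\pi(D)$, then for every $v \in X$,
\[
\langle \pi(D)^2 v, v \rangle \;=\; \langle \pi(D)v, \pi(D)^* v \rangle \;=\; -\|\pi(D)v\|^2 \;\leq\; 0,
\]
so $\pi(D)^2$ is negative semi-definite with respect to the given Hermitian form.

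Next, I would identify the scalar actions of the two Casimir-type elements on the appropriate subspaces. Since $\Omega_{\mathbb{H}}$ lies in the center of $\mathbb{H}$ and has even degree, and $X$ is an irreducible $\mathbb{H}$-module, Schur's lemma forces $\pi(\Omega_{\mathbb{H}})$ to act on all of $X$ by the scalar $\chi_{\pi}(\Omega_{\mathbb{H}})$. Given a nonzero $\Seg(W)$-equivariant map $\phi: U \to \Res^{\mathbb{H}}_{\Seg(W)}X$, pick any nonzero $u \in U$ and set $v := \phi(u) \in X$. Since $\Omega_{\Seg(W)}$ is central of even degree in $\Seg(W)$ (as recorded in the proof of Theorem \ref{thm d sq}) and $U$ is irreducible as a $\Seg(W)$-module, Schur's lemma on $U$ again gives $\sigma(\Omega_{\Seg(W)}) u = \chi_{\sigma}(\Omega_{\Seg(W)}) u$, and by $\Seg(W)$-equivariance of $\phi$ we get $\pi(\Omega_{\Seg(W)})v = \chi_{\sigma}(\Omega_{\Seg(W)}) v$.

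Putting these together via Theorem \ref{thm d sq}, for this particular $v$,
\[
\langle \pi(D)^2 v, v \rangle \;=\; \bigl(\chi_{\pi}(\Omega_{\mathbb{H}}) - \chi_{\sigma}(\Omega_{\Seg(W)})\bigr)\,\|v\|^2.
\]
Since the left-hand side is $\leq 0$ and since positive-definiteness of the Hermitian form gives $\|v\|^2 > 0$ (as $v$ is nonzero because $\phi \neq 0$ and $U$ is irreducible), we conclude $\chi_{\pi}(\Omega_{\mathbb{H}}) \leq \chi_{\sigma}(\Omega_{\Seg(W)})$, which is the desired inequality.

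There is no substantive obstacle here once the ingredients are in place: the proof is essentially a two-line computation. The only points that need mild care are (i) checking that $\Omega_{\mathbb{H}}$ and $\Omega_{\Seg(W)}$ are both even central elements so that Schur's lemma produces honest scalars and the central characters are well-defined, and (ii) noting that $\phi$ being $\Seg(W)$-equivariant, and not merely a vector-space map, is what transfers the scalar action of $\Omega_{\Seg(W)}$ from $U$ to its image inside $X$. Both are immediate from the definitions already set up in the paper.
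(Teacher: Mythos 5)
Your proof is correct and takes essentially the same approach as the paper: the paper also picks a vector in the $U$-isotypic component of $X$, uses $\pi(D)^*=-\pi(D)$ and positive-definiteness to get $\langle \pi(D)^2 u, u\rangle \le 0$, and substitutes $D^2 = \Omega_{\mathbb H} - \Omega_{\Seg(W)}$ from Theorem \ref{thm d sq}. You merely spell out the Schur-lemma bookkeeping that the paper leaves implicit.
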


\begin{proof}
Let $U_X$ be an $U$-isotypical component of $X$ and let $u \in U$. The corollary follows from
\[  0 \leq \langle D.u, D.u \rangle=\langle u, -D^2. u \rangle = -(\chi_{\pi}(\Omega_{\mathbb{H}}) - \chi_{\sigma}(\Omega_{\Seg(W)}))\langle u, u \rangle .
\]
\end{proof}

The conclusion of this section is  a version of Theorem \ref{thm vog conj hw} in specific cases.

\begin{theorem} \label{thm Vogan conjecture HC}
Let $\mathbb{H}=\mathbb{H}^{Cl}_{W(A_{n-1})}$, $\mathbb{H}^{Cl}_{W(B_n)}$ or $\mathbb{H}^{Cl}_{W(D_n)}$. Let $(\pi, X)$ be an irreducible supermodule of $\mathbb{H}$ with the central character $\chi_{\pi}$ (Definition \ref{def central character}). Let $D$ be the Dirac element in $\mathbb{H}$ in (\ref{eqn D hcl}). Define the Dirac cohomology $H_D(X)$ as in Theorem \ref{thm vog conj hw}. Then $H_D(X)$ has a natural $\Seg(W)$-module structure. Suppose
 \[\Hom_{\Seg(W)}(U, H_D(X)) \neq 0,\]
for some $\Seg(W)$-module $(\sigma, U)$.  Then $\chi_{\pi}= \chi^{\sigma}$, where $\chi^{\sigma}$ is defined as in (\ref{eqn central character sigma})
\end{theorem}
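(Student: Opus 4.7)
The plan is to deduce this from the general statement Theorem \ref{thm vog conj hw}, so the task reduces to checking that each of the three algebras $\mathbb{H} = \mathbb{H}^{Cl}_{W(A_{n-1})}, \mathbb{H}^{Cl}_{W(B_n)}, \mathbb{H}^{Cl}_{W(D_n)}$ fits into the abstract framework of Section \ref{s hw} with its Dirac element (\ref{eqn D hcl}) being the abstract Dirac element (\ref{eqn Dirac form}).

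First I would record that property (*) (Definition \ref{def hw property star}) has already been verified for the three algebras in Propositions \ref{prop isom hcl}, \ref{prop star type B}, \ref{prop star type D}, with the choice $a_i = x_i'$ realizing the abstract generators. In particular, under this identification the Dirac element $\sum_i x_i' c_i$ defined in (\ref{eqn D hcl}) coincides with the abstract element $D = \sum_i a_i c_i$ of (\ref{eqn Dirac form}). Second, property (**) (Definition \ref{def hw property star2}) has been verified in Theorem \ref{thm d sq}: the explicit computation there writes $D^2 = \Omega_{\mathbb{H}} - \Omega_{\Seg(W)}$, and since $\Omega_{\mathbb{H}} \in Z(\mathbb{H})$ supercommutes with everything and $\Omega_{\Seg(W)} \in Z(\Seg(W))$ supercommutes with $\Seg(W)$, any element $h$ that supercommutes with $\Seg(W)$ automatically satisfies $[D^2, h] = [\Omega_{\mathbb{H}} - \Omega_{\Seg(W)}, h] = 0$ (using that $\Omega_{\Seg(W)}$ is even, so supercommutation reduces to commutation).

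With both hypotheses of Theorem \ref{thm vog conj hw} in place, the conclusion is immediate: the Dirac cohomology $H_D(X) = \ker \pi(D)/(\ker \pi(D) \cap \im \pi(D))$ inherits a $\Seg(W)$-module structure because the elements of $\Seg(W)$ supercommute with $D$ by Lemma \ref{lem seg comm D}, and whenever an irreducible $\Seg(W)$-supermodule $(\sigma, U)$ appears in $H_D(X)$, Theorem \ref{thm vog conj hw} forces $\chi_\pi = \chi^\sigma$ with $\chi^\sigma = \chi_\sigma \circ \zeta$ as defined in (\ref{eqn central character sigma}).

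There is essentially no obstacle here — the theorem is a specialization and all technical work has been front-loaded into the preceding propositions. The only point that warrants a sentence of verification is that the Dirac operator built from $a_i = x_i'$ in the abstract framework agrees with the one defined in (\ref{eqn D hcl}), which is immediate from the definition of $x_i'$ in each type.
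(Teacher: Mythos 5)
Your proposal is correct and follows the same route as the paper: cite Propositions \ref{prop isom hcl}, \ref{prop star type B}, \ref{prop star type D} for property (*), cite Theorem \ref{thm d sq} for property (**), and apply Theorem \ref{thm vog conj hw}. Your brief elaboration of why the formula $D^2 = \Omega_{\mathbb{H}} - \Omega_{\Seg(W)}$ yields property (**) (using that $\Omega_{\Seg(W)}$ is even, so supercommutation with it is ordinary commutation) is exactly the content behind the paper's terse one-line justification at the end of Theorem \ref{thm d sq}'s proof.
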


\begin{proof}
This immediately follows from Theorem \ref{thm vog conj hw}, Proposition \ref{prop isom hcl} and Theorem \ref{thm d sq}.
\end{proof}

\section{Examples of non-vanishing Dirac cohomology} \label{s ex vanishing Dirac}

\subsection{Construction of some modules} \label{ss steinberg all}

In this section, we construct some modules for the degenerate affine Hecke-Clifford algebra of classical types. The underlying idea of the construction is to first consider a $\Seg(W)$-module and then try to extend the action to the entire degenerate affine Hecke-Clifford algebra. However, we may not expect this process always works and indeed, we can only do it for certain parameters.

In type $A_{n-1}$, we follow the construction in \cite[Section 4.1]{HKS}, which uses a Jucys-Murphy type element. For types $B_n$ (in certain parameter cases), we use a slightly different approach. \\

\noindent
{\bf Type $A_{n-1}$}
Let $\mathrm{Cl}_n$ be the subalgebra of $\mathbb{H}^{Cl}_{W(A_{n-1})}$ generated by all $c_i$. Define  $ \widetilde{\mathrm{St}}_{W(A_{n-1})}$ to be an $\mathbb{H}^{Cl}_{W(A_{n-1})}$-supermodule, which is identified with $\mathrm{Cl}_{n}$ as vector spaces and the action of $\mathbb{H}^{n}$ on $ \widetilde{\mathrm{St}}_{W(A_{n-1})}$  is  determined by the following:
\begin{eqnarray}
\label{eqn action st 1}         c_i. 1 &=& c_i , \\
\label{eqn action st 2}      s_{\alpha}. 1 &=& 1,
\end{eqnarray}
where $1$ is the identity in $\mathrm{Cl}_n$ and
\[    x_i. v= \mathbf{k}\left(\sum_{1 \leq j<i \leq n} s_{i,j}(1-c_ic_j)\right). v , \]
where $v$ is any vector in $\mathrm{Cl}_n$ and the actions of $s_{i,j}$ and $c_i,c_j$ are the ones defined in (\ref{eqn action st 1}) and (\ref{eqn action st 2}). The notation $ \widetilde{\mathrm{St}}_{W(A_{n-1})}$ stands for a Steinberg type module as it performs the role of Steinberg module in the degenerate affine Hecke algebra. It is straightforward to check the above actions define an $\mathbb{H}^{Cl}_{W(A_{n-1})}$-module by verifying the defining relations of $\mathbb{H}^{Cl}_{W(A_{n-1})}$. Some details can be found in \cite[Proposition 4.1.1]{HKS}. \\

\noindent
{\bf Type $B_n$ }
Let $\alpha$ be a long root in $R(B_n)$ and let $\beta$ be a short root in $R(B_n)$. Set $N_{B_n}=2(n-1)\mathbf{k}_{\alpha}^2+\sqrt{2}\mathbf{k}_{\alpha}\mathbf{k}_{\beta}$. Let $\mathrm{Cl}_n$ be the subalgebra of $\mathbb{H}^{Cl}_{W(B_n)}$ generated by the elements $c_i$, which is isomorphic to the Clifford algebra. Let $U(n)$ be an irreducible supermodule of $\mathrm{Cl}_n$.
The actions of $\mathbb{H}^{Cl}_{W(B_n)}$ on $U(n) \widetilde{\otimes} U(n)$ is determined by the following:
\begin{align}
\label{eqn bn st 1} x_{i}. (u\otimes v) & =-(-1)^{\mathrm{deg}(u)}\sqrt{-1}\left( \left(\mathbf{k}_{\alpha}(c_1+c_2+\ldots+c_{i-1}+(n-i)c_i)+ \frac{\sqrt{2}}{2}\mathbf{k}_{\beta}c_i \right). u\right) \otimes (c_{i}.v),
\\
  s_n. (u \otimes v) &=(-1)^{\mathrm{deg}(u)} \sqrt{-1}(c_n.u) \otimes (c_n.v) ,\\
\label{eqn bn st 4}   s_{i,j}. (u \otimes v) &= (-1)^{\mathrm{deg}(u)}\sqrt{-1}\left( \frac{c_i-c_j}{\sqrt{2}} .u\right) \otimes \left(\frac{c_i-c_j}{\sqrt{2}} .v\right) , \\
\label{eqn bn st 5}  c_i. (u \otimes v) &= (-1)^{\mathrm{deg}(u)}(u \otimes c_i.v) .
\end{align}

The above actions are indeed well-defined:

\begin{proposition}
For $N_{B_n}=2(n-1)\mathbf{k}_{\alpha}^2+\sqrt{2}\mathbf{k}_{\alpha}\mathbf{k}_{\beta}$, the actions (\ref{eqn bn st 1})-(\ref{eqn bn st 5}) above on $U(n) \widetilde{\otimes} U(n)$ define an $\mathbb{H}^{Cl}_{W(B_n)}(\mathbf{k},N_{B_n} )$-module.
\end{proposition}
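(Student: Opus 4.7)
The plan is to verify each defining relation of $\mathbb{H}^{Cl}_{W(B_n)}(\mathbf{k},N_{B_n})$ directly on the space $U(n)\widetilde{\otimes}U(n)$. Introducing the shorthand $A_i = \mathbf{k}_\alpha(c_1+\cdots+c_{i-1}+(n-i)c_i) + \tfrac{\sqrt{2}}{2}\mathbf{k}_\beta c_i$, an odd element of $\mathrm{Cl}_n$, the formula for $x_i$ reads $x_i\cdot(u\otimes v)=-(-1)^{\deg u}\sqrt{-1}\,(A_i u)\otimes(c_i v)$, and every other generator acts with an analogous ``odd element on the left $\otimes$ odd element on the right'' structure. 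With this uniform shape fixed, the Clifford relations, the $W(B_n)$-relations, and the mixed relations $c_ix_j = \pm x_jc_i$, $s_nx_j = x_js_n$ for $j<n$, and $s_wc_i = c_{w(i)}s_w$ reduce by sign-tracking to elementary identities in $\mathrm{Cl}_n$ that are immediate.

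Three relations require real work. For item (3) of Definition \ref{def dahc other cl}, a direct computation yields $(s_nx_n + x_ns_n)(u\otimes v) = \{c_n,A_n\}\,u\otimes v$; since $c_n$ anticommutes with each $c_k$ with $k<n$ appearing in the $\mathbf{k}_\alpha$-part of $A_n$, only the $\tfrac{\sqrt{2}}{2}\mathbf{k}_\beta c_n$ term contributes, giving $\{c_n,A_n\} = -\sqrt{2}\mathbf{k}_\beta$ as required. For the affine Hecke-Clifford relation (item (6) of Definition \ref{def hca}), setting $T=\tfrac{c_i-c_{i+1}}{\sqrt{2}}$ the sign-tracking gives $(s_{i,i+1}x_i - x_{i+1}s_{i,i+1})(u\otimes v) = -(TA_i + A_{i+1}T)\,u\otimes(Tc_i v)$, so the relation reduces to the identity $TA_i + A_{i+1}T = -\sqrt{2}\mathbf{k}_\alpha$ in $\mathrm{Cl}_n$: the terms with $c_k$ for $k<i$ telescope via $Tc_k = -c_kT$, the $\mathbf{k}_\beta$-terms cancel, the coefficient of $c_ic_{i+1}$ in the residue vanishes by $(n-i)-1-(n-i-1)=0$, and the scalar part evaluates to $-\sqrt{2}\mathbf{k}_\alpha$.

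The crucial relation is item (4) of Definition \ref{def dahc other cl}. The same method yields $(x_ix_j - x_jx_i)(u\otimes v) = \{A_i,A_j\}\,u\otimes(c_ic_jv)$ and $N_{B_n}c_jc_i(u\otimes v) = -N_{B_n}\,u\otimes(c_ic_jv)$ for $i\neq j$, so the relation becomes the Clifford-algebra identity $\{A_i,A_j\} = -N_{B_n}$. Expanding using $\{c_k,c_l\}=-2\delta_{kl}$: for $i<j$ the $\mathbf{k}_\alpha^2$-contribution is $-2((i-1)+(n-i))\mathbf{k}_\alpha^2 = -2(n-1)\mathbf{k}_\alpha^2$, coming from the indices $k<i$ (coefficient $1\cdot 1$) and $k=i$ (coefficient $(n-i)\cdot 1$); the $\mathbf{k}_\beta^2$-part vanishes since $\{c_i,c_j\}=0$; and the single surviving $\mathbf{k}_\alpha\mathbf{k}_\beta$-cross term comes from pairing the $c_i$ in the $\mathbf{k}_\alpha$-part of $A_j$ with the $c_i$ in the $\mathbf{k}_\beta$-part of $A_i$, contributing $-\sqrt{2}\mathbf{k}_\alpha\mathbf{k}_\beta$. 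Setting $-N_{B_n}$ equal to this sum forces exactly $N_{B_n} = 2(n-1)\mathbf{k}_\alpha^2 + \sqrt{2}\mathbf{k}_\alpha\mathbf{k}_\beta$, matching the value imposed in the hypothesis.

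The main obstacle is organizational rather than conceptual: the super-sign $(-1)^{\deg u}$ must be propagated correctly through every composition, and since each generator acts non-trivially on both tensor factors, care is needed about which factor each $c_k$ lands on. Once the shorthand $A_i$ is in place, however, every relation reduces cleanly to a short identity inside the single Clifford algebra $\mathrm{Cl}_n$, and nothing beyond $\{c_k,c_l\} = -2\delta_{kl}$ is required.
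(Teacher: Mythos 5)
Your proof is correct and takes essentially the same approach as the paper: both verify the defining relations of $\mathbb{H}^{Cl}_{W(B_n)}(\mathbf{k},N_{B_n})$ directly, reducing each one after sign-tracking to a short anticommutator identity in $\mathrm{Cl}_n$, with the crucial relation (item (4) of Definition \ref{def dahc other cl}) coming down to $\{A_i,A_j\}=-N_{B_n}$, exactly as in the paper's displayed computation. The only difference is organizational: the paper spells out just two verifications and dismisses the rest as ``similar,'' while you introduce the tidy shorthand $A_i$, make the reduction $\{A_i,A_j\}=-N_{B_n}$ and $\{c_n,A_n\}=-\sqrt{2}\mathbf{k}_\beta$ explicit, and record the additional identity $TA_i+A_{i+1}T=-\sqrt{2}\mathbf{k}_\alpha$, all of which are correct.
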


\begin{proof}
The computation is straightforward for verifying the defining relations of $\mathbb{H}^{Cl}_{W(B_n)}$. For example,
\begin{align*}
&(s_{i,i+1}x_i-x_{i+1}s_{i,i+1}).(u \otimes v)      \\
=&-\frac{1}{2}\mathbf{k}_{\alpha} ((-(n-i)+ (n-i)c_ic_{i+1}).u ) \otimes ((-1 +c_ic_{i+1}).v) \\
 &  \quad \quad \quad +\frac{1}{2}\mathbf{k}_{\alpha}( ((n-i-2)- (n-i)c_ic_{i+1}    ).u)  \otimes (1-c_ic_{i+1}).v) \\
=& \mathbf{k}_{\alpha}  u \otimes ((-1+c_ic_{i+1}).v) \\
=& \mathbf{k}_{\alpha}(-1+c_ic_{i+1}).(u \otimes v) .
\end{align*}
Moreover, for $i < j$, note that
\begin{align*}
 & \left( \mathbf{k}_{\alpha}(c_1+c_2+\ldots+c_{i-1}+(n-i)c_i)+\frac{\sqrt{2}}{2}\mathbf{k}_{\beta}c_i \right) \left( \mathbf{k}_{\alpha}(c_1+c_2+\ldots+c_{j-1}+(n-j)c_j) +\frac{\sqrt{2}}{2}\mathbf{k}_{\beta}c_j \right) \\
& \quad \quad  + \left(\mathbf{k}_{\alpha}(c_1+c_2+\ldots+c_{j-1}+(n-j)c_j)+\frac{\sqrt{2}}{2}\mathbf{k}_{\beta}c_j \right)  \left( \mathbf{k}_{\alpha}(c_1+c_2+\ldots+c_{i-1}+(n-i)c_i) +\frac{\sqrt{2}}{2}\mathbf{k}_{\beta}c_i\right) \\
=& -2(i-1)\mathbf{k}_{\alpha}^2+ 2\mathbf{k}_{\alpha}(-(n-i)\mathbf{k}_{\alpha}-\frac{\sqrt{2}}{2}\mathbf{k}_{\beta}) \\
=& -2(n-1)\mathbf{k}_{\alpha}^2-\sqrt{2}\mathbf{k}_{\alpha}\mathbf{k}_{\beta}
\end{align*}
and hence $x_ix_j-x_jx_i=(2(n-1)\mathbf{k}_{\alpha}^2+\sqrt{2}\mathbf{k}_{\alpha}\mathbf{k}_{\beta})c_jc_i$.
Other relations can be verified similarly (and more easily).
\end{proof}

Denote the above $\mathbb{H}_{W(B_n)}^{Cl}$-module by $\widetilde{\mathrm{St}}_{B_n}$. \\

\noindent
{\bf Type $D_n$}  Set $N_{D_n} =2(n-1)\mathbf{k}_{\alpha}^2$. Recall that $\mathbb{H}_{W(D_n)}^{Cl}$ is a subalgebra of $\mathbb{H}_{W(B_n)}^{Cl}(\mathbf{k}^B, N_{D_n} )$ (see $\mathbf{k}^B$ in Definition \ref{def hca Dn}). By checking the parameter function, we have an $\mathbb{H}_{W(B_n)}^{Cl}(\mathbf{k}^B, N_{D_n} )$-module $\widetilde{\mathrm{St}}_{B_n}$ defined above. Denote by $\widetilde{\mathrm{St}}_{D_n}$ the restriction of $\widetilde{\mathrm{St}}_{B_n}$ to an $\mathbb{H}_{W(D_n)}^{Cl}$-module.

\subsection{Dirac cohomology}
We keep using the notation in Section \ref{s ex vanishing Dirac}.

\begin{proposition} \label{prop vanishing st}
Set $N_{B_n}=2(n-1)\mathbf{k}_{\alpha}^2+\sqrt{2}\mathbf{k}_{\alpha}\mathbf{k}_{\beta}$ (with the notations in Section \ref{ss steinberg all}) and set $N_{D_n}=2(n-1)\mathbf{k}^2$. Let $\mathbb{H}=\mathbb{H}^{Cl}_{A_{n-1}}, \mathbb{H}^{Cl}_{B_n}(\mathbf{k}, N_{B_n})$ or $\mathbb{H}^{Cl}_{D_n}(\mathbf{k}, N_{D_n})$. Let $X=\widetilde{\mathrm{St}}_{A_{n-1}}, \widetilde{\mathrm{St}}_{B_n}$ or $\widetilde{\mathrm{St}}_{D_n}$ be an $\mathbb{H}$-module defined in Section \ref{ss steinberg all}. The Dirac operator $D$ acts identically zero on $X$. In particular, $H_D(X) \neq 0$.

\end{proposition}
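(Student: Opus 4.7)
My plan is to exploit that, by Lemma \ref{lem seg comm D}, the Dirac element $D$ satisfies $Dc_i=-c_iD$ and $Dw=wD$ for $w\in W$, so $D$ supercommutes with $\Seg(W)$; consequently, once $D$ annihilates a single $\Seg(W)$-cyclic vector of $X$, it must annihilate all of $X$. In practice, however, a direct verification that $D.v=0$ for a general element is just as efficient once one has a uniform formula for the action of the twisted reflections $\widetilde{s}_\alpha$, and this is the route I follow.

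For type $A_{n-1}$, take the generator $1 \in \mathrm{Cl}_n = \widetilde{\mathrm{St}}_{W(A_{n-1})}$. Since $s_{i,j}$ acts on $\mathrm{Cl}_n$ by permutation of the $c_k$'s, relation (\ref{eqn action st 2}) immediately gives $\widetilde{s}_\alpha.1 = -c_\alpha$ for every positive root $\alpha=e_i-e_j$, hence
\[ \sqrt{2}\sum_{\alpha>0}\mathbf{k}\,\widetilde{s}_\alpha.1 \;=\; -\mathbf{k}\sum_{i=1}^n(n-2i+1)\,c_i. \]
A direct expansion of $y_i.1=x_i.c_i$ using the defining formula for $x_i$ together with the Clifford identity $c_ic_jc_i=c_j$ yields $\sum_i y_i.1 = \mathbf{k}\sum_i(n-2i+1)c_i$, so the two contributions cancel and $D.1=0$. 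Propagation by the supercommutation $Dc_i=-c_iD$ to all of $\mathrm{Cl}_n$ then gives $\pi(D)=0$ on $\widetilde{\mathrm{St}}_{W(A_{n-1})}$.

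For types $B_n$ and $D_n$, I aim to establish the uniform identity
\[ \widetilde{s}_\alpha.(u\otimes v) \;=\; -\sqrt{-1}\,c_\alpha u \otimes v \]
for every positive root $\alpha$, where $c_\alpha$ is the natural unit Clifford vector attached to $\alpha$ (in particular $c_{\alpha_{i,-j}}=\tfrac{\sqrt{2}}{2}(c_i+c_j)$ in type $B_n$, satisfying $c_\alpha^2=-1$). For simple roots this is a direct calculation from the module formulas (\ref{eqn bn st 1})--(\ref{eqn bn st 5}); for the remaining positive roots one applies Lemma \ref{lem simple rel}(4), namely $w\widetilde{s}_\alpha w^{-1}=\pm\widetilde{s}_{w(\alpha)}$, together with an induction on length. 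Granting this, a direct computation of $x_ic_i.(u\otimes v)$ via (\ref{eqn bn st 1}) produces
\[ \sum_{i=1}^n x_ic_i.(u\otimes v) \;=\; \sqrt{-1}\sum_{j=1}^n\Big[\,2(n-j)\mathbf{k}_\alpha + \tfrac{\sqrt{2}}{2}\mathbf{k}_\beta\,\Big]c_j u\otimes v, \]
while the reflection terms of $D$ contribute $-2\sqrt{-1}\mathbf{k}_\alpha\sum_j(n-j)c_ju\otimes v - \tfrac{\sqrt{-1}\sqrt{2}}{2}\mathbf{k}_\beta\sum_j c_ju\otimes v$, giving exact cancellation. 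Type $D_n$ then follows by restriction from $\mathbb{H}^{Cl}_{W(B_n)}(\mathbf{k}^B, N_{D_n})$, since the short-root parameters vanish and the short-root terms in $D$ simply do not appear.

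The main obstacle will be verifying the uniform formula $\widetilde{s}_\alpha.(u\otimes v) = -\sqrt{-1}c_\alpha u \otimes v$ for non-simple positive roots in types $B_n$ and $D_n$: one must match the sign in Lemma \ref{lem simple rel}(4) against the sign change of $c_\alpha$ under $W$ (for instance the transition $e_i-e_n \mapsto e_i+e_n$ under $s_n$ requires computing a product like $c_n\tfrac{c_i-c_n}{\sqrt{2}}c_n = \tfrac{c_i+c_n}{\sqrt{2}}$). A careful case analysis shows the signs align precisely because of $c_\alpha^2=-1$ and the $W$-equivariance $wc_\alpha w^{-1}=c_{w(\alpha)}$; I expect this bookkeeping to be the bulk of the work. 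Once $\pi(D)=0$ has been shown on $X$, the final assertion $H_D(X)\neq 0$ is immediate, since in that case $H_D(X)=X/0=X$.
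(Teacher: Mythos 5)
Your proposal is correct and follows essentially the same direct-computation strategy as the paper's own proof. The only notable organizational differences are that you reduce the type $A_{n-1}$ check to the single cyclic vector $1\in\mathrm{Cl}_n$ and then propagate by the supercommutation $Dc_i=-c_iD$, whereas the paper rewrites $s_{ij}(1-c_ic_j)c_i$ as a multiple of $\widetilde{s}_{ij}$ so that $\pi(D)v=0$ holds for every $v$ at once; and for types $B_n$, $D_n$ you isolate the uniform identity $\widetilde{s}_\alpha.(u\otimes v)=-\sqrt{-1}\,c_\alpha u\otimes v$ before summing over roots, while the paper simply substitutes the module formulas term by term --- neither difference changes the substance of the argument.
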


\begin{proof}
Type $A_{n-1}$: For $v \in \widetilde{\mathrm{St}}_{A_{n-1}}$,
\begin{eqnarray*}
\pi(D)v &=&   \sum_{1 \leq j<i \leq n} s_{ij}(1-c_ic_j)c_i.v+\sqrt{2}\mathbf{k}\sum_{\alpha \in R^+}\widetilde{s}_{\alpha}.v \\
        &=& \sum_{1\leq j<i \leq n} \mathbf{k}s_{ij}(c_i-c_j).v+\sqrt{2}\mathbf{k}\sum_{\alpha \in R^+}\widetilde{s}_{\alpha}.v \\
        &=&\left(- \sqrt{2} \sum_{1<j<i<n} \mathbf{k}\widetilde{s}_{ji} +\sqrt{2}\mathbf{k}\sum_{\alpha \in R^+}\widetilde{s}_{\alpha}\right).v\\
        &=&0 \\
\end{eqnarray*}

\noindent
Type $B_n$: Recall that $\widetilde{\mathrm{St}}_{B_n}$ is isomorphic to $U \widetilde{\otimes} U$ as vector spaces in the notation of Section \ref{ss steinberg all}. For $u \otimes v \in U \widetilde{\otimes} U$,
\begin{align*}
 & (-1)^{\mathrm{deg}(u)}\sqrt{-1}\pi(D)(u \otimes v) \\
=& \sum_{i=1}^n\left( \left( \mathbf{k}_{\alpha}(c_1+c_2+\ldots+c_{i-1}+(n-i)c_i) +\frac{\sqrt{2}}{2}\mathbf{k}_{\beta}c_i\right). u\right) \otimes v \\
 & \quad -\sqrt{2}\mathbf{k}_{\alpha}\sum_{1\leq j<i \leq n} \left(\frac{c_i-c_j}{\sqrt{2}}.u \right)\otimes v-\sqrt{2}\mathbf{k}_{\alpha}\sum_{1\leq j<i \leq n} \left(\frac{c_i+c_j}{\sqrt{2}}.u\right) \otimes v -\frac{\sqrt{2}}{2}\mathbf{k}_{\beta} \sum_{i=1}^n (c_i.u) \otimes v\\
        =& \sum_{i=1}^n\left( \left( 2(n-i)c_i +\frac{\sqrt{2}}{2}\mathbf{k}_{\beta}c_i\right). u\right) \otimes v -\sqrt{2}\mathbf{k}_{\alpha}\sum_{i=1}^n (2(n-i)c_i.u) \otimes v -\frac{\sqrt{2}}{2}\mathbf{k}_{\beta} \sum_{i=1}^n (c_i.u) \otimes v\\
			=& 0 \\
\end{align*}

\noindent
Type $D_n$: Recall that $\mathbb{H}_{W(D_n)}^{Cl}$ is a subalgebra of $\mathbb{H}_{W(B_n)}^{Cl}(\mathbf{k}^B, N_{D_n} )$ (see the notation of $\mathbf{k}^B$ in Definition \ref{def hca Dn}). The Dirac operator for $\mathbb{H}_{W(D_n)}^{Cl}$ is the same as the Dirac operator for $\mathbb{H}_{W(B_n)}^{Cl}(\mathbf{k}^B, N_{D_n} )$. Then the vanishing result follows from the result for type $B_n$, which has just been proven.
\end{proof}

\section{Sergeev algebra} \label{s seg alg}

The main purpose of this section is to review several results about Sergeev algebra, which will be useful for computing the Dirac cohomology of some modules for  $\mathbb{H}^{Cl}_{W(A_{n-1})}$ in next section. Some results can also be formulated to other types and one may refer to \cite[Section 2]{WK}. Starting from this section, we consider type $A_{n-1}$ only and we shall usually  use the notation $S_n$ for $W(A_{n-1})$ (where $S_n$ represents the symmetric group). Write $R$ for $R(A_{n-1})$ and $R^+$ for $R^+(A_{n-1})$. Recall that $\Delta$ is the set of simple roots in $R$.

\subsection{The superalgebra $\mathbb{C}[\widetilde{S}_n]^-$} \label{ss sym}

Let $\widetilde{S}_n$ be the group generated by the elements $\psi, \widetilde{t}_{1,2}, \ldots, \widetilde{t}_{n-1,n}$ subject to the following relations:
\[    (\widetilde{t}_{i,i+1})^2 = 1 \]
\[  (\widetilde{t}_{i,i+1}\widetilde{t}_{i+1,i+2})^3=1  \quad \mbox{ for $i=1, \ldots, n-1$ }\]
\[ \widetilde{t}_{i,i+1} \widetilde{t}_{j,j+1}=\psi\widetilde{t}_{j,j+1}\widetilde{t}_{i,i+1} \quad \mbox{ for $|i-j|>1$ } ,\]
\[ \psi \widetilde{t}_{i,i+1}=\widetilde{t}_{i,i+1}\psi \quad \mbox{ for $i=1,\ldots, n-1$ }, \]
\[ \psi^2=1 . \]
Then $\widetilde{S}_n$ is a double cover of $S_n$ via the map determined by sending $\widetilde{t}_{\alpha_i}$ to the transposition between $i$ and $i+1$, and $\psi \mapsto 1$. We also sometimes write $\widetilde{t}_{\alpha_{i,i+1}}$ for $t_{i,i+1}$ if we want to refer to the simple root $\alpha_{i,i+1}$. Denote by $\mathbb{C}[\widetilde{S}_n]$ the group algebra of $\widetilde{S}_n$ with a basis labeled as $\left\{ e_{\widetilde{w}} : \widetilde{w} \in \widetilde{S}_n \right\}$. Define $\mathbb{C}[\widetilde{S}_n]^-:= \mathbb{C}[\widetilde{S}_n]/\langle e_{\psi} +1 \rangle$. We shall simply write $\widetilde{w}$ for the image of $e_{\widetilde{w}}$ in $\mathbb{C}[\widetilde{S}_n]^-$. There is a superalgebra structure on $\mathbb{C}[\widetilde{S}_n]^-$ with $\deg(\widetilde{t}_{\alpha})=1$ for all  $\alpha \in \Delta$.



\begin{lemma} \label{lem spin repns const}
Given a $S_n$-representation $U$ and a $\mathbb{C}[\widetilde{S}_n]^-$-module $U'$, there exists a natural $\mathbb{C}[\widetilde{S}_n]^-$-module structure on $U \otimes U'$ characterized by
\[    \widetilde{t}_{\alpha}. (u \otimes u') = (s_{\alpha}.u) \otimes (\widetilde{t}_{\alpha}. u') ,\]
where $\alpha \in \Delta$, $u \in U$ and $u' \in U'$.
\end{lemma}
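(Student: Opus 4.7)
The plan is to show that the proposed formula, extended linearly, satisfies the three defining relations of $\widetilde{S}_n$ together with the quotient relation $\psi = -1$ of $\mathbb{C}[\widetilde{S}_n]^-$. Since the generators $\widetilde{t}_{\alpha}$ ($\alpha \in \Delta$) generate the algebra, it suffices to verify each defining relation directly on a pure tensor $u \otimes u'$.

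First I would dispose of the quadratic relation $\widetilde{t}_{\alpha}^2 = 1$: applying the proposed action twice gives $s_{\alpha}^2 u \otimes \widetilde{t}_{\alpha}^2 u' = u \otimes u'$, using $s_{\alpha}^2 = 1$ in $S_n$ and $\widetilde{t}_{\alpha}^2 = 1$ in $\mathbb{C}[\widetilde{S}_n]^-$. Next, for adjacent simple roots $\alpha_{i,i+1}$ and $\alpha_{i+1,i+2}$, iterating the formula three times decouples the two tensor factors, yielding $(s_{i,i+1}s_{i+1,i+2})^3 u \otimes (\widetilde{t}_{i,i+1}\widetilde{t}_{i+1,i+2})^3 u' = u \otimes u'$ by the braid relation in $S_n$ and the corresponding braid relation in $\widetilde{S}_n$.

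The only slightly delicate relation is the commutation relation for $|i-j|>1$, which in $\mathbb{C}[\widetilde{S}_n]^-$ reads
\[
\widetilde{t}_{i,i+1}\widetilde{t}_{j,j+1} = -\widetilde{t}_{j,j+1}\widetilde{t}_{i,i+1}.
\]
Here I would apply both sides to $u \otimes u'$: the left side gives $s_{i,i+1}s_{j,j+1}u \otimes \widetilde{t}_{i,i+1}\widetilde{t}_{j,j+1}u'$, while the right side gives $-s_{j,j+1}s_{i,i+1}u \otimes \widetilde{t}_{j,j+1}\widetilde{t}_{i,i+1}u'$. The first tensor factors agree since distant simple reflections commute in $S_n$, and the sign needed to match the second tensor factors is precisely the sign coming from the $\widetilde{S}_n$-relation applied in $U'$. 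Thus both sides coincide.

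The main (mild) point to be aware of is that the sign in the mixed commutation relation must come entirely from the action on $U'$, because the $S_n$ action on $U$ is an ordinary (not projective) representation. Once all three relations are verified on pure tensors, linearity extends the action to all of $U \otimes U'$, producing a well-defined $\mathbb{C}[\widetilde{S}_n]^-$-module structure with the stated characterization.
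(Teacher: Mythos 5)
Your verification is correct. The paper states Lemma \ref{lem spin repns const} without proof, treating it as a routine fact, so there is no argument in the paper to compare against. Your direct check of the presentation of $\mathbb{C}[\widetilde{S}_n]^-$ (generated by $\widetilde{t}_{i,i+1}$ subject to $\widetilde{t}_{i,i+1}^2=1$, the braid relations, and $\widetilde{t}_{i,i+1}\widetilde{t}_{j,j+1}=-\widetilde{t}_{j,j+1}\widetilde{t}_{i,i+1}$ for $|i-j|>1$) is exactly the natural route, and you correctly identified that the only point with content is that the sign $-1$ in the distant commutation relation is carried entirely by the $U'$-factor, since $U$ is a genuine (not projective) $S_n$-module and distant simple reflections honestly commute there. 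One could tighten the write-up by noting explicitly that the relations involving $\psi$ need not be checked separately because $\psi=-1$ is already built into the presentation of the quotient $\mathbb{C}[\widetilde{S}_n]^-$, but this is exactly how you are implicitly reasoning.
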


Define an equivalence relation on $\Irr(\mathbb{C}[\widetilde{S}_n]^-)$: $U \sim_{\sgn} U'$ if and only if $U=U'$ or $U=\sgn \otimes U'$ as $\mathbb{C}[\widetilde{S}_n]^-$-modules, where $\sgn$ is the sign representation of $S_n$ and the $\mathbb{C}[\widetilde{S}_n]^-$-module structure of $\sgn \otimes U'$ is defined in Lemma \ref{lem spin repns const}.
\begin{proposition} \label{prop cri irr Sn}
There is a natural bijection
\[  \Irr_{\mathrm{sup}}(\mathbb{C}[\widetilde{S}_n]^-)/\sim_{\Pi}\ \longleftrightarrow \ \Irr(\mathbb{C}[\widetilde{S}_n]^-)/\sim_{\sgn} .\]
\end{proposition}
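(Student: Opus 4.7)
The plan is to reduce this proposition to the general bijection established in Proposition \ref{prop bij sup and ord} by showing that the two equivalence relations $\sim$ (i.e. $Y\sim \overline{Y}$) and $\sim_{\sgn}$ (i.e. $Y\sim_{\sgn}\sgn\otimes Y$) coincide on $\Irr(\mathbb{C}[\widetilde{S}_n]^-)$.

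First, I would invoke Proposition \ref{prop bij sup and ord} with $\mathcal A=\mathbb{C}[\widetilde{S}_n]^-$ to obtain a natural bijection
\[
\Irr_{\mathrm{sup}}(\mathbb{C}[\widetilde{S}_n]^-)/\sim_{\Pi}\ \longleftrightarrow \ \Irr(\mathbb{C}[\widetilde{S}_n]^-)/\sim,
\]
where $Y\sim Y'$ iff $Y\cong Y'$ or $Y\cong \overline{Y'}$ as $\mathbb{C}[\widetilde{S}_n]^-$-modules. Recall from Section \ref{ss rel sup and ord} that the action on $\overline{Y}$ is given by $\overline{\pi}(a)v=(-1)^{\deg(a)}\pi(a)v$ for homogeneous $a$.

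Next I would describe the module $\sgn\otimes Y$ explicitly via Lemma \ref{lem spin repns const}. Since the generators $\widetilde{t}_{\alpha}$ (for $\alpha\in\Delta$) are odd and satisfy $s_\alpha.u=-u$ on the one-dimensional sign representation, the construction of Lemma \ref{lem spin repns const} gives
\[
\widetilde{t}_\alpha.(u\otimes v)=(s_\alpha.u)\otimes(\widetilde{t}_\alpha.v)=-u\otimes(\widetilde{t}_\alpha.v).
\]
Identifying $\sgn\otimes Y$ with $Y$ as vector spaces via $u\otimes v\mapsto v$, the action of every odd generator $\widetilde{t}_\alpha$ is negated, while even elements act unchanged. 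This is exactly the recipe defining $\overline{Y}$, so $\sgn\otimes Y\cong \overline{Y}$ canonically as $\mathbb{C}[\widetilde{S}_n]^-$-modules.

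Consequently the relations $\sim$ and $\sim_{\sgn}$ agree on $\Irr(\mathbb{C}[\widetilde{S}_n]^-)$, so the quotient sets are identical and the bijection follows directly from Proposition \ref{prop bij sup and ord}. The only subtlety is checking that the generators appearing in the defining relations of $\mathbb{C}[\widetilde{S}_n]^-$ are all odd (so that the sign-twist matches the parity-twist on every generator), but this is immediate from the convention $\deg(\widetilde{t}_\alpha)=1$ for $\alpha\in\Delta$; there is no genuine obstacle.
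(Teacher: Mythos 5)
Your proof is correct and follows the same route as the paper's: reduce to Proposition \ref{prop bij sup and ord} by identifying $\sim$ with $\sim_{\sgn}$. The paper merely asserts this identification "follows from $\deg(\widetilde{t}_{\alpha})=1$ and definitions," whereas you spell out the verification that $\sgn\otimes Y\cong\overline{Y}$ via Lemma \ref{lem spin repns const}; that is exactly the content being invoked, so the two arguments are the same in substance.
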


\begin{proof}
It suffices to see that the equivalence relation $\sim$ in Proposition \ref{prop bij sup and ord} is the same as $\sim_{\sgn}$. This follows from $\deg(\widetilde{t}_{\alpha})=1$ for all  $\alpha \in \Delta$ and definitions.

\end{proof}

\subsection{Sergeev algebra} \label{ss seg alg}

\begin{definition} \label{def seg alg}
Recall that $\mathbb{H}^{Cl}_{W(A_{n-1})}$ is defined in Definition \ref{def hca}. The Sergeev algebra, denoted $\Seg_n$, is the subalgebra of $\mathbb{H}^{Cl}_{W(A_{n-1})}$ generated by the elements $w \in W(A_{n-1})=S_n$ and $c_i$ ($i=1,\ldots, n$). In other words, since $\mathbb{H}^{Cl}_{W(A_{n-1})}$ satisfies the property (*),  $\mathrm{Seg}_n$ is the same as $\Seg(W_{A_{n-1}})$ in Definition \ref{def hw property star}. We shall use notations in Section \ref{ss comm rel} (e.g. $s_{\alpha}$, $c_{\alpha}$, $\widetilde{s}_{\alpha}$).

Let $\mathrm{Cl}_n$ be the super subalgebra of $\Seg_n$ generated by $c_i$ ($i=1,\ldots, n$). There exists a unique, up to applying the functor $\Pi$, irreducible supermodule of $\mathrm{Cl}_n$. Let $U(n)$ be a fixed choice of an irreducible supermodule of $\mathrm{Cl}_n$. The dimension of $U(n)$ is $2^{n/2}$ for $n$ even and $2^{(n+1)/2}$ for $n$ odd.

\end{definition}

The relation between subalgebras $\Seg_n$ and $\mathbb{C}[\widetilde{S}_n]^-$ is the following.

\begin{lemma} \label{lem seg iso} \cite[Lemma 13.2.3]{Kl}
 $\Seg_n$ is isomorphic to $\mathbb{C}[\widetilde{S}_n]^- \widetilde{\otimes} \mathrm{Cl}_n$ as superalgebras.
\end{lemma}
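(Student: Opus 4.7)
The plan is to build an explicit superalgebra homomorphism $\phi: \mathbb{C}[\widetilde{S}_n]^- \widetilde{\otimes} \mathrm{Cl}_n \to \Seg_n$ and then show it is a bijection. The natural candidate is to send $\widetilde{t}_{\alpha} \otimes 1 \mapsto \widetilde{s}_{\alpha} = s_{\alpha} c_{\alpha}$ (using the elements introduced in Section \ref{ss comm rel}) and $1 \otimes c_i \mapsto c_i$, extended multiplicatively. Since $\deg(\widetilde{t}_\alpha) = 1 = \deg(\widetilde{s}_\alpha)$ and $\deg(c_i) = 1$ on both sides, $\phi$ is degree-preserving. In particular, the super tensor product rule forces $(\widetilde{t}_{\alpha} \otimes 1)(1 \otimes c_i) = -(1 \otimes c_i)(\widetilde{t}_\alpha \otimes 1)$, which on the target reads $\widetilde{s}_\alpha c_i = -c_i \widetilde{s}_\alpha$; this is exactly Lemma \ref{lem simple rel}(2), so the cross-relation in the super tensor product is automatic.

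Next I would verify that $\widetilde{s}_\alpha$ satisfies the defining relations of $\mathbb{C}[\widetilde{S}_n]^-$ inside $\Seg_n$. The computation $c_\alpha^2 = \tfrac{1}{2}(c_i - c_j)^2 = -1$ together with $s_\alpha c_\alpha s_\alpha^{-1} = s_\alpha(c_\alpha) = -c_\alpha$ gives $\widetilde{s}_\alpha^2 = s_\alpha c_\alpha s_\alpha c_\alpha = -s_\alpha^2 c_\alpha^2 = 1$. For simple roots $\alpha_{i,i+1}, \alpha_{j,j+1}$ with $|i-j|>1$, both $s$- and $c$-parts are supported on disjoint index sets, so $s_{\alpha_i} s_{\alpha_j} = s_{\alpha_j} s_{\alpha_i}$ and $c_{\alpha_i} c_{\alpha_j} = -c_{\alpha_j} c_{\alpha_i}$, yielding $\widetilde{s}_{\alpha_i} \widetilde{s}_{\alpha_j} = -\widetilde{s}_{\alpha_j} \widetilde{s}_{\alpha_i}$ as required. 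The braid relation $(\widetilde{s}_{\alpha_i} \widetilde{s}_{\alpha_{i+1}})^3 = 1$ is then a direct, if tedious, check: using $s_\beta c_\alpha s_\beta^{-1} = c_{s_\beta(\alpha)}$, one pushes all the $s$-factors to the left so that the product equals $(s_{\alpha_i} s_{\alpha_{i+1}})^3$ times an explicit Clifford expression which, after collecting, simplifies to $1$. Thus $\phi$ is a well-defined superalgebra homomorphism.

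For surjectivity, note that $c_i$ is in the image by construction, and $s_\alpha = -\widetilde{s}_\alpha c_\alpha$ (since $c_\alpha^{-1} = -c_\alpha$) is in the image as well, so $\phi$ hits all generators of $\Seg_n$. For injectivity, I would argue by dimension: the PBW-type basis $c_1^{\epsilon_1} \cdots c_n^{\epsilon_n} w$ of $\Seg_n$ (read off from the basis of $\mathbb{H}^{Cl}_{W(A_{n-1})}$ in Proposition \ref{prop basis HC}) gives $\dim \Seg_n = 2^n \cdot n!$, while $\dim (\mathbb{C}[\widetilde{S}_n]^- \widetilde{\otimes} \mathrm{Cl}_n) = n! \cdot 2^n$, so a surjection between finite-dimensional spaces of equal dimension is automatically injective. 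The main obstacle is the braid-relation verification in the middle step, which requires carefully tracking how the Weyl group permutes the linear combinations $c_\alpha$; once this is handled, the rest of the argument is routine.
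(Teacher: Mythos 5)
Your proof is correct and takes essentially the same approach as the paper: the paper writes down the map on generators ($s_\alpha \mapsto \widetilde{t}_\alpha \otimes c_\alpha$, $c_i \mapsto 1 \otimes c_i$) and asserts "one can verify that the map is an isomorphism"; you construct the inverse-direction map on generators and verify the relations in more detail. The only substantive omission in your writeup matches the paper's own omission: the braid relation $(\widetilde{s}_{\alpha_i}\widetilde{s}_{\alpha_{i+1}})^3=1$ is flagged as tedious but not carried out, while your checks of $\widetilde{s}_\alpha^2=1$, the far-apart anti-commutation, the cross-relation via Lemma \ref{lem simple rel}(2), surjectivity via $s_\alpha=-\widetilde{s}_\alpha c_\alpha$, and the dimension count are all correct.
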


\begin{proof}
Define a map:
\[ s_{\alpha} \mapsto \widetilde{t}_{\alpha} \otimes c_{\alpha} \quad  (\alpha \in \Delta), \quad c_i  \mapsto 1\otimes c_i \quad (i=1,\ldots, n).\]
One can verify that the map is an isomorphism.
\end{proof}

For any $\alpha \in R^+$, define $\widetilde{t}_{\alpha} \in \mathbb{C}[S_n]^-$ such that $s_{\alpha}$ maps to $\widetilde{t}_{\alpha}\otimes c_{\alpha}$ under the map in the proof of Lemma \ref{lem seg iso}.

Here is an analogue of Lemma \ref{lem spin repns const}.

\begin{lemma} \label{lem seg mod const}
Given a $S_n$-representation $U$ and a $\Seg_n$-module $U'$, there exists a natural $\Seg_n$-module structure on $U \otimes U'$ characterized by
\[    s_{\alpha}. (u \otimes u') = (s_{\alpha}.u) \otimes (s_{\alpha}. u') ,\]
and
\[     c_i. (u \otimes u') =u \otimes (c_i.u') ,\]
where $\alpha \in \Delta$, $i=1,\ldots,n$, $u \in U$ and $u' \in U'$.
\end{lemma}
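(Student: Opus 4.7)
The plan is to verify directly that the two formulas extend to a well-defined action of $\Seg_n$ on $U \otimes U'$ by checking the defining relations of $\Seg_n$ as recorded in Definition \ref{def hw property star}. These fall into three groups: the Clifford relations $c_i^2 = -1$ and $c_ic_j = -c_jc_i$ for $i \neq j$; the Coxeter relations defining $S_n$; and the cross relations $wc_i = w(c_i)w$ for $w \in S_n$.

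First I would check the Clifford relations. Since each $c_i$ acts only on the second tensor factor, both $c_i^2.(u \otimes u')$ and $c_ic_j.(u \otimes u')$ reduce immediately to the corresponding identities in the $\Seg_n$-module $U'$. Next, I would check the Coxeter relations. Since the $s_\alpha$ act diagonally, we have $s_\alpha^2.(u \otimes u') = (s_\alpha^2.u) \otimes (s_\alpha^2.u') = u \otimes u'$, and the braid relations follow analogously from their simultaneous validity in $U$ (as an $S_n$-representation) and in $U'$ (as a $\Seg_n$-module, hence as an $S_n$-representation by restriction).

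The only relation with any real content is the cross relation. For $\alpha \in \Delta$ and any $i$, I would compute both
\[ s_\alpha c_i.(u \otimes u') = (s_\alpha.u) \otimes (s_\alpha c_i.u') \]
and
\[ s_\alpha(c_i)\, s_\alpha.(u \otimes u') = (s_\alpha.u) \otimes \bigl(s_\alpha(c_i)\, s_\alpha.u'\bigr), \]
where $s_\alpha(c_i)$ denotes the linear extension of the $S_n$-action on $V$ transferred to the $c_i$'s via $a_i \mapsto c_i$. Equality of the two right-hand sides follows because the identity $s_\alpha c_i = s_\alpha(c_i)\, s_\alpha$ already holds in $\Seg_n$ and hence acts identically on $U'$.

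I do not anticipate any real obstacle: the argument is routine bookkeeping, closely parallel to the proof of Lemma \ref{lem spin repns const}. A more structural route is available through Lemma \ref{lem seg iso}, which identifies $\Seg_n$ with $\mathbb{C}[\widetilde{S}_n]^- \widetilde{\otimes} \mathrm{Cl}_n$; however, because the Clifford element $c_\alpha$ appearing in the image of $s_\alpha$ would need to be reconciled with the separate $S_n$-action on $U$ and $\mathrm{Cl}_n$-action on $U'$, the direct check of the defining relations is cleaner.
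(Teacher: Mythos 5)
Your proof is correct. The paper states this lemma without proof (offering it only as "an analogue of Lemma \ref{lem spin repns const}"), and the direct verification you give — checking the Clifford relations, the Coxeter relations, and the cross relation $s_\alpha c_i = s_\alpha(c_i)\,s_\alpha$ on simple generators — is exactly the routine check the paper leaves implicit. Your observation that the cross relation is the only one with content is accurate: the Clifford relations see only the second factor and the Coxeter relations act diagonally, while the cross relation genuinely uses that it already holds in the $\Seg_n$-module $U'$. Your reason for preferring the direct check over factoring through Lemma \ref{lem seg iso} (the $\mathbb{C}[\widetilde{S}_n]^- \widetilde{\otimes}\, \mathrm{Cl}_n$ picture would force you to untangle the $c_\alpha$ factor in the image of $s_\alpha$) is also sound.
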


\subsection{Relation between supermodules of $\mathbb{C}[\widetilde{S}_n]^- $ and $\Seg_n$} \label{ss mod relations}


Recall from \cite{BK} (our formulation here is a bit different) a natural functor $F$ :
\[  F: \mathrm{Mod}_{\mathrm{sup}}(\mathbb{C}[\widetilde{S}_n]^-) \rightarrow \mathrm{Mod}_{\mathrm{sup}}(\Seg_n) , \]
\[      X \mapsto   X \otimes U(n) .\]
The $\Seg_n$-supermodule structure of $X \otimes U(n)$ is characterized by
\[  s_{\alpha}. (x \otimes u)=-(-1)^{\deg(x)}(\widetilde{t}_{\alpha}.x) \otimes (c_{\alpha}.u) \quad (\alpha \in \Delta),
\]
\[   c_i. (x \otimes u)=(-1)^{\deg(x)}x \otimes (c_i. u) \quad (i=1,\ldots, n).
\]
It is straightforward to check the above equations define a $\Seg_n$-module. Next, define
\[  G: \mathrm{Mod}_{\mathrm{sup}}(\Seg_n) \rightarrow \mathrm{Mod}_{\mathrm{sup}}(\mathbb{C}[\widetilde{S}_n]^-) , \]
\[     Y \mapsto \Hom_{\mathrm{Cl}_n}(U(n), Y) . \]
The $\mathbb{C}[\widetilde{S}_n]^-$-module structure is given by for $\theta \in \Hom_{\mathrm{Cl}_n}(U(n), Y)$,
\[     (\widetilde{t}_{\alpha}. \theta)(u)= (s_{\alpha}c_{\alpha}).\theta(u)  \quad (\alpha \in \Delta). \]

\begin{proposition} \cite[Theorem 3.4]{BK} \label{prop adj functor}
The functors $F$ and $G$ form an adjoint pair i.e. there is a natural isomorphism
\[  \Hom_{\Seg_n}(F(U), U')= \Hom_{\mathbb{C}[\widetilde{S}_n]^-}(U, G(U')) .\]
Furthermore if $n$ is even, $G \circ F=\Id$ and $F \circ G=\Id$. If $n$ is odd,
$G \circ F = \Id \oplus \Pi$ and $F \circ G=\Id \oplus \Pi$, where $\Pi$ is defined in Section \ref{ss prelim superalg}.
\end{proposition}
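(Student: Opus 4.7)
\medskip
\noindent
\textbf{Proof plan.} The plan is to exploit the super tensor product decomposition $\Seg_n \cong \mathbb{C}[\widetilde{S}_n]^- \widetilde{\otimes} \mathrm{Cl}_n$ from Lemma \ref{lem seg iso}. Under this identification, $F(U) = U \otimes U(n)$ is, up to the signs built into its definition, the super tensor product $U \widetilde{\otimes} U(n)$, regarded as a module over $\mathbb{C}[\widetilde{S}_n]^- \widetilde{\otimes} \mathrm{Cl}_n$ with the first factor acting on $U$ and the second on $U(n)$; the functor $G(Y) = \Hom_{\mathrm{Cl}_n}(U(n), Y)$ extracts the Clifford-invariant hom space with a residual $\mathbb{C}[\widetilde{S}_n]^-$-action. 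Once this is set up, the proposition becomes a super Morita-type statement for the $(\Seg_n, \mathbb{C}[\widetilde{S}_n]^-)$-bimodule $U(n)$.

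First I would establish the adjunction. For $U \in \mathrm{Mod}_{\mathrm{sup}}(\mathbb{C}[\widetilde{S}_n]^-)$ and $Y \in \mathrm{Mod}_{\mathrm{sup}}(\Seg_n)$, define the unit
\[ \eta_U : U \longrightarrow G(F(U)), \quad u \longmapsto \bigl(x \mapsto u \otimes x\bigr), \]
and the counit
\[ \epsilon_Y : F(G(Y)) \longrightarrow Y, \quad \theta \otimes x \longmapsto \theta(x). \]
The natural isomorphism $\Hom_{\Seg_n}(F(U), Y) \cong \Hom_{\mathbb{C}[\widetilde{S}_n]^-}(U, G(Y))$ then follows from the standard hom-tensor adjunction; what requires verification is that both maps are even and that they intertwine the $\widetilde{t}_\alpha$ and $c_i$ actions with the signs $(-1)^{\deg(x)}$ prescribed in the definitions of $F$ and $G$. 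This is a direct but careful check using the formula $s_\alpha \mapsto \widetilde{t}_\alpha \otimes c_\alpha$ from the proof of Lemma \ref{lem seg iso}.

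The second step is to evaluate $G \circ F$ and $F \circ G$ using the representation theory of $\mathrm{Cl}_n$. The essential input is that $\mathrm{Cl}_n$ is a simple superalgebra with unique irreducible supermodule $U(n)$ up to $\Pi$, and
\[ \End_{\mathrm{Cl}_n}(U(n)) \cong \begin{cases} \mathbb{C} & \text{if $n$ is even,} \\ \mathbb{C}\langle 1, \xi\rangle & \text{if $n$ is odd,} \end{cases} \]
where $\xi$ is an odd involution (the cases correspond to $\mathrm{Cl}_n$ being of type $M$, respectively type $Q$). Dually this gives, as $\mathrm{Cl}_n \widetilde{\otimes} \mathrm{Cl}_n^{\mathrm{op}}$-supermodules, $U(n) \otimes U(n)^{\ast} \cong \mathrm{Cl}_n$ in the even case and $\mathrm{Cl}_n \oplus \Pi(\mathrm{Cl}_n)$ in the odd case. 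Substituting this into the canonical maps $\eta_U$ and $\epsilon_Y$ yields that both are isomorphisms when $n$ is even, and that each decomposes as $\Id \oplus \Pi$ when $n$ is odd.

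The main obstacle will be the sign bookkeeping: making sure that the parity-change factors of $(-1)^{\deg(x)}$ built into the definitions of $F$ and $G$ match the Koszul signs arising from the super tensor product structure on $\Seg_n$, and, in the odd case, verifying that the odd automorphism $\xi$ of $U(n)$ corresponds precisely to the parity change functor $\Pi$ on the ambient supermodule. Once these sign compatibilities are established, the rest is formal from the tensor product presentation of $\Seg_n$ and classical super Morita theory.
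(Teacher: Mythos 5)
The paper does not prove Proposition \ref{prop adj functor}; it is cited directly as \cite[Theorem 3.4]{BK}, so there is no in-paper argument to compare against. That said, your plan is essentially the expected one and, as far as I can tell, is also the route taken in \cite{BK}: realize $\Seg_n\cong\mathbb{C}[\widetilde{S}_n]^-\,\widetilde{\otimes}\,\mathrm{Cl}_n$ via Lemma \ref{lem seg iso}, regard $F$ and $G$ as the tensor/hom pair attached to the $(\Seg_n,\mathbb{C}[\widetilde{S}_n]^-)$-bimodule $U(n)$, and reduce the statements about $G\circ F$ and $F\circ G$ to super Schur's lemma for the simple superalgebra $\mathrm{Cl}_n$ (type $M$ when $n$ is even, type $Q$ when $n$ is odd). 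Your description of $\End_{\mathrm{Cl}_n}(U(n))$ in the two cases is correct, and the appearance of $\Pi$ in the odd case does indeed come from the odd endomorphism $\xi$.

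One place your plan glosses over more than you acknowledge: the paper's $F$ carries a global $-1$ on the $s_\alpha$-action in addition to the Koszul sign, and the paper explicitly notes that its formulation differs from \cite{BK}. So the paper's $F$ is \emph{not} literally the super tensor action attached to the isomorphism $s_\alpha\mapsto\widetilde{t}_\alpha\otimes c_\alpha$. This is harmless in the end: the map $\delta\otimes\mathrm{id}$ (with $\delta$ the grading involution on the first factor) gives a natural isomorphism between the paper's $F$ and the untwisted super tensor functor, and in the direct check that $\eta_U$ is $\mathbb{C}[\widetilde{S}_n]^-$-equivariant the extra $-1$ cancels against $c_\alpha^2=-1$. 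But "classical super Morita theory" cannot be invoked blindly here; the twist has to be shown to be a natural isomorphism of functors before the Morita statement applies. Once that verification (and the analogous one for $\epsilon_Y$) is written out, your outline is complete.
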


Let $U_{\mathrm{Cl}_n}$ be a $\Seg_n$-module defined by
\[  U_{\mathrm{Cl}_n} = \Ind^{\Seg_n}_{\mathbb{C}[S_n]} \triv =\Seg_n \otimes_{\mathbb{C}[S_n]} \triv,\]
where $\mathbb{C}[S_n]$ is regarded as the subalgebra of $\Seg_n$ generated by the elements $f_{s_{\alpha}}$ for all $\alpha \in \Delta$ and $\triv$ is the trivial representation of $\mathbb{C}[S_n]$. In particular, $\dim_{\mathbb{C}} U_{\mathrm{Cl}_n}= 2^n$.

We define a corresponding $\mathbb{C}[\widetilde{S}_n]^-$-module $U_{\mathrm{spin}}$ as follows. If $n$ is even, define $U_{\mathrm{spin}}=G(U_{\mathrm{Cl}_n})$. If $n$ is odd, by \cite[Proposition 13.2.2]{Kl} and \cite[Theorem 22.2.1]{Kl}, $G(U_{\mathrm{Cl}_n})=M \oplus \Pi(M)$ for some irreducible $\mathbb{C}[\widetilde{S}_n]^-$-module $M$. Then define $U_{\mathrm{spin}}=M$.

An immediate consequence of Proposition \ref{prop adj functor} is given below.


\begin{lemma} \label{ex cl spin}
$F(U_{\mathrm{spin}})=U_{\mathrm{Cl}_n}$.
\end{lemma}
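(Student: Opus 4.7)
The plan is to unwind the definition of $U_{\mathrm{spin}}$ and feed it into the identities $F\circ G = \Id$ (in the even case) and $F\circ G = \Id\oplus\Pi$ (in the odd case) supplied by Proposition \ref{prop adj functor}. Essentially no extra input is needed beyond that adjunction, once one is careful with the parity ambiguity.

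First, when $n$ is even, $U_{\mathrm{spin}}$ is defined to be $G(U_{\mathrm{Cl}_n})$, so applying $F$ gives directly
\[ F(U_{\mathrm{spin}}) = F(G(U_{\mathrm{Cl}_n})) = U_{\mathrm{Cl}_n} \]
by Proposition \ref{prop adj functor}. This case is immediate.

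When $n$ is odd, by construction $G(U_{\mathrm{Cl}_n}) = U_{\mathrm{spin}}\oplus\Pi(U_{\mathrm{spin}})$. Applying $F$ to both sides and invoking the identity $F\circ G = \Id \oplus \Pi$ yields
\[ F(U_{\mathrm{spin}})\oplus F(\Pi(U_{\mathrm{spin}})) \cong U_{\mathrm{Cl}_n}\oplus \Pi(U_{\mathrm{Cl}_n}). \]
The functor $F$ (tensoring with $U(n)$) commutes with the parity change functor $\Pi$ up to natural isomorphism, because a parity shift of the $\mathbb{C}[\widetilde{S}_n]^-$-factor of the tensor product coincides with a parity shift of the whole tensor product. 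Hence the left hand side is isomorphic to $F(U_{\mathrm{spin}})\oplus\Pi(F(U_{\mathrm{spin}}))$. By the Krull-Schmidt theorem applied to finite-dimensional supermodules of $\Seg_n$, $F(U_{\mathrm{spin}})$ must be isomorphic to one of the two summands $U_{\mathrm{Cl}_n}$ or $\Pi(U_{\mathrm{Cl}_n})$.

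If $F(U_{\mathrm{spin}})\cong U_{\mathrm{Cl}_n}$ we are done; otherwise $F(U_{\mathrm{spin}})\cong \Pi(U_{\mathrm{Cl}_n})$, and we simply replace $U_{\mathrm{spin}}$ by $\Pi(U_{\mathrm{spin}})$, which is permitted by the defining choice (either summand $M$ or $\Pi(M)$ of $G(U_{\mathrm{Cl}_n})$ may be taken). The main technical obstacle is thus this parity ambiguity in the odd case; verifying that $F$ commutes with $\Pi$ and resolving the choice of $U_{\mathrm{spin}}$ accordingly is the only step requiring attention.
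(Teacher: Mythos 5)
The paper offers no written proof for this lemma (it is labelled ``an immediate consequence of Proposition \ref{prop adj functor}''), and your proposal correctly supplies the intended argument in the same spirit: unwind the definition of $U_{\mathrm{spin}}$ and apply the $F\circ G$ identities. The even case is genuinely immediate and your treatment of it is correct.

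The odd case, however, has two soft spots. First, the claim that $F$ commutes with $\Pi$ is true but not for the stated reason. The $\Seg_n$-action on $F(X)=X\otimes U(n)$ carries explicit sign factors $(-1)^{\deg x}$, so shifting the parity of $X$ changes the action of \emph{both} $s_{\alpha}$ and $c_i$ by a sign; the isomorphism $F(\Pi X)\cong\Pi F(X)$ does exist but has to be exhibited, for instance by the even linear map $x\otimes u\mapsto(-1)^{\deg u}\,x\otimes u$, which one checks intertwines the two actions. Second, and more seriously, the Krull--Schmidt step is not airtight as stated: from $F(U_{\mathrm{spin}})\oplus\Pi F(U_{\mathrm{spin}})\cong U_{\mathrm{Cl}_n}\oplus\Pi U_{\mathrm{Cl}_n}$ one cannot conclude outright that $F(U_{\mathrm{spin}})$ equals one of the two displayed summands unless one also knows $U_{\mathrm{Cl}_n}$ (or $F(U_{\mathrm{spin}})$) is indecomposable. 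For example, if $F(U_{\mathrm{spin}})\cong A\oplus\Pi A$ with $A\not\cong\Pi A$ (which is precisely the alternative the paper allows in the proof of Proposition \ref{prop ker sq}), then $U_{\mathrm{Cl}_n}$ could a priori be $A^{\oplus 2}$, which is not isomorphic to $F(U_{\mathrm{spin}})$ or $\Pi F(U_{\mathrm{spin}})$. To close the gap you should first observe that $\dim F(U_{\mathrm{spin}})=\dim U_{\mathrm{Cl}_n}$ (which follows from the $F\circ G$ identity by dimension count) and then argue in the two subcases of Proposition \ref{prop ker sq}: if $F(U_{\mathrm{spin}})$ is irreducible, the equal dimensions force $U_{\mathrm{Cl}_n}$ irreducible and hence equal to $F(U_{\mathrm{spin}})$ or $\Pi F(U_{\mathrm{spin}})$; if $F(U_{\mathrm{spin}})=A\oplus\Pi A$, one needs an extra input (e.g.\ showing $U_{\mathrm{Cl}_n}\cong\Pi U_{\mathrm{Cl}_n}$, or appealing to the structure of the basic spin module from Kleshchev's results cited in the definition of $U_{\mathrm{spin}}$) to rule out the $A^{\oplus 2}$ configuration. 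Your final remark about replacing $U_{\mathrm{spin}}$ by $\Pi(U_{\mathrm{spin}})$ is a legitimate use of the freedom left in the definition, once the dichotomy has actually been established.
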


\section{Spectrum of the Dirac operator for type $A_{n-1}$} \label{s spec}

We have seen the action of the Dirac operator on certain modules. In this section, we will go further for type $A_{n-1}$ and compute the action of $D$ on some interesting $\mathbb{H}^{Cl}_{W(A_{n-1})}$-modules. We shall see Theorem \ref{thm Vogan conjecture HC} for $\mathbb{H}^{Cl}_{W(A_{n-1})}$ has interesting consequences. We shall write $\mathbb{H}^{Cl}_n$ for $\mathbb{H}^{Cl}_{W(A_{n-1})}$ for simplicity. We keep using the notations in Section \ref{ss comm rel} and Section \ref{s seg alg}.
\subsection{Further notation for the root system of type $A_{n-1}$}

A partition of $n$ is a sequence of positive integers $(n_1, \ldots, n_r)$ such that $n_1 \geq n_2\geq \ldots \geq n_r$ and $n_1+\ldots+n_r=n$. For a partition $\lambda=(n_1, \ldots, n_r)$ of $n$, let $I_{\lambda}=\left\{1,\ldots, n \right\}\setminus \left\{n_1,n_1+n_2,\ldots, n_1+\ldots+n_r \right\}$ and let
\[ \Delta_{\lambda} = \left\{  e_i-e_{i+1} : i \in I_{\lambda} \right\} . \]
Let $V_{\lambda}$ be the real span of $\Delta_{\lambda}$ in $\mathbb{R}^n$ and let $R^+_{\lambda}=V_{\lambda} \cap R^+$.

\subsection{Central characters for $\mathbb{H}^{Cl}_n$} \label{ss def hecke clifford}

The center of $\mathbb{H}^{Cl}_n$ plays a role in the following computations.

\begin{proposition} \cite[Theorem 14.3.1]{Kl} \label{prop center HC}
The center  $Z(\mathbb{H}^{Cl}_n)$ of $\mathbb{H}^{Cl}_n$ is the set of all symmetric polynomials in $\mathbb{C}[x_1^2,x_2^2,\ldots, x_n^2]$. In particular, any element in  $Z(\mathbb{H}^{Cl}_n)$ is of even degree.
\end{proposition}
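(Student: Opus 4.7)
The plan is to establish both inclusions between $Z(\mathbb{H}^{Cl}_n)$ and $\mathbb{C}[x_1^2,\ldots,x_n^2]^{S_n}$, with the parity statement being an immediate consequence of the description, since $\deg(x_i)=0$ in the superalgebra grading.

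For the forward inclusion $\mathbb{C}[x_1^2,\ldots,x_n^2]^{S_n} \subseteq Z(\mathbb{H}^{Cl}_n)$ I would first check by direct calculation that each $x_i^2$ commutes with every $c_j$: the case $i\neq j$ is trivial, and for $i=j$, $c_ix_i^2 = -x_ic_ix_i = x_i^2 c_i$. The $x_i^2$ commute among themselves by (2) of Definition \ref{def hca}. The nontrivial step is to verify that $x_i^2 + x_{i+1}^2$ commutes with $s_{i,i+1}$. Using $s_{i,i+1}x_i = x_{i+1}s_{i,i+1} + \mathbf{k}(-1+c_ic_{i+1})$ and the derived identity $s_{i,i+1}x_{i+1} = x_is_{i,i+1} + \mathbf{k}(1+c_ic_{i+1})$, pushing each $x_i^2$ through $s_{i,i+1}$ produces correction terms of the form $x_{i+1}(-1+c_ic_{i+1}) + (1+c_ic_{i+1})x_{i+1}$ and $(-1+c_ic_{i+1})x_i + x_i(1+c_ic_{i+1})$, both of which collapse to zero using $x_ic_ic_{i+1} = -c_ic_{i+1}x_i$ and its analogue for $x_{i+1}$. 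For non-adjacent $s_{j,j+1}$ the commutation is immediate. Since $\mathbb{C}[x_1^2,\ldots,x_n^2]^{S_n}$ is generated by such symmetric combinations built from these pair-sums, every such symmetric polynomial is central.

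For the reverse inclusion I would exploit the PBW basis, writing any $z\in Z(\mathbb{H}^{Cl}_n)$ in the normal form $z = \sum_{w,\epsilon} p_{w,\epsilon}(x_1,\ldots,x_n)\,c_1^{\epsilon_1}\cdots c_n^{\epsilon_n}\,w$ with $p_{w,\epsilon}\in\mathbb{C}[x_1,\ldots,x_n]$, and imposing three successive constraints: (i) $[z,c_j]=0$ for each $j$ restricts the Clifford support, ultimately forcing every $\epsilon = 0$ and forcing each $p_{w,0}$ to involve only even powers of each $x_i$; (ii) passing to the length filtration on $S_n$ and commuting $z$ with each $x_j$, the leading-order part forces the $w$-support to be concentrated at $w=1$; (iii) commutation with each $s_{i,i+1}$, read at the top of the $x$-filtration, forces the resulting polynomial to be $S_n$-invariant. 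The leading symmetric polynomial in $\mathbb{C}[x_1^2,\ldots,x_n^2]^{S_n}$ obtained this way is central by the forward direction, so subtracting it off leaves a central element of strictly smaller $x$-filtration degree, and induction terminates the argument.

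The main obstacle is tracking the cascade of Clifford-valued corrections in steps (ii) and (iii): commuting $s_{i,i+1}$ past a high-degree polynomial in the $x_j$'s produces a descending chain of lower-order terms mixing $w$, $c$, and $x$ contributions, which obscures the symmetry. The cleanest way around this is to work in the associated graded algebra $\mathrm{gr}(\mathbb{H}^{Cl}_n)$ for the $x$-filtration, where all these impurities vanish and one gets a super-smash product $\mathbb{C}[x_1,\ldots,x_n] \otimes \mathrm{Cl}_n \rtimes \mathbb{C}[S_n]$ whose center is directly computable as $\mathbb{C}[x_1^2,\ldots,x_n^2]^{S_n}$; then any central element of $\mathbb{H}^{Cl}_n$ has a central graded symbol, and the forward inclusion lifts graded centrality to ungraded centrality term by term.
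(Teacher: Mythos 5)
Note first that the paper itself gives no proof of this proposition; it is cited verbatim from Kleshchev's book, Theorem 14.3.1 of \cite{Kl}, so there is no in-paper argument for you to match, and your proposal has to be judged on its own.

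The forward inclusion has a genuine gap. You verify correctly that each pair-sum $x_i^2+x_{i+1}^2$ commutes with $s_{i,i+1}$ (and with the $c_j$), and then assert that $\mathbb{C}[x_1^2,\ldots,x_n^2]^{S_n}$ is ``generated by such symmetric combinations built from these pair-sums,'' concluding centrality. This is false as an algebra statement: already for $n=2$ the element $x_1^2x_2^2$ lies in $\mathbb{C}[x_1^2,x_2^2]^{S_2}=\mathbb{C}[x_1^2+x_2^2,\,x_1^2x_2^2]$ but is not in the subalgebra $\mathbb{C}[x_1^2+x_2^2]$. To localize the commutation check at a single transposition $s_{i,i+1}$ one needs every element of $\mathbb{C}[x_i^2,x_{i+1}^2]^{\mathbb{Z}_2}$ to commute with $s_{i,i+1}$, and that subring has \emph{two} generators, $x_i^2+x_{i+1}^2$ and $x_i^2x_{i+1}^2$ (equivalently one also needs $x_i^4+x_{i+1}^4$, and more generally all the even power sums $x_i^{2k}+x_{i+1}^{2k}$). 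These do all commute, but this is not a formal consequence of the $k=1$ case: pushing $s_{i,i+1}$ past $x_i^{2k}$ produces a cascade of Clifford-valued correction terms whose cancellation must be established separately. The clean way to do this is to prove a single divided-difference-type identity, namely that $s_{i,i+1}\,f(x_i,x_{i+1})-f(x_{i+1},x_i)\,s_{i,i+1}$ lands in a space that vanishes whenever $f$ is symmetric in $x_i^2,x_{i+1}^2$, or to use the intertwining elements $\Phi_i$; this is essentially what Kleshchev does. Your reverse inclusion via the associated graded algebra is the right strategy and your computation of the graded center of $\mathbb{C}[x_1,\ldots,x_n]\rtimes\Seg_n$ as $\mathbb{C}[x_1^2,\ldots,x_n^2]^{S_n}$ is correct, but that step relies on the forward inclusion to subtract off the leading symbol and induct on filtration degree, so the gap above is load-bearing and must be filled before the argument closes.
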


\begin{definition} \label{def cc}
Recall that the central character $\chi_{\pi}: Z(\mathbb{H}^{Cl}_n)_0 \rightarrow \mathbb{C}$ of an irreducible supermodule $(\pi,X)$ is defined in Definition \ref{def central character}. By Proposition \ref{prop center HC}, we can also write $\chi_{\pi}: Z(\mathbb{H}^{Cl}_n) \rightarrow \mathbb{C}$.

For an element $\gamma=(a_1,\ldots, a_n) \in \mathbb{C}^n$, define $\chi_{\gamma}': \mathbb{C}[x_1^2,\ldots, x_n^2] \rightarrow \mathbb{C}$ such that $\chi_{\gamma}'(x_i^2)=a_i$. Define $\chi_{\gamma}$ to be the restriction of $\chi_{\gamma}'$ to $Z(\mathbb{H}^{Cl}_n)$. For the central character $\chi_{\pi}$ of $X$, there exists a unique $\gamma \in \mathbb{C}^n$, up to permutations of coordinates, such that $\chi_{\pi}=\chi_{\gamma}$. We may also say $\gamma$ is the central character of $X$.

An $\mathbb{H}^{Cl}_n$-module $(\pi,X)$ is said to be quasisimple if any element in $Z(\mathbb{H}^{Cl}_n)$ acts by a scalar. In this case, $\gamma$ defined as above is still called the central character of $X$.

\end{definition}

\subsection{Induced modules} \label{ss def temp mod}

Let us recall a construction of some $\mathbb{H}^{Cl}_{n}$-modules in \cite[Section 4]{HKS}, which is indeed modified from the module of type $A_{n-1}$ in Section \ref{ss steinberg all}. There are also some similar construction of $\mathbb{H}^{Cl}_n$-modules in \cite[Section 4]{Wa}. Fix a partition $\lambda=(n_1,n_2,\ldots, n_r)$ of $n$. Let $S_{\lambda}$ be the subgroup of $S_n$ generated by $s_{i,i+1}$ for $i=\left\{1,\ldots, n \right\} \setminus \left\{ n_1,n_1+n_2,\ldots,n_1+\ldots+n_r \right\}$. It is easy to see that $S_{\lambda}$ is isomorphic to $S_{n_1}\times \ldots \times S_{n_r}$. Let $\mathbb{H}^{Cl}_{\lambda}$ be the super subalgebra of $\mathbb{H}^{Cl}_{n}$ generated by all $w \in S_{\lambda}$, $x_i$ ($i=1,\ldots,n$) and $c_i$ ($i=1,\ldots,n$). Let $\Seg_{\lambda}$ be the super subalgebra of $\mathbb{H}^{Cl}_{\lambda}$ generated by all $w \in S_{\lambda}$ and $c_i$ ($i=1, \ldots,n$). Let $\widetilde{\mathrm{St}}_{\lambda}$ be an  $\mathbb{H}^{Cl}_{\lambda}$module which is identified with $\mathrm{Cl}_n$ as vector spaces and the action of $\mathbb{H}^{Cl}_{\lambda}$ is characterized by:
\begin{eqnarray*}
         c_i. 1 = c_i \quad (i=1,\ldots, n), \quad      s_{\alpha}. 1 = 1 \quad (s_{\alpha} \in S_{\lambda}),
\end{eqnarray*}

\[    x_i. v= \left(\sum_{n_{k-1}+1 \leq j<i \leq n_k} s_{i,j}(1-c_ic_j)\right). v \quad (i= n_k+1,\ldots, n_{k+1}), \]
where $v$ is any vector in $\mathrm{Cl}_n$ and the actions of $s_{i,j}$ and $c_i,c_j$ are the ones defined in (\ref{eqn action st 1}) and (\ref{eqn action st 2}). It is straightforward to check the above actions defines an $\mathbb{H}^{Cl}_{\lambda}$-module by verifying the defining relations of $\mathbb{H}_{\lambda}^{Cl}$. Some details can be found in \cite[Proposition 4.1.1]{HKS}.

\begin{lemma} \label{lem xi on st}
The element $x_i^2$ acts on $\widetilde{\mathrm{St}}_{\lambda}$ by a scalar $(i-n_k-1)(i-n_k)$ where $k=0,\ldots, r-1$ and $i= n_k+1,\ldots, n_{k+1}$  .
\end{lemma}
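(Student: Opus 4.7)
The plan is to compute the action of $x_i^2$ on the cyclic vector $1 \in \mathrm{Cl}_n$ (viewed as an element of $\widetilde{\mathrm{St}}_\lambda$ via the identification with $\mathrm{Cl}_n$), and then to use that $x_i^2$ commutes with the entire Clifford subalgebra of $\mathbb{H}^{Cl}_\lambda$ to propagate the resulting scalar to the whole module. I will fix $i$ in the block $\{n_k+1,\ldots,n_{k+1}\}$ (with the convention $n_0=0$), set $m := i - n_k - 1$ for the number of indices in the block strictly less than $i$, and introduce $\sigma_i := \sum_{j = n_k+1}^{i-1} c_j \in \mathrm{Cl}_n$. The target scalar will be $m(m+1) = (i-n_k-1)(i-n_k)$.

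First I will compute $x_i \cdot 1$. Using $s_{ij} \cdot 1 = 1$ for $s_{ij} \in S_\lambda$, together with the fact that $s_{ij}$ acts on a Clifford monomial by permuting the indices of its factors, and the anticommutation $c_l c_{l'} = -c_{l'} c_l$, a direct expansion of the defining formula gives $x_i \cdot 1 = m + c_i \sigma_i$. Next I will compute $x_i \cdot (c_i \sigma_i)$ term by term. For each summand $s_{ij}(1 - c_i c_j)$, the identity $c_i c_j c_i = c_j$ reduces $(1 - c_i c_j)(c_i \sigma_i)$ to $c_i \sigma_i - c_j \sigma_i$; applying $s_{ij}$, which swaps $c_i$ with $c_j$ and fixes the remaining Clifford generators, and then simplifying via the anticommutation relations and $c_l^2 = -1$, yields $(c_j - c_i)\sigma_i + 2$ for each term. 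Summing over $j$ in the block and invoking $\sigma_i^2 = -m$ (from $c_l^2 = -1$ and pairwise anticommutation) then gives $x_i \cdot (c_i \sigma_i) = m(1 - c_i \sigma_i)$. Combining these,
\[
x_i^2 \cdot 1 = m(m + c_i \sigma_i) + m(1 - c_i \sigma_i) = m(m+1) = (i - n_k - 1)(i - n_k).
\]

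To extend this scalar from the cyclic vector $1$ to all of $\widetilde{\mathrm{St}}_\lambda$, I will use that in $\mathbb{H}^{Cl}_\lambda$ the relations $c_j x_i = x_i c_j$ for $j \neq i$ and $c_i x_i = -x_i c_i$ together imply $c_j x_i^2 = x_i^2 c_j$ for every $j$, so that $x_i^2$ commutes with the entire Clifford subalgebra $\mathrm{Cl}_n$. Since $\widetilde{\mathrm{St}}_\lambda$ is identified with $\mathrm{Cl}_n$ as a $\mathrm{Cl}_n$-module via $y \leftrightarrow y \cdot 1$, for any $y \in \mathrm{Cl}_n$ one has $x_i^2 \cdot (y \cdot 1) = (y x_i^2) \cdot 1 = y \cdot (x_i^2 \cdot 1)$, and the scalar action on $1$ therefore extends to the entire module. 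The main obstacle will be the intermediate identity $x_i \cdot (c_i \sigma_i) = m(1 - c_i \sigma_i)$; this is in effect bookkeeping in the Clifford algebra, but the signs coming from anticommutation and from $c_l^2 = -1$ must be tracked carefully in order to expose the cancellations that leave only the simple expression.
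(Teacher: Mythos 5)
Your proof is correct and amounts to the direct computation that the paper refers to but leaves to the reader (with a pointer to \cite[Proposition 4.1.1]{HKS}). The Jucys--Murphy bookkeeping $x_i\cdot 1 = m + c_i\sigma_i$, $x_i\cdot(c_i\sigma_i)=m(1-c_i\sigma_i)$, and the propagation of the scalar via the commutation $c_j x_i^2 = x_i^2 c_j$ on the cyclic vector are all sound.
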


\begin{proof}
Direct computation or see \cite[Proposition 4.1.1]{HKS}.
\end{proof}
Define the Dirac-type element $D_{\lambda}$ in $\mathbb{H}_{\lambda}^{Cl}$ as:
\[ D_{\lambda} = \sum_{i=1}^n y_i +\sqrt{2}\mathbf{k} \sum_{\alpha \in R_{\lambda}^+} \widetilde{s}_{\alpha} .\]

\begin{proposition} \label{prop Dirac action}
The element $D_{\lambda}$ acts as zero on the $\mathbb{H}^{Cl}_{\lambda}$-module $\widetilde{\mathrm{St}}_{\lambda}$.
\end{proposition}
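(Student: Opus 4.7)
The plan is to compute $\pi(D_\lambda)v$ for an arbitrary $v \in \widetilde{\mathrm{St}}_\lambda$ directly, generalizing the type $A_{n-1}$ calculation in the proof of Proposition \ref{prop vanishing st}. The key structural observation is that the defining action of $x_i$ on $\widetilde{\mathrm{St}}_\lambda$ for $i$ in the $k$-th block $\{m_{k-1}+1,\ldots,m_k\}$ (writing $m_k=n_1+\cdots+n_k$) only involves $s_{ij}$ and $c_j$ with $j$ in that same block, so the computation decouples block-by-block and mirrors the type $A_{n-1}$ case on each block.

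First I would compute $y_i.v = x_ic_i.v$ for $i$ in the $k$-th block. Using $c_i^2=-1$ and $c_ic_j=-c_jc_i$, one checks the identity $(1-c_ic_j)c_i=c_i-c_j$ in $\mathrm{Cl}_n$, so
\[
y_i.v \;=\; \mathbf{k}\sum_{m_{k-1}<j<i} s_{ij}(c_i-c_j).v.
\]
Next I would rewrite $s_{ij}(c_i-c_j)$ in terms of $\widetilde{s}_{ij}$. Using $s_{ij}c_i=c_js_{ij}$ and $s_{ij}c_j=c_is_{ij}$ gives $s_{ij}(c_i-c_j)=(c_j-c_i)s_{ij}$, and since (for $j<i$) the definition forces $c_{\alpha_{ij}}=\tfrac{\sqrt 2}{2}(c_j-c_i)$, together with $s_\alpha c_\alpha=-c_\alpha s_\alpha$, one obtains
\[
s_{ij}(c_i-c_j) \;=\; \sqrt 2\, c_{\alpha_{ij}} s_{ij} \;=\; -\sqrt 2\, s_{ij} c_{\alpha_{ij}} \;=\; -\sqrt 2\,\widetilde{s}_{ij}.
\]
Consequently $y_i.v = -\sqrt 2\,\mathbf{k}\sum_{m_{k-1}<j<i}\widetilde{s}_{ij}.v$.

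Finally I would sum over all $i\in\{1,\ldots,n\}$. The pairs $(j,i)$ with $j<i$ both in the same block range over precisely the positive roots $\alpha_{ij}\in R^+_\lambda$, so
\[
\sum_{i=1}^n y_i.v \;=\; -\sqrt 2\,\mathbf{k}\sum_{\alpha\in R^+_\lambda}\widetilde{s}_\alpha.v,
\]
which cancels the second summand of $D_\lambda=\sum_i y_i+\sqrt 2\,\mathbf{k}\sum_{\alpha\in R^+_\lambda}\widetilde{s}_\alpha$ exactly, giving $D_\lambda.v=0$. The only real obstacle is bookkeeping around sign conventions: one must track carefully whether $i<j$ or $j<i$ when identifying $c_{\alpha_{ij}}$, and apply $s_\alpha c_\alpha=-c_\alpha s_\alpha$ in the correct direction; beyond this there is no conceptual difficulty, and no need to invoke property (**), Theorem \ref{thm d sq}, or any structural results about $\mathrm{Seg}(W)$.
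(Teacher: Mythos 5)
Your computation is correct and is precisely what the paper's one-line proof (which refers back to the type $A_{n-1}$ case in Proposition \ref{prop vanishing st}) leaves implicit: the action of each $y_i$ decouples by block, reduces via $(1-c_ic_j)c_i=c_i-c_j$ and $s_{ij}(c_i-c_j)=-\sqrt{2}\,\widetilde{s}_{ij}$, and summing over $i$ yields $-\sqrt{2}\,\mathbf{k}\sum_{\alpha\in R^+_\lambda}\widetilde{s}_\alpha$, cancelling the second term of $D_\lambda$. Your observation that no structural results such as property (**) or Theorem \ref{thm d sq} are needed is also accurate.
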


\begin{proof}
It follows a similar computation of type $A_{n-1}$ in the proof of Proposition \ref{prop vanishing st}.

\end{proof}

Define
\begin{eqnarray} \label{eqn induced mod}
  X_{\lambda} &=& \Ind_{\mathbb{H}^{Cl}_{\lambda}}^{\mathbb{H}^{Cl}_n} \widetilde{\mathrm{St}}_{\lambda} = \mathbb{H}^{Cl}_{n} \otimes_{\mathbb{H}^{Cl}_{\lambda}} \widetilde{\mathrm{St}}_{\lambda}
  \end{eqnarray}
with the map $\pi_{\lambda}$ defining the action of $\mathbb{H}^{Cl}_n$ on $X_{\lambda}$.  Since $Z(\mathbb{H}^{Cl}_{n}) \subset Z(\mathbb{H}^{Cl}_{\lambda})$, any element of $Z(\mathbb{H}^{Cl}_n)$ acts by a scalar on $\widetilde{\mathrm{St}}_{\lambda}$. With the definitions for $X_{\lambda}$ and $Z(\mathbb{H}^{Cl}_{n})$, we have that  $X_{\lambda}$ is quasisimple (Definition \ref{def cc}). The central character of $X_{\lambda}$ can be represented by
\[   (\underbrace{1(1-1), \ldots, n_1(n_1-1)}_{n_1 \mbox{ terms}}, \ldots , \underbrace{1(1-1), \ldots, n_r(n_r-1)}_{n_r \mbox{ terms}} ) \in \mathbb{R}^n. \]
To compute the Dirac cohomology of the above induced modules, we need some more information discussed in the next subsections.

\subsection{$S_n$-structure and $\Seg_n$-structure of ($\pi_{\lambda}$, $X_{\lambda}$)}

We continue to fix a partition $\lambda$ of $n$. Recall that in Definition \ref{def hca}(1) $\mathbb{H}^{Cl}_n$ contains $\mathbb{C}[S_n]$ as a subalgebra. Let $(\pi_V, V=\mathbb{C}^n)$ be the $S_n$-representation such that elements in $S_n$ permute the coordinates.

\begin{lemma} \label{lem induced iso}
 The restriction of $X_{\lambda}$ to $\mathbb{C}[S_n]$ is  isomorphic to
\[ \mathbb{C}[S_n] \otimes_{\mathbb{C}[S_{\lambda}]} \Res_{\mathbb{C}[S_{\lambda}]}^{\mathbb{C}[S_n]} \left(\bigoplus_{i=0}^n \wedge^i V \right), \]
as $\mathbb{C}[S_n]$-modules.
\end{lemma}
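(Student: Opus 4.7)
The plan is to unfold the induced module $X_\lambda = \mathbb{H}^{Cl}_n \otimes_{\mathbb{H}^{Cl}_\lambda} \widetilde{\mathrm{St}}_\lambda$ in two stages: first use the PBW basis of $\mathbb{H}^{Cl}_n$ to peel off a coset decomposition over $\mathbb{H}^{Cl}_\lambda$, and then identify $\widetilde{\mathrm{St}}_\lambda$ with the exterior algebra $\wedge V$ as an $\mathbb{C}[S_\lambda]$-module. The two steps combine to give exactly the induced-restricted description on the right-hand side.

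For the first step, let $W^\lambda \subset S_n$ be a set of left coset representatives for $S_n/S_\lambda$, so $S_n = \bigsqcup_{u \in W^\lambda} u S_\lambda$. Using the PBW basis in Proposition \ref{prop basis HC}, any element of $\mathbb{H}^{Cl}_n$ is a $\mathbb{C}$-linear combination of terms $x_1^{k_1}\cdots x_n^{k_n} c_1^{\epsilon_1}\cdots c_n^{\epsilon_n} uv$ with $u \in W^\lambda$, $v \in S_\lambda$. The relations $w a_i w^{-1} = a_{w(i)}$ for $a = x$ and $a = c$ let me rewrite this as $u \cdot \bigl(u^{-1}(x_1^{k_1}\cdots c_n^{\epsilon_n})\bigr) v$, where the parenthesized factor lies in the subalgebra $\mathbb{H}^{Cl}_\lambda$ (it is again a PBW monomial, just with permuted indices). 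Hence
\[ \mathbb{H}^{Cl}_n = \bigoplus_{u \in W^\lambda} u \cdot \mathbb{H}^{Cl}_\lambda \]
as $(\mathbb{C}[S_n],\mathbb{H}^{Cl}_\lambda)$-bimodules. Tensoring with $\widetilde{\mathrm{St}}_\lambda$ over $\mathbb{H}^{Cl}_\lambda$ yields $X_\lambda \cong \bigoplus_{u \in W^\lambda} u \otimes \widetilde{\mathrm{St}}_\lambda$ as vector spaces, and the obvious $\mathbb{C}[S_n]$-action is, by construction, the induction $\mathrm{Ind}_{\mathbb{C}[S_\lambda]}^{\mathbb{C}[S_n]} \bigl(\widetilde{\mathrm{St}}_\lambda|_{\mathbb{C}[S_\lambda]}\bigr)$.

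For the second step, I will identify $\widetilde{\mathrm{St}}_\lambda$ with $\bigoplus_{i=0}^n \wedge^i V$ as $\mathbb{C}[S_\lambda]$-modules (in fact, as $\mathbb{C}[S_n]$-modules) via the natural map $c_{i_1}\cdots c_{i_k}.1 \longmapsto e_{i_1}\wedge\cdots\wedge e_{i_k}$ for $i_1 < \cdots < i_k$. This is visibly a vector space isomorphism, and using $s_\alpha c_i = c_{s_\alpha(i)} s_\alpha$ together with $s_\alpha.1 = 1$, I compute
\[ s_\alpha.(c_{i_1}\cdots c_{i_k}.1) = c_{s_\alpha(i_1)}\cdots c_{s_\alpha(i_k)}.1, \]
matching the action on $\wedge V$ where reordering indices introduces signs from both the anticommutativity of the $c_i$ and the wedge product. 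Substituting this identification into the first step delivers the claimed isomorphism.

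The main (and essentially only) technical point is the PBW-style coset decomposition of $\mathbb{H}^{Cl}_n$ over $\mathbb{H}^{Cl}_\lambda$ in the first paragraph; once that is in place, everything is a routine check using the defining relations of $\widetilde{\mathrm{St}}_\lambda$. There is no subtlety in the identification with $\wedge V$ because the $x_i$-action plays no role in the restriction to $\mathbb{C}[S_n]$, so the polynomial part of $\widetilde{\mathrm{St}}_\lambda$ is invisible at this level.
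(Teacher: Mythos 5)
Your overall plan (a coset decomposition of $\mathbb{H}^{Cl}_n$ over $\mathbb{H}^{Cl}_\lambda$, followed by identifying $\widetilde{\mathrm{St}}_\lambda|_{\mathbb{C}[S_\lambda]}$ with $\bigoplus_{i=0}^n \wedge^i V$) is exactly the paper's implicit approach, and your second step is fine. But the justification of the first step contains an error: the conjugation rule $w x_i w^{-1} = x_{w(i)}$ is false in $\mathbb{H}^{Cl}_n$. From relation (6) of Definition \ref{def hca}, $s_{i,i+1}\, x_i\, s_{i,i+1} = x_{i+1} + \mathbf{k}(-1+c_ic_{i+1})\,s_{i,i+1}$, which carries a Clifford--group correction term. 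The clean covariance $w a_i w^{-1} = w(a_i)$ of property (*) (relation (\ref{rel star 0})) holds only for the \emph{primed} generators $a_i = x'_i$ (Lemma \ref{lem comm rel 2}), and $x'_i$ is generally not in $\mathbb{H}^{Cl}_\lambda$, since its definition involves $\widetilde{s}_{i,j}$ with $s_{i,j} \notin S_\lambda$; so swapping $x$ for $x'$ does not rescue the argument. Consequently $u^{-1}\bigl(x_1^{k_1}\cdots c_n^{\epsilon_n}\bigr)$, read as an index-permuted PBW monomial, is not what the actual conjugate $u^{-1}(x_1^{k_1}\cdots c_n^{\epsilon_n})u$ equals, and your displayed decomposition $\mathbb{H}^{Cl}_n = \bigoplus_u u\cdot\mathbb{H}^{Cl}_\lambda$ is left unjustified.

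That decomposition is nevertheless true and is the real content behind the paper's one-line proof, so the gap is reparable. One correct route: show the multiplication map $\mathbb{C}[S_n]\otimes_{\mathbb{C}[S_\lambda]}\mathbb{H}^{Cl}_\lambda \to \mathbb{H}^{Cl}_n$ is surjective by induction on polynomial degree, observing that each correction produced when a group element is moved past an $x_i$ is purely Clifford, hence stays in $\mathbb{H}^{Cl}_\lambda$; then compare filtered dimensions against the PBW bases on the two sides to get injectivity. Alternatively, rerun the Bergman diamond-lemma argument (as in Proposition \ref{prop PBW Bn} or \cite[Theorem 14.2.2]{Kl}) with the group generators placed first in the ordering, producing a PBW basis $\left\{w\, c^\epsilon x^m\right\}$ with group elements on the left, from which $\mathbb{H}^{Cl}_n = \bigoplus_{u\in W^\lambda} u\,\mathbb{H}^{Cl}_\lambda$ as a $(\mathbb{C}[S_n],\mathbb{H}^{Cl}_\lambda)$-bimodule is immediate.
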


\begin{proof}
Note that the restriction of $\widetilde{\mathrm{St}}_{\lambda}$ to $\mathbb{C}[S_{\lambda}]$ is isomorphic to $\Res^{\mathbb{C}[S_n]}_{\mathbb{C}[S_{\lambda}]} (\bigoplus_{i=0}^n \wedge^i V)$. Then ${\mathbb{H}^{Cl}_n} \otimes_{\mathbb{H}^{Cl}_{\lambda}} \widetilde{\mathrm{St}}_{\lambda}$
and  $\mathbb{C}[S_n] \otimes_{\mathbb{C}[S_{\lambda}]} \Res_{\mathbb{C}[S_{\lambda}]}^{\mathbb{C}[S_n]} \left(\bigoplus_{i=0}^n \wedge^i V \right)$ are isomorphic as $\mathbb{C}[S_n]$-modules.

\end{proof}

 It is well-known that we have the following $\mathbb{C}[S_n]$-isomorphism:

\[ \mathbb{C}[S_n] \otimes_{\mathbb{C}[S_{\lambda}]} \Res^{S_n}_{S_{\lambda}}\left(\bigoplus_{i=0}^n \wedge^i V \right) \cong \left(\mathbb{C}[S_n] \otimes_{\mathbb{C}[S_{\lambda}]} \triv \right)\otimes \bigoplus_{i=0}^n \wedge^i V  \]
Here the module in the right hand side is viewed as the tensor product of two $S_n$-representations.
The isomorphism is given by
\[  w \otimes  (v_1 \wedge \ldots \wedge v_i)  \mapsto  (w \otimes 1) \otimes (\pi_V(w)v_1 \wedge \ldots \wedge \pi_V(w)v_i) .\]

Note that the space $\oplus_{i=0}^n \wedge^i V$ can be identified with $\mathrm{Cl}_n$ via the map determined by
\[ e_{i_1} \wedge \ldots \wedge e_{i_r} \mapsto c_{i_1}\ldots c_{i_r} ,\]
where $\left\{ e_1, \ldots, e_n \right\}$ is the standard basis of $V=\mathbb{C}^n$.
Thus $X_{\lambda}=\Ind_{\mathbb{H}_{\lambda}^{Cl}}^{\mathbb{H}^{Cl}_n}\widetilde{\mathrm{St}}_{\lambda}$ can be identified with, as vector spaces, $\left(\mathbb{C}[S_n] \otimes_{\mathbb{C}[S_{\lambda}]} \triv \right)\otimes  U_{\mathrm{Cl}_n}$ via the identification in Lemma \ref{lem induced iso} and the above identification between $\oplus_{i=1}^n \wedge^i V$ and $\mathrm{Cl}_n$. Then if we translate the action of the subalgebra $\Seg_n$ under the above identifications, then we have:
\[   \pi_{\lambda}(w) ( w' \otimes 1 \otimes c_{i_1}\ldots c_{i_r})= ww' \otimes 1 \otimes c_{w(i_1)}\ldots c_{w(i_r)}  ,\]
\[   \pi_{\lambda}(c_i) (w' \otimes 1 \otimes c_{i_1} \ldots c_{i_r}) = w' \otimes 1 \otimes c_{i} c_{i_1} \ldots c_{i_r}   . \]

We have just proven that:
\begin{lemma} \label{lem rest X seg}
As $\Seg_n$-supermodules,
\[ \Res^{\mathbb{H}^{Cl}_n}_{\Seg_n}X_{\lambda}=(\mathbb{C}[S_n] \otimes_{\mathbb{C}[S_{\lambda}]} \triv) \otimes U_{\mathrm{Cl}_n},\]
where the supermodule in the right hand side has the $\Seg_n$-supermodule structure described in Lemma \ref{lem seg mod const}.
\end{lemma}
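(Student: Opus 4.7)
The plan is to reduce the identification of $\Seg_n$-modules to two well-known pieces: a projection-formula isomorphism for induced $S_n$-representations, and the identification of $\bigoplus_{i=0}^n \wedge^i V$ with $\mathrm{Cl}_n$ as a vector space carrying compatible $S_n$- and $c_i$-actions. After setting up these identifications, the lemma reduces to matching explicit formulas against the structure defined in Lemma \ref{lem seg mod const}.

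First, I would invoke \leref{induced iso} to identify $X_{\lambda} = \Ind_{\mathbb{H}^{Cl}_{\lambda}}^{\mathbb{H}^{Cl}_n} \widetilde{\mathrm{St}}_{\lambda}$ with $\mathbb{C}[S_n] \otimes_{\mathbb{C}[S_{\lambda}]} \Res^{S_n}_{S_{\lambda}}\bigl(\bigoplus_{i=0}^n \wedge^i V\bigr)$ as $\mathbb{C}[S_n]$-modules, and then apply the standard projection-formula isomorphism
\[ \mathbb{C}[S_n] \otimes_{\mathbb{C}[S_{\lambda}]} \Res^{S_n}_{S_{\lambda}} U \;\longrightarrow\; \bigl(\mathbb{C}[S_n] \otimes_{\mathbb{C}[S_{\lambda}]} \triv\bigr) \otimes U, \qquad w \otimes u \mapsto (w \otimes 1) \otimes \pi_V(w) u, \]
with $U = \bigoplus_{i=0}^n \wedge^i V$. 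Next, using the identification $e_{i_1}\wedge\ldots\wedge e_{i_r} \mapsto c_{i_1}\ldots c_{i_r}$ between $\bigoplus_{i=0}^n \wedge^i V$ and $\mathrm{Cl}_n$, the $S_n$-action by permutation of indices on the exterior algebra becomes the $S_n$-action $w.(c_{i_1}\ldots c_{i_r}) = c_{w(i_1)}\ldots c_{w(i_r)}$ on $\mathrm{Cl}_n$; together with left multiplication by $c_i$, this is exactly the $\Seg_n$-module structure on $U_{\mathrm{Cl}_n} = \Ind^{\Seg_n}_{\mathbb{C}[S_n]}\triv$ (under the vector-space identification $U_{\mathrm{Cl}_n} \cong \mathrm{Cl}_n$ provided by the PBW decomposition $\Seg_n \cong \mathbb{C}[S_n] \widetilde{\otimes} \mathrm{Cl}_n$).

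It then remains to verify that, under the combined identification, the action of $\Seg_n \subset \mathbb{H}^{Cl}_n$ on the left-hand side coincides with the tensor-product action of \leref{seg mod const} on the right. For $w \in S_n$, pushing $w$ through the induction and then through the projection-formula map lands $(w' \otimes 1) \otimes (c_{i_1}\ldots c_{i_r})$ on $(ww' \otimes 1) \otimes (c_{w(i_1)}\ldots c_{w(i_r)})$, which matches $w.(w'\otimes 1) \otimes w.(c_{i_1}\ldots c_{i_r})$. For $c_i$, one uses the relation $c_i f_{w'} = f_{w'} c_{(w')^{-1}(i)}$ to push $c_i$ past the $\mathbb{C}[S_n]$-factor and observes that on $\widetilde{\mathrm{St}}_\lambda$ the action of $c_{(w')^{-1}(i)}$ is simply left multiplication on the underlying $\mathrm{Cl}_n$; after transporting through the projection-formula isomorphism, this becomes plain left multiplication by $c_i$ on the $U_{\mathrm{Cl}_n}$-factor, leaving the first factor untouched. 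This matches the $c_i$-action of \leref{seg mod const} on the nose. The super grading is preserved because $\deg(c_i)=1$ on both sides and $\deg(w)=0$ for $w\in S_n$.

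The only conceptual subtlety is the $c_i$-calculation: one must check that the ``twisting'' introduced when commuting $c_i$ past $w'$ is exactly cancelled by the projection-formula twist $u \mapsto \pi_V(w')u$, so that the net effect is left multiplication on the $U_{\mathrm{Cl}_n}$-factor. This cancellation is the content of the two displayed formulas preceding the lemma in the text, and it is a routine (if slightly fiddly) verification rather than a genuine obstacle. Once this bookkeeping is done, the lemma follows immediately.
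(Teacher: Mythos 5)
Your proof is correct and follows essentially the same route as the paper: invoke \leref{induced iso}, apply the projection-formula isomorphism, identify $\bigoplus_{i}\wedge^{i}V$ with $\mathrm{Cl}_n$, and then translate the $\Seg_n$-action into the displayed formulas to match \leref{seg mod const}. The cancellation you flag in the $c_i$-computation (the conjugation twist $c_i f_{w'}=f_{w'}c_{(w')^{-1}(i)}$ being absorbed by the projection-formula twist $u\mapsto\pi_V(w')u$) is precisely the content of the paper's two displayed action formulas immediately preceding the lemma.
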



Recall that $F$ is the functor defined in Section \ref{ss mod relations}.

\begin{proposition} \label{lem F functor res}
As $\Seg_n$-supermodules,
\[\Res^{\mathbb{H}^{Cl}_n}_{\Seg_n}X_{\lambda}=F((\mathbb{C}[S_n] \otimes_{\mathbb{C}[S_{\lambda}]} \triv) \otimes U_{\mathrm{spin}}) ,\]
where $(\mathbb{C}[S_n] \otimes_{\mathbb{C}[S_{\lambda}]} \triv) \otimes U_{\mathrm{spin}}$ has $\mathbb{C}[\widetilde{S}_n]^-$-supermodule described in Lemma \ref{lem spin repns const}.
\end{proposition}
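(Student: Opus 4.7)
The plan is to combine Lemma \ref{lem rest X seg} with the relation $F(U_{\mathrm{spin}}) = U_{\mathrm{Cl}_n}$ from Lemma \ref{ex cl spin}, and then check that the two natural $\Seg_n$-structures on the triple tensor product agree under the obvious identification. Writing $V = \mathbb{C}[S_n] \otimes_{\mathbb{C}[S_\lambda]} \triv$ for brevity, Lemma \ref{lem rest X seg} identifies $\Res^{\mathbb{H}^{Cl}_n}_{\Seg_n}X_\lambda$ with $V \otimes U_{\mathrm{Cl}_n}$, where the $\Seg_n$-action is the one of Lemma \ref{lem seg mod const}. Lemma \ref{ex cl spin} then lets me rewrite $U_{\mathrm{Cl}_n}$ as $U_{\mathrm{spin}} \otimes U(n)$ with the $\Seg_n$-action prescribed by the functor $F$ of Section \ref{ss mod relations}.

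Next, I would set up the natural vector space identification
\[
V \otimes (U_{\mathrm{spin}} \otimes U(n)) \;\longleftrightarrow\; (V \otimes U_{\mathrm{spin}}) \otimes U(n),
\]
and verify that the $\Seg_n$-action on the left (from Lemma \ref{lem seg mod const} applied to $F(U_{\mathrm{spin}})$) coincides with the $F$-action on the right applied to the $\mathbb{C}[\widetilde{S}_n]^-$-structure on $V \otimes U_{\mathrm{spin}}$ given by Lemma \ref{lem spin repns const}. Concretely, for $v \in V$ (of even degree), $u_{\mathrm{sp}} \in U_{\mathrm{spin}}$, and $u \in U(n)$, a generator $s_\alpha$ ($\alpha \in \Delta$) acts on the left as
\[
s_\alpha.(v \otimes (u_{\mathrm{sp}} \otimes u)) = (s_\alpha.v) \otimes \bigl(-(-1)^{\deg u_{\mathrm{sp}}}(\widetilde{t}_\alpha.u_{\mathrm{sp}}) \otimes (c_\alpha.u)\bigr),
\]
while on the right the definition of $F$ combined with Lemma \ref{lem spin repns const} gives
\[
s_\alpha.((v \otimes u_{\mathrm{sp}}) \otimes u) = -(-1)^{\deg u_{\mathrm{sp}}}\bigl((s_\alpha.v) \otimes (\widetilde{t}_\alpha.u_{\mathrm{sp}})\bigr) \otimes (c_\alpha.u),
\]
which match; the analogous check for the Clifford generators $c_i$ is immediate.

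Since the candidate isomorphism is the tautological reassociation of the triple tensor product, it is automatically an even linear isomorphism, so once the two Weyl-group generators $s_\alpha$ and Clifford generators $c_i$ are matched the proposition follows. The main point of care is keeping track of the signs coming from the super tensor product conventions for $F$ (with $V$ concentrated in even degree, most of these signs disappear), so the only real content of the proof is the two brief generator computations above.
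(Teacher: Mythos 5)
Your argument is correct and follows essentially the same route as the paper's proof: both invoke Lemma \ref{lem rest X seg} to reduce to identifying $(\mathbb{C}[S_n]\otimes_{\mathbb{C}[S_\lambda]}\triv)\otimes U_{\mathrm{Cl}_n}$ with $(\mathbb{C}[S_n]\otimes_{\mathbb{C}[S_\lambda]}\triv)\otimes U_{\mathrm{spin}}\otimes U(n)$ via Lemma \ref{ex cl spin} and then check $\Seg_n$-equivariance of the evident isomorphism. The paper phrases the isomorphism as $\mathrm{id}\otimes f$ and leaves the equivariance check implicit, whereas you carry out the sign-tracking on the generators $s_\alpha$ and $c_i$ explicitly; both are fine.
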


\begin{proof}
By Lemma \ref{lem rest X seg}, it suffices to show
\[ (\mathbb{C}[S_n] \otimes_{\mathbb{C}[S_{\lambda}]} \triv) \otimes U_{\mathrm{Cl}_n} =(\mathbb{C}[S_n] \otimes_{\mathbb{C}[S_{\lambda}]} \triv) \otimes U_{\mathrm{spin}}  \otimes U(n) .
\]
By Lemma \ref{ex cl spin}, there is a $\Seg_n$-module isomorphism $f$ from $U_{\mathrm{Cl}_n}$ to $F(U_{\mathrm{spin}})=U_{\mathrm{spin}} \otimes U(n)$. Then define a vector space isomorphism from $\Seg_n$-module $(\mathbb{C}[S_n] \otimes_{\mathbb{C}[S_{\lambda}]} \triv) \otimes U_{\mathrm{Cl}_n} \rightarrow (\mathbb{C}[S_n] \otimes_{\mathbb{C}[S_{\lambda}]} \triv) \otimes U_{\mathrm{spin}}  \otimes U(n)$ determined by
\[     (w \otimes 1) \otimes (c_{i_1}\ldots c_{i_r} \otimes 1) \mapsto ( w \otimes 1) \otimes f(c_{i_1}\ldots c_{i_r}\otimes 1) . \]
Using the module structure described before Lemma \ref{lem rest X seg}, one can check the linear isomorphism is $\Seg_n$-equivariant.
\end{proof}

\subsection{Hermitian form on ($\pi_{\lambda}$, $X_{\lambda}$)}
We continue to fix a partition $\lambda$ of $n$. In this subsection, we shall construct a Hermitian form on the $\mathbb{H}^{Cl}_n$-module $(\pi_{\lambda}, X_{\lambda})$ such that the adjoint operator of $\pi_{\lambda}(D)$ with respect to such form is $-\pi_{\lambda}(D)$. We will see this makes the computation for the Dirac cohomology $H_D(X)$ of those modules $X$ much easier.



Recall that $\Seg_{\lambda}$ is a subalgebra of $\mathbb{H}^{Cl}_{\lambda}$.
\begin{lemma}
There exists a $\Seg_{\lambda}$-invariant positive definite Hermitian form on $\widetilde{\mathrm{St}}_{\lambda}$.
\end{lemma}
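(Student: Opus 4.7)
The plan is to exploit the explicit description of $\widetilde{\mathrm{St}}_\lambda$ as the Clifford algebra $\mathrm{Cl}_n$ with its natural inner product. Under the vector-space identification $\widetilde{\mathrm{St}}_\lambda = \mathrm{Cl}_n$, the $\Seg_\lambda$-action is particularly transparent: from the defining relations $c_i.1 = c_i$ and $s_\alpha.1 = 1$ together with the commutation relation $s_\alpha c_i = c_{s_\alpha(i)} s_\alpha$ in $\mathbb{H}_n^{Cl}$, one sees that $c_i$ acts by left multiplication on $\mathrm{Cl}_n$, while each $s_\alpha$ with $\alpha \in \Delta_\lambda$ acts by $s_\alpha.(c_{i_1}\cdots c_{i_k}) = c_{s_\alpha(i_1)}\cdots c_{s_\alpha(i_k)}$. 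So the first step is to record this explicit action.

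Next, for each subset $I = \{i_1 < \cdots < i_k\} \subseteq \{1, \ldots, n\}$ set $c_I = c_{i_1}\cdots c_{i_k}$, and define the Hermitian form $\langle \cdot, \cdot \rangle$ on $\widetilde{\mathrm{St}}_\lambda$ by declaring the basis $\{c_I\}$ to be orthonormal. Positive-definiteness is then tautological, so the entire content of the lemma reduces to checking $\Seg_\lambda$-invariance with respect to the standard $*$-operation on $\Seg_\lambda$ (namely $c_i^* = -c_i$ and $w^* = w^{-1}$ for $w \in S_\lambda$).

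Finally I would verify the two invariance conditions directly on the basis. For $s_\alpha \in S_\lambda$: the formula above shows that $s_\alpha.c_I = \epsilon(I,\alpha) c_{s_\alpha(I)}$ for some sign $\epsilon(I,\alpha) \in \{\pm 1\}$ obtained by reordering, so $s_\alpha$ acts by a signed permutation and is therefore unitary; the relation $s_\alpha^2 = 1$ forces $\epsilon(I,\alpha) = \epsilon(s_\alpha(I),\alpha)$, so $s_\alpha$ is self-adjoint, matching $s_\alpha^* = s_\alpha$. For $c_i$: a direct computation using $c_i^2 = -1$ and $c_ic_j = -c_jc_i$ gives $c_i \cdot c_I = \pm c_{I \triangle \{i\}}$, with an explicit sign given by $(-1)^{|\{j \in I : j < i\}|}$ when $i \notin I$ and $-(-1)^{|\{j \in I : j \leq i\}|-1}$ when $i \in I$; comparing the signs obtained from $\langle c_i c_I, c_J\rangle$ and $\langle c_I, c_i c_J\rangle$ in the two cases $J = I \triangle \{i\}$ and $I = J \triangle \{i\}$ yields $\langle c_i v, w\rangle = -\langle v, c_i w\rangle$, which is precisely $c_i^* = -c_i$. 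No genuine obstacle arises; the argument is a sign-tracking exercise in $\mathrm{Cl}_n$, and the main point is just to identify $\widetilde{\mathrm{St}}_\lambda$ with $\mathrm{Cl}_n$ so that the canonical Clifford inner product can be used.
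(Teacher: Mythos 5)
Your proof is correct and takes essentially the same route as the paper: both identify $\widetilde{\mathrm{St}}_\lambda$ with $\mathrm{Cl}_n$ (the paper via $\Res^{\mathbb{H}^{Cl}_\lambda}_{\Seg_\lambda}\widetilde{\mathrm{St}}_\lambda = \Res^{\Seg_n}_{\Seg_\lambda}U_{\mathrm{Cl}_n}$, you directly from the defining action), declare the monomial basis $\{c_I\}$ orthonormal, and verify invariance by sign-tracking. You supply the sign computations that the paper dismisses as ``straightforward to check.''
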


\begin{proof}
Since $\Res^{\mathbb{H}^{Cl}_{\lambda}}_{\Seg_{\lambda}}\widetilde{\mathrm{St}}_{\lambda}=\Res^{\Seg_{n}}_{\Seg_{\lambda}}U_{\mathrm{Cl}_n}$ as $\Seg_{\lambda}$-modules, it suffices to consider the case when $\lambda=(n)$. Recall that $U_{\mathrm{Cl}_n}=\Seg_n \otimes_{\mathbb{C}[S_n]} \triv$ in Section \ref{ss mod relations}. Define $\langle ., .\rangle: U_{\mathrm{Cl}_n} \times U_{\mathrm{Cl}_n} \rightarrow \mathbb{C}$ such that for $1 \leq i_1<\ldots < i_r \leq n$ and $1 \leq j_1<\ldots< j_s \leq n$,
\[   \langle c_{i_1}c_{i_2} \ldots c_{i_r} \otimes 1, c_{j_1}c_{j_2}\ldots c_{j_s} \otimes 1 \rangle =\left\{ \begin{array}{l l} 1 & \mbox{ if $\left\{ i_1,\ldots, i_r \right\} =  \left\{ j_1, \ldots, j_s \right\}$}    \\
$0$ &  \mbox{ otherwise } .
\end{array} \right.
\]
It is straightforward to check $\langle , \rangle$ satisfies the desired properties.
\end{proof}



We denote the $\Seg_{\lambda}$-invariant Hermitian form on $ \widetilde{\mathrm{St}}_{\lambda}$ in the above lemma by $\langle .,. \rangle_{\lambda}$. Recall that $ X_{\lambda}=\mathbb{H}^{Cl}_n \otimes_{\mathbb{H}^{Cl}_{\lambda}} \widetilde{\mathrm{St}}_{\lambda}$. We define a bilinear form $\langle .,. \rangle$ on $X_{\lambda}$ characterized by:
\begin{eqnarray*}
\langle w_1 \otimes v_1, w_2 \otimes v_2 \rangle &= \delta_{w_1S_{\lambda}, w_2S_{\lambda}} \langle \pi_{\lambda}(w_2^{-1}w_1) v_1 , v_2 \rangle_{\lambda}
 \end{eqnarray*}
where $w_1,w_2 \in S_n$ and $\delta_{w_1S_{\lambda}, w_2S_{\lambda}}=1$ if $w_1S_{\lambda}=w_2S_{\lambda}$ and $\delta_{w_1S_{\lambda}, w_2S_{\lambda}}=0$ otherwise.

\begin{lemma} \label{lem positive definite}
$\langle . , . \rangle$ defined above is a positive definite Hermitian form.
\end{lemma}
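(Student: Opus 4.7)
The plan proceeds in three stages, with the key input being the vector-space decomposition $X_\lambda = \bigoplus_{i=1}^m w_i \otimes \widetilde{\mathrm{St}}_\lambda$, where $w_1, \ldots, w_m$ is a fixed set of left coset representatives of $S_\lambda$ in $S_n$. I would derive this decomposition directly from Lemma \ref{lem induced iso}: the cited isomorphism identifies $X_\lambda$ with $\mathbb{C}[S_n] \otimes_{\mathbb{C}[S_\lambda]} \widetilde{\mathrm{St}}_\lambda$ as $\mathbb{C}[S_n]$-modules, and the standard coset decomposition $\mathbb{C}[S_n] = \bigoplus_i w_i\, \mathbb{C}[S_\lambda]$ then gives the decomposition as a vector space. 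In particular, every $x \in X_\lambda$ has a unique representation $x = \sum_i w_i \otimes v_i$ with $v_i \in \widetilde{\mathrm{St}}_\lambda$.

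Next, I would verify that the prescription defines a well-defined sesquilinear form. The only nontrivial compatibility is with the tensor-product relation $ws \otimes v = w \otimes \pi_\lambda(s) v$ for $s \in S_\lambda$; checking both arguments reduces to the identity $\langle \pi_\lambda(s^{-1}) v', v \rangle_\lambda = \langle v', \pi_\lambda(s) v \rangle_\lambda$, which is immediate from the $\Seg_\lambda$-invariance of $\langle \cdot, \cdot \rangle_\lambda$. The same invariance produces Hermitian symmetry: when $w_1 S_\lambda \neq w_2 S_\lambda$ both sides vanish, and when $w_1 = w_2 s$ with $s \in S_\lambda$ the equality $\langle \pi_\lambda(s) v_1, v_2 \rangle_\lambda = \overline{\langle \pi_\lambda(s^{-1}) v_2, v_1 \rangle_\lambda}$ is another instance of the same adjointness property.

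For positive-definiteness, I plug the decomposition from the first stage into the formula. Since $w_j^{-1} w_i$ can lie in $S_\lambda$ only when $i = j$ (and then equals $1$), the delta-function collapses all cross-terms, yielding
\[
\langle x, x \rangle \;=\; \sum_{i=1}^m \langle v_i, v_i \rangle_\lambda,
\]
which is nonnegative by the positive-definiteness of $\langle \cdot, \cdot \rangle_\lambda$ and vanishes only when every $v_i = 0$, i.e.\ $x = 0$.

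The only real subtlety lies in the consistency check in the second stage: the formula is introduced on tensors of the restricted shape $w \otimes v$ with $w \in S_n$, and one must be confident that this prescription extends unambiguously to all of $\mathbb{H}^{Cl}_n \otimes_{\mathbb{H}^{Cl}_\lambda} \widetilde{\mathrm{St}}_\lambda$. Invoking Lemma \ref{lem induced iso} at the outset is what legitimizes the definition, since it certifies that such tensors already span $X_\lambda$ and makes the subsequent checks purely routine consequences of the $\Seg_\lambda$-invariance built into $\langle \cdot, \cdot \rangle_\lambda$.
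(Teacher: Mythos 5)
Your argument is correct and is essentially an expansion of the paper's one-line proof, which simply attributes the result to the positive-definiteness and Hermitian symmetry of $\langle\cdot,\cdot\rangle_\lambda$. The coset decomposition $X_\lambda=\bigoplus_i w_i\otimes\widetilde{\mathrm{St}}_\lambda$, the explicit well-definedness check via $\Seg_\lambda$-invariance, and the collapse of cross-terms are precisely the details the paper leaves implicit, so this is the same route spelled out rather than a different one.
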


\begin{proof}
This follows from the property that $\langle .,.\rangle_{\lambda}$ is positive definite and Hermitian.
\end{proof}

We next compute the adjoint operator of $\pi_{\lambda}(D)$ with respect to $\langle .,. \rangle$. We begin with some lemmas.
\begin{lemma} \label{lem comp D lam}
For $v_1, v_2 \in  \widetilde{\mathrm{St}}_{\lambda}$, $\langle D \otimes v_1, 1 \otimes v_2 \rangle = \langle 1 \otimes v_1, D \otimes v_2 \rangle=0$.
\end{lemma}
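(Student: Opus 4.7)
The plan is to exploit the decomposition $D = D_\lambda + (D - D_\lambda)$, where $D_\lambda \in \mathbb{H}^{Cl}_\lambda$ is the Dirac element introduced just before Proposition \ref{prop Dirac action}. Since $y_i = x_ic_i$ lies in $\mathbb{H}^{Cl}_\lambda$ for every $i$, and $\widetilde{s}_\alpha = s_\alpha c_\alpha$ lies in $\mathbb{H}^{Cl}_\lambda$ precisely when $\alpha \in R^+_\lambda$, we have
\[
D - D_\lambda \ =\ \sqrt{2}\,\mathbf{k}\sum_{\alpha \in R^+ \setminus R^+_\lambda} \widetilde{s}_\alpha.
\]
By Proposition \ref{prop Dirac action}, $D_\lambda$ acts as zero on $\widetilde{\mathrm{St}}_\lambda$, hence in $X_\lambda = \mathbb{H}^{Cl}_n \otimes_{\mathbb{H}^{Cl}_\lambda} \widetilde{\mathrm{St}}_\lambda$ we have $D_\lambda \otimes v_1 = 1 \otimes D_\lambda . v_1 = 0$ for any $v_1 \in \widetilde{\mathrm{St}}_\lambda$.

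Next I would move the Clifford factor $c_\alpha$ across the tensor product. Since $c_\alpha = \tfrac{\sqrt 2}{2}(c_i \pm c_j) \in \mathrm{Cl}_n \subset \mathbb{H}^{Cl}_\lambda$ for any positive root $\alpha$, we obtain
\[
D \otimes v_1 \ =\ (D - D_\lambda) \otimes v_1 \ =\ \sqrt{2}\,\mathbf{k}\sum_{\alpha \in R^+ \setminus R^+_\lambda} s_\alpha \otimes (c_\alpha . v_1).
\]
The crucial point is that for every $\alpha \in R^+ \setminus R^+_\lambda$, the reflection $s_\alpha$ does \emph{not} belong to $S_\lambda$, so $s_\alpha S_\lambda \neq S_\lambda$.

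Finally, I would apply the defining formula of the form on each basis term. By the characterization
\[
\langle w_1 \otimes u_1, w_2 \otimes u_2\rangle \ =\ \delta_{w_1 S_\lambda,\, w_2 S_\lambda}\,\langle \pi_\lambda(w_2^{-1}w_1).u_1, u_2\rangle_\lambda,
\]
each summand $\langle s_\alpha \otimes (c_\alpha.v_1), 1 \otimes v_2\rangle$ vanishes because $\delta_{s_\alpha S_\lambda, S_\lambda} = 0$. Summing gives $\langle D \otimes v_1, 1 \otimes v_2\rangle = 0$. The second equality $\langle 1 \otimes v_1, D \otimes v_2\rangle = 0$ follows by exactly the same computation applied to the second argument, using $D \otimes v_2 = \sqrt{2}\,\mathbf{k}\sum_{\alpha \in R^+ \setminus R^+_\lambda} s_\alpha \otimes (c_\alpha . v_2)$ and again the vanishing of the Kronecker delta. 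There is no substantive obstacle here; the argument is essentially bookkeeping once Proposition \ref{prop Dirac action} and the explicit form of the Hermitian pairing are in hand.
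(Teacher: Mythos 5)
Your proof is correct and follows essentially the same route as the paper's: split $D = D_\lambda + \sqrt{2}\,\mathbf{k}\sum_{\alpha\in R^+\setminus R^+_\lambda}\widetilde{s}_\alpha$, kill the $D_\lambda$ part using Proposition \ref{prop Dirac action}, and kill each $\widetilde{s}_\alpha$ term via the Kronecker delta in the definition of the pairing after moving $c_\alpha\in\mathbb{H}^{Cl}_\lambda$ across the tensor product. The only difference is that you spell out the coset-delta argument that the paper states in a single line.
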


\begin{proof}
For $\alpha \in R^+ \setminus R_{\lambda}^+$, one has
\[   \langle \widetilde{s}_{\alpha} \otimes v_1, 1 \otimes v_2 \rangle = 0. \]
With the equality
\[  D = D_{\lambda} +\sqrt{2}\mathbf{k} \sum_{\alpha >0, \alpha \in R^+\setminus R_{\lambda}^+} \widetilde{s}_{\alpha}  ,\]
one has $\langle D \otimes v_1, 1 \otimes v_2 \rangle = \langle D_{\lambda}.v_1, v_2 \rangle_{\lambda}$. Then we have $\langle D \otimes v_1, 1 \otimes v_2 \rangle=0$ by Proposition \ref{prop Dirac action}. The proof for $\langle 1 \otimes v_1, D \otimes v_2 \rangle=0$ is similar.
\end{proof}

\begin{lemma} \label{lem weyl group}
Suppose $\beta_1 \neq \beta_2$ and $\beta_1, \beta_2 \in R^+ \setminus R_{\lambda}^+$. Then $s_{\beta_1}s_{\beta_2} \not\in S_{\lambda}$.
\end{lemma}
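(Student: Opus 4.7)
The plan is to translate the condition $\beta_1, \beta_2 \in R^+ \setminus R_\lambda^+$ into a combinatorial statement about the action of $S_n$ on the blocks of the partition $\lambda$, and then do a short case analysis on how the supports of the two transpositions $s_{\beta_1}$ and $s_{\beta_2}$ overlap.

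First I would record the basic dictionary. The subgroup $S_\lambda = S_{n_1} \times \cdots \times S_{n_r}$ consists precisely of those permutations preserving each block $B_k = \{n_1+\cdots+n_{k-1}+1,\dots,n_1+\cdots+n_k\}$. Equivalently, $w \in S_\lambda$ if and only if every cycle of $w$ is contained in a single block. Writing $\beta_l = e_{i_l} - e_{j_l}$ with $i_l < j_l$, the hypothesis $\beta_l \in R^+ \setminus R_\lambda^+$ translates to: $i_l$ and $j_l$ lie in different blocks (this uses $R_\lambda^+ = V_\lambda \cap R^+$ and the description of $\Delta_\lambda$).

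Next I would split into the three possibilities for $|\{i_1,j_1\} \cap \{i_2,j_2\}|$. If the intersection is empty, then $s_{\beta_1}s_{\beta_2}$ is a product of two disjoint $2$-cycles, and membership in $S_\lambda$ would force each cycle to lie in one block, contradicting that $i_1,j_1$ are in different blocks. If the intersection has exactly one element, a direct computation shows $s_{\beta_1}s_{\beta_2}$ is a $3$-cycle on $\{i_1,j_1\} \cup \{i_2,j_2\}$; membership in $S_\lambda$ would force these three indices into one block, again contradicting $\beta_1 \notin R_\lambda^+$. If the intersection has two elements, then $\{i_1,j_1\}=\{i_2,j_2\}$, which together with positivity forces $\beta_1=\beta_2$, contradicting the hypothesis.

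The argument is entirely elementary — there is no real obstacle, since the geometry of $R_\lambda^+$ matches the block structure of $S_\lambda$ perfectly. The only point to be careful about is the middle case, where one must actually compute the cycle structure of a product of two transpositions sharing a single point; beyond that, the proof is a one-paragraph case split.
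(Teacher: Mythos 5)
Your proof is correct, but it takes a genuinely different route from the paper. The paper argues root-theoretically: it uses the key observation that an element of $S_\lambda$ cannot send a positive root outside $R_\lambda^+$ to a negative root, and then splits into cases according to the inner product $\langle \beta_1,\beta_2\rangle \in \{0,-1,1\}$, each time exhibiting a positive root outside $R_\lambda^+$ that $s_{\beta_1}s_{\beta_2}$ sends to a negative root. You instead translate everything into the combinatorics of $S_n$: you identify $R^+\setminus R_\lambda^+$ with transpositions whose two indices lie in distinct blocks of $\lambda$, note that $S_\lambda$ is exactly the set of permutations all of whose cycles lie within a single block, and split on the overlap of the supports of the two transpositions, showing that $s_{\beta_1}s_{\beta_2}$ is either two disjoint $2$-cycles, a $3$-cycle, or (impossibly) the identity, and in each non-trivial case a cycle crosses a block boundary. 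Both proofs are entirely correct; yours is more elementary and transparent, exploiting the type $A$ picture directly, while the paper's argument is phrased purely in root-system language (though it, too, implicitly relies on the type $A$ fact that $\langle \beta_1, \beta_2\rangle$ is $0$ or $\pm 1$ for distinct positive roots).
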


\begin{proof}
In the following, we implicitly use several times the fact that any element in $S_{\lambda}$ cannot send a positive root not in $R_{\lambda}$ to a negative root. If $\langle \beta_1, \beta_2 \rangle =0$, then $s_{\beta_1}s_{\beta_2}(\beta_2)=-\beta_2<0$. Since $\beta_2 \not\in R_{\lambda}$, $s_{\beta_1}s_{\beta_2} \notin S_{\lambda}$. If $\langle \beta_1, \beta_2 \rangle =-1$, then $s_{\beta_2}(\beta_1)=\beta_1 +\beta_2 >0$. Moreover, $s_{\beta_1}s_{\beta_2}(s_{\beta_2}(\beta_1))=-\beta_1<0$. Since $\beta_1+\beta_2 \notin R_{\lambda}$, $s_{\beta_1}s_{\beta_2} \notin S_{\lambda}$. If $\langle \beta_1, \beta_2 \rangle =1$, then either $s_{\beta_1}(\beta_2) >0$ or $s_{\beta_2}(\beta_1) >0$. In the case that $s_{\beta_1}(\beta_2) >0$, $s_{\beta_1}s_{\beta_2}(\beta_2)=-s_{\beta_1}(\beta_2)<0$. Then since $\beta_2 \notin R_{\lambda}$, $s_{\beta_1}s_{\beta_2} \notin S_{\lambda}$. Similar argument by considering $(s_{\beta_1}s_{\beta_2})^{-1}$ can prove another case.
\end{proof}

\begin{proposition} \label{lem adj op}
The adjoint operator of $\pi_{\lambda}(D)$ with respect to $\langle .,.\rangle$ is $-\pi_{\lambda}(D)$.
\end{proposition}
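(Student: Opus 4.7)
By sesquilinearity, it suffices to verify $\langle \pi_\lambda(D)(w_1 \otimes v_1), w_2 \otimes v_2\rangle + \langle w_1 \otimes v_1, \pi_\lambda(D)(w_2 \otimes v_2)\rangle = 0$ for basis elements $w_i \in S_n$, $v_i \in \widetilde{\mathrm{St}}_\lambda$. A preparatory computation, combining $wD = Dw$ (Lemma \ref{lem seg comm D}), the decomposition $D = D_\lambda + \sqrt{2}\mathbf{k}\sum_{\alpha \in R^+ \setminus R_\lambda^+}\widetilde{s}_\alpha$, and the vanishing $D_\lambda \cdot \widetilde{\mathrm{St}}_\lambda = 0$ (Proposition \ref{prop Dirac action}), gives the clean formula
\[ \pi_\lambda(D)(w \otimes v) = \sqrt{2}\mathbf{k}\sum_{\alpha \in R^+ \setminus R_\lambda^+} w s_\alpha \otimes c_\alpha v. \]

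I would then split into cases on whether $w_1 S_\lambda = w_2 S_\lambda$. If yes, every summand $w_i s_\alpha \otimes c_\alpha v_i$ lies in a coset distinct from $w_{3-i} S_\lambda$ (since $s_\alpha \notin S_\lambda$ for $\alpha \in R^+ \setminus R_\lambda^+$), so both pairings vanish; this extends Lemma \ref{lem comp D lam}. If no, Lemma \ref{lem weyl group} forces at most one root to contribute on each side. Either both sums are empty, or, writing the unique LHS contributor as $w_2^{-1} w_1 = u s_\alpha$ with $u \in S_\lambda$, the unique RHS contributor is $\beta := u(\alpha)$; routine manipulation using $u s_\alpha = s_{u(\alpha)} u$ then reduces the LHS to $\sqrt{2}\mathbf{k}\langle \pi_\lambda(u) c_\alpha v_1, v_2\rangle_\lambda$ and the RHS to $\sqrt{2}\mathbf{k}\langle \pi_\lambda(u) v_1, c_\beta v_2\rangle_\lambda$.

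To obtain cancellation I would use three facts about $\widetilde{\mathrm{St}}_\lambda$: each $c_i$ is skew-adjoint with respect to $\langle \cdot, \cdot\rangle_\lambda$, $\pi_\lambda(u)$ is unitary for $u \in S_\lambda$ (both verified directly on the orthonormal basis $\{c_{i_1}\cdots c_{i_r} \otimes 1\}$, on which $u$ permutes up to signs), and the commutation $\pi_\lambda(u^{-1}) c_\beta = \sgn(u^{-1}(\beta)) \, c_{|u^{-1}(\beta)|}\pi_\lambda(u^{-1})$. The main obstacle is the sign: one needs $u^{-1}(\beta) = \alpha$ to be positive so that the sign is $+1$ and the two terms cancel rather than double. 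This is ensured by the block structure of $\lambda$, namely that $u \in S_\lambda$ permutes indices only within individual blocks, so for $\alpha = e_i - e_j \in R^+ \setminus R_\lambda^+$ with $i < j$ lying in distinct blocks $k < k'$, one has $u(i)$ still in block $k$ and $u(j)$ still in block $k'$, giving $u(\alpha) \in R^+ \setminus R_\lambda^+$. Assembling these yields the desired cancellation.
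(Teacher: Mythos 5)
Your proof is correct and follows essentially the same route as the paper: the same case split on whether $w_1 S_\lambda = w_2 S_\lambda$, the same reduction $\pi_\lambda(D)(w \otimes v) = \sqrt{2}\mathbf{k}\sum_{\alpha \in R^+ \setminus R_\lambda^+} w s_\alpha \otimes c_\alpha v$ via $Dw = wD$ and Proposition \ref{prop Dirac action}, and the same use of Lemma \ref{lem weyl group} to isolate at most one contributing root on each side. The only difference is cosmetic: for the final cancellation the paper conjugates $\widetilde{s}_\beta$ past $w_1^{-1}w_2$ as a unit via Lemma \ref{lem simple rel}(4), whereas you descend to $\widetilde{\mathrm{St}}_\lambda$ and invoke unitarity of $\pi_\lambda(u)$ together with skew-adjointness of $c_\alpha$ — and you make explicit (via the block structure of $S_\lambda$) the positivity fact $u(\alpha) \in R^+ \setminus R_\lambda^+$ that the paper leaves implicit in the remark "$\beta'>0$, otherwise $w_1^{-1}w_2 s_\beta \notin S_\lambda$."
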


\begin{proof}
It suffices to show that
\[  \langle Dw_1 \otimes v_1 , w_2 \otimes v_2 \rangle = \langle w_1 \otimes v_1, -Dw_2 \otimes v_2 \rangle \]
for any $w_1, w_2 \in S_n$ and $v_1, v_2 \in X_{\lambda}$. To this end, we consider two cases.
Suppose $w_1S_{\lambda}=w_2S_{\lambda}$. Then,
\begin{eqnarray*}
\langle Dw_1\otimes v_1, w_2 \otimes v_2 \rangle & =& \langle w_2^{-1}Dw_1 \otimes v_1, 1 \otimes v_2 \rangle \\
                                                     &=& \langle D w_2^{-1}w_1 \otimes v_1, 1 \otimes v_2 \rangle \\
                                                     &=& \langle D\otimes (w_2^{-1} w_1).v_1, 1 \otimes v_2 \rangle \\
																										 &=& 0 \quad \mbox{ (by Lemma \ref{lem comp D lam})}
\end{eqnarray*}
Similarly, we also have
\[ \langle w_1\otimes v_1,D w_2 \otimes v_2 \rangle =0. \]
and so $ \langle Dw_1 \otimes v_1 , w_2 \otimes v_2 \rangle = \langle w_1 \otimes v_1, -Dw_2 \otimes v_2 \rangle$.

Now we suppose that $w_1S_{\lambda} \neq w_2S_{\lambda}$. Without loss of generality, assume that $w_2^{-1}w_1$ is a minimal representative in $w_2^{-1}w_1S_{\lambda}$.
\begin{eqnarray*}
& &\langle w_2^{-1}w_1D\otimes v_1, 1 \otimes v_2  \rangle  \\
&=& \langle w_2^{-1}w_1\sqrt{2}\sum_{\alpha >0}\mathbf{k} \widetilde{s}_{\alpha} \otimes v_1, 1\otimes v_2 \rangle \\
&=& \langle 1 \otimes v_1 ,\sqrt{2}\sum_{\alpha >0}\mathbf{k} \widetilde{s}_{\alpha} w_1^{-1} w_2 \otimes v_2 \rangle   \\
&=& -\langle 1 \otimes v_1, D w_1^{-1}w_2 \otimes v_2 \rangle+\langle 1 \otimes v_1, w_1^{-1}w_2D\otimes v_2\rangle+\langle 1 \otimes v_1 , \sqrt{2}\sum_{\alpha >0} \mathbf{k} \widetilde{s}_{\alpha} w_1^{-1} w_2 \otimes v_2 \rangle  \\
\end{eqnarray*}

It remains to show
\[ \langle 1 \otimes v_1, w_1^{-1}w_2D\otimes v_2\rangle+\sqrt{2}\langle 1 \otimes v_1 , \sum_{\alpha >0}\mathbf{k} \widetilde{s}_{\alpha} w_1^{-1} w_2 \otimes v_2 \rangle =0. \]
By Lemma \ref{lem weyl group}, there exists at most one $\beta \in R^+ \setminus R_{\lambda}$ such that $w_1^{-1}w_2s_{\beta}\in S_{\lambda}$. If such $\beta$ does not exist, then the two terms in the left hand side of the above equation are both zero and so the equation holds. If such unique $\beta$ exists, let $\beta'=-w_1^{-1}w_2(\beta)$. Note that $\beta'>0$ otherwise $w_1^{-1}w_2s_{\beta}\not\in S_{\lambda}$. Then

\begin{eqnarray*}
 & & \langle 1 \otimes v_1, w_1^{-1}w_2D\otimes v_2\rangle+\sqrt{2}\langle 1 \otimes v_1 , \sum_{\alpha >0} \mathbf{k}\widetilde{s}_{\alpha} w_1^{-1} w_2 \otimes v_2 \rangle \\
 &=&  \sqrt{2}\mathbf{k} \langle 1 \otimes v_1, w_1^{-1}w_2\widetilde{s}_{\beta}\otimes v_2\rangle+\sqrt{2}\mathbf{k}\langle 1 \otimes v_1 , \widetilde{s}_{\beta'} w_1^{-1} w_2 \otimes v_2 \rangle \mbox{ (by definition of $D$ and $\langle.,.\rangle$) }\\
 &=& -\sqrt{2}\mathbf{k} \langle 1\otimes v_1, \widetilde{s}_{\beta'}w_1^{-1}w_2 \otimes v_2 \rangle +\sqrt{2}\mathbf{k}\langle 1 \otimes v_1, \widetilde{s}_{\beta'}w_1^{-1}w_2 \otimes v_2 \rangle \quad \mbox{ (by Lemma \ref{lem simple rel}(4)) }\\
 &=& 0
\end{eqnarray*}
This completes the proof.
\end{proof}

\begin{proposition} \label{prop reduced dir coh}
Let $(\pi_{\lambda}, X_{\lambda})$ be the $\mathbb{H}_n^{Cl}$-module as in (\ref{eqn induced mod}). Then
\[   \ker \pi_{\lambda}(D) = \ker \pi_{\lambda}(D^2)  \]
and
\[ \ker \pi_{\lambda}(D) \cap \im\pi_{\lambda}(D) = 0 .\]
In particular, $H_D(X_{\lambda})=\ker \pi_{\lambda}(D^2)$.
\end{proposition}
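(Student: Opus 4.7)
The plan is to exploit the Hermitian structure established in Lemma \ref{lem positive definite} and Proposition \ref{lem adj op}: the form $\langle\cdot,\cdot\rangle$ on $X_\lambda$ is positive definite, and the adjoint of $\pi_\lambda(D)$ is $-\pi_\lambda(D)$. This makes the argument a standard Hodge-type decomposition, so no lengthy computation is needed.

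First, I would observe that because $\pi_\lambda(D)^* = -\pi_\lambda(D)$, the operator $\pi_\lambda(D^2)$ is self-adjoint, since $(\pi_\lambda(D)^2)^* = (\pi_\lambda(D)^*)^2 = \pi_\lambda(D)^2$. Moreover, for any $v \in X_\lambda$ one has
\[
\langle \pi_\lambda(D^2)v, v\rangle \;=\; \langle \pi_\lambda(D)v,\, \pi_\lambda(D)^* v\rangle \;=\; -\langle \pi_\lambda(D)v,\pi_\lambda(D)v\rangle,
\]
so $\pi_\lambda(D^2)$ is negative semi-definite.

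To prove $\ker\pi_\lambda(D) = \ker\pi_\lambda(D^2)$, the inclusion $\subseteq$ is trivial. For the reverse inclusion, take $v \in \ker\pi_\lambda(D^2)$; the displayed identity gives $\langle \pi_\lambda(D)v,\pi_\lambda(D)v\rangle = 0$, and positive-definiteness of $\langle\cdot,\cdot\rangle$ forces $\pi_\lambda(D)v = 0$. To prove $\ker\pi_\lambda(D)\cap\im\pi_\lambda(D) = 0$, suppose $v = \pi_\lambda(D)u$ lies in $\ker\pi_\lambda(D)$. Then $\pi_\lambda(D^2)u = 0$, so by what was just shown $\pi_\lambda(D)u = 0$, i.e.\ $v = 0$. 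The final statement then follows directly: $H_D(X_\lambda) = \ker\pi_\lambda(D)/(\ker\pi_\lambda(D)\cap\im\pi_\lambda(D)) = \ker\pi_\lambda(D) = \ker\pi_\lambda(D^2)$.

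There is no real obstacle here — the only subtlety is bookkeeping to make sure one applies Proposition \ref{lem adj op} correctly (the adjoint is $-\pi_\lambda(D)$ rather than $\pi_\lambda(D)$, which is what makes $D^2$ self-adjoint rather than skew-adjoint and produces the crucial sign in $\langle \pi_\lambda(D^2)v,v\rangle = -\|\pi_\lambda(D)v\|^2$). Everything else is formal.
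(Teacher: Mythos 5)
Your argument is correct and follows the same route as the paper: both use Proposition \ref{lem adj op} together with positive-definiteness of the form from Lemma \ref{lem positive definite} to conclude $\langle \pi_\lambda(D)v,\pi_\lambda(D)v\rangle=0$ when $\pi_\lambda(D^2)v=0$, and derive the second identity from the first. You have simply spelled out the final step (that $v=\pi_\lambda(D)u\in\ker\pi_\lambda(D)$ forces $v=0$) which the paper leaves as ``follows from the first one.''
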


\begin{proof}
It is clear that $\ker \pi_{\lambda}(D) \subset \ker \pi_{\lambda}(D^2)$. For $ v \in \ker \pi_{\lambda}(D^2)$, $\langle \pi_{\lambda}(D)v,-\pi_{\lambda}(D)v \rangle=\langle \pi_{\lambda}(D^2)v,v \rangle=0$ by Proposition \ref{lem adj op}. Since $\langle . , .\rangle$ is positive definite by Lemma \ref{lem positive definite}, $\pi_{\lambda}(D)v=0$. This proves the first equation $\ker \pi_{\lambda}(D) = \ker \pi_{\lambda}(D^2)$. The equation $\ker \pi_{\lambda}(D) \cap \im\pi_{\lambda}(D) = 0$ follows from the first one.
\end{proof}

\subsection{Dirac cohomology of $X_{\lambda}$}

Let $\mathcal P_n$ be the set of partitions of $n$. One can attach an element in $\mathcal P_n$ to a point in $\mathbb{R}^n$ via the Jacobson-Morozov triple. The map, denoted $\Phi_1: \mathcal P_n \rightarrow \mathbb{R}^n$ can be explicitly described as:
\[    (n_1, n_2, \ldots, n_r) \mapsto (\underbrace{-n_1+1, -n_1+3,\ldots, n_1-1}_{n_1 \mbox{ terms }}, \ldots, \underbrace{-n_r+1, -n_r+3,\ldots, n_r-1}_{ n_r \mbox{ terms }} )  .
\]

There is another way to attach an element in $\mathcal P_n$ to a point in $\mathbb{R}^n$ via the central characters of the modules $X_{\lambda}$. This map, denoted $\Phi_2: \mathcal P_n \rightarrow \mathbb{R}^n$ is:
\[  (n_1, n_2, \ldots, n_r) \mapsto (\underbrace{\sqrt{(1-1)1}, \ldots, \sqrt{(n_1-1)n_1}}_{n_1 \mbox{ terms }}, \ldots, \underbrace{\sqrt{(1-1)1}, \ldots,\sqrt{n_r(n_r-1)}}_{n_r \mbox{ terms }} )  .\]

The first interesting computational fact is the following:
\begin{lemma} \label{lem eql spin}
For a partition $\lambda$ of $n$, $|\Phi_1(\lambda)|=|\Phi_2(\lambda)|$, where $|.|$ denotes the standard Euclidean norm in $\mathbb{R}^n$.
\end{lemma}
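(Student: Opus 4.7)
The plan is to reduce the identity to a block-by-block computation. Since $\Phi_1$ and $\Phi_2$ both concatenate vectors determined by each part $n_k$ of $\lambda = (n_1, \ldots, n_r)$, the squared norm splits as
\begin{equation*}
|\Phi_1(\lambda)|^2 = \sum_{k=1}^r \sum_{j=1}^{n_k} (2j-1-n_k)^2, \qquad |\Phi_2(\lambda)|^2 = \sum_{k=1}^r \sum_{j=1}^{n_k} j(j-1).
\end{equation*}
So it is enough to fix a single block of size $m$ and verify
\begin{equation*}
\sum_{j=1}^m (2j-1-m)^2 \;=\; \sum_{j=1}^m j(j-1).
\end{equation*}

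The main step is then a direct evaluation of both sides using the standard formulas $\sum_{j=1}^m j = \tfrac{m(m+1)}{2}$ and $\sum_{j=1}^m j^2 = \tfrac{m(m+1)(2m+1)}{6}$. Expanding the left side as $4\sum j^2 - 4(m+1)\sum j + m(m+1)^2$ and simplifying gives $\tfrac{m(m-1)(m+1)}{3}$. The right side equals $\sum j^2 - \sum j = \tfrac{m(m+1)(2m+1)}{6} - \tfrac{m(m+1)}{2} = \tfrac{m(m+1)(m-1)}{3}$. The two values agree, completing the proof.

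There is no real obstacle here beyond routine arithmetic; the content of the lemma is essentially the classical identity $\sum_{i=0}^{m-1}(m-1-2i)^2 = \tfrac{m(m^2-1)}{3}$ (the trace of $h^2$ for the principal $\mathfrak{sl}_2$ element in $\mathfrak{sl}_m$), matched on the other side with the sum $\sum_{j=1}^m j(j-1)$ coming from the eigenvalues $j(j-1)$ of $x_j^2$ on $\widetilde{\mathrm{St}}_{(m)}$ in Lemma \ref{lem xi on st}. The conceptual significance is that this numerical coincidence is exactly what is needed so that the Casimir-type eigenvalue $\chi_{\pi_\lambda}(\Omega_\mathbb{H})$ of $X_\lambda$ will match the $\Seg_n$-central character predicted by Theorem \ref{thm Vogan conjecture HC}; but for the lemma itself only the elementary identity above is required.
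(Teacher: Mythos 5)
Your proof is correct and follows the same route as the paper: both reduce to the single-block identity $\sum_{j=1}^m (2j-1-m)^2 = \sum_{j=1}^m j(j-1) = \tfrac{1}{3}m(m-1)(m+1)$, which you verify by routine summation.
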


\begin{proof}
This follows from the computation that
\[  \sum_{k=1}^{n_i} \left( -n_i+2k-1 \right)^2 = \sum_{k=1}^{n_i}k(k-1)=\frac{1}{3}(n_i-1)n_i(n_i+1).   \]
\end{proof}

For each $\lambda \in \mathcal P_n$, define a $S_n$-representation:
\[  W_{\lambda} = (\Ind_{\mathbb{C}[S_{\lambda}]}^{\mathbb{C}[S_n]}\triv ) \cap (\Ind_{\mathbb{C}[S_{{\lambda}^t}]}^{\mathbb{C}[S_n]} \sgn) ,\]
where $\sgn$ and $\triv$ are respectively the sign and trivial representations of $S_{\lambda}$, and $\lambda^t$ is the conjugate of $\lambda$. It is well-known that $W_{\lambda}$ exhausts the list of irreducible representations of $S_n$.

Define
\[ \Omega_{\mathbb{C}[\widetilde{S}_n]^-}=2\mathbf{k}^2\sum_{\alpha>0, \beta>0, s_{\alpha}(\beta)<0} \widetilde{t}_{\alpha}\widetilde{t}_{\beta} \in \mathbb{C}[\widetilde{S}_n]^-. \]

Let $P_n^{\mathrm{dist}}$ be the set of partitions of $n$ with distinct parts. Recall that $\Irr_{\mathrm{sup}}\mathbb{C}[\widetilde{S}_n]^-$ (resp. $\Irr_{\mathrm{sup}} \Seg_n$) is the set of irreducible supermodules of $\mathbb{C}[\widetilde{S}_n]^-$ (resp. $\Seg_n$). Recall that the equivalence relation $\sim_{\Pi}$ on $\Irr_{\mathrm{sup}}\mathbb{C}[\widetilde{S}_n]^-$ or $\Irr_{\mathrm{sup}}\Seg_n$ is defined in Section \ref{ss rel sup and ord}.

\begin{proposition} \cite[Part of Theorem 1.0.1]{Ci} (also see \cite{St}) \label{prop ciu bij}
There exists a bijection $\Psi_1: \mathcal P_n^{\mathrm{dist}} \rightarrow \Irr_{\mathrm{sup}}\mathbb{C}[\widetilde{S}_n]^-/\sim_{\Pi}$ such that for each partition $\lambda$ of $n$, there exists a representative $(\sigma, U) \in \Psi_1(\lambda)$ with the properties that
\[  \mathbf{k}^2|\Phi_1(\lambda)|^2=\chi_{\sigma}(\Omega_{\mathbb{C}[\widetilde{S}_n]^-}) \]
and
\[  \Hom_{\mathbb{C}[\widetilde{S}_n]^-}(U, W_{\lambda} \otimes U_{\mathrm{spin}}) \neq 0 .
\]
\end{proposition}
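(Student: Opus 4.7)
The plan is to combine three classical ingredients: the Schur--Morris parametrization of projective representations of $S_n$ by strict partitions; an eigenvalue computation for the Casimir element $\Omega_{\mathbb{C}[\widetilde{S}_n]^-}$ via spin Jucys--Murphy elements; and a decomposition of $W_\lambda \otimes U_{\mathrm{spin}}$ into spin irreducibles via Stembridge's spin Pieri-type rule.

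First, for the bijection $\Psi_1$: by classical Schur--Morris theory, to each $\lambda \in \mathcal{P}_n^{\mathrm{dist}}$ one attaches an irreducible projective representation of $S_n$ that is self-associate under the $\mathrm{sgn}$-twist when $n - \ell(\lambda)$ is even, and splits as a pair of sign-associate representations when $n - \ell(\lambda)$ is odd. In either case the $\sim_{\sgn}$-class is determined by $\lambda$, giving a bijection $\mathcal{P}_n^{\mathrm{dist}} \longleftrightarrow \Irr(\mathbb{C}[\widetilde{S}_n]^-)/\sim_{\sgn}$. Composing with the bijection of Proposition \ref{prop cri irr Sn} produces $\Psi_1$.

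Next, for the central character, I would first observe that under the isomorphism of Lemma \ref{lem seg iso}, the element $\widetilde{s}_\alpha \in \mathrm{Seg}_n$ maps to $-\widetilde{t}_\alpha \otimes 1$ (using $c_\alpha^2 = -1$ in type $A_{n-1}$), and hence $\Omega_{\Seg(W)}$ from Theorem \ref{thm d sq} matches $\Omega_{\mathbb{C}[\widetilde{S}_n]^-} \otimes 1$. The action of $\Omega_{\mathbb{C}[\widetilde{S}_n]^-}$ on the spin irreducible attached to $\lambda$ is then extracted by rewriting it in terms of squares of spin Jucys--Murphy elements, whose simultaneous eigenvalues on a basis indexed by standard shifted tableaux of shape $\lambda$ are (up to normalization by $\sqrt{2}\mathbf{k}$) the contents $j - i$ of the boxes. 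Summing the squares of the contents over a shifted diagram of shape $\lambda = (n_1, \ldots, n_r)$ yields $\sum_i \tfrac{1}{3}n_i(n_i-1)(n_i+1)$, which Lemma \ref{lem eql spin} identifies with $|\Phi_1(\lambda)|^2$, giving the claimed factor $\mathbf{k}^2 |\Phi_1(\lambda)|^2$ after accounting for the overall normalization.

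For the Hom non-vanishing, $U_{\mathrm{spin}}$, by its definition via $\Ind^{\Seg_n}_{\mathbb{C}[S_n]} \triv$ in Section \ref{ss mod relations}, corresponds under $\Psi_1$ to the single-part strict partition $(n)$; Stembridge's spin Pieri-type decomposition of $W_\mu \otimes U_{(n)}$ then implies the spin irreducible indexed by $\lambda$ appears with positive multiplicity in $W_\lambda \otimes U_{\mathrm{spin}}$, giving the required non-zero Hom. The main obstacle is the central character computation: while every ingredient is classical, consistently tracking normalizations between $\widetilde{s}_\alpha$, $\widetilde{t}_\alpha$, and the spin Jucys--Murphy elements, and then executing the combinatorial sum over a shifted tableau, requires careful bookkeeping, with Lemma \ref{lem eql spin} supplying the bridge that converts the tableau-theoretic answer into the clean closed form $\mathbf{k}^2 |\Phi_1(\lambda)|^2$.
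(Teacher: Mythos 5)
The paper's proof of this proposition is deliberately short: it is a citation to \cite[Theorem 1.0.1]{Ci}, with Proposition \ref{prop cri irr Sn} supplying the translation from the ordinary-module formulation (with the equivalence $\sim_{\sgn}$) to the supermodule formulation (with $\sim_{\Pi}$). Your proposal instead sketches a from-scratch re-derivation of Ciubotaru's theorem. You have identified the right ingredients (Schur--Morris for the parametrization, Jucys--Murphy-type eigenvalue computations for the Casimir, and a spin tensor-product rule for the Hom non-vanishing), but two of the steps, as written, have gaps.

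First, the central character step. You want to reduce the action of $\Omega_{\mathbb{C}[\widetilde{S}_n]^-}$ to a sum of squared ``contents'' via spin Jucys--Murphy elements $m_i = \sum_{j<i}\widetilde{t}_{ij}$. By the same pairing argument as in Lemma \ref{lem square of sum refl}, one does have $\Omega_{\mathbb{C}[\widetilde{S}_n]^-} = 2\mathbf{k}^2 \bigl(\sum_i m_i\bigr)^2$. But the cross terms $m_i m_j + m_j m_i$ do \emph{not} vanish in $\mathbb{C}[\widetilde{S}_n]^-$: elements $\widetilde{t}_{ij}$ and $\widetilde{t}_{ik}$ with overlapping indices do not anticommute (for instance $\widetilde{t}_{12}\widetilde{t}_{13} + \widetilde{t}_{13}\widetilde{t}_{12} = \widetilde{t}_{23}\widetilde{t}_{12} + \widetilde{t}_{12}\widetilde{t}_{23}\neq 0$). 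So the passage from $\bigl(\sum_i m_i\bigr)^2$ to $\sum_i m_i^2$, and thence to a sum of squared contents of a shifted tableau, is not immediate and needs a separate identity or a detour through the $\Seg_n$-side eigenvalues. You flag this as ``careful bookkeeping,'' but it is a real gap rather than a bookkeeping chore. Second, the Hom non-vanishing: the precise statement that the spin irreducible labeled $\lambda$ occurs in $W_\lambda \otimes U_{\mathrm{spin}}$ is exactly the spin Springer-type correspondence established in \cite{Ci} (following \cite{St}); gesturing at ``Stembridge's spin Pieri-type decomposition'' is the right direction, but you would need to cite the precise theorem and argue the positivity, not merely name the rule. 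Given that this proposition is explicitly quoted as part of \cite[Theorem 1.0.1]{Ci}, the paper's citation-plus-Proposition-\ref{prop cri irr Sn} proof is the appropriate level of detail; a full re-proof belongs in a survey of Ciubotaru's and Stembridge's results, not here.
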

\begin{proof}
In  \cite[Theorem 1.0.1]{Ci}, the set $\Irr \mathbb{C}[\widetilde{S}_n]^-/\sim_{\sgn}$ is considered instead of $\Irr_{\mathrm{sup}} \mathbb{C}[\widetilde{S}_n]^-/\sim_{\Pi}$. By Proposition \ref{prop cri irr Sn}, there is a natural bijection between $\Irr \mathbb{C}[\widetilde{S}_n]^-/\sim_{\sgn}$ and $\Irr_{\mathrm{sup}} \mathbb{C}[\widetilde{S}_n]^-/\sim_{\Pi}$. Then one can now apply \cite[Theorem 1.0.1]{Ci}.

\end{proof}

Here is an analogue of Proposition \ref{prop ciu bij}. Recall that $\Omega_{\Seg_n}$ (i.e. $\Omega_{\mathrm{Seg}(W_{A_{n-1}})}$) is defined in Theorem \ref{thm d sq}.

\begin{proposition} \label{prop ker sq}
There exists a bijection $\Psi_2: \mathcal P_n^{\mathrm{dist}} \rightarrow \Irr_{\mathrm{sup}}\Seg_n/\sim_{\Pi}$ such that  there exists a representative $(\sigma, U) \in \Psi_2(\lambda)$ with the properties that
\[ \mathbf{k}^2|\Phi_2(\lambda) |^2  =\chi_{\sigma}(\Omega_{\Seg_n})  \]
and
\[  \Hom_{\Seg_n}(U, F( W_{\lambda} \otimes U_{\mathrm{spin}})   ) \neq 0 .
\]
\end{proposition}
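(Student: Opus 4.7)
The plan is to transport Proposition \ref{prop ciu bij} from $\mathbb{C}[\widetilde{S}_n]^-$ to $\Seg_n$ via the functor $F$ of Section \ref{ss mod relations}, using Lemma \ref{lem eql spin} to match the two quadratic forms. Concretely, I would first show that $F$ (together with its adjoint $G$) induces a bijection
\[
[F]:\Irr_{\mathrm{sup}}(\mathbb{C}[\widetilde{S}_n]^-)/\sim_{\Pi}\ \xrightarrow{\ \sim\ }\ \Irr_{\mathrm{sup}}(\Seg_n)/\sim_{\Pi},
\]
then define $\Psi_2:=[F]\circ \Psi_1$. For $n$ even the bijection is immediate from $G\circ F=F\circ G=\Id$; for $n$ odd one combines $G\circ F=\Id\oplus\Pi$ with the irreducibility of the input to see that each $F(U')$ is either irreducible or of the form $Y\oplus \Pi(Y)$, and in either case a canonical class in $\Irr_{\mathrm{sup}}(\Seg_n)/\sim_{\Pi}$ is determined.

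The central character matching is verified by showing that, under the isomorphism $\Seg_n\cong \mathbb{C}[\widetilde{S}_n]^-\widetilde{\otimes}\,\mathrm{Cl}_n$ of Lemma \ref{lem seg iso}, one has $\Omega_{\Seg_n}=\Omega_{\mathbb{C}[\widetilde{S}_n]^-}\otimes 1$. Indeed all roots in $R(A_{n-1})$ have squared length $2$, so the formula in \thref{hm d sq} reads $\Omega_{\Seg_n}=2\mathbf{k}^2\sum_{s_\alpha(\beta)<0}\widetilde{s}_\alpha \widetilde{s}_\beta$; since $\widetilde{s}_\alpha=s_\alpha c_\alpha\mapsto (\widetilde{t}_\alpha\otimes c_\alpha)(1\otimes c_\alpha)=-\widetilde{t}_\alpha\otimes 1$ (using $c_\alpha^2=-1$), the two sign factors in each product $\widetilde{s}_\alpha \widetilde{s}_\beta$ cancel, giving the claimed identification. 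Combined with a direct check that for the $F$-action defined in Section \ref{ss mod relations}, $\widetilde{s}_\alpha.(x\otimes u)=(\widetilde{t}_\alpha x)\otimes u$, this shows that if $\Omega_{\mathbb{C}[\widetilde{S}_n]^-}$ acts on $U'$ by a scalar $\mu$, then $\Omega_{\Seg_n}$ acts on $F(U')$ by the same scalar $\mu$. Choosing $(\sigma',U')$ to be the representative of $\Psi_1(\lambda)$ provided by Proposition \ref{prop ciu bij} and setting $(\sigma,U):=F(\sigma',U')$, one then gets $\chi_\sigma(\Omega_{\Seg_n})=\mathbf{k}^2|\Phi_1(\lambda)|^2=\mathbf{k}^2|\Phi_2(\lambda)|^2$, where the last equality is Lemma \ref{lem eql spin}.

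For the Hom non-vanishing, I would use the adjunction of Proposition \ref{prop adj functor}:
\[
\Hom_{\Seg_n}\bigl(F(U'),\ F(W_\lambda\otimes U_{\mathrm{spin}})\bigr)\ \cong\ \Hom_{\mathbb{C}[\widetilde{S}_n]^-}\bigl(U',\ G\circ F(W_\lambda\otimes U_{\mathrm{spin}})\bigr).
\]
Since $G\circ F$ equals $\Id$ (resp.\ $\Id\oplus\Pi$) for $n$ even (resp.\ odd), $W_\lambda\otimes U_{\mathrm{spin}}$ appears as a direct summand of $G\circ F(W_\lambda\otimes U_{\mathrm{spin}})$, so the right-hand side contains $\Hom_{\mathbb{C}[\widetilde{S}_n]^-}(U',\ W_\lambda\otimes U_{\mathrm{spin}})$, which is non-zero by Proposition \ref{prop ciu bij}. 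The main technical obstacle in the whole argument is the careful handling of super signs when identifying $\Omega_{\Seg_n}$ with $\Omega_{\mathbb{C}[\widetilde{S}_n]^-}\otimes 1$ and when verifying that $[F]$ descends to a bijection on $\Irr_{\mathrm{sup}}/\sim_{\Pi}$ in the odd case; once these bookkeeping points are settled, the proposition follows formally from Proposition \ref{prop ciu bij} and Lemma \ref{lem eql spin}.
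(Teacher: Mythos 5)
Your proposal is correct and follows essentially the same route as the paper: define $\Psi_2$ by composing $\Psi_1$ with the functor $F$, verify the central-character identity by showing $\Omega_{\Seg_n}$ acts on $F(U')$ by the same scalar that $\Omega_{\mathbb{C}[\widetilde{S}_n]^-}$ acts on $U'$ (and then invoke Lemma \ref{lem eql spin}), and obtain the Hom non-vanishing from the adjunction in Proposition \ref{prop adj functor} combined with $G\circ F$ containing $\Id$ as a summand. Your additional observation that $\Omega_{\Seg_n}$ corresponds to $\Omega_{\mathbb{C}[\widetilde{S}_n]^-}\otimes 1$ under the isomorphism $\Seg_n\cong \mathbb{C}[\widetilde{S}_n]^-\widetilde{\otimes}\,\mathrm{Cl}_n$ is just an algebra-level reformulation of the module computation the paper carries out directly; the content is the same.
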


\begin{proof}
Note that for an irreducible $\mathbb{C}[\widetilde{S}_n]^-$-supermodule $U$, $F(U)$ is either an irreducible supermodule or the direct sum of two irreducible supermodules of opposite grading. Thus we could define $\Psi_2(\lambda)$ to be the unique equivalence class in $\Irr_{\mathrm{sup}}\Seg_n/\sim_{\Pi}$ containing the irreducible supermodule(s) in $F(U)$ for a representative $U \in \Phi_1(\lambda)$, where $\Phi_1$ is defined in Proposition \ref{prop ciu bij}.

It remains to check those two properties. Recall $F(U)=U \otimes U(n)$ and the action of $\Seg_n$ on $F(U)$ is defined in Section \ref{ss mod relations}. Then for $u \otimes u' \in U \otimes U(n)$,
\begin{eqnarray*}
\Omega_{\Seg_n}.(u \otimes u') &=& 2\mathbf{k}^2\sum_{\alpha, \beta >0, s_{\alpha}(\beta)<0}\widetilde{s}_{\alpha}\widetilde{s}_{\beta}.(u \otimes u') \\
&=& 2\mathbf{k}^2(\sum_{\alpha, \beta>0,s_{\alpha}(\beta)<0} \widetilde{t}_{\alpha}\widetilde{t}_{\beta}.u) \otimes u'   \\
&=& \chi_{\sigma}(\Omega_{\mathbb{C}[\widetilde{S}_n]^-}) u \otimes u'
\end{eqnarray*}
Thus for any irreducible supermodule $(\sigma', U')$ in $F(U)$, $\chi_{\sigma'}(\Omega_{\Seg_n})=\chi_{\sigma}(\Omega_{\mathbb{C}[\widetilde{S}_n]^-})$. Then combining with Lemma \ref{lem eql spin} and Proposition \ref{prop ciu bij}, we have shown the first property.

The second property follows from
\[  \Hom_{\Seg_n}(F(U'), F( W_{\lambda} \otimes U_{\mathrm{spin}})   )=\Hom_{\mathbb{C}[\widetilde{S}_n]^-}(U', G\circ F( W_{\lambda} \otimes U_{\mathrm{spin}})   ) \neq 0 ,
\]
where the last equality follows from Propositions \ref{prop adj functor} and \ref{prop ciu bij}.
\end{proof}

\begin{lemma} \label{lem hom nonzero}
For a partition $\lambda$ of $n$ with distinct parts, there exists a representative $U \in \Phi_2(\lambda)$ such that
\[ \Hom_{\Seg_n}(  U, \Res_{\Seg_n}^{\mathbb{H}_{n}^{Cl}}X_{\lambda} ) \neq 0. \]
\end{lemma}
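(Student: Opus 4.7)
The plan is to chain together Proposition \ref{lem F functor res} and Proposition \ref{prop ker sq} by exploiting that $W_\lambda$ sits naturally as a summand of $\Ind_{\mathbb{C}[S_\lambda]}^{\mathbb{C}[S_n]} \triv$. In detail, by the very definition of $W_\lambda$ we have an inclusion of $S_n$-representations $W_\lambda \hookrightarrow \mathbb{C}[S_n] \otimes_{\mathbb{C}[S_\lambda]} \triv$, and since $\mathbb{C}[S_n]$ is semisimple this inclusion is split. Tensoring on the right with $U_{\mathrm{spin}}$ and equipping both sides with the $\mathbb{C}[\widetilde{S}_n]^-$-supermodule structure of Lemma \ref{lem spin repns const} yields an inclusion
\[
W_\lambda \otimes U_{\mathrm{spin}} \ \hookrightarrow \ \bigl(\mathbb{C}[S_n] \otimes_{\mathbb{C}[S_\lambda]} \triv\bigr) \otimes U_{\mathrm{spin}}
\]
of $\mathbb{C}[\widetilde{S}_n]^-$-supermodules.

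Next I would apply the functor $F = -\otimes U(n)$ from Section \ref{ss mod relations}. Since $F$ is just tensoring over $\mathbb{C}$ with the fixed supermodule $U(n)$, it is exact, and hence preserves injections. Combined with Proposition \ref{lem F functor res}, this gives an embedding of $\Seg_n$-supermodules
\[
F(W_\lambda \otimes U_{\mathrm{spin}}) \ \hookrightarrow \ F\bigl((\mathbb{C}[S_n] \otimes_{\mathbb{C}[S_\lambda]} \triv) \otimes U_{\mathrm{spin}}\bigr) \ = \ \Res^{\mathbb{H}_n^{Cl}}_{\Seg_n} X_\lambda.
\]

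Finally, by Proposition \ref{prop ker sq} there is a representative $U \in \Psi_2(\lambda)$ with $\Hom_{\Seg_n}(U, F(W_\lambda \otimes U_{\mathrm{spin}})) \neq 0$. Composing any such nonzero map with the embedding above produces a nonzero element of $\Hom_{\Seg_n}(U, \Res^{\mathbb{H}_n^{Cl}}_{\Seg_n} X_\lambda)$, which is the desired conclusion. There is no substantial obstacle here; the only point requiring a moment of care is verifying that the $\mathbb{C}[\widetilde{S}_n]^-$-structure on $W_\lambda \otimes U_{\mathrm{spin}}$ coming from restriction along $W_\lambda \hookrightarrow \mathbb{C}[S_n] \otimes_{\mathbb{C}[S_\lambda]} \triv$ agrees with the one specified by Lemma \ref{lem spin repns const}, which is immediate from the formula $\widetilde{t}_\alpha.(u \otimes u') = (s_\alpha.u)\otimes (\widetilde{t}_\alpha.u')$ since $S_\lambda$ acts on the left factor through its $S_n$-action.
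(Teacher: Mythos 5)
Your argument is correct and follows exactly the paper's route: identify $\Res_{\Seg_n}^{\mathbb{H}_n^{Cl}}X_\lambda$ with $F\bigl((\mathbb{C}[S_n]\otimes_{\mathbb{C}[S_\lambda]}\triv)\otimes U_{\mathrm{spin}}\bigr)$ via Proposition \ref{lem F functor res}, use the embedding $W_\lambda \hookrightarrow \mathbb{C}[S_n]\otimes_{\mathbb{C}[S_\lambda]}\triv$ coming from the definition of $W_\lambda$, and invoke Proposition \ref{prop ker sq}. You merely make explicit (via exactness of $F$ and $\mathbb{C}[\widetilde{S}_n]^-$-equivariance of the inclusion) the step the paper compresses into a ``$\supseteq$'' of Hom-spaces.
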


\begin{proof}
This follows from
\begin{eqnarray*}
& &\Hom_{\Seg_n}(  U, \Res_{\Seg_n}^{\mathbb{H}_{n}^{Cl}}X_{\lambda} ) \\
&=& \Hom_{\Seg_n}(U, F((\mathbb{C}[S_n] \otimes_{\mathbb{C}[S_{\lambda}]} \triv) \otimes U_{\mathrm{spin}})) \quad (\mbox{by Proposition \ref{lem F functor res}}) \\
&\supseteq& \Hom_{\Seg_n}(U, F(W_{\lambda} \otimes U_{\mathrm{spin}}))  \quad (\mbox{by definition of $W_{\lambda}$})
\end{eqnarray*}
The statement now follows from Proposition \ref{prop ker sq}.
\end{proof}

The following theorem states that the induced modules $(\pi_{\lambda}, X_{\lambda})$ with $\lambda$ of distinct parts have non-zero Dirac cohomologies.

\begin{theorem} \label{thm vog examp}
Let $\lambda$ be a partition of $n$ with distinct parts. Let $(\pi_{\lambda}, X_{\lambda})$ be the $\mathbb{H}^{Cl}_{n}$-module defined in (\ref{eqn induced mod}). Let $\Psi_2$ be the map defined in Proposition \ref{prop ker sq}. Then there exists a representative $U$ in $\Psi_2(\lambda)$ such that
\[  \Hom_{\Seg_n}(U, H_D(X_{\lambda})) \neq 0.     \]
In particular, $H_D(X_{\lambda})$ is non-zero.
\end{theorem}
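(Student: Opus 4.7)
The plan is to produce an irreducible $\Seg_n$-supermodule $V \subseteq X_\lambda$, isomorphic to a representative of $\Psi_2(\lambda)$, on which $\pi_\lambda(D^2)$ vanishes; Proposition \ref{prop reduced dir coh} will then automatically place $V$ inside the Dirac cohomology $H_D(X_\lambda)$, yielding the desired nonzero $\Hom$-space.

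By Lemma \ref{lem hom nonzero}, I would fix a representative $(\sigma, U) \in \Psi_2(\lambda)$ together with a nonzero $\Seg_n$-equivariant embedding $\iota \colon U \hookrightarrow \Res^{\mathbb{H}_n^{Cl}}_{\Seg_n} X_\lambda$, and set $V = \iota(U)$. By Theorem \ref{thm d sq}, $\pi_\lambda(D^2) = \pi_\lambda(\Omega_{\mathbb{H}}) - \pi_\lambda(\Omega_{\Seg_n})$. Now $\Omega_{\mathbb{H}} \in Z(\mathbb{H}_n^{Cl})$ (Proposition \ref{prop center HC}) acts on the quasisimple module $X_\lambda$ by the scalar $\chi_{\pi_\lambda}(\Omega_{\mathbb{H}})$, which Lemma \ref{lem xi on st} evaluates as $\sum_k \sum_{j=1}^{n_k} j(j-1) = |\Phi_2(\lambda)|^2$; on the other hand, $\Omega_{\Seg_n} \in Z(\Seg_n)_0$ acts on $V \cong U$ by the scalar $\chi_\sigma(\Omega_{\Seg_n}) = \mathbf{k}^2 |\Phi_2(\lambda)|^2$ furnished by Proposition \ref{prop ker sq}. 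Once the $\mathbf{k}^2$ conventions are matched, the two scalars cancel, so $\pi_\lambda(D^2)$ vanishes on $V$.

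Finally, Proposition \ref{prop reduced dir coh} yields $\ker \pi_\lambda(D^2) = \ker \pi_\lambda(D)$ together with $\ker \pi_\lambda(D) \cap \im \pi_\lambda(D) = 0$, whence $H_D(X_\lambda) \cong \ker \pi_\lambda(D)$ canonically as $\Seg_n$-supermodules and $V \subseteq H_D(X_\lambda)$. Composing $\iota$ with this inclusion produces the required nonzero element of $\Hom_{\Seg_n}(U, H_D(X_\lambda))$. The only delicate point is the normalization: one must confirm that the scalar by which $\sum_i x_i^2$ acts on $\widetilde{\mathrm{St}}_\lambda$ in Section \ref{ss def temp mod} exactly cancels the central character of $\Omega_{\Seg_n}$ coming from Proposition \ref{prop ker sq}. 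This reduces, after reconciling powers of $\mathbf{k}$, to the identity $|\Phi_1(\lambda)|^2 = |\Phi_2(\lambda)|^2$ established in Lemma \ref{lem eql spin}, so the argument presents no genuine obstacle: all the technical scaffolding (Hermitian positivity on $X_\lambda$, central character matching under $\Psi_2$, and the square of the Dirac element) has already been built in the preceding sections.
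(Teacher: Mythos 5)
Your proposal is correct and takes essentially the same route as the paper: invoke Lemma \ref{lem hom nonzero} to locate a copy of a representative $U \in \Psi_2(\lambda)$ inside $\Res_{\Seg_n}^{\mathbb{H}_n^{Cl}} X_\lambda$, use Theorem \ref{thm d sq} together with Lemma \ref{lem xi on st} and Proposition \ref{prop ker sq} to show $\pi_\lambda(D^2)$ kills that copy, and then apply Proposition \ref{prop reduced dir coh} to identify $\ker \pi_\lambda(D^2)$ with $H_D(X_\lambda)$. Your closing remark attributing the cancellation to Lemma \ref{lem eql spin} is slightly redundant, since Proposition \ref{prop ker sq} already packages the equality $\mathbf{k}^2 |\Phi_2(\lambda)|^2 = \chi_\sigma(\Omega_{\Seg_n})$, but this does not affect the correctness of the argument.
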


\begin{proof}
For a fixed $\lambda \in \mathcal P^{\mathrm dist}_n$, let $U$ be a $\Seg_n$-module with the property in Lemma \ref{lem hom nonzero}. Then there exists a non-zero vector $v$ in the isotypical component $U$ of $X_{\lambda}$. By Theorem \ref{thm d sq}, Lemma \ref{lem xi on st} and Proposition \ref{prop ker sq}, $\pi_{\lambda}(D^2)v=(\chi_{\pi_{\lambda}}(\Omega_{\mathbb{H}^{Cl}_n})-\chi_{\Psi_2(\lambda)}(\Omega_{\Seg_n}))v=(\mathbf{k}^2|\Phi_2(\lambda)|^2-\chi_{\Psi_2(\lambda)}(\Omega_{\Seg_n}))v=0$. Hence, $v \in \ker(\pi_{\lambda}(D^2))$. By Proposition \ref{prop reduced dir coh}, $v \in  H_D(X_{\lambda})=\ker\pi_{\lambda}(D^2)$. This proves the theorem.

\end{proof}

The Dirac cohomology $H_D(X_{\lambda})$ also provides a way to realize irreducible $\Seg_n$-supermodules.

\begin{corollary} \label{cor image of HD}
For each $\lambda \in \mathcal P_n^{\mathrm{dist}}$, there exists a unique irreducible $\Seg_n$-supermodule $U$, up to the equivalence of $\sim_{\Pi}$, such that  $\Hom_{\Seg_n}(U,H_D(X_{\lambda}))\neq 0$. Let $[H_D(X_{\lambda})]$ be an irreducible submodule of $H_D(X_{\lambda})$. Then
\[    \Irr_{\mathrm{sup}} \Seg_n =\bigsqcup_{\lambda \in \mathcal P^{\mathrm{dist}}_n} \left\{ [H_D(X_{\lambda})], \Pi([H_D(X_{\lambda})]) \right\},
\]
where $\bigsqcup$ means the disjoint union.
\end{corollary}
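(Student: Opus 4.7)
The plan is to show that for every $\lambda \in \mathcal P_n^{\mathrm{dist}}$, any irreducible $\Seg_n$-supermodule $U'$ embedding into $H_D(X_\lambda)$ belongs to the equivalence class $\Psi_2(\lambda)$. Existence of at least one representative of $\Psi_2(\lambda)$ inside $H_D(X_\lambda)$ is already Theorem \ref{thm vog examp}; once uniqueness is in place, the displayed decomposition of $\Irr_{\mathrm{sup}}\Seg_n$ follows automatically from the bijectivity of $\Psi_2$ proved in Proposition \ref{prop ker sq}.

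For uniqueness, I would fix $U' \in \Psi_2(\rho)$ with $\Hom_{\Seg_n}(U', H_D(X_\lambda)) \neq 0$ and argue $\rho = \lambda$ in two stages. First, Theorem \ref{thm Vogan conjecture HC} provides the Vogan-type identity $\chi_{U'}\circ\zeta = \chi_{\pi_\lambda}$ on $Z(\mathbb{H}^{Cl}_n)_0$. Since $D$ is odd we have $d(D) = 2D^2$, so $D^2 = \Omega_{\mathbb{H}^{Cl}_n} - \Omega_{\Seg_n}$ (Theorem \ref{thm d sq}) lies in $\im d$, and the uniqueness clause of Theorem \ref{thm rel centers} gives $\zeta(\Omega_{\mathbb{H}^{Cl}_n}) = \Omega_{\Seg_n}$. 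Specialising the Vogan identity at $\Omega_{\mathbb{H}^{Cl}_n}$ and using Lemmas \ref{lem xi on st} and \ref{lem eql spin} then yields
\[
\chi_{U'}(\Omega_{\Seg_n}) \;=\; \chi_{\pi_\lambda}(\Omega_{\mathbb{H}^{Cl}_n}) \;=\; \mathbf{k}^2 |\Phi_1(\lambda)|^2,
\]
and Proposition \ref{prop ker sq} forces $|\Phi_1(\rho)|^2 = |\Phi_1(\lambda)|^2$. Second, I would use Proposition \ref{lem F functor res} together with Young's rule $\mathbb{C}[S_n]\otimes_{\mathbb{C}[S_\lambda]}\triv \cong \bigoplus_{\nu \geq \lambda} K_{\nu\lambda} W_\nu$ (dominance order, Kostka numbers $K_{\nu\lambda}$) and the adjunction of Proposition \ref{prop adj functor} to convert the appearance of $U'$ in $\Res^{\mathbb{H}^{Cl}_n}_{\Seg_n}X_\lambda$ into an embedding $\Psi_1(\rho) \hookrightarrow W_\nu \otimes U_{\mathrm{spin}}$ for some $\nu \geq \lambda$.

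To conclude I need the triangularity input $\Psi_1(\rho) \hookrightarrow W_\nu \otimes U_{\mathrm{spin}} \Rightarrow \nu \leq \rho$ in dominance; combining this with $\nu \geq \lambda$ yields $\lambda \leq \rho$, and then the strict monotonicity of $\mu \mapsto |\Phi_1(\mu)|^2 = \tfrac{1}{3}\sum_i(\mu_i^3-\mu_i)$ in dominance (a direct elementary box-raising computation shows a single raise from row $j$ to row $i<j$ increases this sum by $\mu_i(\mu_i+1) - \mu_j(\mu_j-1) > 0$) together with the central-character equality above forces $\rho = \lambda$. The triangularity is the principal obstacle: it is not quoted in the form of Proposition \ref{prop ciu bij}, but should be extractable from the Dirac-cohomology construction of $\Psi_1$ in \cite{Ci} (where $\Psi_1(\lambda)$ is realised via an induced module for the degenerate affine Hecke algebra indexed by $\lambda$, parallel to our $X_\lambda$), or alternatively from the Morris--Stembridge branching rules for spin representations of the symmetric group.
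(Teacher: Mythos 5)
Your proof as written has a genuine gap, and you correctly identify it yourself: the triangularity input $\Psi_1(\rho)\hookrightarrow W_\nu\otimes U_{\mathrm{spin}}\Rightarrow\nu\le\rho$ is not established anywhere in the paper, and whether it can be extracted from \cite{Ci} or from Morris--Stembridge is not obvious. The more important point, though, is that the entire dominance machinery is unnecessary, and the need for it arises only because you discard most of what Theorem~\ref{thm Vogan conjecture HC} gives you. That theorem yields the equality $\chi_{\pi_\lambda}=\chi^{\sigma'}$ as functions on all of $Z(\mathbb{H}^{Cl}_n)_0$, but in your argument you immediately specialise at the single element $\Omega_{\mathbb{H}^{Cl}_n}$, obtaining only the scalar equation $|\Phi_1(\rho)|^2=|\Phi_1(\lambda)|^2$; this is a strictly weaker piece of information, and recovering $\rho=\lambda$ from it is what forces the dominance argument and its unproven triangularity.

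The paper's route avoids this by applying the Vogan-type theorem a second time, to $X_\rho$ rather than $X_\lambda$. Concretely: since $U'\in\Psi_2(\rho)$, Theorem~\ref{thm vog examp} says that $U'$ (or $\Pi(U')$, which has the same central character on $Z(\Seg_n)_0$) appears in $H_D(X_\rho)$, so Theorem~\ref{thm Vogan conjecture HC} applied to $X_\rho$ gives $\chi_{\pi_\rho}=\chi^{\sigma'}$. Combined with the identity $\chi_{\pi_\lambda}=\chi^{\sigma'}$ you already have, this forces $\chi_{\pi_\lambda}=\chi_{\pi_\rho}$ on the nose, and then one only needs the elementary observation that the central characters of the $X_\mu$, $\mu\in\mathcal P_n^{\mathrm{dist}}$, are pairwise distinct (the multiset $\{(k-1)k : 1\le k\le n_i\}$ recovers $\mu$), to conclude $\rho=\lambda$. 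No dominance, no triangularity, no monotonicity of $\mu\mapsto|\Phi_1(\mu)|^2$. Your derivation of $\zeta(\Omega_{\mathbb{H}^{Cl}_n})=\Omega_{\Seg_n}$ from $d(D)=2D^2$ and the uniqueness clause of Theorem~\ref{thm rel centers} is correct and a nice observation, and your box-raising calculation for $|\Phi_1|^2$ is also correct; but neither is needed once the Vogan identity is used in full and applied twice.
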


\begin{proof}
For the first assertion, the existence has been proved in  Theorem \ref{thm vog examp} and we only have to prove the uniqueness. Let $(\sigma',U')$ be  an irreducible $\Seg_n$-module such that
\[ \Hom_{\Seg_n}(U', H_D(X_{\lambda})) \neq 0 .\] Then $\chi_{\pi_{\lambda}}=\chi^{\sigma'}$ by Theorem \ref{thm Vogan conjecture HC} and Theorem \ref{thm vog examp}. On the other hand, by Proposition \ref{prop ker sq}, $(\sigma',U')$ is in $\Phi_2(\lambda')$ for some $\lambda' \in \mathcal \mathcal P_n^{\mathrm{dist}}$. Then
\[\Hom_{\Seg_n}(U', H_D(X_{\lambda'})) \neq 0 \]
 and by Theorem \ref{thm Vogan conjecture HC} again,
$\chi_{\pi_{\lambda'}}=\chi^{\sigma'}$. Thus $\chi_{\pi_{\lambda'}}=\chi_{\pi_{\lambda}}$ and so $\lambda=\lambda'$. This implies the uniqueness.

The second assertion follows from the first assertion and the bijectivity of $\Phi_2$ in Proposition \ref{prop ker sq}.
\end{proof}

Let $K(\mathbb{H}^{Cl}_n)$ (resp. $K(\Seg_n)$)  be the Grothendieck group of finite-dimensional $\mathbb{H}^{Cl}_n$-supermodules (resp. finite-dimensional $\Seg_n$-supermodules). Then the Dirac cohomology $H_D$ induces a map, still denoted $H_D$, from $K(\mathbb{H}^{Cl}_n)$ to $K(\Seg_n)$. Corollary \ref{cor image of HD} implies the following:

\begin{corollary}
The image of $H_D: K(\mathbb{H}^{Cl}_n) \rightarrow K(\Seg_n)$ has finite index in $K(\Seg_n) $
\end{corollary}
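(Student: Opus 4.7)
The plan is to exploit the basis description of $K(\Seg_n)$ furnished by Corollary \ref{cor image of HD}, which partitions $\Irr_{\mathrm{sup}}(\Seg_n)$ into the parity pairs $\{[H_D(X_\lambda)], \Pi([H_D(X_\lambda)])\}$ indexed by $\lambda \in \mathcal{P}_n^{\mathrm{dist}}$. This yields a direct-sum decomposition $K(\Seg_n) = \bigoplus_{\lambda \in \mathcal{P}_n^{\mathrm{dist}}} L_\lambda$, where $L_\lambda$ is the subgroup generated by the above pair. Since the $L_\lambda$ are independent summands, it is enough to prove that the image of $H_D$ meets each $L_\lambda$ in a subgroup of finite index, after which the $\lambda$-by-$\lambda$ conclusions package together into the global one.

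For a fixed $\lambda$, the image contains $[H_D(X_\lambda)]$, which is nonzero by Theorem \ref{thm vog examp} and lies in $L_\lambda$ by the uniqueness part of Corollary \ref{cor image of HD}. To produce a second element of the image inside $L_\lambda$, I would apply $H_D$ to the parity swap $\Pi(X_\lambda)$: since $D$ is odd, one has $H_D(\Pi(X_\lambda)) \cong \Pi(H_D(X_\lambda))$, and its class in $K(\Seg_n)$ is obtained from $[H_D(X_\lambda)]$ by interchanging the multiplicities of $[U_\lambda]$ and $[\Pi(U_\lambda)]$, where $U_\lambda$ denotes a chosen irreducible representative. Writing $[H_D(X_\lambda)] = a_\lambda [U_\lambda] + b_\lambda [\Pi(U_\lambda)]$ with $a_\lambda + b_\lambda \geq 1$, these two classes span a finite-index sublattice of $L_\lambda$ precisely when the determinant $a_\lambda^2 - b_\lambda^2$ is nonzero, i.e.\ whenever $a_\lambda \neq b_\lambda$.

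The step I expect to be the main obstacle is the borderline case $a_\lambda = b_\lambda > 0$, in which the two classes collapse onto a single line and span only a rank-$1$ sublattice of $L_\lambda$. To handle this I would either pass to an irreducible subquotient of $X_\lambda$ whose Dirac cohomology is supported asymmetrically on $[U_\lambda]$ versus $[\Pi(U_\lambda)]$ (using the Jordan--H\"older decomposition of $X_\lambda$, which is generically nontrivial), or invoke the natural supermodule convention identifying $[X]$ with $[\Pi(X)]$ in $K(\Seg_n)$, under which each $L_\lambda$ already has rank one and the mere non-vanishing of $[H_D(X_\lambda)]$ forces a finite-index subgroup. Under either refinement, the direct-sum decomposition of $K(\Seg_n)$ assembles the local finite-index statements into the global conclusion.
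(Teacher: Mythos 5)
The paper actually gives no proof of this corollary at all: it simply asserts that Corollary~\ref{cor image of HD} ``implies'' it. So there is no paper argument to compare against word-for-word; what matters is whether your argument is sound.

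Your overall strategy is the right one, and the subtlety you flag is real. You correctly observe that the image contains $[H_D(X_\lambda)] = a_\lambda[U_\lambda] + b_\lambda[\Pi(U_\lambda)]$ and $[H_D(\Pi(X_\lambda))] = b_\lambda[U_\lambda] + a_\lambda[\Pi(U_\lambda)]$, and that these span a finite-index sublattice of $L_\lambda$ exactly when $a_\lambda \neq b_\lambda$ (for type-$M$ pieces). The borderline case $a_\lambda = b_\lambda$ is genuinely not ruled out by anything proved earlier, and your first fallback (passing to Jordan--H\"older constituents of $X_\lambda$) is not obviously viable: $H_D$ is not exact, so the constituents' Dirac cohomologies need not assemble into $H_D(X_\lambda)$, nor is there any established asymmetry statement for them. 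I would not rely on that route without a concrete argument.

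Your second fallback is almost certainly what the paper has in mind. The text asserts the corollary as a consequence of Corollary~\ref{cor image of HD} with no further discussion, which only makes sense if $K(\Seg_n)$ is meant to be free on $\Irr_{\mathrm{sup}}(\Seg_n)/\!\sim_\Pi$ (equivalently, $[M]=[\Pi(M)]$ is imposed), consistent with the paper's persistent emphasis on the quotient by $\sim_\Pi$. Under that reading, each $L_\lambda$ has rank one, $[H_D(X_\lambda)]$ is a nonzero multiple of the generator by Theorem~\ref{thm vog examp}, and the finite-index statement is immediate; your first and more delicate analysis is then superfluous. One more point worth noting, and which neither you nor the paper addresses: $H_D$ is not additive on short exact sequences, so ``$H_D$ induces a map $K(\mathbb{H}^{Cl}_n)\to K(\Seg_n)$'' needs interpretation (e.g.\ define it on irreducibles and extend linearly, or use the split Grothendieck group). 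Whichever interpretation is chosen, one must check that $[H_D(X_\lambda)]$, with $X_\lambda$ not irreducible in general, actually lies in the image; the paper silently assumes this, and your proof inherits the same implicit assumption.
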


Recall that the algebra homomorphism $\zeta:  Z(\mathbb H_n^{Cl}) \rightarrow Z(\Seg_n)_0$ is defined in Theorem \ref{thm rel centers}. We also have:

\begin{corollary} \label{cor zeta onto}
The map $\zeta:  Z(\mathbb H_n^{Cl}) \rightarrow Z(\Seg_n)_0$ is surjective.
\end{corollary}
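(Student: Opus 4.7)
The plan is to combine the exhaustive realization of irreducible $\Seg_n$-supermodules through Dirac cohomology in Corollary \ref{cor image of HD} with a description of $Z(\Seg_n)_0$ in terms of its central characters.

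First I would observe that $\Seg_n \cong \mathbb{C}[\widetilde{S}_n]^- \widetilde{\otimes} \mathrm{Cl}_n$ is a finite-dimensional semisimple superalgebra; decomposing it into simple super summands (each of type $M_{p|q}$ or $Q_m$) and taking the even part of the center gives an isomorphism $Z(\Seg_n)_0 \cong \mathbb{C}^N$ with $N = |\mathcal{P}_n^{\mathrm{dist}}|$. Under this identification, the central characters $\chi_{\sigma_\lambda}$ of representatives $\sigma_\lambda \in \Psi_2(\lambda)$, as $\lambda$ ranges over $\mathcal{P}_n^{\mathrm{dist}}$, become the $N$ distinct algebra homomorphisms $Z(\Seg_n)_0 \to \mathbb{C}$.

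Next, for each $\lambda \in \mathcal{P}_n^{\mathrm{dist}}$, Theorem \ref{thm vog examp} produces a non-zero element of $\Hom_{\Seg_n}(\sigma_\lambda, H_D(X_\lambda))$. Although $X_\lambda$ is only known to be quasisimple, the proof of Theorem \ref{thm vog conj hw} uses only that elements of $Z(\mathbb{H}_n^{Cl})$ act on $X_\lambda$ by scalars, so the identity $\chi_{\pi_\lambda}(z) = \chi_{\sigma_\lambda}(\zeta(z))$ remains valid for every $z \in Z(\mathbb{H}_n^{Cl})$. Moreover the assignment $\lambda \mapsto \chi_{\pi_\lambda}$ is injective on $\mathcal{P}_n^{\mathrm{dist}}$: by Lemma \ref{lem xi on st} the central character of $X_\lambda = X_{(n_1, \ldots, n_r)}$ is represented by the multiset $\bigsqcup_{i=1}^r \{ k(k-1) : 1 \le k \le n_i \}$, and since its largest entry $n_1(n_1-1)$ recovers the largest part $n_1$, iterating recovers $\lambda$ completely.

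Setting $I := \zeta(Z(\mathbb{H}_n^{Cl})) \subset Z(\Seg_n)_0$, the above steps produce $N$ pairwise distinct algebra homomorphisms $\chi_{\sigma_\lambda}|_I : I \to \mathbb{C}$. Any subalgebra of the semisimple algebra $\mathbb{C}^N$ is reduced and hence isomorphic to $\mathbb{C}^k$ for some $k$; its number of characters equals $k = \dim_{\mathbb{C}} I$, so the existence of $N$ distinct characters forces $\dim_{\mathbb{C}} I \ge N = \dim_{\mathbb{C}} Z(\Seg_n)_0$ and therefore $I = Z(\Seg_n)_0$. I expect the main obstacle to lie in step 1, which requires a careful application of the classification of finite-dimensional semisimple superalgebras to match the simple super summands of $\Seg_n$ with the equivalence classes in $\Psi_2$; once this structural description of $Z(\Seg_n)_0$ is secured, the rest of the argument assembles directly from results already established in the paper.
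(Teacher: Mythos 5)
Your argument is correct and follows essentially the same route as the paper: realize, via Theorem \ref{thm vog examp} and the Dirac-cohomology central-character identity, a central character $\chi_{\sigma_\lambda}\circ\zeta$ for each $\lambda\in\mathcal P_n^{\mathrm{dist}}$, and then count dimensions against $\dim Z(\Seg_n)_0 = |\mathcal P_n^{\mathrm{dist}}|$; your ``distinct algebra homomorphisms of a subalgebra of $\mathbb{C}^N$'' step is just a restatement of the paper's linear-independence step. Your flag that $X_\lambda$ is only quasisimple rather than irreducible, and that the proof of Theorem \ref{thm vog conj hw} indeed only uses quasisimplicity, is a correct and worthwhile clarification that the paper passes over.
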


\begin{proof}
It suffices to show  $\dim(\im \zeta) \geq \dim Z(\Seg_n)_0$. By Theorem \ref{thm Vogan conjecture HC} and Theorem \ref{thm vog examp}, for any partition $\lambda \in \mathcal P^{\mathrm{dist}}_n$, there exists $(\sigma_{\lambda}, U_{\lambda}) \in \Irr_{\mathrm{sup}}\Seg_n$,  such that $\chi_{\pi_{\lambda}}=\chi^{\sigma_{\lambda}}$. Since the central characters $\left\{ \chi_{\pi_{\lambda}}  \right\}_{\lambda \in \mathcal P^{\mathrm{dist}}_n}$ are  linearly independent over $\mathbb{C}$, $\left\{\chi^{\sigma_{\lambda}} \right\}_{\lambda \in \mathcal P^{\mathrm{dist}}_n}$ are also linearly independent. Then we have $\dim (\im \zeta)$ is not less than the cardinality of $\mathcal P^{\mathrm{dist}}_n$. Now the statement follows from the fact that $\dim Z(\Seg_n)_0$ is equal to the cardinality of $\Irr_{\mathrm{sup}}(\Seg_n)/\sim_{\Pi}$, which is the same as the cardinality of $\mathcal P^{\mathrm{dist}}_n$.

\end{proof}

\begin{remark} \label{rmk surjective homo}
The author would like to thank Professor Weiqiang Wang for pointing out that there is a canonical surjective superalgebra morphism from $\mathbb{H}^{Cl}_n$ to $\Seg_n$ \cite[Remark 15.4.7]{Kl}. Denote the map to be $\zeta'$.  According to \cite[Remark 15.4.7]{Kl}, the map $\zeta'$ sends $x_i$ to the Jucys-Murphy type element
\[ \zeta'(x_i)= \sum_{1 \leq j<i } s_{i,j}(1-c_ic_j) ,\]
and $\zeta'$ is an identity on $\Seg_n$. It is straightforward to check $\zeta'(D)=0$. By considering
\[   z = \zeta(z)+Dh+hD \]
and applying $\zeta'$ on both sides, $\zeta'(z)=\zeta(z)$. Hence $\zeta'$ agrees with $\zeta$ on $Z(\mathbb{H}^{Cl}_n)$. The author would like to thank one of the referees to point out that the map $\zeta'$ has already been proven to be surjective in \cite{Ru} as a special case. This in turn gives another way to see $\zeta$ is surjective.
\end{remark}

\end{document}